\documentclass{amsart}
\usepackage{todonotes}
\usepackage{graphicx, amsmath, amssymb, amsthm,subcaption}
\usepackage{tikz-cd}
\usepackage{spectralsequences}
\usepackage{hyperref} 
\usepackage{adjustbox}
\usepackage[backend=biber, style=numeric, maxbibnames=99, doi=true,url=false,
    giveninits=true, hyperref]{biblatex}
\addbibresource{BPRn.bib}
\renewbibmacro{in:}{}
\newcommand{\aln}[1]{\begin{align*}#1\end{align*}}
\newcommand{\st}{\text{ : }}

\newcommand{\complete}{\;\widehat{}}


\newcommand{\Z}{{\mathbb  Z}}

\newcommand{\R}{{\mathbb R}}
\newcommand{\F}{{\mathbb F}}


\newcommand{\BP}{BP}
\newcommand{\BPR}{\BP_{\R}}

\newcommand{\BPRn}[1][m]{\BPR\langle #1\rangle}



\DeclareMathOperator{\Hom}{Hom}
\DeclareMathOperator{\Ext}{Ext}

\DeclareMathOperator{\coker}{coker}

\DeclareMathOperator{\Spec}{Spec}





\newcommand{\m}[1]{{\protect\underline{#1}}}
\newcommand{\mZ}{\m{\Z}}


\newcommand{\cc}[1]{\mathcal #1}
\newcommand{\cA}{\cc{A}}

\newcommand{\Sp}{\mathcal Sp}


\mathchardef\mhyphen=45



\numberwithin{equation}{section}

\newtheorem{theorem}{Theorem}[section]
\newtheorem{lemma}[theorem]{Lemma}
\newtheorem{corollary}[theorem]{Corollary}

\newtheorem{proposition}[theorem]{Proposition}
\newtheorem{conjecture}[theorem]{Conjecture}

\newtheorem*{theorem*}{Theorem}
\newtheorem*{proposition*}{Proposition}


\theoremstyle{remark}
\newtheorem{remark}[theorem]{Remark}
\newtheorem{example}[theorem]{Example}

\theoremstyle{definition}

\newtheorem{definition}[theorem]{Definition}





\newcommand{\sseq}{spectral sequence}

\newcommand{\ov}{\bar{v}}

\let\oldwidehat\ov
\protected\def\ov{\oldwidehat}

\begin{document}
\title[The homological slice spectral sequence]{The homological slice spectral sequence in motivic and Real bordism}

\author{Christian Carrick}
\address{Mathematical Institute, Utrecht University, Utrecht, 3584 CD, the Netherlands}
\email{c.d.carrick@uu.nl}
\author{Michael A. Hill}
\address{University of California Los Angeles, Los Angeles, CA 90095}
\email{mikehill@math.ucla.edu}
\thanks{This material is based upon work supported by the National Science Foundation under Grant No. 2105019}
\author{Douglas C. Ravenel}
\address{Department of Mathematics University of Rochester, Rochester, NY 14627}
\email{dcravenel@gmail.com}


\begin{abstract}
For a motivic spectrum $E\in \mathcal{SH}(k)$, let $\Gamma(E)$ denote the global sections spectrum, where $E$ is viewed as a sheaf of spectra on $\mathrm{Sm}_k$. Voevodsky's slice filtration determines a spectral sequence converging to the homotopy groups of $\Gamma(E)$. In this paper, we introduce a spectral sequence converging instead to the mod 2 homology of $\Gamma(E)$ and study the case $E=BPGL\langle m\rangle$ for $k=\R$ in detail. We show that this spectral sequence contains the $\mathcal{A}_*$-comodule algebra $\mathcal{A}_*\square_{\mathcal{A}(m)_*}\F_2$ as permanent cycles, and we determine a family of differentials interpolating between $\mathcal{A}_*\square_{\mathcal{A}(0)_*}\F_2$ and $\mathcal{A}_*\square_{\mathcal{A}(m)_*}\F_2$. Using this, we compute the spectral sequence completely for $m\le 3$.

In the height 2 case, the Betti realization of $BPGL\langle 2\rangle$ is the $C_2$-spectrum $BP_\R\langle 2\rangle$, a form of which was shown by Hill and Meier to be an equivariant model for $\mathrm{tmf}_1(3)$. Our spectral sequence therefore gives a computation of the comodule algebra $H_*\mathrm{tmf}_0(3)$. As a consequence, we deduce a new ($2$-local) Wood-type splitting
\[\mathrm{tmf}\wedge X\simeq \mathrm{tmf}_0(3)\]
of $\mathrm{tmf}$-modules predicted by Davis and Mahowald, for $X$ a certain 10-cell complex. 
\end{abstract}
\maketitle
\tableofcontents

\section{Introduction}
The classical truncated Brown--Peterson spectrum $BP\langle m\rangle$ admits an action by the cyclic group of order 2, via the complex conjugation action on complex bordism, $MU$. The fixed points admit familiar geometric models for small heights $m$ and give higher height analogues of connective real $K$-theory $\mathrm{ko}$. We give a spectral sequence converging to the homology of these fixed point spectra by lifting the calculation to a simpler one in the $\R$-motivic stable homotopy category. 

\subsection{Motivation}
The chromatic approach to stable homotopy gives a method to compute the stable homotopy groups of spheres via a step-by-step procedure. This procedure comes from the height filtration on the moduli stack of formal groups, mirrored in stable homotopy by Bousfield localization. The chromatic convergence theorem of Hopkins--Ravenel \cite{ravorange} states that the $p$-local sphere spectrum may be recovered as the limit of its chromatic tower
\[S^0_{(p)}\simeq\varprojlim \big(\cdots\to L_nS^0\to L_{n-1}S^0\to\cdots\to L_0S^0\big)\]
where $L_n$ denotes Bousfield localization at a height $n$ Morava $E$-theory $E(k,\Gamma)$, for $\Gamma$ a height $n$ formal group over a perfect field $k$ of characteristic $p$. The spectrum $L_nS^0$ is built in finitely many steps from the $K(i)$-local spheres $L_{K(i)}S^0$ for $i\le n$, via chromatic fracture squares. The spectrum $L_{K(n)}S^0$, in turn, admits a description as a homotopy fixed-point spectrum 
\[L_{K(n)}S^0\simeq E(k,\Gamma)^{h\mathbb G_n}\]
by a theorem of Devinatz--Hopkins \cite{devhop}, where $\mathbb G_n=\mathrm{Aut}(\Gamma)$ is the Morava stabilizer group. This, in principle, reduces many questions in stable homotopy theory to understanding the $\mathbb G_n$-equivariant spectrum $E(k,\Gamma)$.

However, this is not easy to access in practice, even at small heights. It was an observation of Ravenel \cite{ravarf} and Hopkins--Miller \cite{hopmiller} that $L_{K(n)}S^0$ is well approximated by the fixed points of $E(k,\Gamma)$ at \textit{finite} subgroups of $\mathbb G_n$, and that these theories are more computable. These theories
\[EO_n(G):=E(k,\Gamma)^{hG}\]
are known as the Hopkins--Miller higher real $K$-theories.

The $EO_n(G)$'s have proven very effective at carrying rich information in an accessible way. For instance, studying these theories led to the solution of the Kervaire invariant problem, at primes $p\ge5$ by Ravenel \cite{ravarf} and at $p=2$ by Hill--Hopkins--Ravenel \cite{HHR}. At the prime $p=3$, they were used by Goerss--Henn--Mahowald--Rezk \cite{ghmr} to produce an explicit finite resolution of the $K(2)$-local sphere, giving a conceptual framework to the calculation of $\pi_*L_{K(2)}S^0$ by Shimomura--Wang \cite{SW}. We refer the reader also to the introduction of \cite{BHLSZ} for a nice discussion of these theories and their history.

\subsubsection*{Connective models} Working with the $EO_n(G)$'s in practice is limited by the size of these theories. The $EO_n(G)$'s are non-connective, and their mod $p$ homology vanishes, making it impossible to understand these theories directly from the point of view of the Adams spectral sequence. Moreover, the homotopy groups of the $EO_n(G)$'s are not degreewise finitely generated, so passing to the connective cover does not give a substantial improvement. A key feature of the study of the $EO_n(G)$'s is thus a search for good connective models $eo_n(G)$ with strong finiteness properties.

At heights $h=2^{n-1}m$ with $p=2$, the group $\mathbb G_h$ contains a subgroup isomorphic to $G=C_{2^n}$, and Beaudry--Hill--Shi--Zeng defined and studied candidates for good connective models of $EO_h(G)$ in this case via Real bordism \cite{BHSZ}. These theories---known as the $BP^{((G))}\langle m\rangle$'s---are particularly accessible as they arise as the fixed points of a $G$-spectrum whose action comes from geometry, as opposed to the action on $E(k,\Gamma)$, which is defined via obstruction theory. An important manifestation of this is that the $BP^{((G))}\langle m\rangle$'s have a well understood slice filtration, in the sense of Hill--Hopkins--Ravenel \cite{HHR}.

In upcoming work \cite{CH}, the first and second named authors use the slice filtration to establish the desired finiteness properties of the $BP^{((G))}\langle m\rangle$'s, showing that their fixed points are fp spectra of type $m|G|/2$, in the sense of Mahowald---Rezk \cite{MR}. For explicit calculations, however, we would like to know the comodules $H_*BP^{((G))}\langle m\rangle^G$, and the main aim of the present paper is to initiate such homology computations by exploring the case $G=C_2$.

\subsubsection*{The $G=C_2$ case} The cyclic group of order 2 is often used as a test case for computations in equivariant homotopy in general and for the study of the $EO_n(G)$'s and $BP^{((G))}\langle m\rangle$'s in particular. Quite a lot is known in this case; for instance, the homotopy fixed point spectral sequence computing $\pi_*EO_n(C_2)$ was completely computed at all heights by Hahn--Shi in \cite{hahnshi}. Li--Shi--Wang--Xu showed that a large class of elements in the homotopy groups of spheres is detected by the $EO_n(C_2)$'s \cite{LSWX}. The $EO_n(C_2)$'s were studied extensively by Kitchloo--Wilson; they used these theories to prove new nonimmersion results for real projective spaces \cite{KW1} \cite{KW2}, demonstrating that these theories are useful far beyond their role as a test case.

For $G=C_2$, the connective models $BP^{((G))}\langle m\rangle$ were first defined by Hu--Kriz \cite{HK} and are also known as the $BP_\R\langle m\rangle$'s, as they model the classical truncated Brown--Peterson spectra $BP\langle m\rangle$ with their $C_2$-action via complex conjugation. At small heights $m$, the $BP_\R\langle m\rangle$'s and their fixed points admit familiar geometric models:

\begin{table}[!htbp]
\begin{tabular}{l | c | c }
$m$& $BP_\R\langle m\rangle$ & $BP_\R\langle m\rangle^{C_2}$\\
\hline\hline
-1&$H\underline{\F_2}$&$H\F_2$\\
0&$H\underline{\Z}_{(2)}$&$H\Z_{(2)}$\\
1&$\mathrm{k}_\R$&$\mathrm{ko}$\\
2&$\mathrm{tmf}_1(3)$&$\mathrm{tmf}_0(3)$
\end{tabular}
\end{table}

Here $\mathrm{k}_\R$ denotes connective Real $K$-theory in the sense of Atiyah \cite{atiyah}, and $\mathrm{tmf}_1(3)$ and $\mathrm{tmf}_0(3)$ are spectra of topological modular forms with level structure (see \cite{hilllawson} and \cite{hillmeier}).

\subsubsection*{$\R$-motivic homotopy} For a real variety $X$, its set of complex points $X(\mathbb C)$ comes equipped with the complex analytic topology, with respect to which the action of $C_2=\mathrm{Gal}(\mathbb C/\mathbb R)$ is continuous. This extends to a symmetric monoidal left adjoint 
\[\beta: \mathcal{SH}(\R)\to \Sp^{C_2}\]
from the $\R$-motivic stable homotopy category to the category of genuine $C_2$-spectra, called Betti realization. Real bordism and the $BP_\R\langle m\rangle$'s lift along $\beta$ to $\R$-motivic analogues, known as motivic bordism and the $BPGL\langle m\rangle$'s.

Consequently, we lift the calculation of $H_*BP_\R\langle m\rangle^{C_2}$ along $\beta$ to a simpler and more fundamental calculation in the $\R$-motivic stable homotopy category, and it is in this setting in which we do our computations. In the motivic setting, the role of fixed points $(-)^{C_2}$ is played by the global sections functor $\Gamma:\mathcal{SH}(k)\to \Sp$, the right adjoint to the unique colimit-preserving symmetric monoidal functor
\[i_*:\Sp\to \mathcal{SH}(k)\]
For $E\in \mathcal{SH}(k)$, $\Gamma(E)$ may be computed as the mapping spectrum $F(\Spec(k)_+,E)$ in $\mathcal{SH}(k)$ and thus is the global sections of $E$, viewed as a sheaf of spectra on $\mathrm{Sm}_k$, the category of smooth schemes over $\Spec(k)$.

The advantage of working motivically is that we may discard the so-called negative cone in our calculations. As we show in Section \ref{secequivariant}, the presence of the negative cone in the $C_2$-equivariant homology of a point results in serious complications. In Section \ref{sec2}, we give a method of comparing $C_2$-equivariant calculations of the kind studied here to the corresponding $\R$-motivic calculations, and we use this to show that the results must agree in non-positive weights. In practice, this means we may discard the negative cone and still recover the correct result in degrees $a+b\sigma$ for $b\le0$ and, in particular, in integer degrees.

\subsection{Main results}

\subsubsection*{Section 2} We begin by defining the homological slice spectral sequence (HSSS) in Section \ref{sec2}. This spectral sequence arises by smashing the slice tower of $E\in\mathcal{SH}(k)$ with the motivic spectrum $i_*H\F_2$, giving a spectral sequence of $\mathcal{A}_*$-comodules. We show that when $E$ is slice connective, this spectral sequence has strong vanishing lines and converges to the comodule $H_*\Gamma(E)$ (Proposition \ref{propvanishinglines}).

For our applications to the $BP_\R\langle m\rangle$'s, we compare the global sections functor $\Gamma$ and the genuine fixed points functor $(-)^{C_2}$, producing a natural transformation
\[\Gamma(E)\to\beta(E)^{C_2}\]
which we show to be an equivalence in many cases. In particular,
\begin{proposition}[Corollary \ref{cor:betaE}]
    For all $m$, there is an equivalence of spectra
    \[\Gamma(BPGL\langle m\rangle)\to BP_\R\langle m\rangle^{C_2}\]
\end{proposition}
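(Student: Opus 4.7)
First, I would establish the general natural transformation $\Gamma(E) \to \beta(E)^{C_2}$ used in the proposition. Using the identification $\Gamma(E) = \Map_{\mathcal{SH}(\R)}(\mathbf{1}, E)$ and the fact that $\beta$ is symmetric monoidal with $\beta(\mathbf{1}) \simeq S^0$ (the $C_2$-equivariant sphere), the induced map on mapping spectra gives a natural comparison $\Gamma(E) \to \Map_{\Sp^{C_2}}(S^0, \beta(E)) = \beta(E)^{C_2}$. For $E = BPGL\langle m\rangle$, we have $\beta(E) \simeq BP_\R\langle m\rangle$ by the construction of these spectra as lifts of their $C_2$-equivariant models along $\beta$, so the map takes the claimed form.

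The plan to show this is an equivalence is to pass through the slice tower $\{P^n E\}$ of $E = BPGL\langle m\rangle$. This tower converges by Proposition \ref{propvanishinglines}, and its layers are generalized motivic Eilenberg--MacLane spectra---wedges of motivic suspensions of $H\Z$ indexed by monomials in the $\bar{v}_i$. Under Betti realization this motivic slice tower is carried to the $C_2$-slice tower of $BP_\R\langle m\rangle$ of Hill--Hopkins--Ravenel, whose slices are wedges of suspensions of $H\underline{\Z}$ by multiples of the regular representation. It therefore suffices to verify the comparison on each slice, which reduces to checking that $\Gamma(\Sigma^{p,q} H\Z) \to (\Sigma^{p,q} \beta(H\Z))^{C_2}$ is an equivalence in the bidegrees that occur. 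Since $\beta$ identifies motivic $H\Z$ with $C_2$-equivariant $H\underline{\Z}$, and since the slice bidegrees for $BPGL\langle m\rangle$ lie in the range where motivic cohomology of $\Spec(\R)$ agrees with $RO(C_2)$-graded cohomology of a $C_2$-point---the ``negative cone'' being absent in non-positive weights, as discussed just before the proposition---the slicewise comparison is an equivalence.

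The main obstacle is that $\beta$, being a left adjoint, does not commute with inverse limits in general, so a priori the slicewise equivalence need not pass to an equivalence of inverse limits. This is handled by the strong vanishing lines of Proposition \ref{propvanishinglines} together with the analogous finiteness of the $C_2$-slice filtration of $BP_\R\langle m\rangle$: in each stem only finitely many slices contribute, so the inverse limit comparison can be read off at a bounded stage of the tower, where the slicewise equivalence applies. Combining the slicewise identification with this bounded-stage argument gives the desired equivalence $\Gamma(BPGL\langle m\rangle) \xrightarrow{\simeq} BP_\R\langle m\rangle^{C_2}$.
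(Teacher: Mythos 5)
Your proof follows essentially the same route as the paper: establish the comparison map $\Gamma(E)\to\beta(E)^{C_2}$ (which the paper frames as the unit of the reflection $\beta\dashv\mathrm{Sing}$ together with Proposition \ref{propfixedpointsglobalsec}), reduce along the slice tower to the slices $\bigoplus\Sigma^{2t,t}M\Z$ with $t\ge0$, verify the comparison there using the fact that the obstruction (the negative cone) does not appear in the relevant weight range, and control the passage to the limit by the connectivity/vanishing lines of the slice filtration on both sides. The paper packages the slicewise base case as Proposition \ref{propbetaMZ}, proved via Voevodsky's computation of $\pi_{*,*}^\R M\Z$ and the arithmetic square; your phrase about the negative cone being absent in the bidegrees that occur is a reasonable shorthand for the same fact, though the sharp bound there is weight $>-2$, and what you actually use is that the slices sit at weights $t\ge0$. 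Your detour through the $C_2$-slice tower of $BP_\R\langle m\rangle$ is not strictly needed and tacitly uses that Betti realization carries the motivic slice filtration to the HHR $C_2$-slice filtration; the paper's argument stays purely motivic, comparing $\Gamma(P^\bullet E)$ with $\Gamma(\beta(P^\bullet E))$ directly. These are cosmetic differences; the substance is the same.
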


\subsubsection*{Section 3} In \cite{BehrensShah}, Behrens--Shah lift the Tate square in $C_2$-equivariant homotopy along $\beta$ to an arithmetic square
\[
\begin{tikzcd}
    E\arrow[r]\arrow[d]& E[\rho^{-1}]\arrow[d]\\
    E^{\complete}_\rho[\tau^{-1}]\arrow[r]&E^{\complete}_\rho[\tau^{-1}][\rho^{-1}]
\end{tikzcd}
\]
for $E\in\mathcal{SH}(\R)$. In contrast to the Tate square, this is not usually a pullback, but we show that it becomes a pullback after applying $\Gamma(-)$ in many cases. For $E=i_*H\F_2\otimes BPGL$, we completely determine the effect of the arithmetic square on bigraded homotopy groups; the result in weight zero is as follows.

\begin{proposition}[Proposition \ref{proparithmeticsquarecomputation}]
    On homotopy groups, the arithmetic square of $i_*H\F_2\otimes BPGL$ is given by
\[
\begin{tikzcd}
H_*\Gamma(BPGL)\arrow[r]\arrow[d]&\mathcal A_*\arrow[d,"\varphi"]&\xi_i\arrow[d,mapsto]\\
\F_2[z,\chi_i|i\ge1]^{\complete}_{z}\arrow[r,hookrightarrow]&\F_2[z,\chi_i|i\ge1]^{\complete}_{z}[z^{-1}]&\chi_iz^{2^i-1}+\chi_{i-1}z^{-1}
\end{tikzcd}
\]
with $|z|=-1$ and $|\chi_i|=2(2^i-1)$.
\end{proposition}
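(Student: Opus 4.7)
The plan is to identify each of the four corners of the arithmetic square after applying $\pi_*\Gamma(-)$ to $E=i_*H\F_2\otimes BPGL$, and then read off the comparison maps between them.

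First I would identify the top-left corner as $H_*\Gamma(BPGL)$, using Corollary \ref{cor:betaE} together with the standard identification of $\pi_*\Gamma(i_*H\F_2\otimes-)$ with mod $2$ homology of sections in integer degrees. For the top-right, my plan is to invoke Betti realization: $\beta$ is symmetric monoidal and carries $\rho$ to the $C_2$-equivariant Euler class $a_\sigma$, so $\beta(E[\rho^{-1}])\simeq \Phi^{C_2}(\m{\F_2}\wedge\BPR)$. Combined with the known structure of $\Phi^{C_2}\BPR$ (a retract of $\Phi^{C_2}\MUR\simeq MO_{(2)}$) and of $\Phi^{C_2}\m{\F_2}$, this yields integer-graded homotopy $\cA_*$.

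Next I would compute the bottom-left via the completed HFPSS for $\m{\F_2}\wedge\BPR$, identifying $\tau$-inverted, $\rho$-completed sections with Borel-completed homotopy fixed points under $\beta$. The $E_2$-page is $\F_2[z]\otimes H_*BP$, with $|z|=-1$ coming from $H^*(C_2;\F_2)$ and $H_*BP=\F_2[\chi_i\mid i\ge1]$ where $\chi_i$ corresponds to $\bar\xi_i^2$ of degree $2(2^i-1)$. A degree argument together with $\rho$-completion forces collapse with no additive extensions, giving $\F_2[z,\chi_i]^{\complete}_{z}$. The bottom horizontal map is then simply the $z$-localization.

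The main obstacle is identifying the vertical map $\varphi\colon\cA_*\to\F_2[z,\chi_i]^{\complete}_{z}[z^{-1}]$ on the generators $\xi_i$. My plan is to exploit the commutativity of the arithmetic square together with the interaction between classical dual-Steenrod generators and Real bordism generators via the complex orientation $BPGL\to i_*H\F_2$. The proposed formula $\chi_iz^{2^i-1}+\chi_{i-1}z^{-1}$ (with the convention $\chi_0=1$) decomposes as a leading geometric-fixed-point term $\chi_iz^{2^i-1}$ (detecting $\xi_i$ through the Milnor squaring $\bar\xi_i\mapsto \chi_i=\bar\xi_i^2$ shifted by powers of the Euler class) together with a correction $\chi_{i-1}z^{-1}$ reflecting a $\rho$-torsion shift forced by the Real slice filtration (equivalently, by the $C_2$-equivariant formal group law of $\BPR$). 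The hardest part will be verifying the precise form of the correction and pinning down the coefficients; I would check these either by imposing compatibility with the coproduct on $\cA_*$, or by direct comparison with the Hu--Kriz computation of the $C_2$-equivariant dual Steenrod algebra \cite{HK}.
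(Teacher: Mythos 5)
Your identification of the four corners is essentially sound, and your route to the bottom-left via a completed HFPSS is a plausible alternative to the paper's approach (which uses Proposition \ref{borelhomology} to convert $i_*H\F_2$-homology to $M\F_2$-homology and then quotes Ormsby's computation of $H^\R_{*,*}BPGL$). The real issue is the gluing map $\varphi$, which you correctly flag as ``the hardest part'' but for which your proposed methods would not succeed.

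The entire difficulty is that the composite
\[
i_*H\F_2[\rho^{-1}]\;\simeq\;BPGL[\rho^{-1}]\;\longrightarrow\;M\F_2[\rho^{-1}]
\]
is \emph{not} homotopic to the canonical map $i_*H\F_2[\rho^{-1}]\to M\F_2[\rho^{-1}]$ of \S\ref{subsubseccanonicalmap}; this is the content of Proposition \ref{propnotunit}, and the subtlety is serious enough that the paper records in Remark \ref{hukrizerror} that Hu--Kriz themselves assumed the canonical identification in \cite{BPO} and obtained erroneous formulas. Your proposed fallback of ``direct comparison with the Hu--Kriz computation'' of $\mathcal{A}^{C_2}_{\star}$ therefore cannot close the gap: the Hu--Kriz formulas (Proposition \ref{propcounitimage}) describe the image of $\xi_i$ under the \emph{canonical} map $i_*H\F_2\otimes i_*H\F_2\to M\F_2\otimes M\F_2$, and applying them naively gives the wrong $\varphi$ — precisely the error the paper is correcting. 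Your other proposed check, coproduct compatibility, also cannot distinguish the two candidate maps, since both the canonical map and the actual gluing map are ring maps compatible with coactions.

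What the paper actually uses to pin down $\varphi$ is Theorem \ref{thmtatefrob}: under the canonical splitting $\Gamma(M\F_2[\rho^{-1}])\simeq\prod_{i\ge0}\Sigma^iH\F_2$, the map in question has components $Sq^i$ (it is the Tate-valued Frobenius of Nikolaus--Scholze). The proof is a genuinely nontrivial input: one first shows the map is $E_\infty$ (factoring through $MGL_{(2)}[\rho^{-1}]$, i.e.\ through $MO$), then invokes Mahowald's theorem, which exhibits $H\F_2$ as a pushout in $E_2$-rings and thereby shows there are exactly two $E_2$-maps $H\F_2\to H\F_2[y]$, and finally identifies the non-unit one with $\varphi_2$ via \cite[IV.1.5]{NS}. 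Lemma \ref{lemmacapproduct} then converts this to a cap-product formula, and Proposition \ref{proprightmap} combines it with the Hu--Kriz recursion of Proposition \ref{propcounitimage} to deduce
\[
\varphi(\xi_i)=\rho^{2^i-1}\,\overline{t}_i+\Big(\frac{\tau^{2^{i-1}}}{\rho}\Big)\,\overline{t}_{i-1},
\]
which in weight $0$ with $z=\rho/\tau$ and $\chi_i=\tau^{2^i-1}\overline{t}_i$ becomes the stated $\chi_iz^{2^i-1}+\chi_{i-1}z^{-1}$. Your proposal is missing this entire chain; the heuristic description of the ``correction term $\chi_{i-1}z^{-1}$'' is not a substitute for the Mahowald/Frobenius argument, and as written your plan would most likely reproduce the Hu--Kriz error rather than the correct formula.
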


This leads to a somewhat explicit description of the $\mathcal{A}_*$-comodule $H_*\Gamma(BPGL)$ via the associated Mayer--Vietoris sequence (Theorem \ref{thmhomologybpr}). This corrects an error of Hu--Kriz, who gave similar formulas in \cite{BPO} (see Remark \ref{hukrizerror}). The $\rho$-local part of this arithmetic square is complicated by the following unexpected fact about the reduction map $BPGL\to M\F_2$ from the motivic Brown--Peterson spectrum to the mod 2 motivic Eilenberg--Maclane spectrum.

\begin{theorem}[Theorem \ref{thmtatefrob}]\label{mainthmtate} 
The composite
\[H\F_2\simeq\Gamma(BPGL[\rho^{-1}])\to \Gamma(M\F_2[\rho^{-1}])\simeq\prod\limits_{i\ge0}\Sigma^i H\F_2\]
has components $Sq^i:H\F_2\to \Sigma^i H\F_2$.

Equivariantly, identifying $\Phi^{C_2}(H\underline{\F_2})$ with the connective cover of $H\F_2^{tC_2}$, the composite 
\[H\F_2\simeq\Phi^{C_2}BP_\R\to \Phi^{C_2}H\underline{\F_2}\simeq\prod\limits_{i\ge0}\Sigma^i H\F_2\]
has components $Sq^i:H\F_2\to \Sigma^i H\F_2$.
\end{theorem}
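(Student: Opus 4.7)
The plan is to reduce both parts of the theorem to one equivariant calculation and then to identify the resulting map with the Tate-valued Frobenius for $H\F_2$, which classically has components $\Sq^i$.

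First, Betti realization makes the motivic and equivariant statements equivalent: $\beta(BPGL)\simeq\BPR$, $\beta(M\F_2)\simeq H\underline{\F_2}$, and $\beta(\rho)=a_\sigma$, so that inverting $\rho$ motivically and then applying $\Gamma$ corresponds (after the connective-cover identifications established in Sections~\ref{sec2}--\ref{sec:3}) to applying $\Phi^{C_2}$ to the Betti realization. It therefore suffices to prove the equivariant statement.

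The reduction map $r\colon\BPR\to H\underline{\F_2}$ is a map of $C_2$-equivariant $E_\infty$-rings, and $\Phi^{C_2}$ is symmetric monoidal, so the induced map
\[
\theta\colon H\F_2\simeq\Phi^{C_2}(\BPR)\longrightarrow\Phi^{C_2}(H\underline{\F_2})\simeq H\F_2[a_\sigma]
\]
is a map of non-equivariant $E_\infty$-rings. Writing $\theta(x)=\sum_{i\ge 0}\theta_i(x)\,a_\sigma^i$, each component $\theta_i\in[H\F_2,\Sigma^iH\F_2]=\cA^i$ is a Steenrod-algebra element. The ring-map condition on $\pi_0$ forces $\theta_0=\mathrm{id}$, and multiplicativity gives the Cartan formula $\theta_n(xy)=\sum_{i+j=n}\theta_i(x)\theta_j(y)$; specializing $x=y$ and using the characteristic-$2$ freshman's dream yields $\theta_{2i}(x^2)=\theta_i(x)^2$ and $\theta_{2i+1}(x^2)=0$.

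The main obstacle is pinning down $\theta$, since the relations above do not by themselves characterize the $\Sq^i$. The cleanest way is to identify $\theta$ with the Tate-valued Frobenius $H\F_2\to H\F_2^{tC_2}$ in the sense of Nikolaus--Scholze, which is classically known to realize the total Steenrod square; this requires verifying that the equivalence $\Phi^{C_2}(\BPR)\simeq H\F_2$ used here intertwines with the canonical Frobenius structure, for which naturality along the unit $S^0_{C_2}\to\BPR\to H\underline{\F_2}$ should provide the needed comparison. A more hands-on alternative is to compute $\theta_1$ directly on the fundamental class of a Moore space, where $\Sq^1$ is detected by the Bockstein, and then propagate to higher $\theta_i$ using the multiplicative relations above together with the instability conditions that follow from $\theta$ landing in the connective cover.
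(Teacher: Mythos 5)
Your outline (reduce to the equivariant statement, observe the map is a highly structured ring map, derive the Cartan formula from multiplicativity, and identify the result with the Tate-valued Frobenius) is the right shape, and the identification with Nikolaus–Scholze's Frobenius is exactly the endgame the paper uses. But you have correctly located the hard step and then left it unresolved: neither of your two proposed routes for ``pinning down $\theta$'' actually closes the gap.

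The missing ingredient is a rigidity theorem, and the paper supplies it via \emph{Mahowald's theorem on $H\F_2$}. Mahowald's theorem exhibits $H\F_2$ as a Thom spectrum / an $E_2$-quotient of the sphere, which the paper packages as a pushout square of $E_2$-rings. The consequence is the computation
\[
\pi_0\,\mathrm{Map}_{E_2}\bigl(H\F_2,\ \Gamma(M\F_2[\rho^{-1}])\bigr)\cong\pi_1\Gamma(M\F_2[\rho^{-1}])=\F_2,
\]
so that up to homotopy there are exactly two $E_2$-ring maps $H\F_2\to H\F_2[y]$: the unit, and precisely one other. The paper then appeals to Proposition~\ref{propnotunit} to rule out the unit, and to the fact that the Tate-valued Frobenius $\varphi_2\colon H\F_2\to H\F_2^{tC_2}$ is $E_\infty$, factors through the connective cover, is not the unit, and has components $\Sq^i$ by \cite[IV.1.5]{NS}. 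Uniqueness finishes the job. Your Cartan formula alone, or even together with computing $\theta_1$, does not determine $\theta$; there are many algebra maps $\cA_\ast\to\cA_\ast[y]$ satisfying the $E_1$/Cartan constraints, and the ``instability conditions from landing in the connective cover'' you gesture at are not strong enough. The identification really needs the $E_2$-level rigidity coming from Mahowald's theorem. Likewise, the naturality comparison you sketch (``naturality along the unit $S^0_{C_2}\to\BPR\to H\underline{\F_2}$'') does not by itself supply a map from $\Phi^{C_2}$ of anything to a Tate construction in a way that identifies $\theta$ with $\varphi_2$; the paper does not attempt such a direct identification, and it is not clear that one exists without first invoking the uniqueness statement.

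A secondary issue: you assert that $r\colon\BPR\to H\underline{\F_2}$ is a map of $C_2$-equivariant $E_\infty$-rings. This is not known for $\BPR$ (respectively $BPGL$), which is only available with lower multiplicative structure. The paper sidesteps this by factoring the map through $MGL_{(2)}[\rho^{-1}]$ (equivariantly, through $MO$), where both $H\F_2\to MO$ and $MGL_{(2)}\to M\F_2$ are $E_\infty$. You should adopt the same factorization rather than asserting $E_\infty$-ness of the reduction map directly.
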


We prove Theorem \ref{mainthmtate} as an application of Mahowald's theorem on $H\F_2$. The description of the map appearing here comes via comparison with the Tate-valued Frobenius of Nikolaus--Scholze \cite{NS}. We expect this fact to be of independent interest.

\subsubsection*{Section 4}
The HSSS for $BPGL\langle m\rangle$ has a straightforward $E_2$-page.
\begin{proposition}[Corollary \ref{corE_2description}]
The $E_2$-page of the HSSS for $BPGL\langle m\rangle$ is isomorphic, as an $\mathcal{A}_*$-comodule algebra, to
\[(\mathcal{A}_*\square_{\mathcal{A}(0)_*}\F_2)[\rho,x_1,\ov_1,\ldots,\ov_m]\]
where 
\[\mathcal{A}_*\square_{\mathcal{A}(0)_*}\F_2\cong H_*H\Z\cong\F_2[\zeta_1^2,\zeta_2,\ldots]\]
The tridegrees are given as follows
\aln{
|\zeta_i|&=(2^i-1,0,1-2^i)\\
|\rho|&=(-1,-1,1)\\
|x_1|&=(0,-1,0)\\
|\ov_i|&=(2(2^i-1),2^i-1,0)
}
where we use Adams trigrading $(t-s,w,s)$, representing (stem, weight, filtration). The classes $\rho$ and $\ov_i$ are comodule primitives, and
\[\psi(x_1)=1\otimes x_1+\xi_1\otimes \rho\]
\end{proposition}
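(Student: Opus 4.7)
The strategy is to combine the known slice filtration of $BPGL\langle m\rangle$ with a computation of $\pi_{*,*}(i_*H\F_2 \otimes M\Z)$ over $\R$: the higher slices will contribute the $\ov_i$ generators, and the $0$-slice will contribute the factor $(\mathcal{A}_*\square_{\mathcal{A}(0)_*}\F_2)[\rho, x_1]$.

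First, I would invoke the motivic Hopkins--Morel theorem (Hoyois, Spitzweck, Levine), which identifies the slices of $BPGL$ as a wedge of suspensions of $M\Z$, polynomial on lifts $\ov_i$ of the classical $v_i$ generators in bidegree $(2(2^i-1), 2^i-1)$; truncation to $BPGL\langle m\rangle$ restricts to $1 \le i \le m$. Since the HSSS is obtained by smashing the slice tower with $i_*H\F_2$ and its $E_2$-page is the homotopy of the associated graded, one gets
\[E_2 \cong \pi_{*,*}(i_*H\F_2 \otimes M\Z) \otimes \F_2[\ov_1,\ldots,\ov_m],\]
with the tridegrees of the $\ov_i$ read off from the slice in which each generator arises. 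The $\ov_i$'s are comodule primitives because they are represented by classes in the motivic $\pi_{*,*}$ of the sphere.

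Second, to compute $\pi_{*,*}(i_*H\F_2 \otimes M\Z)$ over $\R$, I would use Section \ref{sec2} to see that applying $\Gamma$ and extracting the weight zero part recovers $\pi_*(H\F_2 \wedge H\Z) \cong \mathcal{A}_*\square_{\mathcal{A}(0)_*}\F_2 \cong \F_2[\zeta_1^2, \zeta_j \mid j \ge 2]$, accounting for the $\zeta_i$'s. To obtain the two additional polynomial generators $\rho \in \pi_{-1,-1}$ and $x_1 \in \pi_{0,-1}$, I would compare with the $\R$-motivic dual Steenrod algebra of Voevodsky and Hu--Kriz--Ormsby: $\rho$ is the standard $\R$-motivic class coming from $-1 \in \R^\times$, and $x_1$ arises from the image of $\tau_0$ after splitting off the $\mathcal{A}(0)$-factor. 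The class $\rho$ is primitive because it lifts from $\pi_{*,*}$ of the sphere, while the coaction $\psi(x_1) = 1 \otimes x_1 + \xi_1 \otimes \rho$ is forced by the coaction on $\tau_0$ together with the quadratic relation $\tau_0^2 = \rho \tau_1 + \tau \xi_1$ in the $\R$-motivic dual Steenrod algebra.

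The main obstacle is the second step: isolating the full bigraded structure of $\pi_{*,*}(i_*H\F_2 \otimes M\Z)$ along with its classical $\mathcal{A}_*$-comodule structure. Since $i_*H\F_2 \not\simeq M\F_2$, one is working with a mixed smash product in which only some features of the $\R$-motivic Steenrod algebra survive, and one must carefully identify which classes descend and how the classical coaction interacts with the motivic generators $\rho$ and $x_1$. Once this is done, the proposition's algebraic statements follow by inspection from the known motivic computations together with the primitivity argument above.
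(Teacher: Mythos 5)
Your high-level skeleton is right and matches the paper: Proposition~\ref{propslicebp} gives the slice associated graded $M\Z[\ov_1,\ldots,\ov_m]$, the $\ov_i$'s and $\rho$ are primitive because they come from the unit of $i_*H\F_2$, and the weight-zero piece is $H_*H\Z\cong\mathcal{A}_*\square_{\mathcal{A}(0)_*}\F_2$. You also correctly flag that the crux is determining the full bigraded ring $\pi^\R_{*,*}(i_*H\F_2\otimes M\Z)$ together with its \emph{classical} $\mathcal{A}_*$-coaction, precisely because $i_*H\F_2\not\simeq M\F_2$. But you do not actually close this gap, and the route you sketch to close it does not work.

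The specific problems with your second step: you propose to identify $x_1$ with (a shadow of) $\tau_0$ in $\mathcal{A}^\R_{*,*}$ and to read the coaction off from the $\R$-motivic quadratic relation. This is off on degrees — $\tau_0$ lives in bidegree $(1,0)$ while $x_1$ lives in $(0,-1)$ — and more fundamentally the $\R$-motivic dual Steenrod algebra controls $M\F_2$-cooperations, not the $i_*H\F_2$-cooperations relevant here; the paper's whole point (Theorem~\ref{thmtatefrob}, Proposition~\ref{propnotunit}) is that the interaction between the two is subtle and not the identity. What the paper actually does is: (i) establish the additive splitting of Proposition~\ref{propE_2splitting}, which in particular shows $\pi^\R_{*,*}(i_*H\F_2\otimes M\Z)$ is $\rho$-torsion-free; (ii) define $x_1$ bluntly as the unique nonzero class in bidegree $(0,-1)$; (iii) exploit the resulting injection into the $\rho$-localization $\mathcal{A}_*[y_2,\rho^{\pm}]$ (Proposition~\ref{propE_2localized}), where both the multiplication and the coaction are transparent, to deduce $x_1^2=\zeta_1^2\rho^2+\tau^2$ and $\psi(x_1)=1\otimes x_1+\xi_1\otimes\rho$; and (iv) conclude by a dimension count plus the surjectivity supplied by the relation. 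The $\rho$-local embedding is the missing idea in your proposal, and without it the coaction formula and the isomorphism claim are asserted rather than proved.
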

Our main application of the results of Section \ref{secarithmeticsquare} on the  arithmetic square is to determine a large class of permanent cycles in the HSSS for $BPGL\langle m\rangle$. This comes via the edge homomorphism
\[H_*\Gamma(BPGL\langle m\rangle)\to H_*\Gamma(P^0BPGL\langle m\rangle)=H_*H\Z\]

\begin{theorem}[Theorem \ref{edgethm}]\label{mainthmedge} In weight zero, the image of the edge homomorphism in the HSSS for $BPGL\langle m\rangle$ is precisely
\[\mathcal{A}_*\square_{\mathcal{A}(m)_*}\F_2\subset \mathcal{A}_*\square_{\mathcal{A}(0)_*}\F_2\]    
\end{theorem}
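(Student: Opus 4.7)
The plan is to identify the image of the edge map with $E_\infty^{0,0,0}$, the surviving weight-zero, filtration-zero classes in the HSSS, and show this coincides with the subalgebra $\mathcal{A}_*\square_{\mathcal{A}(m)_*}\F_2$ inside $E_2^{0,0,0} = \mathcal{A}_*\square_{\mathcal{A}(0)_*}\F_2 = H_*H\Z$. I would establish both containments separately.

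For the lower bound (image $\supseteq \mathcal{A}_*\square_{\mathcal{A}(m)_*}\F_2$), I would construct explicit permanent cycles covering the generators of
\[\mathcal{A}_*\square_{\mathcal{A}(m)_*}\F_2 = \F_2[\zeta_1^{2^{m+1}},\zeta_2^{2^m},\ldots,\zeta_{m+1}^2,\zeta_{m+2},\zeta_{m+3},\ldots].\]
The untruncated generators $\zeta_j$ for $j\ge m+2$ lift through the tower map $BPGL\to BPGL\langle m\rangle$: the composite $H_*\Gamma(BPGL)\to H_*\Gamma(BPGL\langle m\rangle)\to H_*H\Z$ can be traced using the explicit formulas of Proposition~\ref{proparithmeticsquarecomputation}. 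The truncated generators $\zeta_i^{2^{m+2-i}}$ for $i\le m+1$ arise from Frobenius iteration on the $E_2$-page: even when a lower Frobenius power $\zeta_i^{2^k}$ supports a differential, sufficiently high powers are forced to be permanent cycles by the Leibniz rule, since the relevant $\ov_i$-targets are not squares.

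For the upper bound (image $\subseteq \mathcal{A}_*\square_{\mathcal{A}(m)_*}\F_2$), I would establish a family of differentials $d_r(\zeta_i^{2^k})\neq 0$ for the relevant pairs $(i,k)$, with targets in positive filtration built from $\ov_i$ and lower $\zeta_j$'s. These are the HSSS analogues of the classical slice differentials of Hill--Hopkins--Ravenel \cite{HHR} for $BP_\R\langle m\rangle$, pushed along the natural map from the slice spectral sequence to the HSSS induced by smashing with $i_*H\F_2$. Together with the multiplicative structure of the $E_2$-page, they annihilate exactly those generators of $\mathcal{A}_*\square_{\mathcal{A}(0)_*}\F_2$ lying outside the subalgebra $\mathcal{A}_*\square_{\mathcal{A}(m)_*}\F_2$.

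The main obstacle will be rigorously establishing this family of differentials and their targets. They sit in higher filtrations of the HSSS and require careful bookkeeping of the slice tower of $BPGL\langle m\rangle$, the interaction of the $\mathcal{A}_*$-coaction with the slice differentials, and the multiplicative structure on the $\ov_i$-variables. One also has to rule out hidden extensions or extraneous permanent cycles that could alter the $E_\infty$-count in weight zero and filtration zero.
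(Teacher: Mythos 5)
The overall architecture of your argument (permanent cycles for $\supseteq$, a family of differentials for $\subseteq$) matches the paper's, but both halves as you have described them have real gaps, and the specific mechanisms you propose diverge from what the paper actually does.

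\textbf{Lower bound.} Your Frobenius/Leibniz argument does not prove what you need. The Leibniz rule in characteristic $2$ shows that $d_r(\zeta_i^{2^{k+1}})=2\zeta_i^{2^k}d_r(\zeta_i^{2^k})=0$, i.e.\ the square of a class that dies under $d_r$ is a $d_r$-cycle — but this gives no information about longer differentials on the square. Indeed, the pattern in this spectral sequence is precisely that $\zeta_1^2$ supports a $d_3$, $\zeta_1^4$ supports a $d_7$, $\zeta_1^8$ a $d_{15}$, and so on, so ``squares are cycles'' is badly insufficient. The parenthetical ``since the relevant $\ov_i$-targets are not squares'' does not repair this: it bears on boundary questions, not on whether a cycle survives later pages. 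The paper instead establishes $\mathcal{A}_*\square_{\mathcal{A}(m)_*}\F_2\subseteq$ image by a global argument (Corollary \ref{coredgeA(m)}): it checks that the images of the generators under the localization $E_2\to E_2[\rho^{-1}]$ lie in $\mathcal{A}_*[y^{2^{m+1}}]$ using Proposition \ref{prophomologytwist}, and then lifts against the Mayer--Vietoris sequence of the arithmetic square for $i_*H\F_2\otimes BPGL\langle m\rangle$, reducing to an explicit calculation in the $\rho$-complete, $\tau$-inverted corner. Since the image of the edge homomorphism automatically consists of permanent cycles (Proposition \ref{propedgeispermanent}), this bypasses any page-by-page survival analysis.

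\textbf{Upper bound.} Pushing the Hill--Hopkins--Ravenel slice differentials through the map of towers $\mathrm{SliceSS}\to\mathrm{HSSS}$ does not directly produce the differentials you need. The HHR differentials live on powers of $u_{2\sigma}$ (motivically, on powers of $\tau^2$), not on $\zeta_j^{2^k}$. Since $\tau^2 = x_1^2+\zeta_1^2\rho^2$ in $E_2$ (Corollary \ref{corx_1rel}), translating the pushed-forward differential $d_{2^{m+1}-1}(\tau^{2^m})$ into statements about individual Milnor generators is nontrivial and, in particular, does not by itself give you the formula controlling the kernel. The paper's route (Theorem \ref{differentialsthm}) is to compute the $\rho$-localized HSSS completely by degree and convergence arguments (Proposition \ref{proplocalizedSS}), then use an induction across the $\ov_m$-Bockstein tower $BPGL\langle m\rangle\to BPGL\langle m-1\rangle$ together with $\rho$-torsion control (Lemma \ref{subspacelemma}, Proposition \ref{v_npure}) to deduce that the differentials land in a $\rho$-torsion-free subspace, at which point naturality pins them down. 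You also do not name the key algebraic input: the restriction of $d_{2^{m+1}-1}$ to $\mathcal{A}_*\square_{\mathcal{A}(m-1)_*}\F_2$ is identified (Remark \ref{capremark}) with the cap product $-\cap Sq^{2^m}$, whose kernel is exactly $\mathcal{A}_*\square_{\mathcal{A}(m)_*}\F_2$ because $\mathcal{A}(m)$ is generated over $\mathcal{A}(m-1)$ by $Sq^{2^m}$. Without this identification, ``they annihilate exactly the generators outside $\mathcal{A}_*\square_{\mathcal{A}(m)_*}\F_2$'' remains an assertion rather than a proof.
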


The classes $x_1,\rho$, and $\ov_i$ are all permanent cycles, so the subalgebra
\[(\mathcal{A}_*\square_{\mathcal{A}(m)_*}\F_2)[\rho,x_1,\ov_1,\ldots,\ov_m]\]
of $E_2$ consists of permanent cycles. We use the arithmetic square to identify also a class of permanent cycles in negative weights.

\begin{proposition}[Corollary \ref{corx_mpermanent}]
The classes defined inductively $x_0=\rho$, $x_1$ the class above, and 
\[x_n=\sum\limits_{i=0}^{n-1}x_i\zeta_{n-i}^{2^i}\]
are permanent cycles in the HSSS for $BPGL\langle m\rangle$, for all $n$ and $m$. The tridegree of $x_n$ is $(2^n-2,-1,2-2^n)$, and the coaction is as follows
\[\psi(x_n)=\sum\limits_{i=0}^n\xi_i^{2^{n-i}}\otimes x_{n-i}\]
\end{proposition}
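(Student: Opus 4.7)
The plan is to induct on $n$, using the base cases $n=0$ ($x_0=\rho$, a comodule primitive) and $n=1$ (the class from Corollary \ref{corE_2description}). In the inductive step I would separately verify (i) the tridegree, (ii) the coaction formula, and (iii) the permanent-cycle property for $x_n$.

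The tridegree check is a direct calculation: combining the inductive hypothesis $|x_i|=(2^i-2,-1,2-2^i)$ with $|\zeta_{n-i}^{2^i}|=2^i\cdot(2^{n-i}-1,0,1-2^{n-i})$, every summand $x_i\zeta_{n-i}^{2^i}$ lies in tridegree $(2^n-2,-1,2-2^n)$, independently of $i$, so $x_n$ does too. For the coaction, I would expand
\[\psi(x_n)=\sum_{i=0}^{n-1}\psi(x_i)\,\psi(\zeta_{n-i}^{2^i})\]
using the inductive formula for $\psi(x_i)$ and the conjugate Milnor coproduct on $\zeta_{n-i}^{2^i}$ provided by the inclusion $H_*H\Z\subset\mathcal{A}_*$, and then reorganize. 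The key combinatorial step is to observe that, for each fixed $l\le n$, the terms whose right-hand tensor factor contains $x_{n-l}$ assemble---after absorbing excess $\zeta$-factors by running the defining recursion of $x_{n-l}$ in reverse---into $\xi_l^{2^{n-l}}\otimes x_{n-l}$, with the left-factor coefficients collapsing via the antipode identity $\sum_{i=0}^n\xi_{n-i}^{2^i}\zeta_i=0$ (for $n\ge 1$) to the single monomial $\xi_l^{2^{n-l}}$. I would sanity check this at $n=2$, where $\psi(x_2)=\psi(\rho\zeta_2)+\psi(x_1\zeta_1^2)$ expands to $1\otimes x_2+\xi_1^2\otimes x_1+(\xi_2+\xi_1^3)\otimes\rho+\xi_1^3\otimes\rho=1\otimes x_2+\xi_1^2\otimes x_1+\xi_2\otimes x_0$ after the $\xi_1^3$-terms cancel, matching the claim.

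For the permanent-cycle property, I would invoke the arithmetic-square description of $H_*\Gamma(BPGL)$ from Proposition \ref{proparithmeticsquarecomputation}, combined with the explicit description of the map $\varphi$ coming from Theorem \ref{mainthmtate}. Together these identify $H_*\Gamma(BPGL)$ as a pullback whose elements are matching pairs, and a tridegree-and-coaction match exhibits the $x_n$ as the HSSS representatives of distinguished classes in this pullback (the coaction formula established above is exactly what forces the match). Consequently each $x_n$ is a permanent cycle in the HSSS for $BPGL$, and the map of motivic spectra $BPGL\to BPGL\langle m\rangle$ induces a morphism of HSSSs carrying each $x_n$ to a permanent cycle in the HSSS for $BPGL\langle m\rangle$, as claimed.

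The main obstacle is the combinatorial reorganization in the coaction verification: interleaving the recursive definition of the $x_i$'s on the right tensor factor with the antipode identity on the left is delicate, and tracking the cancellations in general requires careful bookkeeping rather than a one-line manipulation. Once this algebraic identity is in hand, the tridegree check is immediate and the permanent-cycle claim follows by a clean invocation of the arithmetic-square pullback from Section \ref{secarithmeticsquare}.
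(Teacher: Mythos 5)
Your verification of the tridegree is correct, and your inductive derivation of the coaction formula using the conjugate Milnor coproduct and the antipode identity does go through (I checked the collapses at $n=2,3$), though it is substantially more work than the paper's route. The authors instead deduce the coaction from the injectivity of $E_2\to E_2[\rho^{-1}]$ (Corollary~\ref{corE_2description}) together with the identity $x_n\mapsto\rho\xi_n$ under localization (Proposition~\ref{propmaptolocalized}): the coaction $\psi(\rho\xi_n)=\sum_i\xi_i^{2^{n-i}}\otimes\rho\xi_{n-i}$ is transparent in the localized $E_2$, and pulling back along the injection gives the formula without any combinatorics.

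The permanent-cycle step is where your argument has a genuine gap. You write that ``a tridegree-and-coaction match exhibits the $x_n$ as the HSSS representatives of distinguished classes in this pullback (the coaction formula established above is exactly what forces the match),'' but this misattributes the crucial ingredient. Whether the class $\rho\xi_n\in\mathcal{A}_*[\rho^{\pm}]=H_*\Gamma(BPGL[\rho^{-1}])$ lifts to $H_*\Gamma(BPGL)$ via the Mayer--Vietoris sequence of the arithmetic square is \emph{not} decided by comodule data, and the coaction formula has no bearing on it. What the proof actually rests on is the explicit formula for the gluing map from Proposition~\ref{proprightmap}:
\[
\varphi(\rho\xi_n)=\rho\cdot\varphi(\xi_n)=\rho^{2^n}\overline{t}_n+\tau^{2^{n-1}}\overline{t}_{n-1},
\]
together with the observation that the negative power of $\rho$ in $\varphi(\xi_n)=\rho^{2^n-1}\overline{t}_n+(\tau^{2^{n-1}}/\rho)\overline{t}_{n-1}$ disappears after multiplying by $\rho$, so $\varphi(\rho\xi_n)$ lies in the image of the bottom row of the arithmetic square. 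That single computation is the entire content of the paper's proof of Corollary~\ref{corx_mpermanent}; your proposal cites the right ingredients (Proposition~\ref{proparithmeticsquarecomputation} and Theorem~\ref{mainthmtate}, equivalently Proposition~\ref{proprightmap}) but never performs it. As a cautionary example showing that coaction data alone cannot force a lift: $\zeta_1^2$ has a perfectly explicit coaction in $E_2$ yet supports $d_3(\zeta_1^2)=\rho\overline{v}_1$. Once you supply the $\varphi$-computation, the Mayer--Vietoris sequence produces a lift, injectivity of $E_2\to E_2[\rho^{-1}]$ identifies the edge-image of that lift with $x_n$, and your final transport along $BPGL\to BPGL\langle m\rangle$ completes the argument.
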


Heuristically, the $x_n$ classes may be thought of as indecomposable elements of the form $[\xi_n\rho]$. We determine a family of differentials in the HSSS for $BPGL\langle m\rangle$, which may be expressed in terms of the $x_n$'s. The classes $x_n$ appear as Massey products when taking homology with respect to these differentials.

\begin{theorem}[Theorem \ref{differentialsthm}]\label{mainthmdiff}
In the HSSS for $BPGL\langle m\rangle$, we have the differentials
\[d_{2^{i+1}-1}(\zeta_j^{2^{i+1-j}})=\overline{v}_{i}\rho^{2^i-1}\bigg(p_{j-1}\bigg(\frac{x_1}{\rho},\ldots,\frac{x_{j-1}}{\rho}\bigg)\bigg)^{2^{i+1-j}}\]
for all $1\le i\le m$ and $1\le j\le i+1$, where $p_j$ is the polynomial
\[\zeta_{j}=p_j(\xi_1,\ldots,\xi_j)\]
given by the inversion formulas in the Hopf algebra $\mathcal A_*$.
\end{theorem}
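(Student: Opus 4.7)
The plan is to establish these differentials in two stages: first prove the ``base case'' $j = 1$ for each $1 \le i \le m$, and then propagate to $j > 1$ using the $\mathcal{A}_*$-comodule structure of the HSSS.

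For the base case, the goal is to show
\begin{equation*}
d_{2^{i+1}-1}(\zeta_1^{2^i}) = \ov_i \rho^{2^i - 1}, \qquad 1 \le i \le m
\end{equation*}
(interpreting $p_0 \equiv 1$). These are HSSS analogues of the fundamental Hill--Hopkins--Ravenel slice differentials for $BP_\R\langle m\rangle$, and they descend from the motivic slice tower of $BPGL\langle m\rangle$ upon smashing with $i_* H\F_2$. More intrinsically, one can invoke the edge homomorphism (Theorem~\ref{edgethm}): since its weight-zero image is exactly $\mathcal{A}_*\square_{\mathcal{A}(m)_*}\F_2$, the classes $\zeta_1^{2^i}$ for $1 \le i \le m$ must die on a finite page. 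The Leibniz rule in characteristic two forces $\zeta_1^{2^{i-1}}$ to be killed strictly before $\zeta_1^{2^i}$, pinning down the supporting page for each $\zeta_1^{2^i}$. Tridegree bookkeeping then shows the only possible target is a unit multiple of $\ov_i \rho^{2^i - 1}$, and the unit is determined by comparison with the $\rho$-localized computation of Proposition~\ref{proparithmeticsquarecomputation}.

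For the propagation, the key is that the HSSS is a spectral sequence of $\mathcal{A}_*$-comodules, so $(\mathrm{id} \otimes d_r) \circ \psi = \psi \circ d_r$. The coaction on $\zeta_n$ restricted to $\mathcal{A}_* \square_{\mathcal{A}(0)_*} \F_2$ is the standard
\begin{equation*}
\psi(\zeta_n) = \sum_{k=0}^{n} \zeta_k \otimes \zeta_{n-k}^{2^k},
\end{equation*}
while the classes $x_n$ satisfy $\psi(x_n) = \sum_k \xi_k^{2^{n-k}} \otimes x_{n-k}$ by Corollary~\ref{corx_mpermanent}. Fixing $i$, I would induct on $j$ from $1$ to $i+1$. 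For each $j$, apply $(\mathrm{id} \otimes d_{2^{i+1}-1})$ to $\psi(\zeta_j^{2^{i+1-j}})$, expand using the coaction formula and the inductive hypothesis (which supplies the differentials $d_{2^{i+1}-1}(\zeta_{j'}^{2^{i+1-j'}})$ for $j' < j$), and compare to $\psi(d_{2^{i+1}-1}(\zeta_j^{2^{i+1-j}}))$. The $1 \otimes (-)$ summand on the LHS isolates $d_{2^{i+1}-1}(\zeta_j^{2^{i+1-j}})$, and the explicit matching --- which uses that $\psi(x_k/\rho)$ mirrors $\psi(\xi_k)$ under the formal dictionary $x_k/\rho \leftrightarrow \xi_k$ after $\rho$-inversion --- recovers the polynomial $p_{j-1}(x_1/\rho, \ldots, x_{j-1}/\rho)^{2^{i+1-j}}$ appearing in the statement.

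The main obstacle lies in the bookkeeping. The combinatorial identity between the coaction expansion and the polynomial $p_{j-1}$ --- essentially the antipode inversion $p_j(\xi_1,\ldots,\xi_j) = \zeta_j$ --- must be made precise in the presence of the Frobenius exponents $2^{i+1-j}$, and keeping track of which powers of $\rho$ cancel requires care. One must also verify that $\zeta_j^{2^{i+1-j}}$ actually survives to $E_{2^{i+1}-1}$ (so that the differential is defined there), which follows from the Leibniz rule applied to lower powers of $\zeta_j$ already killed at earlier pages, together with the permanent cycles identified in Corollary~\ref{corE_2description} and the $x_n$'s of Corollary~\ref{corx_mpermanent}; and finally that the target tridegree admits no additional contributions beyond the claimed expression, using the explicit $E_2$-page description.
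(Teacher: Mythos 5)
Your comodule-propagation scheme for $j>1$ is a genuine alternative to the paper's argument, but both the base case and the propagation step have gaps, and one of them is a circularity. For the base case $j=1$, you invoke Theorem~\ref{edgethm} to force the $\zeta_1^{2^i}$ to die --- but in the paper, Theorem~\ref{edgethm} is proved \emph{after} Theorem~\ref{differentialsthm} and uses it (via the identification of $d_{2^{m+1}-1}\vert_{\mathcal A_*\square_{\mathcal A(m-1)_*}\F_2}$ with $-\cap Sq^{2^m}$ in Remark~\ref{capremark}), so this appeal is circular. Your remaining fallback, ``tridegree bookkeeping shows the only possible target is a unit multiple of $\ov_i\rho^{2^i-1}$,'' is simply false at the $E_2$-level: already for $i=3$ the class $\ov_1\ov_2^2\rho^7$ lies in the same tridegree $(7,0,7)$ as $\ov_3\rho^7$. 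Ruling out such terms requires knowing what survives to page $2^{i+1}-1$, i.e.\ exactly the $\rho$-torsion-free input supplied by Lemma~\ref{subspacelemma} and Proposition~\ref{v_npure}, which you never establish. The paper's route deliberately bypasses the comodule induction: it proves once and for all that the differentials on $\mathcal A_*\square_{\mathcal A(i-1)_*}\F_2$ land in the subspace $\ov_i\cdot\Sym^{2^i-1}(\F_2\{\rho,x_1\})\cdot(\mathcal A_*\square_{\mathcal A(0)_*}\F_2)$, which is $\rho$-torsion free on $E_{2^{i+1}-1}$, and then reads off all the values simultaneously from the completely determined localized spectral sequence of Proposition~\ref{proplocalizedSS}.

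For the propagation step itself, the equation $\psi\circ d_r=(\mathrm{id}\otimes d_r)\circ\psi$ only constrains $d_{2^{i+1}-1}(\zeta_j^{2^{i+1-j}})$ modulo comodule primitives of the relevant tridegree; it does not, as you assert, ``isolate'' the value from the $1\otimes(-)$ summand. It happens that in weight zero the only primitives on $E_2$ of the form $\rho^a\prod\ov_k^{c_k}$ have stem equal to filtration, whereas the target tridegree $(2^{i+1}-2^{i+1-j}-1,\,0,\,2^{i+1-j}-1)$ has stem $\ne$ filtration for $j>1$, so the ambiguity vanishes --- but this is precisely the kind of verification you wave off. You would also need to independently establish that $\zeta_j^{2^{i+1-j}}$ survives to $E_{2^{i+1}-1}$; the paper does this via the comparison of truncations $E_r\langle m\rangle\to E_r\langle m-1\rangle$ (Proposition~\ref{prop:EquivOfSlices} and Lemma~\ref{filteredlemma}), not by the Leibniz argument alone, since Leibniz in characteristic $2$ only shows the square is a cycle on the page \emph{immediately} following the one on which the lower power dies, leaving a range of pages to account for.
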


The classes $x_n$ are not divisible by $\rho$, but all of the $\rho$ exponents appearing are nonnegative when this expression is expanded. Using the heuristic $x_n=[\xi_n\rho]$, this expression may then be read more compactly as
\[d_{2^{i+1}-1}(\zeta_j^{2^{i+1-j}})=\overline{v}_{i}[\rho^{2^i-1}\zeta_{j-1}^{2^{i+1-j}}]\]
As each $\zeta_j^{2^{i+1-j}}$ supports a nonzero $d_{2^{i+1}-1}$, its square is a nonzero cycle on the next page. This stops when $i=m+1$, thus interpolating between 
\[\mathcal{A}_*\square_{\mathcal{A}(0)_*}\F_2=\F_2[\zeta_1^2,\zeta_2,\ldots]\]
and
\[\mathcal{A}_*\square_{\mathcal{A}(m)_*}\F_2=\F_2[\zeta_1^{2^{m+1}},\zeta_2^{2^m},\ldots,\zeta_{m+1}^2,\zeta_{m+2},\ldots]\]

\subsubsection*{Section 5} These differentials are explicit but complicated due to the inversion formulas in $\mathcal{A}_*$, which makes computing homology with respect to these differentials quite difficult, though purely algebraic. This limits our understanding of the behavior of this spectral sequence in general beyond Theorems \ref{mainthmdiff} and \ref{mainthmedge}. However, we make the following conjecture.

\begin{conjecture}\label{conj}
All differentials in the HSSS for $BPGL\langle m\rangle$ are generated under the Leibniz rule by those in Theorem \ref{mainthmdiff}. In particular, the spectral sequence collapses on $E_{2^{m+1}}$.
\end{conjecture}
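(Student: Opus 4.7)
The plan is to combine an upper bound on $E_\infty$ coming from the specified differentials with a matching lower bound from an independent computation of $H_*\Gamma(BPGL\langle m\rangle)$, and then use a Poincar\'e-series argument to rule out further differentials.

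First, starting from the $E_2$-page in Corollary \ref{corE_2description}, I would take homology with respect to the differentials of Theorem \ref{mainthmdiff}, propagating them via the Leibniz rule page by page up to $E_{2^{m+1}}$. The $d_{2^{i+1}-1}$ differentials for $i = 1, \ldots, m$ successively kill the classes $\zeta_j^{2^{i+1-j}}$, with targets built out of $\ov_i$, $\rho$, and (via the $p_{j-1}$ polynomials) the classes $x_1, \ldots, x_{j-1}$. After exhausting the differentials of Theorem \ref{mainthmdiff}, the surviving weight-zero subalgebra on the $\zeta_j$ is precisely
\[
\F_2[\zeta_1^{2^{m+1}}, \zeta_2^{2^m}, \ldots, \zeta_{m+1}^2, \zeta_{m+2}, \ldots] \;=\; \mathcal{A}_*\square_{\mathcal{A}(m)_*}\F_2,
\]
which is consistent with Theorem \ref{mainthmedge}. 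A careful Leibniz computation then yields the full $E_{2^{m+1}}$-page as an $\mathcal{A}_*$-comodule algebra. The classes $x_n$ appear naturally as Massey products forced by these differentials, making precise the heuristic $x_n = [\xi_n \rho]$.

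For the matching lower bound, I would apply the arithmetic square of Section \ref{secarithmeticsquare} to $E = i_*H\F_2 \otimes BPGL\langle m\rangle$, in analogy with the $m = \infty$ case treated in Proposition \ref{proparithmeticsquarecomputation}. Identifying the $\rho$-inverted and $\rho$-completed corners for truncated height $m$ requires passing Theorem \ref{mainthmtate} through the slice tower of $BPGL\langle m\rangle$; the resulting Mayer--Vietoris sequence produces an $\mathcal{A}_*$-comodule description of $H_*\Gamma(BPGL\langle m\rangle)$. Comparing Poincar\'e series with the upper bound of the previous step tridegree by tridegree, and invoking the comparison of Section \ref{sec2} which ensures that agreement need only be checked in non-positive weights, would force collapse at $E_{2^{m+1}}$: any additional differential would strictly shrink $E_\infty$ below the size of the known target, a contradiction. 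Collapse at $E_{2^{m+1}}$ then feeds back to say that all differentials are Leibniz consequences of those in Theorem \ref{mainthmdiff}.

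The main obstacle is the lower bound: the inversion formulas $\zeta_j = p_j(\xi_1, \ldots, \xi_j)$ render both sides combinatorially intricate, and a closed-form, uniform-in-$m$ Mayer--Vietoris computation appears to require a genuinely new ingredient beyond what Section \ref{secarithmeticsquare} provides for $BPGL$. This is why Conjecture \ref{conj} remains open in general and has been verified only for $m \le 3$ in Section 5, where the low-degree algebra can be analyzed directly.
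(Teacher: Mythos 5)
The statement you have been asked to justify is labeled a \emph{conjecture} in the paper, and the authors explicitly leave it open: it is verified only for $m\le 3$ in Section \ref{seccomputations}, and the general case carries no proof to compare against. Your write-up recognizes this and concedes as much in its final paragraph, which is the correct conclusion; this is not a gap you failed to close but the actual state of the art.

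On the strategy you sketch: it is a Poincar\'e-series squeeze, taking homology of the $E_2$-page with respect to the differentials of Theorem \ref{differentialsthm} as an upper bound for $E_\infty$ and an independent arithmetic-square computation of $H_*\Gamma(BPGL\langle m\rangle)$ (in the style of Theorem \ref{thmhomologybpr}) as a matching lower bound, so that equality rules out further differentials. This is a valid collapse criterion in principle. But the lower bound is precisely the bottleneck the paper itself flags: the remark after Theorem \ref{thmhomologybpr} says the arithmetic-square description of $H_*\Gamma(BPGL\langle m\rangle)$ exists but is too complicated to be useful, and extracting even a graded dimension count from the Mayer--Vietoris boundary map for general $m$ runs through essentially the same combinatorial thicket (the inversion polynomials $\zeta_j=p_j(\xi_1,\ldots,\xi_j)$) as computing homology with respect to the differentials of Theorem \ref{differentialsthm} directly, so the two sides of the squeeze are not really independent. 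What the paper actually does for $m\le 3$ is different in spirit: it computes $E_{2^{m+1}}$ explicitly via $\rho$-Bockstein spectral sequences, and then shows collapse not by a size comparison but by locating every remaining algebra generator either in the known subalgebra of permanent cycles (Theorem \ref{edgethm}, Corollary \ref{corx_mpermanent}) or, for the $\overline{v}_i$-divisible part, by $\mathcal{A}_*$-comodule-structure reductions and sparseness of the possible target bidegrees (plus a machine check at $m=3$). For larger $m$ neither route has been carried through, which is exactly why the statement remains a conjecture.
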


Given Conjecture \ref{conj}, computing $H_*\Gamma(BPGL\langle m\rangle)$ becomes the purely algebraic problem of computing homology with respect to these differentials, modulo comodule algebra extension problems. In Section \ref{seccomputations}, we verify this conjecture for $m\le 3$, and compute the corresponding spectral sequences completely. 

At height 1, this recovers the classical computation of $H_*\mathrm{ko}$, without the use of the Wood cofiber sequence or knowledge of $\pi_*\mathrm{ko}$. At height 2, it is a theorem of Hill--Meier that the $C_2$-spectrum $\mathrm{tmf}_1(3)$ is a form of $BP_\R\langle 2\rangle$ \cite{hillmeier}, so our computation determines the $\mathcal{A}_*$-comodule algebra $H_*\mathrm{tmf}_0(3)$. 

\begin{theorem}[Theorem \ref{thmt03}]\label{mainthmhtmf}
    There is an isomorphism of $\mathcal{A}_*$-comodule algebras
    \[H_*\mathrm{tmf}_0(3)\cong \mathcal{A}_*\square_{\mathcal{A}(2)_*}M_2\]
    where $M_2$ is the $\mathcal{A}(2)_*$-comodule algebra given by the square zero extension $\F_2\{1\}\oplus \overline{M}_{2}$, and $\overline{M}_{2}$ is the $\mathcal{A}(2)_*$-comodule is displayed below in Figure \ref{figM2}.
\end{theorem}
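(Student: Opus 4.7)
The plan is to compute $H_*\mathrm{tmf}_0(3)$ as $H_*\Gamma(BPGL\langle 2\rangle)$ by running the HSSS completely at $m=2$ and then reading off the answer in weight zero. The reduction uses the Hill--Meier identification $\mathrm{tmf}_1(3)\simeq BP_\R\langle 2\rangle$ of $C_2$-ring spectra \cite{hillmeier}, followed by Corollary \ref{cor:betaE} to pass from the equivariant fixed points to the motivic global sections; the comparison results of Section \ref{sec2} then guarantee that the weight-zero part of the HSSS target agrees with the integer-graded homology we want. By Corollary \ref{corE_2description} the $E_2$-page is the comodule algebra
\[(\mathcal{A}_*\square_{\mathcal{A}(0)_*}\F_2)[\rho,x_1,\overline{v}_1,\overline{v}_2].\]

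Next I would specialize Theorem \ref{mainthmdiff} to $m=2$. This yields a family of $d_3$-differentials coming from $i=1$ (with $j=1,2$), generated under the Leibniz rule by $d_3(\zeta_1^2) = \overline{v}_1\rho$ and $d_3(\zeta_2)$, and a family of $d_7$-differentials coming from $i=2$ (with $j=1,2,3$), generated by $d_7(\zeta_1^4) = \overline{v}_2\rho^3$, $d_7(\zeta_2^2)$, and $d_7(\zeta_3)$. Propagating these via the Leibniz rule and the coproduct formulas, I would compute $E_4$ and then $E_8$ as $\mathcal{A}_*$-comodule algebras. In weight zero, the kernel of the edge map onto the surviving $\zeta_i^{2^{3-i}}$-subalgebra should recover exactly $\mathcal{A}_*\square_{\mathcal{A}(2)_*}\F_2$ as required by Theorem \ref{mainthmedge}, which provides a strong consistency check on the bookkeeping. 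To verify Conjecture \ref{conj} for $m=2$ --- collapse on $E_8$ --- I would use the vanishing lines of Proposition \ref{propvanishinglines} together with the known permanent cycles (the edge image, the $x_n$'s of Corollary \ref{corx_mpermanent}, and the $\overline{v}_i$'s) to rule out higher-page differentials on tridegree grounds.

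The final step is to reorganize the weight-zero $E_\infty$-page as $\mathcal{A}_*\square_{\mathcal{A}(2)_*}M_2$. The subalgebra $\mathcal{A}_*\square_{\mathcal{A}(2)_*}\F_2$ corresponds to $\F_2\{1\}\subset M_2$ via Theorem \ref{mainthmedge}, and the remaining surviving classes --- built from $\overline{v}_1$, $\overline{v}_2$, $x_1$, and those $x_n$'s that survive both differentials --- should form a copy of $\mathcal{A}_*\square_{\mathcal{A}(2)_*}\overline{M}_2$ sitting in the appropriate bidegrees. I would extract these classes explicitly, compute their $\mathcal{A}_*$-coactions from the formulas in Corollary \ref{corE_2description} and Corollary \ref{corx_mpermanent}, apply a change-of-rings argument to repackage the answer as a cotensor product, and then match the resulting $\mathcal{A}(2)_*$-comodule against Figure \ref{figM2}. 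The main obstacle will be establishing the square-zero relation $\overline{M}_2\cdot\overline{M}_2 = 0$ together with the remaining comodule algebra extensions: a priori, products such as $\overline{v}_1^2$, $\overline{v}_1\overline{v}_2$, and $\overline{v}_2^2$ are only determined in the associated graded, so each must be shown individually either to lie in the subalgebra $\mathcal{A}_*\square_{\mathcal{A}(2)_*}\F_2$ corresponding to $\F_2\{1\}$ or to be killed by one of the $d_3$ or $d_7$ differentials above.
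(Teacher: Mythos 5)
Your high-level outline matches the paper's: pass from $H_*\mathrm{tmf}_0(3)$ to $H_*\Gamma(BPGL\langle 2\rangle)$ via Hill--Meier and Corollary \ref{cor:betaE}, feed the $E_2$-page of Corollary \ref{corE_2description} and the differentials of Theorem \ref{mainthmdiff} into the HSSS, and read off the weight-zero $E_\infty$. But the proposal leaves the hardest step essentially unaddressed. Taking $d_3$-homology is easy because $E_3$ is $\rho$-torsion free; taking $d_7$-homology is not, because $E_7 \cong \F_2[\zeta_1^4,\zeta_2^2,\zeta_3,\ldots][\rho,x_1,x_2,\ov_1,\ov_2]/(\rho\ov_1,x_1\ov_1,x_2\ov_1,x_2^2+\rho^2\zeta_2^2+x_1^2\zeta_1^4)$ carries $\rho$-torsion coming from the $\ov_1$-divisible classes, so ``propagate via the Leibniz rule'' does not by itself produce $\ker d_7$, the Massey-product cycles, or the relations on $E_8$. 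The paper handles this by first replacing $E_7$ by $E_7/(\ov_1)$ (an isomorphism of dga's in weights $\le 0$ by Proposition \ref{propwithrelationslist}), and then running a $\rho$-Bockstein spectral sequence to pin down $\ker d_7$ exactly; that reduction and Bockstein argument are the real content of the $m=2$ computation and are missing from your plan.

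A related mischaracterization: you describe the complement of $\F_2\{1\}$ in $M_2$ as built from $\ov_1,\ov_2,x_1$ and surviving $x_n$'s, but in weight zero $\ov_1$ contributes nothing at all --- any weight-zero monomial divisible by $\ov_1$ must also carry $\rho$, $x_1$, or $x_2$, and $\rho\ov_1 = x_1\ov_1 = x_2\ov_1 = 0$ already on $E_4$. All ten classes of $\overline{M}_2$ are of the form (monomial in $\rho,x_1,x_2,x_3$)$\cdot\ov_2$. Finally, for extensions: checking $\ov_1^2,\ov_1\ov_2,\ov_2^2$ individually is not the right invariant (these classes are not in weight zero), and it would be laborious even to enumerate the relevant products. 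The paper's resolution is cleaner: each class of $M_2$ sits in the highest filtration of its stem on the weight-zero $E_\infty$-page, so the projection $H_*\Gamma(BPGL\langle 2\rangle) \to M_2$ is automatically a map of $\mathcal{A}(2)_*$-comodule algebras, and its adjoint gives the asserted isomorphism $H_*\mathrm{tmf}_0(3)\cong \mathcal{A}_*\square_{\mathcal{A}(2)_*}M_2$ with no extension problems to check. You should replace your extension-chasing plan with this observation; it also delivers the square-zero structure on $\overline{M}_2$ directly from the $E_\infty$ relations.
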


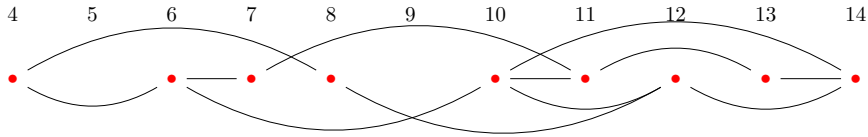
\begin{figure}[!htbp]
\centering
\[
\adjustbox{scale=0.8,center}{
\begin{tikzcd}
4&5&6&7&8&9&10&11&12&13&14\\
\color{red}{\bullet}\arrow[rr,dash,bend right]\arrow[rrrr,dash,bend left]&&\color{red}{\bullet}\arrow[r,dash]\arrow[rrrr,dash,bend right]&\color{red}{\bullet}\arrow[rrrr,dash,bend left]&\color{red}{\bullet}\arrow[rrrr,dash,bend right]&&\color{red}{\bullet}\arrow[r,dash]\arrow[rr,dash,bend right]\arrow[rrrr,dash,bend left]&\color{red}{\bullet}\arrow[rr,dash,bend left]&\color{red}{\bullet}\arrow[rr,dash,bend right]&\color{red}{\bullet}\arrow[r,dash]&\color{red}{\bullet}
\end{tikzcd}
}
\]
\caption{The $\mathcal{A}(2)_*$-comodule $\overline{M}_{2}$}
\label{figM2}
\end{figure}

Prior to the work of Hill--Lawson \cite{hilllawson}, it was not known how to produce a spectrum $\mathrm{Tmf}_0(3)$ (and thereby a connective model $\mathrm{tmf}_0(3)$ of the periodic spectrum $\mathrm{TMF}_0(3)$ of topological modular forms with level structure) as the global sections of a derived stack. In \cite{davismahowald}, Davis and Mahowald proposed several hands-on definitions that were suitable for computation. In particular, they construct a certain 10-cell complex $X$ \cite[Theorem 2.1]{davismahowald} and propose $\mathrm{tmf}\wedge X$ as such a connective model.

Our computation in Theorem \ref{mainthmhtmf} allows us to join this definition with the derived algebro-geometric one of Hill--Lawson. We use the Adams spectral sequence to construct a map $X\to \mathrm{tmf}_0(3)$, resulting in the following Wood-type splitting.

\begin{theorem}[Corollary \ref{cortmfsplitting}]
    There is a 2-local equivalence of $\mathrm{tmf}$-modules 
    \[\mathrm{tmf}_0(3)\simeq\mathrm{tmf}\wedge X \]
\end{theorem}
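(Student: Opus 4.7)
The plan is to construct a map $f: X \to \mathrm{tmf}_0(3)$, extend it via the $\mathrm{tmf}$-module structure on $\mathrm{tmf}_0(3)$ (induced by the canonical algebra map $\mathrm{tmf} \to \mathrm{tmf}_0(3)$) to a $\mathrm{tmf}$-module map $\tilde f: \mathrm{tmf}\wedge X \to \mathrm{tmf}_0(3)$, and then verify that $\tilde f$ is a 2-local equivalence by a mod 2 homology computation.

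The homology check is the cleanest step. By the construction of $X$ in \cite{davismahowald}, the $\mathcal{A}(2)_*$-comodule $H_*(X;\F_2)$ is isomorphic to $M_2$. Combined with the standard description $H_*\mathrm{tmf} \cong \mathcal{A}_*\square_{\mathcal{A}(2)_*}\F_2$, the K\"unneth formula and the usual change-of-rings identification yield
\[
H_*(\mathrm{tmf}\wedge X;\F_2) \cong \mathcal{A}_*\square_{\mathcal{A}(2)_*} M_2 \cong H_*\mathrm{tmf}_0(3),
\]
where the last isomorphism is Theorem \ref{mainthmhtmf}. Since $\tilde f$ is a map of $H_*\mathrm{tmf}$-comodule algebras, its effect on mod 2 homology is determined by its restriction to $H_*X$; thus, provided $f$ realizes the canonical inclusion of $M_2$ into $\mathcal{A}_*\square_{\mathcal{A}(2)_*}M_2$, the map $\tilde f$ will realize the displayed isomorphism.

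The main obstacle is constructing $f: X \to \mathrm{tmf}_0(3)$ with this effect on homology. For this I would use the Adams spectral sequence converging to $\pi_* F(X, \mathrm{tmf}_0(3))^{\complete}_2$. After change of rings, the relevant Ext group on the $E_2$-page reduces to $\Ext_{\mathcal{A}(2)_*}^{0,0}(M_2, M_2)$, which contains the identity of $M_2$ as a distinguished class. The construction reduces to showing that this class is a permanent cycle, which is a finite computation: since $M_2$ is concentrated in a narrow range of degrees, only finitely many potential differentials need to be ruled out, and they land in Ext groups explicitly computable from Theorem \ref{mainthmhtmf} together with the known Adams chart for $\mathrm{tmf}_0(3)$.

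Given such an $f$, its $\mathrm{tmf}$-module adjoint $\tilde f$ is an $H\F_2$-equivalence by the homology computation above. As both $\mathrm{tmf}\wedge X$ and $\mathrm{tmf}_0(3)$ are 2-locally connective with degreewise finitely generated 2-local homotopy groups, this implies $\tilde f$ is a 2-local equivalence of $\mathrm{tmf}$-modules, completing the proof.
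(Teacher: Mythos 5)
Your overall strategy matches the paper's: construct a map $X \to \mathrm{tmf}_0(3)$ via the Adams spectral sequence, show it realizes the adjoint of $\mathrm{id}_{M_2}$, and deduce a 2-local equivalence from the resulting $H\F_2$-equivalence plus finiteness. The homology bookkeeping and the final deduction are fine.

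However, the step you call ``a finite computation'' is where the real content is, and your justification for it is not correct. You argue that since $M_2$ is concentrated in a narrow range of degrees, only finitely many potential differentials on $\iota \in \Ext_{\mathcal{A}(2)_*}^{0,0}(M_2,M_2)$ need to be ruled out. But the $-1$ stem of $\Ext_{\mathcal{A}(2)_*}(M_2,M_2)$ is \emph{not} bounded in filtration: as visible in the paper's Figure \ref{bruner}, it contains an $h_0$-tower, so $d_r(\iota)$ has a nonzero potential target for infinitely many $r$. Knowing the $E_2$ page (which Bruner's software gives) does not by itself determine the differentials --- indeed Corollary \ref{cortmf03collapse} shows the Adams spectral sequence for $\mathrm{tmf}_0(3)$ itself has a nontrivial $d_2$, so this is genuinely subtle terrain. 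The paper's actual argument first observes (from the Toda obstruction construction, Proposition \ref{proptoda}) that the $4$-skeleton of $X$ is $C(2\nu)$; since $2\nu = 0 \in \pi_3\mathrm{tmf}_0(3)$, the unit map extends over $C(2\nu)$, giving a permanent cycle in the auxiliary spectral sequence $\Ext_{\mathcal{A}_*}(H_*C(2\nu),H_*\mathrm{tmf}_0(3))$. Naturality then rules out $d_{<4}$ on $\iota$. Separately, a Spanier--Whitehead duality argument (using that $D(C(2\nu)) \not\simeq S^{-4}\vee S^0$) establishes a specific $d_2$ which, by $h_0$-linearity, clears the entire $h_0$-tower in the $-1$ stem on $E_3$, so no higher differentials are possible. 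Without these inputs, your claim that $\iota$ survives is not established, and this gap is the heart of the proof.
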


Rationally, $X$ has rank 4, corresponding to the fact that the map of stacks $\mathcal{M}_0(3)\to\mathcal{M}_{ell}$ is a 4-fold cover. Base changing, this also gives the corresponding splittings for $\mathrm{Tmf}_0(3)$ and $\mathrm{TMF}_0(3)$. 

At height 3, there is no known familiar geometric model of $BP_\R\langle 3\rangle$, but our computation extends the program of computing $H_*eo_n(G)$ at $p=2$ to height 3 for the first time, while also demonstrating the staggering complexity of these computations at heights $>2$. We refer the reader to Section \ref{subsecn=3computation} for a description of the comodule algebra $M_3$.

\begin{theorem}[Theorem \ref{einftym=3}]
The $E_\infty$ page of the HSSS for $BPGL\langle 3\rangle$ is isomorphic to the square zero extension
\[(\mathcal{A}_*\square_{\mathcal{A}(3)_*}M_3)\oplus (\mathcal{A}_*\square_{\mathcal{A}(2)_*}\overline{M}_{2})\]
for $M_3$ an explicit $\mathcal{A}(3)_*$-comodule algebra of dimension 165, where $\overline{M}_{3}\cdot\overline{M}_{2}=0$, and $M_2$ is the comodule of Theorem \ref{mainthmhtmf}.
\end{theorem}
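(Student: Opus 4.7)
The plan is to compute the $E_\infty$-page of the HSSS for $BPGL\langle 3\rangle$ by running the differentials of Theorem \ref{mainthmdiff} in the case $m=3$, namely $d_3$, $d_7$, and $d_{15}$, starting from the $E_2$-page
\[(\mathcal{A}_*\square_{\mathcal{A}(0)_*}\F_2)[\rho,x_1,\overline{v}_1,\overline{v}_2,\overline{v}_3].\]
By the Leibniz rule, each $E_r$ is determined once one knows the differentials on the polynomial generators $\zeta_j^{2^{i+1-j}}$ together with the permanent cycles $\rho, x_1, x_n, \overline{v}_1, \overline{v}_2, \overline{v}_3$ (Corollary \ref{corx_mpermanent}). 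The first step is to verify Conjecture \ref{conj} in the case $m=3$, so that $E_{16}=E_\infty$. The weight-zero part of $E_{16}$ is the subalgebra
\[\mathcal{A}_*\square_{\mathcal{A}(3)_*}\F_2=\F_2[\zeta_1^{16},\zeta_2^8,\zeta_3^4,\zeta_4^2,\zeta_5,\ldots],\]
which consists of permanent cycles by Theorem \ref{mainthmedge}; in negative weights, every surviving monomial is a product of a weight-zero class with the permanent cycles listed above, so Leibniz prevents any further differentials.

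Next, I would compute $E_{16}$ explicitly by taking homology page by page through $d_3, d_7, d_{15}$. The surviving classes fall into two families. The first consists of $\zeta$-monomials in the kernel of the $d_r$'s, paired with products of $x_n$ and $\overline{v}_i$; after identifying the comodule algebra structure forced by $\psi(x_n)=\sum \xi_i^{2^{n-i}}\otimes x_{n-i}$ and the primitivity of $\rho$ and $\overline{v}_i$, these assemble into an $\mathcal{A}(3)_*$-comodule algebra $M_3$ of dimension $165$, with $\mathcal{A}_*\square_{\mathcal{A}(3)_*}M_3$ the first summand of the asserted decomposition. The second family is a residual copy of the height-$2$ pattern $\overline{M}_2$ from Theorem \ref{mainthmhtmf}, arising because classes that were $d_7$-boundaries in the $BPGL\langle 2\rangle$ computation are no longer boundaries: their $m=2$ preimages now support $d_{15}$'s to $\overline{v}_3$-multiples instead, so the original $m=2$ cycles survive and are detected by $\mathcal{A}_*\square_{\mathcal{A}(2)_*}\overline{M}_2$.

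Finally, the square-zero assertion $\overline{M}_3\cdot\overline{M}_2=0$ follows from a tridegree check using the vanishing lines of Proposition \ref{propvanishinglines}: any nontrivial product of a class in $\overline{M}_3$ with one in $\overline{M}_2$ lies in a tridegree where $E_\infty$ must vanish, or is hit by a $d_{15}$ via Leibniz applied to the generating differentials. The main obstacle is the combinatorial bookkeeping: carefully organizing the $165$ monomials surviving to $E_{16}$ into the asserted comodule algebra $M_3$, verifying their coactions and multiplicative relations, and confirming that no hidden multiplicative extensions occur between the two summands. This is carried out in detail in Section \ref{subsecn=3computation}.
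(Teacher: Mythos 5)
The main gap is in your argument for collapse at $E_{16}$, which is the heart of the proof and requires substantially more care than your Leibniz/permanent-cycle sketch allows.

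You claim "the weight-zero part of $E_{16}$ is the subalgebra $\mathcal{A}_*\square_{\mathcal{A}(3)_*}\F_2$" and that in negative weights "every surviving monomial is a product of a weight-zero class with the permanent cycles listed above." Neither of these is correct. The weight-zero part of $E_{16}$ is precisely the large module $(\mathcal{A}_*\square_{\mathcal{A}(3)_*}M_3)\oplus(\mathcal{A}_*\square_{\mathcal{A}(2)_*}\overline{M}_2)$ asserted in the theorem -- the classes $\rho, x_i, \overline{v}_j$ have weights that combine to zero (e.g.\ $\rho^2 x_1\overline{v}_2$ has weight $-2-1+3=0$), so $\overline{M}_2$ and $\overline{M}_3$ live entirely in weight zero. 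More importantly, $E_{16}$ contains survivors that are \emph{not} products of permanent cycles: the classes in the $\mathcal{A}_*\square_{\mathcal{A}(2)_*}\overline{M}_2$ summand involve $\zeta$-monomials from $E(\zeta_1^8,\zeta_2^4,\zeta_3^2,\zeta_4)$ multiplied by $\overline{v}_2$-divisible classes. Here $\zeta_1^8,\zeta_2^4,\zeta_3^2,\zeta_4$ are \emph{not} permanent cycles -- they support $d_{15}$'s -- yet their products with $\overline{v}_2$-divisibles survive because the targets of those $d_{15}$'s are annihilated by $\overline{v}_2$ via the earlier $d_7$ relations. Leibniz does not prevent a later differential on such a product; it only constrains it. The paper addresses exactly this: using the $\mathcal{A}_*$-comodule structure it reduces to showing $\overline{v}_2\zeta_1^8\zeta_2^4\zeta_3^2\zeta_4$ is a permanent cycle, then rules out the only dimensionally possible $d_{23}$ (with potential target in the $\overline{v}_3^2$-divisible part of $E_{16}$) by a degree argument backed by machine computation. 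Your proposal omits this entirely.

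Two secondary concerns. First, your explanation of the origin of the $\overline{M}_2$ summand ("their $m=2$ preimages now support $d_{15}$'s to $\overline{v}_3$-multiples instead") misdescribes what happens: the $d_7$'s and their images are unchanged from the $m=2$ case, and $\overline{M}_2$ reappears because $\mathrm{image}(d_{15})\cap(\overline{v}_2)=0$, so nothing new kills or is supported on these classes. Second, $\overline{M}_3\cdot\overline{M}_2=0$ does not follow from the vanishing lines of Proposition \ref{propvanishinglines} (which are far too weak to see relations in the interior of the cone); it follows from the multiplicative relations inherited from $H_*(E_{15}/(\overline{v}_1,\overline{v}_2))$ via the $\overline{v}_2$-Bockstein spectral sequence, which is the paper's actual device here. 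You should replace the vanishing-line argument with an appeal to those explicit relations.
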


We could, in principle, go further to height 4 and beyond; we see no reason our methods should not extend. However, we are unable to give a general description of a comodule $M_m$ at height $m$, and already at height 3, it is a significant task to give a proper account of the computation.  

\subsubsection*{Section 6} We finish in Section \ref{secequivariant} by defining and exploring the HSSS in equivariant stable homotopy. Whereas the slice spectral sequence of Hill--Hopkins--Ravenel \cite{HHR} has proven an effective tool to compute the homotopy groups of the fixed points $E^G$ of a $G$-spectrum $E$, this gives a way to compute the \textit{homology} of $E^G$.

When $G=C_2$, the HSSS for $BP_\R\langle m\rangle$ is closely related to the motivic spectral sequences studied in the body of the paper. However, the presence of the negative cone in the equivariant homology of a point results in some unexpected differences. We show in particular the that the $RO(C_2)$-graded HSSS of $k_\R$ has an exotic differential of the form
\[d_5(2u_{2\sigma}^{-1}\cdot \zeta_1^2\zeta_2)=\ov_1^2\]
This differential leaves a class in the negative cone and kills a class in the positive cone. 

\subsection{Acknowledgments}
The authors would like to thank Lennart Meier and Danny Shi for helpful conversations. The first and second-named authors would like to thank the Hausdorff Research Institute for Mathematics for its hospitality and support while some of this work was done. Our slice spectral sequence charts were created using Hood Chatham's {\tt spectralsequences} package, and our Adams spectral sequence charts were created using Bruner's {\tt ext} software.
\subsection{Notation and conventions}
\begin{enumerate}
\item We work at the prime $p=2$ in this paper, so homology $H_*(-)$ will always denote mod 2 homology $H_*(-;\F_2)$ and our Steenrod algebras are the 2-primary versions.
\item $\mathrm{Sm}_k$ denotes the category of smooth schemes over $\Spec(k)$, and $\mathcal{SH}(k)$ denotes the category of $k$-motivic spectra. $\mathcal{SH}(k)_{\mathrm{cell}}$ denotes the cellular category, the localizing subcategory generated by the bigraded spheres $S^{s,w}$.
\item $\pi_{s,w}^\R(-)$ denotes bigraded homotopy groups for an $\R$-motivic spectrum, where $s$ denotes the stem or topological degree, and $w$ denotes the weight.
\item $M\Z$ and $M\F_2$ are the integral and mod 2 $\R$-motivic homology spectra, so that $\pi_{*,*}^\R M\Z=\Z[\rho,\tau^2]/(2\rho)$ and $\pi_{*,*}^\R M\F_2=\F_2[\rho,\tau]$ where $|\rho|=(-1,-1)$ and $|\tau|=(0,-1)$. 
\item $\mathcal{A}_*$ denotes the classical mod 2 dual Steenrod algebra, so that 
\[\mathcal{A}_*=\F_2[\xi_1,\xi_2,\ldots]\]
where $|\xi_i|=2^i-1$ are the usual Milnor generators, and $\zeta_i$ denotes the conjugate of $\xi_i$. $\mathcal{A}(m)_*$ denotes the quotient Hopf algebra
\[\mathcal{A}(m)_*=\F_2[\xi_1,\ldots,\xi_{m+1}]/(\xi_1^{2^{m+1}},\ldots,\xi_{m+1}^2)\]
and
\[\mathcal{A}_*\square_{\mathcal{A}(m)_*}\F_2\cong\F_2[\zeta_1^{2^{m+1}},\zeta_2^{2^m},\ldots,\zeta_{m+1}^2,\zeta_{m+2},\ldots]\]
Dually, $\mathcal{A}$ denotes the mod 2 Steenrod algebra, $\mathcal{A}(m)$ the subalgebra generated by $Sq^{2^i}$ for $i\le m$, and 
\[\mathcal{A}/\!/\mathcal{A}(m)=\mathcal{A}\otimes_{\mathcal{A}(m)}\F_2\]
\item $\mathcal{A}^{\R}$ denotes the $\R$-motivic mod 2 Steenrod algebra, and we use the following notation for generators
\[\mathcal{A}^{\R}_{*,*}=\pi_{*,*}^\R M\F_2[\overline{t}_1,\overline{t}_2,\ldots,\tau_0,\tau_1,\ldots]/(\tau_i^2=\tau \overline{t}_{i+1}+\rho\tau_{i+1}+\rho\tau_0\overline{t}_{i+1})\]
where $|\overline{t}_i|=(2(2^i-1),2^i-1)$ and $|\tau_i|=(2(2^i-1)+1,2^i-1)$. We use the following notation for the quotient Hopf algebra
\[\mathcal{E}_{*,*}^{\R}(m)=\pi_{*,*}^\R M\F_2[\tau_0,\tau_1,\ldots,\tau_m]/(\tau_i^2=\rho\tau_{i+1})\]
Moreover, $c(-)$ denotes Hopf conjugation in $\mathcal{A}^{\R}_{*,*}$.
\item $\Sp$ denotes the category of spectra, and $\Sp^G$ denotes the category of genuine $G$-spectra. For a $G$-spectrum $X$, $\pi_\star(X)$ denotes the $RO(G)$-graded homotopy groups of $X$.
\item We let $i_*:\Sp\to \mathcal{SH}(k)$ denote the unique symmetric monoidal colimit-preserving functor and $\Gamma(-)$ its right adjoint. $\beta:\mathcal{SH}(\R)\to \Sp^{C_2}$ denotes the $C_2$-equivariant Betti realization functor.
\item Our spectral sequences are Adams trigraded, so that a class in tridegree $(t-s,w,s)$ is in stem $t-s$, weight $w$, and filtration $s$. All of the charts drawn in Section \ref{seccomputations} are in weight $w=0$.
\end{enumerate}

\section{The slice spectral sequence for generalized homology}\label{sec2}
In this section we define the HSSS in motivic stable homotopy and establish its basic properties. In Section \ref{subsecmotivicslice}, we work at the level of generality of $\mathcal{SH}(k)$, where $k$ is a perfect field, and we use the very effective slice filtration of Spitzweck-Ostvaer \cite{Spitz}. This allows us to have strong convergence in a wide generality, due to Morel's connectivity theorem. However, for the spectra we consider - namely, standard quotients of the algebraic cobordism spectrum $MGL$ - the corresponding slice towers for the effective, cellular effective, very effective, and cellular very effective all coincide, as shown by Heard \cite{Heard}.

This filtration determines a tower of motivic spectra, and our spectral sequences arise via smashing these towers pointwise with an ordinary spectrum, using that $\mathcal{SH}(k)$ is tensored over $\Sp$. This construction is analogous to one construction of the classical Atiyah--Hirzebruch spectral sequence, where one does the same with the Postnikov tower of a spectrum.

In Section \ref{subsecbetti}, we specialize to $\mathcal{SH}_{\mathrm{cell}}(\R)$, the cellular subcategory of $\R$-motivic spectra, where we use the results of Behrens and Shah \cite{BehrensShah} on Betti realization to compare with $C_2$-equivariant stable homotopy.

\subsection{General slice towers}\label{subsecmotivicslice}
We begin with the very effective slice filtration, a modification of Voevodsky's slice filtration \cite{Voevodsky}, developed by Spitzweck--Ostvaer \cite{Spitz} and further studied by Bachmann \cite{Bachmann}. The reader will notice we double the usual grading conventions, so that our slice towers are run at half speed; we explain this below in Remark \ref{remarkdoublegradings}. 

Following the discussion in \cite[Section 3]{Heard}, we let
\[\mathcal K_{2t}=\{\Sigma^{2a,a}\Sigma^{\infty}_+X;a\ge t,X\in\mathrm{Sm}_k\}\subset \mathcal{SH}(k)\]
and we set $\mathcal K_{2t+1}=\mathcal K_{2t}$. Letting $\mathcal{SH}(k)^{t,\mathrm{veff}}$ denote the full subcategory of $\mathcal{SH}(k)$ generated under colimits and extensions by $\mathcal K_t$, we have a filtration 
\[\cdots\subset \mathcal{SH}(k)^{t+1,\mathrm{veff}}\subset \mathcal{SH}(k)^{t,\mathrm{veff}}\subset \mathcal{SH}(k)^{t-1,\mathrm{veff}}\subset\cdots\]
of $\mathcal{SH}(k)$, and we say $E$ is slice $\ge t$ if $E\in \mathcal{SH}(k)^{t,\mathrm{veff}}$. Associated to any $E\in \mathcal{SH}(k)$, we have a natural cofiber sequence
\[P_{t+1}E\to E\to P^tE\]
such that $P_{t+1}E\in \mathcal K_{t+1}$, and 
\[\mathrm{Map}_{\mathcal{SH}(k)}(K,P^tE)\simeq*\]
for all $K\in \mathcal{SH}(k)^{t+1,\mathrm{veff}}$. We define the fiber $P^{t}_tE$ of the map
\[
    P^t E\to P^{t-1}E
\]
to be the \(t\)-slice of \(E\). The slice filtration gives a natural tower of spectra under \(E\):
\[
    \dots\to    P^t E\to P^{t-1}E\to\dots.
\]
The colimit is always contractible and the limit is \(E\).

Now, given any \(K\in \mathcal{SH}(k)\), we can smash the slice tower for \(E\) with \(K\) to produce a new filtered motivic spectrum \(K\otimes P^\bullet E\). Applying (bigraded) homotopy groups, this gives a spectral sequence.
\begin{definition}\label{definition:generalizedslicess}
    The \(K\)-homology slice spectral sequence for \(E\) is the spectral sequence associated to the filtered motivic spectrum \(K\otimes P^\bullet E\). We will generically denote this by 
    \[
        E_{\ast}^{s,w,t}(E;K).
    \]
    In particular, the \(E_2\)-term is given by 
    \[
        E_2^{s,w,t}(E;K)=K_{t-s,w}\big(P^t_t E\big).
    \]
    We use Adams grading $(t-s,w,s)$ so that the \(d_r\) differentials change tridegree by \((-1,0,r)\).
\end{definition}

\begin{remark}\label{remarkdoublegradings}
   We have chosen to double the gradings to cohere with two closely related spectral sequences when $k=\R$: slice spectral sequences in equivariant homotopy and the classical Atiyah--Hirzebruch spectral sequence. Applying Betti realization to the spectral sequences we consider recovers the analogous slice spectral sequences in $C_2$-equivariant homotopy (see Section \ref{secequivariant}), and if one base changes to $\mathbb C$ and inverts $\tau$, one recovers a classical Atiyah--Hirzebruch spectral sequence. Our choice of gradings recovers the standard grading conventions on these spectral sequences, in both cases.
\end{remark}

\begin{example}
When \(K=S^{0,0}\) is the sphere spectrum, this is the ordinary slice spectral sequence of $E$. When \(K=S^{0,0}[\rho^{-1}]\), this is the $\rho$-localized slice spectral sequence of $E$.
\end{example}

The ordinary slice spectral sequence converges strongly for any $E$, due to the Morel connectivity theorem \cite[Section 5.3]{Morel}. When we smash with an arbitrary motivic spectrum $K$, this is no longer guaranteed. For the following result, cf. \cite[Proposition 5.11]{Spitz}.

\begin{proposition}\label{propsliceconvergence}
    Suppose $K$ is slice bounded below. Then, the \(K\)-homology slice spectral sequence of $E$ converges conditionally in the sense that the natural map
    \[
        K\otimes E\to \lim_{\leftarrow} (K\otimes P^\bullet E)
    \]
    induces an isomorphism on bigraded homotopy groups, and $\mathrm{colim}(K\otimes P^\bullet E)\simeq*$.

    If, in addition, $E$ is slice bounded below, the spectral sequence converges strongly in the sense that   
    there are at most finitely many filtrations in which \(E_2\) is non-zero in any given stem.
\end{proposition}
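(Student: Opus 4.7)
My strategy is to bootstrap from the ordinary slice convergence theorem (where Morel gives $\mathrm{colim}\, P^\bullet E \simeq *$ and $\lim P^\bullet E \simeq E$ unconditionally) using two standard features of the very effective slice filtration. First, $K\otimes(-)$ commutes with colimits and cofiber sequences. Second, if $X\in\mathcal{SH}(k)^{a,\mathrm{veff}}$ and $Y\in\mathcal{SH}(k)^{b,\mathrm{veff}}$ then $X\otimes Y\in\mathcal{SH}(k)^{a+b,\mathrm{veff}}$; this is immediate from the definition of the generators $\mathcal{K}_\bullet$ combined with distributivity of the smash product over colimits.

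Conditional convergence splits into two statements. The colimit half is formal: $\mathrm{colim}(K\otimes P^\bullet E)\simeq K\otimes\mathrm{colim}\, P^\bullet E\simeq *$. For the limit half, pick $t_K$ with $K\in\mathcal{SH}(k)^{t_K,\mathrm{veff}}$. The fiber of $K\otimes P^t E\to K\otimes P^{t-1}E$ is $K\otimes P^t_t E$, which the smash estimate places in $\mathcal{SH}(k)^{t+t_K,\mathrm{veff}}$. I would then invoke Morel connectivity to conclude that $\pi_{s,w}(K\otimes P^t_t E)=0$ once $t$ is sufficiently large in terms of $s$, $w$, and $t_K$, so the tower $\{\pi_{s,w}(K\otimes P^t E)\}_t$ is eventually constant---in particular Mittag-Leffler. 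Applying the same estimate to $K\otimes P_{t+1}E\in\mathcal{SH}(k)^{t+1+t_K,\mathrm{veff}}$, which is the fiber of $K\otimes E\to K\otimes P^t E$, the latter map induces an isomorphism on $\pi_{s,w}$ for $t$ large, and composing with the eventual-constancy of the tower yields the desired isomorphism $\pi_{s,w}(K\otimes E)\cong\pi_{s,w}(\lim K\otimes P^\bullet E)$.

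For strong convergence, I would assume further that $E\in\mathcal{SH}(k)^{t_E,\mathrm{veff}}$. The universal property characterizing $P^t$ then forces $P^t E\simeq *$, hence also $P^t_t E\simeq *$, whenever $t<t_E$, bounding the filtrations contributing to any stem below. For the upper bound, fixing a stem $t-s$ and weight $w$, the same Morel-connectivity estimate used above shows that $E_2^{s,w,t}=\pi_{t-s,w}(K\otimes P^t_t E)$ vanishes once $t$ exceeds a threshold depending only on $t-s$, $w$, and $t_K$. The main point requiring care is making the Morel-connectivity input precise: one must know that objects of $\mathcal{SH}(k)^{n,\mathrm{veff}}$ have bigraded homotopy vanishing in a region whose bound grows linearly in $n$. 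Over a perfect field this transfers from the effective to the very effective slice filtration by the comparison results of Spitzweck--Ostvaer, and this is exactly what powers the eventual-constancy arguments throughout.
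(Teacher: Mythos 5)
Your proof is correct and takes essentially the same approach as the paper: both reduce to Morel connectivity applied after observing that the very effective slice filtration is multiplicative, so that tensoring with $K$ shifts slice connectivity by a fixed amount. The only organizational difference is that the paper argues via the fiber sequence $\lim(K\otimes P_{t+1}E)\to K\otimes E\to\lim(K\otimes P^tE)$ and shows the fiber has trivial bigraded homotopy, whereas you argue via eventual constancy (hence Mittag-Leffler) of the tower $\{\pi_{s,w}(K\otimes P^tE)\}_t$ together with the vanishing of $\pi_{s,w}(K\otimes P_{t+1}E)$ for large $t$; these are two ways of packaging the same $\lim^1$-vanishing argument, driven by exactly the same connectivity estimate.
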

\begin{proof}
    The spectrum $\mathrm{colim }P^\bullet E$ is contractible as the categories $\mathcal{K}_t$ form a set of compact generators of $\mathcal{SH}(k)$. Smashing with $K$, we use that the smash product commutes with colimits.
    
    One has a fiber sequence
    \[\lim_{\leftarrow}(K\otimes P_{t+1}E)\to K\otimes E\to \lim_{\leftarrow}(K\otimes P^tE)\]
    By assumption $K$ is slice $\ge n$ for some fixed $n$, so $K\otimes P_{t+1}E$ is slice $\ge n+t+1$ for all $t$. It follows from Morel's connectivity theorem that if $E\in \mathcal{SH}(k)^{t,\mathrm{veff}}$, then $\pi_{s,w}E=0$ for $s<t/2+w$. In a fixed weight $w$, therefore, $\pi_{s,w}(K\otimes P_{t+1}E)$ vanishes for $s<w+(n+t+1)/2$. This vanishing range for $s$ is strictly increasing in $t$, so the bigraded homotopy groups of the limit vanish. If $E$ is slice bounded below, then the same connectivity constraints imply that, in a fixed weight, only finitely many slices contribute to a given stem.
  \end{proof}

\begin{remark}
    Without the bound on \(K\), the result is much more delicate, even when \(E\) is \(0\)-connective, and the comparison map may fail to be an isomorphism. For example, when \(K=EGL(n)=\overline{v}_n^{-1}BPGL\langle n\rangle\) and \(E=MGL\), smashing the slice associated graded for \(E\) with \(K\) gives the zero spectrum.
\end{remark}

For our applications, both $K$ and $E$ will be slice connective, and this sharpens our convergence results to give strong vanishing lines.

\begin{proposition}\label{propvanishinglines}
    If $E$ and $K$ are slice $\ge0$, the $K$-homology slice spectral sequence of $E$ in weight zero is a right half-plane spectral sequence concentrated between the lines $y=x$ and $y=-x$. That is, the groups
    \[E_2^{s,0,t}(E;K)\]
    vanish if $s>t-s$ or $t<0$.
\end{proposition}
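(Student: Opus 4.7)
The plan is to combine two vanishing observations, both of which build on techniques already used in the proof of Proposition \ref{propsliceconvergence}. First, I unwind Definition \ref{definition:generalizedslicess}, which identifies the $E_2$-term as
\[E_2^{s,0,t}(E;K) = \pi_{t-s,0}(K\otimes P^t_t E).\]
The goal is to show that this group vanishes under each of the two stated inequalities, which can be handled independently.

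For the claim that $E_2^{s,0,t}(E;K) = 0$ when $t<0$: since $E$ is slice $\ge 0$ and $t+1 \le 0$, we have $E \in \mathcal{SH}(k)^{t+1,\mathrm{veff}}$, so $P_{t+1}E \simeq E$ and hence $P^t E \simeq 0$. In particular $P^t_t E \simeq 0$, so the $E_2$-term vanishes identically for $t<0$, giving the right half-plane condition.

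For the claim that $E_2^{s,0,t}(E;K) = 0$ when $s > t-s$, I use monoidality of the very effective slice filtration: smashing a slice $\ge a$ spectrum with a slice $\ge b$ spectrum yields a slice $\ge a+b$ spectrum. Since $K$ is slice $\ge 0$ and the $t$-slice $P^t_t E$ is slice $\ge t$ (as a cofiber of two slice $\ge t$ spectra, namely $P_{t+1}E$ and $P_t E$), we conclude that $K \otimes P^t_t E$ is slice $\ge t$. Applying Morel's connectivity theorem exactly as in the proof of Proposition \ref{propsliceconvergence} then yields $\pi_{s', 0}(K \otimes P^t_t E) = 0$ whenever $s' < t/2$. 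Taking $s' = t-s$, the vanishing condition becomes $t - s < t/2$, which rearranges to $s > t - s$, as desired.

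The argument is essentially bookkeeping with the slice filtration, so I do not anticipate a serious obstacle: the only nontrivial input beyond what is already used in Proposition \ref{propsliceconvergence} is the monoidality of the very effective slice filtration, which is a standard input from the foundational work of Spitzweck--\O stv\ae r and Bachmann.
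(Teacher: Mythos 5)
Your proposal is correct and takes essentially the same approach as the paper: the lower vanishing line comes from $P^t_t E \simeq *$ for $t < 0$ (since $E$ is slice $\ge 0$), and the upper vanishing line comes from Morel's connectivity theorem applied to $K \otimes P^t_t E$, which is slice $\ge t$ by multiplicativity of the slice filtration. The only difference is that you spell out the monoidality step and the identification of $P^t_t E$ as a cofiber of slice $\ge t$ spectra, which the paper takes for granted.
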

\begin{proof}
    If $E$ is slice $\ge0$, then $P^t_tE\simeq*$ for $t<0$, which gives the lower vanishing line. The upper vanishing line follows as above from Morel's connectivity theorem, which implies in particular that since $K\otimes P^t_tE$ is slice $\ge t$, $\pi_{t-s,0}(K\otimes P^t_tE)$ vanishes for $(t-s)<t/2$. This vanishing line is equivalent to \((t-s)<s\).
\end{proof}

The slice filtration is multiplicative, so we have the following.
\begin{proposition} 
    If \(K\) and \(E\) are both ring spectra, then this is a spectral sequence of algebras.
\end{proposition}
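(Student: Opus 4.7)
The plan is to promote the slice tower of $E$ to a filtered ring spectrum, smash it with $K$, and then conclude by general properties of spectral sequences associated to filtered rings.

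The first step is to recall that the very effective slice filtration is multiplicative: if $X\in\mathcal{SH}(k)^{s,\mathrm{veff}}$ and $Y\in\mathcal{SH}(k)^{t,\mathrm{veff}}$, then $X\otimes Y\in\mathcal{SH}(k)^{s+t,\mathrm{veff}}$. This is verified on the generators $\mathcal{K}_{2t}$, since
\[
\Sigma^{2a,a}\Sigma^\infty_+ U\otimes \Sigma^{2b,b}\Sigma^\infty_+ V\simeq\Sigma^{2(a+b),a+b}\Sigma^\infty_+(U\times V)\in \mathcal{K}_{2(a+b)},
\]
and closure under colimits and extensions is then automatic. This multiplicativity is recorded by Bachmann \cite{Bachmann} in this setting.

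The second step is to use this to endow the slice tower $P^\bullet E$ of the ring spectrum $E$ with the structure of a filtered ring spectrum, i.e., compatible pairings $P^s E\otimes P^t E\to P^{s+t}E$. The universal property of $P^{s+t}$ applies because the cofiber $P_{s+t+1}$ of the candidate map lands in $\mathcal{SH}(k)^{s+t+1,\mathrm{veff}}$ by the multiplicativity above, so the composite $P^s E\otimes P^t E\to E\otimes E\xrightarrow{\mu_E}E$ factors uniquely through $P^{s+t}E$. Compatibility with the tower maps follows from the same universal property. With $K$ a ring spectrum, we then equip $K\otimes P^\bullet E$ with a filtered ring structure via
\[
(K\otimes P^s E)\otimes (K\otimes P^t E)\xrightarrow{\simeq}(K\otimes K)\otimes(P^s E\otimes P^t E)\xrightarrow{\mu_K\otimes\mu}K\otimes P^{s+t}E,
\]
with unit $S^{0,0}\to K\otimes E\to K\otimes P^0 E$.

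Finally, the spectral sequence associated to any filtered ring object in a stable category is a spectral sequence of algebras: the pairings on the tower induce pairings $E_r\otimes E_r\to E_r$, and the Leibniz rule $d_r(xy)=d_r(x)y+(-1)^{|x|}x\,d_r(y)$ follows from the compatibility of these pairings with the connecting maps in the exact couple. The tridegree of the pairing matches the additive grading on stem, weight, and filtration. The main technical point in the whole argument is the multiplicativity of the slice filtration itself; once that is in hand, the ring structure on the spectral sequence is formal.
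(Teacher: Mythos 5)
Your overall strategy matches the paper's one-line justification (``the slice filtration is multiplicative''): verify multiplicativity on generators, promote the slice tower of $E$ to a filtered ring, smash with $K$, and conclude formally. Steps one and three are fine, but step two is run with the wrong objects. You try to produce a pairing $P^sE\otimes P^tE\to P^{s+t}E$ by factoring ``the composite $P^sE\otimes P^tE\to E\otimes E\xrightarrow{\mu_E} E$'' through $P^{s+t}E$, but in the paper's conventions $P^tE$ is a \emph{truncation}: the natural map is $E\to P^tE$, so the composite you write down does not exist. The surrounding justification (``the cofiber $P_{s+t+1}$ of the candidate map lands in $\mathcal{SH}(k)^{s+t+1,\mathrm{veff}}$ by the multiplicativity above'') is also off, since $P_{s+t+1}E$ is the fiber of $E\to P^{s+t}E$ and lies in $\mathcal{SH}(k)^{s+t+1,\mathrm{veff}}$ by construction, independently of multiplicativity. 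Nor can you rescue the statement by descending $E\otimes E\to E\to P^{s+t}E$ along $E\otimes E\to P^sE\otimes P^tE$: the fiber of the latter is built from $P_{s+1}E\otimes E$ and $P^sE\otimes P_{t+1}E$, which are generally only slice $\ge s+1$ and $\ge t+1$ respectively, not $\ge s+t+1$.

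The universal-property argument you have in mind is correct when run on the slice \emph{covers} $P_sE$, which are the objects that map to $E$. Multiplicativity gives $P_sE\otimes P_tE\in\mathcal{SH}(k)^{s+t,\mathrm{veff}}$, and the universal property of $P_{s+t}E\to E$ (namely that $\mathrm{Map}(X,P_{s+t}E)\to\mathrm{Map}(X,E)$ is an equivalence for $X\in\mathcal{SH}(k)^{s+t,\mathrm{veff}}$, because $\mathrm{Map}(X,P^{s+t-1}E)\simeq *$) yields a unique factorization of $P_sE\otimes P_tE\to E\otimes E\xrightarrow{\mu_E} E$ through $P_{s+t}E$. This makes $P_\bullet E$, and hence $K\otimes P_\bullet E$, a filtered ring spectrum; since the towers $K\otimes P_\bullet E$ and $K\otimes P^\bullet E$ determine the same spectral sequence, the formal Leibniz argument in your last step then applies.
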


In fact, we have more structure that we can consider from naturality in \(K\). Since the spectral sequence arises from a filtration of \(E\) that is completely independent of \(K\), all of the structure commutes with \(K\)-cooperations.

\begin{proposition}
    If \(K\) is a flat homology theory, then the \(K\)-homology slice spectral sequence for any \(E\) is a spectral sequence of \(K_{*,*} K\)-comodules. If \(E\) is a ring, this is a spectral sequence of \(K_{*,*} K\)-comodule algebras.
\end{proposition}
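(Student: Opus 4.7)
The approach is to realize the coaction at the level of filtered motivic spectra and then transfer it to the spectral sequence using flatness. Recall that for a flat homology theory $K$, the $K_{*,*}K$-comodule structure on $K_{*,*}(X)$ arises from the map
\[K\otimes X \longrightarrow K\otimes K\otimes X\]
induced by the unit $\eta: S^{0,0}\to K$, together with the flatness identification
\[\pi_{*,*}(K\otimes K\otimes X)\cong K_{*,*}K\otimes_{K_{*,*}} K_{*,*}(X).\]

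The plan is to apply this construction levelwise to the filtered motivic spectrum $K\otimes P^\bullet E$. Since the slice tower $P^\bullet E$ depends only on $E$, smashing with $\eta$ yields a map of filtered motivic spectra
\[K\otimes P^\bullet E \longrightarrow K\otimes K\otimes P^\bullet E\]
and hence a morphism of associated spectral sequences. The target is precisely the $(K\otimes K)$-homology slice spectral sequence for $E$; by flatness of $K_{*,*}K$ over $K_{*,*}$, each $E_r$-page of the target is identified with $K_{*,*}K\otimes_{K_{*,*}} E_r^{*,*,*}(E;K)$. The morphism of spectral sequences thus supplies a coaction on every page, and its compatibility with the $d_r$ differentials is automatic since it arises from a map of filtered spectra. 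When $E$ is a ring spectrum, the previous proposition upgrades this to a spectral sequence of algebras; since $\eta$ is a map of commutative ring spectra the coaction is multiplicative, producing a spectral sequence of $K_{*,*}K$-comodule algebras.

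The main technical point is the inductive flatness identification of the $E_r$-pages. At $E_2$ this is the standard argument for commuting $\pi_{*,*}$ past smashing with a flat module; because taking homology with respect to a differential is an exact functor on flat modules, the identification propagates from each page to the next. This is the same mechanism that endows the classical Adams and Atiyah--Hirzebruch spectral sequences with their comodule structures, so no genuinely new difficulties arise in the motivic setting beyond checking that the flatness hypothesis on $K$ does what one expects in $\mathcal{SH}(k)$.
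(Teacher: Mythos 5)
Your argument is the same as the paper's: the paper states the proposition without a formal proof, remarking only that ``the spectral sequence arises from a filtration of $E$ that is completely independent of $K$, so all of the structure commutes with $K$-cooperations,'' which is precisely the naturality-in-$K$ mechanism you spell out via the map of filtered objects $K\otimes P^\bullet E\to K\otimes K\otimes P^\bullet E$ and the flatness identification $E_r(E;K\otimes K)\cong K_{*,*}K\otimes_{K_{*,*}}E_r(E;K)$. One small wording slip: ``taking homology with respect to a differential is an exact functor on flat modules'' should read that tensoring with the flat $K_{*,*}$-module $K_{*,*}K$ is exact and therefore commutes with passage to homology, which is what propagates the identification from $E_r$ to $E_{r+1}$.
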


\begin{example}\label{example:bredonSS}
Let $k=\R$ and $K=M\F_2$, the motivic mod 2 Eilenberg--Maclane spectrum. This gives a spectral sequence of $\mathcal A^\R_{*,*}$-comodules converging to the bigraded motivic homology of $E$ with $E_2$-page the bigraded motivic homology of the slice associated graded of $E$. If $E$ is a ring spectrum, this is a spectral sequence of $\mathcal A^\R_{*,*}$-comodule algebras.
\end{example}

\subsection{The homological slice spectral sequence (HSSS)}\label{subsechomologicalslice}

\subsubsection{Global sections homology}\label{subsubsecglobalsec} There is an essentially unique colimit-preserving symmetric monoidal functor
\[
    i_\ast\colon \Sp\to\mathcal{\mathcal{SH}}(k).
\]
This functor admits a right adjoint
\[\Gamma(-):\mathcal{\mathcal{SH}}(k)\to\Sp\]
which sends a motivic spectrum $E$ to the mapping spectrum
\[F(S^{0,0},E)\]
Viewing $E$ as a sheaf of spectra on $\mathrm{Sm}_k$, since $S^{0,0}=\Spec(k)_+$, we may think of this as taking the global sections of $E$. We now define our main spectral sequence of interest.

\begin{definition}
    For $E\in\mathcal{\mathcal{SH}}(k)$, the \textit{homological slice spectral sequence} (HSSS) of $E$ is the spectral sequence $E_*^{s,w,t}(E;i_*H\F_2)$ of Definition \ref{definition:generalizedslicess}.
\end{definition}

That is, we tensor the slice tower of $E$ with $i_*H\F_2$ and take the spectral sequence associated to the resulting tower. We begin with the following lemma, which follows immediately from the adjunction $i_*\dashv \Gamma$.

\begin{lemma}\label{lemma:homotopyofglobalsections}
    For $E\in\mathcal{\mathcal{SH}}(k)$, we have an isomorphism of abelian groups
    \[\pi_{s,w}E\cong \pi_s\Gamma(\Sigma^{0,-w}E)\]
    When $E$ is a ring spectrum, this is an isomorphism of bigraded rings.
\end{lemma}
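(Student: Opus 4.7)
The plan is to unwind both sides of the claimed isomorphism via the adjunction $i_* \dashv \Gamma$ and the identification of how $i_*$ interacts with spheres, then upgrade to a ring isomorphism using that $\Gamma$ is lax symmetric monoidal.

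First, I would rewrite the left-hand side as a mapping group out of the unit. By definition $\pi_{s,w}E = [S^{s,w}, E]_{\mathcal{SH}(k)}$, and since $S^{s,w} \simeq \Sigma^{s,w} S^{0,0}$ and motivic bigraded suspension is invertible, this group is naturally isomorphic to $[S^{0,0}, \Sigma^{-s,-w}E]_{\mathcal{SH}(k)}$.

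Next, I would rewrite the right-hand side. By definition $\pi_s \Gamma(\Sigma^{0,-w}E) = [S^s, \Gamma(\Sigma^{0,-w}E)]_{\Sp}$, and by the adjunction $i_* \dashv \Gamma$, this equals $[i_*S^s, \Sigma^{0,-w}E]_{\mathcal{SH}(k)}$. Since $i_*$ is symmetric monoidal and colimit-preserving, it sends $S^s = \Sigma^s S^0$ to $\Sigma^{s,0} S^{0,0} = S^{s,0}$. Applying invertibility of suspension once more yields $[S^{0,0}, \Sigma^{-s,-w}E]_{\mathcal{SH}(k)}$, matching the left-hand side. This proves the abelian group isomorphism, and naturality in $E$ is automatic since every step is natural.

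For the ring statement, I would use that $\Gamma$ is lax symmetric monoidal (as the right adjoint of a symmetric monoidal functor), so $\Gamma(E)$ inherits a ring spectrum structure from $E$ via the lax structure map $\Gamma(E) \wedge \Gamma(E) \to \Gamma(E \otimes E) \to \Gamma(E)$. The bigraded product on $\pi_{\ast,\ast}E$ comes from smashing representatives $S^{s,w} \to E$ and composing with $\mu_E$, while the product on $\pi_\ast \Gamma(\Sigma^{0,-\ast}E)$ comes from the analogous composition using the lax structure map. Under the adjunction correspondence above, these two products correspond because the counit $i_*\Gamma(E) \to E$ is a map of ring spectra (an adjunction-theoretic observation). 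Thus the isomorphism assembles into an isomorphism of bigraded rings, and the main step is genuinely routine once the adjunction is set up correctly; no real obstacle arises.
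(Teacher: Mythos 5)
Your argument is correct and takes essentially the same route the paper does: the paper remarks that the lemma "follows immediately from the adjunction $i_* \dashv \Gamma$," and your proof simply spells out that remark, including the observation that $i_*(S^s) \simeq S^{s,0}$ so the adjunction transposes $[S^{s,0}, \Sigma^{0,-w}E]$ to $[S^s, \Gamma(\Sigma^{0,-w}E)]$, together with the standard fact that a monoidal adjunction makes the induced isomorphism multiplicative.
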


Since the unit is compact in $\mathcal{\mathcal{SH}}(k)$, $i_*$ preserves compact objects and hence $\Gamma(-)$ admits a right adjoint. In particular $\Gamma(-)$ preserves colimits, and we deduce the following projection formula.

\begin{proposition}
    For any spectrum \(K\) and $E\in\mathcal{\mathcal{SH}}(k)$, we have
    \[
        \Gamma(i_\ast K\otimes E)\simeq K\otimes \Gamma(E)
    \]
\end{proposition}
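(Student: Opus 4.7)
The plan is to prove this by constructing a canonical comparison map and then checking it is an equivalence by reducing to the case $K=S^0$, where it becomes tautological.

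First, I would produce the natural assembly map
\[K\otimes \Gamma(E)\to \Gamma(i_\ast K\otimes E)\]
via the adjunction $i_\ast\dashv \Gamma$. Specifically, because $i_\ast$ is symmetric monoidal, we have $i_\ast(K\otimes \Gamma(E))\simeq i_\ast K\otimes i_\ast\Gamma(E)$, and we may compose this equivalence with $\mathrm{id}\otimes \varepsilon$, where $\varepsilon\colon i_\ast\Gamma(E)\to E$ is the counit. The adjoint of this composite is the desired map. (This is the standard lax monoidal structure on the right adjoint of a symmetric monoidal functor applied to $i_\ast K$ and $E$.)

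Next, I would fix $E$ and view both sides as functors $\Sp\to \Sp$ in the variable $K$. Both preserve colimits: the functor $K\mapsto K\otimes \Gamma(E)$ is cocontinuous because the smash product on $\Sp$ is cocontinuous in each variable, and $K\mapsto \Gamma(i_\ast K\otimes E)$ is cocontinuous because $i_\ast$ is a left adjoint, the tensor product on $\mathcal{SH}(k)$ is cocontinuous, and $\Gamma$ preserves colimits (as observed in the paragraph preceding the proposition, since $i_\ast$ preserves the compact generator). Since $\Sp$ is generated under colimits by $S^0$, it suffices to verify the comparison map is an equivalence when $K=S^0$.

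Finally, when $K=S^0$ the comparison map is adjoint to the counit $\varepsilon\colon i_\ast\Gamma(E)\to E$ itself (using $i_\ast S^0\simeq \mathbf{1}$), so under the identifications $S^0\otimes \Gamma(E)\simeq \Gamma(E)$ and $\Gamma(\mathbf{1}\otimes E)\simeq \Gamma(E)$ it becomes the identity. There is no real obstacle here: this is the standard projection formula for a symmetric monoidal left adjoint whose right adjoint is colimit-preserving, and the only input beyond formalities is the cocontinuity of $\Gamma$ which has already been recorded.
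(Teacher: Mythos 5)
Your proof is correct and matches the argument the paper implicitly intends: the paper sets up the proposition precisely by first noting that $\Gamma$ preserves colimits (since $i_\ast$ preserves the compact unit), and your reduction to $K=S^0$ via the assembly map and colimit-preservation in $K$ is exactly the projection-formula argument this observation is meant to enable.
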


Applying Lemma \ref{lemma:homotopyofglobalsections} in the case that \(K=H\F_2\), we have the following.

\begin{corollary}
    For any \(E\in\mathcal{\mathcal{SH}}(k)\), the homotopy groups of the spectrum \(\Gamma(i_\ast H\F_2\otimes E)\) are the mod \(2\) homology groups of \(\Gamma(E)\):
    \[
        (i_\ast H\F_2)_{*,0} E\cong H_\ast\Gamma(E)
    \]
\end{corollary}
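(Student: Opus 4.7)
The plan is to chain together Lemma \ref{lemma:homotopyofglobalsections} and the projection formula just established. I would first unfold the notation on the left-hand side: by definition of bigraded generalized homology in motivic stable homotopy, the group $(i_\ast H\F_2)_{s,0}E$ is precisely $\pi_{s,0}(i_\ast H\F_2\otimes E)$, the weight-zero bigraded homotopy of the smash product computed in $\mathcal{SH}(k)$.

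Next, I would apply Lemma \ref{lemma:homotopyofglobalsections} to the motivic spectrum $i_\ast H\F_2\otimes E$ with $w=0$, identifying this group with $\pi_s\Gamma(i_\ast H\F_2\otimes E)$. The projection formula from the preceding proposition then provides an equivalence
\[\Gamma(i_\ast H\F_2\otimes E)\simeq H\F_2\otimes \Gamma(E)\]
of ordinary spectra, whose homotopy groups are by definition the mod $2$ homology groups $H_\ast\Gamma(E)$. Stringing these identifications together yields the desired isomorphism.

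There is essentially no obstacle here: the statement is a bookkeeping consequence of the two preceding results, and its proof amounts to composing natural isomorphisms. The substantive content lies in the corollary's role in justifying the HSSS: it guarantees that the filtered motivic spectrum $i_\ast H\F_2\otimes P^\bullet E$ giving rise to the spectral sequence in Definition \ref{definition:generalizedslicess} abuts to $H_\ast\Gamma(E)$, which is the target we ultimately care about for the applications to $\Gamma(BPGL\langle m\rangle)\simeq BP_\R\langle m\rangle^{C_2}$ announced in the introduction.
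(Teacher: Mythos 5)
Your argument is exactly the paper's: the paper states the corollary follows by "applying Lemma \ref{lemma:homotopyofglobalsections} in the case that $K=H\F_2$" to the projection formula, which is precisely the chain of identifications you spell out. Your unfolding is correct and your closing remarks on the corollary's role in convergence of the HSSS are accurate but not part of the proof itself.
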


Since \(H\F_2\) is an \(E_\infty\)-ring spectrum, so is \(i_\ast H\F_2\). Since \(H\F_2\otimes H\F_2\) is a tensor product of free associative algebras generated by spheres, the same is true upon taking \(i_\ast\).
\begin{proposition}
    We have an equivalence of associative \(i_\ast H\F_2\)-algebras
    \[
        i_\ast H\F_2^{\otimes 2}\simeq i_\ast H\F_2[\zeta_1,\dots],
    \]
    where just as classically, the topological degree of \(\zeta_i\) is \((2^i-1)\), and it is in weight 0.
\end{proposition}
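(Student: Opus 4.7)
The plan is to apply $i_\ast$ to a suitable $E_1$-algebra enhancement of Milnor's theorem. Classically, $H\F_2 \wedge H\F_2$ admits an equivalence of $E_1$-$H\F_2$-algebras
\[
H\F_2 \wedge H\F_2 \;\simeq\; \bigotimes_{i \geq 1}^{H\F_2} H\F_2[\zeta_i],
\]
where each factor $H\F_2[\zeta_i]$ denotes the free $E_1$-$H\F_2$-algebra on $\Sigma^{2^i - 1} H\F_2$. Indeed, a choice of Milnor generators $\zeta_i \in \pi_{2^i - 1}(H\F_2 \wedge H\F_2)$ determines a map from the right-hand side, and a single-variable free $E_1$-$H\F_2$-algebra on a positive-degree generator has underlying homotopy the polynomial algebra on that class, so on $\pi_\ast$ the map reproduces the Milnor isomorphism $\F_2[\zeta_1, \zeta_2, \ldots] \cong \pi_\ast(H\F_2 \wedge H\F_2)$ and is therefore an equivalence.

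Now I apply $i_\ast$. Since $i_\ast$ is symmetric monoidal, it carries the left-hand side to $i_\ast H\F_2 \otimes i_\ast H\F_2$ and preserves the associative algebra structure. Since $i_\ast$ also preserves all colimits, it commutes with both the relative tensor product of $E_1$-algebras over $H\F_2$ and the free $E_1$-algebra functor. Combined with the identification $i_\ast(S^d) = S^{d, 0}$ of motivic spheres, this produces the asserted equivalence of associative $i_\ast H\F_2$-algebras
\[
i_\ast H\F_2 \otimes i_\ast H\F_2 \;\simeq\; \bigotimes_{i \geq 1}^{i_\ast H\F_2} (i_\ast H\F_2)[\zeta_i],
\]
with each $\zeta_i$ living in topological degree $2^i - 1$ and weight $0$.

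The only real input beyond formal properties of $i_\ast$ is the classical $E_1$-lift of Milnor's theorem, and no serious obstacle stands in the way. It is worth emphasizing that this is strictly an $E_1$-statement: an analogous $E_\infty$-presentation would be obstructed by the nontrivial Dyer--Lashof action on $H\F_2 \wedge H\F_2$, but the proposition prudently asks only for associative algebra structure, which is exactly what the above argument produces.
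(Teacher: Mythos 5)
Your proof is correct and follows essentially the same route as the paper's, just spelled out in more detail. The paper's own argument is the terse version of exactly what you wrote: $H\F_2\otimes H\F_2$ is a (filtered colimit of) tensor product of free associative $H\F_2$-algebras on spheres indexed by Milnor generators, and $i_\ast$, being symmetric monoidal and colimit-preserving, carries this presentation to one for $i_\ast H\F_2\otimes i_\ast H\F_2$. Your closing remark about the $E_1$-versus-$E_\infty$ distinction (Steinberger's nontrivial Dyer--Lashof action would obstruct an $E_\infty$-free presentation) is a helpful clarification that the paper leaves implicit.
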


In homotopy, the induced coaction on the \(\zeta_i\)'s is also the usual one, by functoriality. Put another way, the motivic spectrum \(i_\ast H\F_2\otimes E\) records all of the information of the mod \(2\) homology of \(\Gamma(E)\), even with the dual Steenrod coaction. 

\begin{corollary}
    The HSSS of $E$ is a spectral sequence of bigraded $\mathcal A_*$-comodules that converges to $H_*\Gamma(\Sigma^{0,-*}E)$. If $E$ is a ring spectrum, this is a spectral sequence of bigraded $\mathcal{A}_*$-comodule algebras.
\end{corollary}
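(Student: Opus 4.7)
The plan is to assemble the result from the pieces already established in this subsection and the previous one. The HSSS is by definition the specialization of the general $K$-homology slice spectral sequence of Definition \ref{definition:generalizedslicess} to $K=i_*H\F_2$. For convergence, I would first check that $i_*H\F_2$ is slice $\ge 0$: since $i_*$ is colimit-preserving and symmetric monoidal, $i_*H\F_2$ is built from non-negative cells $\Sigma^{s,0}S^{0,0}$, each of which lies in $\mathcal K_0$. Hence Proposition \ref{propsliceconvergence} applies and gives conditional convergence to $\pi_{*,*}(i_*H\F_2\otimes E)$. To identify this target with $H_*\Gamma(\Sigma^{0,-*}E)$, I would combine Lemma \ref{lemma:homotopyofglobalsections} with the projection formula from the previous proposition to get
\[
\pi_{s,w}(i_*H\F_2\otimes E)\cong \pi_s\Gamma\bigl(\Sigma^{0,-w}(i_*H\F_2\otimes E)\bigr)\cong \pi_s\bigl(H\F_2\otimes \Gamma(\Sigma^{0,-w}E)\bigr)=H_s\Gamma(\Sigma^{0,-w}E).
\]

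For the comodule structure, I would invoke the preceding proposition, which gives an equivalence $i_*H\F_2^{\otimes 2}\simeq i_*H\F_2[\zeta_1,\zeta_2,\ldots]$ of associative $i_*H\F_2$-algebras, with $|\zeta_i|=(2^i-1,0)$. In particular $i_*H\F_2$ is a flat homology theory in the motivic sense, so the general principle (stated just above Example \ref{example:bredonSS}) applies to give a spectral sequence of $(i_*H\F_2)_{*,*}(i_*H\F_2)$-comodules. Because the $\zeta_i$ all sit in weight zero, the bigraded cooperation algebra takes the form $\pi_{*,*}i_*H\F_2\otimes_{\F_2}\mathcal A_*$ (with the $\mathcal A_*$-factor concentrated in weight $0$), and the induced coaction on each weight slice is exactly the classical $\mathcal A_*$-coaction. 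Thus the spectral sequence is naturally a spectral sequence of bigraded $\mathcal A_*$-comodules converging to the $\mathcal A_*$-comodule $H_*\Gamma(\Sigma^{0,-*}E)$.

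For the ring case, I would use that the slice filtration is multiplicative in $\mathcal{SH}(k)$, so smashing the slice tower of a ring spectrum $E$ with the $E_\infty$-ring $i_*H\F_2$ produces a multiplicative filtered spectrum, and hence a multiplicative spectral sequence. Multiplicativity of the $\mathcal A_*$-coaction on each page follows from the corresponding fact for $\pi_{*,*}(i_*H\F_2\otimes E)$, which is a consequence of the multiplicativity of the Künneth-type description of $i_*H\F_2^{\otimes 2}$. Together these give the statement that the spectral sequence is one of $\mathcal A_*$-comodule algebras.

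There is no genuine obstacle here—the proof is essentially bookkeeping. The only point that merits some care is checking that the weight-zero restriction of the $(i_*H\F_2)_{*,*}(i_*H\F_2)$-coaction really is the classical $\mathcal A_*$-coaction, as opposed to some motivic deformation of it; this follows because the classes $\zeta_i$ lie in weight $0$ with the usual topological degrees and the coproduct on them is inherited from the ordinary dual Steenrod algebra via functoriality of $i_*$.
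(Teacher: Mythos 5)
Your argument is correct and follows the same route the paper intends: the corollary is assembled from Proposition~\ref{propsliceconvergence}, Lemma~\ref{lemma:homotopyofglobalsections}, the projection formula, and the flatness/cooperations propositions immediately preceding it. One small imprecision: each cell $\Sigma^{s,0}S^{0,0}$ with $s\ge0$ is not literally an element of $\mathcal K_0$; rather it lies in $\mathcal{SH}(k)^{0,\mathrm{veff}}$, the subcategory generated by $\mathcal K_0$ under colimits and extensions (simplicial suspension is a pushout, so the subcategory is closed under it). With that adjustment the slice-connectivity of $i_*H\F_2$ is as you say, and the rest of the bookkeeping — the projection-formula identification of the abutment, the use of flatness to get a comodule structure, and the observation that the $\zeta_i$ sit in weight zero with the classical coproduct inherited via $i_*$ — matches the paper's intended justification.
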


\subsubsection{Global sections homology and motivic homology}\label{subsubseccanonicalmap}

The global sections functor is also lax monoidal, so given an \(E_\infty\)-monoid \(R\) in $\mathcal{\mathcal{SH}}(k)$, $\Gamma(R)$ is an \(E_\infty\)-ring spectrum. The counit of the adjunction $i_*\dashv \Gamma$ gives an \(E_\infty\)-map
\[
    i_\ast \Gamma(R)\to R.
\]
It follows from Lemma \ref{lemma:homotopyofglobalsections} that when \(R=M\Z\) or \(R=M\F_2\), the motivic Eilenberg--Maclane spectra, the global sections spectra are \(H\Z\) and \(H\F_2\), respectively. We therefore have canonical \(E_\infty\)-maps
\[
    i_\ast H\Z\to M\Z\text{ and }i_\ast H\F_2\to M\F_2
\]
We will use these to identify the slice associated graded in many cases.

Applying the functor $i_*$ to ordinary homotopy groups, we have a canonical map of algebras
\[
\mathcal A_*=\pi_\star(H\F_2\otimes H\F_2)\xrightarrow{i_*}\pi_{*,*}
(i_*H\F_2\otimes i_*H\F_2)\to \pi_{*,*}(M\F_2\otimes M\F_2)=:\mathcal A^k_{*,*}\]
If the latter dual Steenrod algebra is flat over $\pi_{*,*}^k(M\F_2)$, we have a Hopf algebroid structure, and our map of algebras is compatible with the coactions in the following sense.
\begin{proposition}
    If \(E\in\mathcal{\mathcal{SH}}(k)\), then we have a natural map
    \[
        H_*\Gamma(E)\to H_{*,*}(E;M\F_2)
    \]
    When $\mathcal A^k_{*,*}$ is flat, this is a map of $\mathcal A^k_{*,*}$-comodules via the map
    \[
        \cA_\ast\to \mathcal A^k_{*,*}
    \]
\end{proposition}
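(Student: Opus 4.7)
The plan is to construct the natural map by applying $\Gamma(-)$ to the map of motivic spectra
\[i_*H\F_2\otimes E\to M\F_2\otimes E\]
obtained by smashing the $E_\infty$-counit $i_*H\F_2=i_*\Gamma(M\F_2)\to M\F_2$ identified above with $E$. The projection formula identifies the source $\Gamma(i_*H\F_2\otimes E)\simeq H\F_2\otimes \Gamma(E)$, whose bigraded homotopy is $H_*\Gamma(\Sigma^{0,-*}E)$ by Lemma \ref{lemma:homotopyofglobalsections}, and this recovers $H_*\Gamma(E)$ in weight zero. The target $\Gamma(M\F_2\otimes E)$ has bigraded homotopy $H_{*,*}(E;M\F_2)$, again by Lemma \ref{lemma:homotopyofglobalsections}, and the resulting map of bigraded groups is the claimed natural transformation.

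For the comodule statement, I would observe that the coaction on either side arises from the relevant cooperation map induced by the unit of the corresponding commutative ring spectrum. On the source, the $\cA_*$-coaction is the map of $\pi_*$ of
\[H\F_2\otimes\Gamma(E)\to (H\F_2\otimes H\F_2)\otimes\Gamma(E)\]
induced by the unit $\mathbb S\to H\F_2$; via the projection formula, this is $\Gamma$ applied to the analogous motivic cooperation map
\[i_*H\F_2\otimes E\to (i_*H\F_2\otimes i_*H\F_2)\otimes E\]
induced by $\mathbb S_k\to i_*H\F_2$. On the target, the $\mathcal{A}^k_{*,*}$-coaction comes from the analogous motivic cooperation map for $M\F_2$ induced by $\mathbb S_k\to M\F_2$. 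These two motivic cooperation squares fit together in a commuting rectangle by the naturality of the unit applied to the $E_\infty$-counit $i_*H\F_2\to M\F_2$; applying $\Gamma$ and bigraded homotopy groups gives the desired compatibility. The flatness hypothesis on $\mathcal{A}^k_{*,*}$ is what allows the identification
\[\pi_{*,*}\bigl(M\F_2\otimes M\F_2\otimes E\bigr)\cong \mathcal{A}^k_{*,*}\otimes_{\pi_{*,*}M\F_2}H_{*,*}(E;M\F_2)\]
so that the cooperation map defines an $\mathcal{A}^k_{*,*}$-comodule structure.

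The only point requiring a small verification is that the map $\cA_*\to \mathcal{A}^k_{*,*}$ appearing in the statement agrees with the one extracted from the above commuting rectangle in internal tridegree $(s,w=0,t)$; this is immediate from the definition of the map displayed earlier in the text as the composite $\pi_\star(H\F_2\otimes H\F_2)\xrightarrow{i_*}\pi_{*,*}(i_*H\F_2\otimes i_*H\F_2)\to\pi_{*,*}(M\F_2\otimes M\F_2)$, since both factor through the same counit $i_*H\F_2\to M\F_2$. I do not expect any serious obstacle here beyond careful bookkeeping of weights, since everything above is a formal consequence of the projection formula and the naturality of units and counits.
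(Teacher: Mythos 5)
Your construction is exactly the one the paper has in mind: the proposition is stated without a formal proof, but the preceding discussion sets up the counit $i_*H\F_2\to M\F_2$, the projection formula, and the map $\mathcal{A}_*\to\mathcal{A}^k_{*,*}$ as the ingredients, and you assemble them correctly. The comodule compatibility argument via naturality of the cooperation maps and the flatness hypothesis is also the intended one, so this matches the paper's approach.
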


\subsection{Betti realization and the arithmetic square}\label{subsecbetti} In this section, we restrict our attention now to $k=\R$ and work in the cellular context. That is, we work in the category $\mathcal{SH}_{\mathrm{cell}}(\R)$, the localizing subcategory of $\mathcal{SH}(\R)$ generated by the bigraded spheres $S^{s,w}$. Moreover, we implicitly work in the $2$-complete context everywhere, so that our $\mathcal{SH}_{\mathrm{cell}}(\R)$ stands for $\mathcal{SH}_{\mathrm{cell}}(\R)^{\complete}_2$, and our $\Sp^{C_2}$ stands for $(\Sp^{C_2})^{\complete}_2$.

There is a strong relationship between $\R$-motivic stable homotopy and $C_2$-equivariant stable homotopy, which comes via the Betti realization functor
\[\beta:\mathcal{SH}(\R)\to\Sp^{C_2}\]
If $X$ is a smooth scheme over $\Spec(\R)$, its set of complex points $X(\mathbb C)$ can be equipped with the complex analytic topology, and the action of $C_2=\mathrm{Gal}(\mathbb C/\R)$ on $X(\mathbb C)$ is continuous. The functor $\beta$ is obtained by left Kan extension of $X\mapsto \Sigma^\infty_+X(\mathbb C)$ along the canonical functor $\mathrm{Sm}_{\R}\to \mathcal{SH}(\R)$.

Pushing and pulling computations along the functor $\beta$ has been incredibly fruitful (see \cite{DI}, \cite{BGI}, and \cite{GHIR} for example). There has therefore been an effort to make the connection provided by $\beta$ conceptually precise (see  \cite{DI}, \cite{HellerOrmsby}, \cite{bachmann2}, and \cite{BehrensShah}). Behrens--Shah combined and extended these results to characterize $\beta$ as a localization functor on $\mathcal{SH}_{\mathrm{cell}}(\R)$. They showed, in particular, that after completing at a prime, the functor
\[\mathrm{Sing}:\Sp^{C_2}\to \mathcal{SH}_{\mathrm{cell}}(\R)\]
is a fully faithful right adjoint to $\beta$, and we may therefore regard $\Sp^{C_2}$ as a reflective subcategory of $\mathcal{SH}_{\mathrm{cell}}(\R)$ with reflection functor given by $\beta$ \cite[Theorem 1.12]{BehrensShah}.

On the other hand, $\Sp^{C_2}$ may be described in terms of pullback squares. For $E\in \Sp^{C_2}$, there is a commutative square
\[
\begin{tikzcd}
    E\arrow[r]\arrow[d]&\tilde{E}C_2\otimes E\arrow[d]\\
    F(E{C_2}_+,E)\arrow[r]&\tilde{E}C_2\otimes F(E{C_2}_+,E)
\end{tikzcd}
\]
known as the \textit{Tate square} of $E$, which is a pullback. Behrens--Shah \cite[Theorem 1.10]{BehrensShah} provided an extension of this construction to all $E\in \mathcal{SH}_{\mathrm{cell}}(\R)$, forming a commutative square
\[
\begin{tikzcd}
    E\arrow[r]\arrow[d]& E[\rho^{-1}]\arrow[d]\\
    E^{\complete}_\rho[\tau^{-1}]\arrow[r]&E^{\complete}_\rho[\tau^{-1}][\rho^{-1}]
\end{tikzcd}
\]
which we call the \textit{arithmetic square} of $E$. This square is \textit{not} in general a pullback, but the results of Behrens--Shah imply the following.

\begin{proposition}\label{proparithmeticsquare}
    $E\in \mathcal{SH}_{\mathrm{cell}}(\R)$ is in the essential image of the fully faithful right adjoint
    \[\mathrm{Sing}:\Sp^{C_2}\to \mathcal{SH}_{\mathrm{cell}}(\R)\]
    if and only if the arithmetic square of $E$ is a pullback. In particular, the functor $\beta(-)$ may be regarded as the endofunctor of $\mathcal{SH}_{\mathrm{cell}}(\R)$ given by the pullback
    \[
\begin{tikzcd}
    \beta(E)\arrow[r]\arrow[d]& E[\rho^{-1}]\arrow[d]\\
    E^{\complete}_\rho[\tau^{-1}]\arrow[r]&E^{\complete}_\rho[\tau^{-1},\rho^{-1}]
\end{tikzcd}
\]

\end{proposition}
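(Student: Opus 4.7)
The plan is to show that both conditions in the biconditional are equivalent to the unit $\eta\colon E\to \mathrm{Sing}(\beta(E))$ of the adjunction being an equivalence. Since $\mathrm{Sing}$ is fully faithful, its essential image is precisely the reflective subcategory of $\beta$-local objects, namely those $E$ for which $\eta$ is an equivalence. On the arithmetic square side, the strategy is to identify the pullback of the three non-$E$ corners with $\mathrm{Sing}(\beta(E))$ and the canonical map from $E$ with $\eta$.

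The key technical input, which I expect to extract from the Behrens--Shah construction of the arithmetic square, is that each of the three constructions
\[
    E\mapsto E[\rho^{-1}], \quad E\mapsto E^{\complete}_\rho[\tau^{-1}], \quad E\mapsto E^{\complete}_\rho[\tau^{-1}][\rho^{-1}]
\]
factors through the essential image of $\mathrm{Sing}$, and that applying $\beta$ to the arithmetic square of $E$ yields the Tate square of $\beta(E)$. Granting this, the three non-$E$ corners of the arithmetic square are identified with $\mathrm{Sing}$ applied to the three corresponding corners of the Tate square. Since $\mathrm{Sing}$ preserves limits and the Tate square is always a pullback in $\Sp^{C_2}$, the pullback $P$ of the corresponding punctured square in $\mathcal{SH}_{\mathrm{cell}}(\R)$ is naturally equivalent to $\mathrm{Sing}(\beta(E))$. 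The canonical corner map from the arithmetic square factors as $E\xrightarrow{\eta}\mathrm{Sing}(\beta(E))\xrightarrow{\simeq} P$, the second arrow being an equivalence because both source and target are $\beta$-local with the same image under $\beta$.

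Consequently, the arithmetic square is a pullback if and only if $\eta$ is an equivalence, which occurs if and only if $E$ lies in the essential image of $\mathrm{Sing}$. The ``in particular'' statement follows immediately: for arbitrary $E$, the pullback $P$ is always $\mathrm{Sing}(\beta(E))$ regardless of whether $E$ itself is $\beta$-local, and one recovers $\beta(E)$ via the counit equivalence $\beta\mathrm{Sing}\simeq\mathrm{id}$. The principal obstacle is the technical claim that the three corner constructions land in the essential image of $\mathrm{Sing}$ and that $\beta$ intertwines the arithmetic and Tate squares; this is the content of the Behrens--Shah analysis of Betti realization, which identifies $\rho$-inversion, $\rho$-completion together with $\tau$-inversion, and their composite respectively with the geometric fixed point, Borel, and Tate constructions applied to $\beta(E)$.
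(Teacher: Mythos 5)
Your proof is correct and follows exactly the route the paper intends: it cites the Behrens--Shah results (full faithfulness of $\mathrm{Sing}$, the identification of $\rho$-inversion/$\rho$-completion with $\tau$-inversion/both with geometric fixed points/Borel/Tate under $\beta$, and the fact that each of these constructions lands in the $\beta$-local objects) and then deduces the statement by identifying the pullback of the punctured square with $\mathrm{Sing}(\beta(E))$ and the corner map with the reflection unit $\eta$. The paper itself does not spell out a proof and simply asserts that the result follows from Behrens--Shah, so your argument is precisely the expected elaboration.
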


This, in turn, describes the genuine fixed points functor $(-)^{C_2}:\Sp^{C_2}\to \Sp$ as a special case of the global sections functor of Section  \ref{subsubsecglobalsec}.

\begin{proposition}\label{propfixedpointsglobalsec}
    The genuine fixed points functor $(-)^{C_2}$ is equivalent to the composite $\Gamma\circ \mathrm{Sing}$.
\end{proposition}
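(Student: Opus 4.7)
The plan is to unwind the definition of $\Gamma$ and apply the adjunction $\beta \dashv \mathrm{Sing}$ together with the symmetric monoidality of $\beta$. Both functors in the statement are right adjoints, but rather than comparing their left adjoints I would work directly with mapping spectra, which gives the quickest route.

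The first observation is that for any $Y \in \mathcal{SH}(\R)$,
\[\Gamma(Y) \simeq F_{\mathcal{SH}(\R)}(S^{0,0}, Y),\]
which is immediate from $\Gamma$ being right adjoint to $i_\ast$ together with the fact that $i_\ast$, being a colimit-preserving symmetric monoidal functor, must send the unit $\mathbb{S} \in \Sp$ to the unit $S^{0,0}$ in $\mathcal{SH}(\R)$.

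The next step is to specialize to $Y = \mathrm{Sing}(E)$ for $E \in \Sp^{C_2}$ and transpose under the spectrally enriched adjunction $\beta \dashv \mathrm{Sing}$ of Behrens--Shah (Proposition \ref{proparithmeticsquare}), giving
\[\Gamma(\mathrm{Sing}(E)) \simeq F_{\mathcal{SH}(\R)}(S^{0,0}, \mathrm{Sing}(E)) \simeq F_{\Sp^{C_2}}(\beta(S^{0,0}), E).\]
Since $\beta$ is symmetric monoidal, $\beta(S^{0,0}) \simeq \mathbb{S}_{C_2}$, and the right-hand side then identifies with $F_{\Sp^{C_2}}(\mathbb{S}_{C_2}, E) \simeq E^{C_2}$, the genuine fixed-point spectrum.

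Naturality in $E$ follows from naturality of the adjunction counit and of the monoidal structure map for $\beta$, assembling the chain of equivalences into an equivalence of functors $\Gamma \circ \mathrm{Sing} \simeq (-)^{C_2}$. I do not anticipate any serious obstacle; the statement is a formal consequence of the Behrens--Shah adjunction together with the fact that $\beta$ preserves monoidal units.
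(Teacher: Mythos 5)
Your argument is correct, and it spells out the formal adjunction reasoning that the paper leaves implicit (the paper states the proposition without proof, relying on the preceding discussion of Behrens--Shah). Transposing $F_{\mathcal{SH}_{\mathrm{cell}}(\R)}(S^{0,0},\mathrm{Sing}(E))\simeq F_{\Sp^{C_2}}(\beta(S^{0,0}),E)\simeq F_{\Sp^{C_2}}(\mathbb{S},E)\simeq E^{C_2}$ is exactly what makes the proposition a ``special case of the global sections functor,'' and it is equivalent to the left-adjoint version you set aside: $\beta\circ i_*$ and the inflation functor $\Sp\to\Sp^{C_2}$ are both the unique colimit-preserving symmetric monoidal functor out of $\Sp$, so their right adjoints $\Gamma\circ\mathrm{Sing}$ and $(-)^{C_2}$ agree.
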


The unit map for the reflection functor $\beta$ gives a natural transformation
\[\eta_E:\Gamma(E)\to \Gamma(\beta(E))\]
We will show that $\eta_E$ is an equivalence in many cases of interest; that is, the arithmetic square of $E$ becomes a pullback after applying $\Gamma(-)$ in these cases. This follows by induction up the slice tower of $E$, given the following base case.

\begin{proposition}\label{propbetaMZ}
The map
\[\Gamma(\Sigma^{s,w}M\Z)\to\Gamma(\Sigma^{s,w}\beta(M\Z))\]
is an equivalence if $w>-2$. Equivalently, the map
\[\pi_{*,*}^{\R}M\Z\to\pi_{*,*}^{\R}\beta(M\Z)\]
is an iso in weights $<2$.
\end{proposition}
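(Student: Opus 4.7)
The plan is to identify $\pi_{*,*}^\R \beta(M\Z)$ with the $RO(C_2)$-graded homotopy of $H\underline{\Z}$, invoke the classical computation of the latter (Dugger, Hu--Kriz), and verify that the canonical comparison map is the inclusion of the positive cone, whose complement lies in weights $\geq 2$.

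First, I would identify $\beta(M\Z) \simeq \mathrm{Sing}(H\underline{\Z})$. Classically, topological Betti realization sends $M\Z$ to the constant Mackey Eilenberg--Mac Lane spectrum $H\underline{\Z}$, and by Proposition \ref{proparithmeticsquare} the endofunctor $\beta$ on $\mathcal{SH}_{\mathrm{cell}}(\R)$ factors as $\mathrm{Sing}\circ\beta^{top}$. Combining Proposition \ref{propfixedpointsglobalsec} with Lemma \ref{lemma:homotopyofglobalsections} then yields
\[
\pi_{s,w}^\R \beta(M\Z) \cong \pi_{(s-w) + w\sigma}^{C_2} H\underline{\Z},
\]
so it suffices to analyze $\pi_\star^{C_2} H\underline{\Z}$ under this dictionary.

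Next, I would invoke the classical decomposition $\pi_\star^{C_2} H\underline{\Z} = \mathrm{PC} \oplus \mathrm{NC}$ into positive and negative cones. The positive cone is $\Z[a_\sigma, u_{2\sigma}]/(2 a_\sigma)$, and under $a_\sigma \leftrightarrow \rho \in \pi_{-1,-1}^\R$, $u_{2\sigma} \leftrightarrow \tau^2 \in \pi_{0,-2}^\R$, it matches $\pi_{*,*}^\R M\Z = \Z[\rho, \tau^2]/(2\rho)$ on the nose; the canonical map $\pi_{*,*}^\R M\Z \to \pi_{*,*}^\R \beta(M\Z)$ is therefore the inclusion of $\mathrm{PC}$. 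The negative cone $\mathrm{NC}$ is an $\F_2$-vector space generated by classes dual to the positive cone, concentrated in $RO(C_2)$-degrees $a + b\sigma$ with $b \geq 2$. Under the dictionary $w = b$, the entire negative cone sits in weights $w \geq 2$, so the positive-cone inclusion is an isomorphism in weights $w < 2$, as claimed.

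The main obstacle is verifying that the bottom class of the negative cone has $\sigma$-coefficient at least $2$, so that $\mathrm{NC}$ does not sneak into weight $1$. An alternative, more intrinsically motivic strategy would be to analyze the arithmetic square of $M\Z$ directly via its Mayer--Vietoris sequence and show that the only new classes appearing in $\pi_{*,*}^\R \beta(M\Z)$ beyond those of $\pi_{*,*}^\R M\Z$ come from $\rho^{-i}\tau^{-j}$-type monomials in the lower-right corner $M\Z^{\complete}_\rho[\tau^{-1}, \rho^{-1}]$ with $i, j \geq 1$, forcing weight $w \geq 2$; carrying this out requires care with the $\rho$-adic completion and the Behrens--Shah lift of $\tau$ in the $\rho$-complete sphere.
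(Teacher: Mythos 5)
Your argument is correct and reproduces the first of the two proofs the paper gives: identify $\beta(M\Z)\simeq\mathrm{Sing}(H\mZ)$, so that by Lemma \ref{lemma:homotopyofglobalsections} the question becomes whether $\pi_{*,*}^{\R}M\Z\to\pi_{\star}^{C_2}H\mZ$ is an isomorphism in weights $<2$, and then observe that the negative cone of $\pi_\star^{C_2}H\mZ$ sits entirely in weights $\geq 2$ (the paper simply cites this fact from Dugger--Isaksen rather than verifying it against Proposition \ref{propstong} as you do). One small inaccuracy: the negative cone of $\pi_\star^{C_2}H\mZ$ is not an $\F_2$-vector space --- by Proposition \ref{propstong} it contains $\Z$-summands $\Z\{e_{2n\sigma}\}$ alongside the $\F_2$-summands --- but this does not affect your argument, which uses only the $RO(C_2)$-degrees and not the underlying groups. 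The paper also records a second, self-contained argument via the arithmetic square and its Mayer--Vietoris sequence, which is precisely the alternative route you gesture at in your final paragraph but do not carry out; that version avoids any citation of the equivariant computation by applying $\pi_{*,*}^\R$ directly to the arithmetic square and reading off exactness in weights $<2$ from Voevodsky's computation $\pi_{*,*}^\R M\Z=\Z[\tau^2,\rho]/(2\rho)$.
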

\begin{proof}
On one hand, this is immediate from Lemma \ref{lemma:homotopyofglobalsections} and the fact that $\beta$ induces an isomorphism
\[\pi_{*,*}^\R M\Z\to\pi_{*,*}^{C_2}H\mZ\]
in weights $<2$ (see \cite{DI}).

However, we may argue directly. From Voevodsky's computation \cite{voe}
\[\pi_{*,*}^\R M\Z=\Z[\tau^2,\rho]/(2\rho)\]
where $|\tau|=(0,-2)$ and $|\rho|=(-1,-1)$, one sees that applying bigraded homotopy groups to the arithmetic square of $M\Z$ gives the square
\[
\begin{tikzcd}
    \Z[\tau^2,\rho]/(2\rho)\arrow[d]\arrow[r]&\F_2[\tau^2,\rho^{\pm}]\arrow[d]\\
    \Z[\tau^{\pm 2},\rho]/(2\rho)\arrow[r]&\F_2[\tau^{\pm 2},\rho^{\pm}]
\end{tikzcd}
\]
The resulting Mayer--Vietoris sequence is exact in weights $w<2$, and the result now follows from Lemma \ref{lemma:homotopyofglobalsections} and Proposition \ref{proparithmeticsquare}.
\end{proof}

In particular, we also have that $\eta_E$ is an equivalence when $E$ is of the form $\Sigma^{2t,t}M\Z$ for $t\ge0$. If $E$ is slice connective with slices of this form, a similar argument to that of Proposition \ref{propsliceconvergence} gives the following.

\begin{corollary}\label{cor:betaE}
    Suppose $E\in \mathcal{SH}_{\mathrm{cell}}(\R)$ has the property that, for all $t\ge0$,
    \[P^{2t}_{2t}E=\bigoplus\limits_{I_t}\Sigma^{2t,t}M\Z\]
    and $P_{2t}^{2t}E$ vanishes for $t<0$. Then 
    \[\Gamma(\Sigma^{s,w}E)\to \Gamma(\Sigma^{s,w}\beta(E))\]
    is an equivalence for $w>-2$. Equivalently, the induced map
    \[\pi_{*,*}^\R E\to\pi_{*,*}^\R\beta(E)\]
    is an isomorphism in weights $w<2$.
\end{corollary}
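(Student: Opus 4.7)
The plan is to do a slice tower induction, using Proposition \ref{propbetaMZ} as the base case, together with a convergence argument analogous to Proposition \ref{propsliceconvergence}. By Lemma \ref{lemma:homotopyofglobalsections} the two formulations of the conclusion are equivalent, so it suffices to show that the natural unit map $E \to \beta(E)$ induces an isomorphism on $\pi_{*,*}^{\R}$ in all weights $w < 2$.

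First I would prove by induction on $n \ge -1$ that $\pi_{*,*}^{\R} P^{2n} E \to \pi_{*,*}^{\R} \beta(P^{2n} E)$ is an isomorphism in weights $w < 2$. The base case $n = -1$ is immediate since $P^{-1} E = 0$ by the slice hypothesis, and hence $\beta(P^{-1} E) = 0$. For the inductive step, the fiber sequence
\[ P^{2n}_{2n} E \to P^{2n} E \to P^{2n-2} E \]
(odd slices vanish) and its image under $\beta$ give a commuting ladder of long exact sequences. Since $\beta$ and $\pi_{*,*}^{\R}$ both commute with direct sums, and $\pi_{s,w}^{\R} \Sigma^{2n,n} M\Z = \pi_{s-2n,\, w-n}^{\R} M\Z$, Proposition \ref{propbetaMZ} applied to each summand of $P^{2n}_{2n} E = \bigoplus_{I_n} \Sigma^{2n,n} M\Z$ shows that the slice-level vertical map is an iso in weights $w - n < 2$; when $n \ge 0$ this includes all $w < 2$. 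The 5-lemma, combined with the inductive hypothesis, concludes the step.

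Second, I would pass from the truncated towers to $E$ and $\beta(E)$ themselves by comparing the fiber sequence $P_{2n+1} E \to E \to P^{2n} E$ with its image under $\beta$. For the motivic side, strong convergence of the slice spectral sequence (Proposition \ref{propsliceconvergence}) via Morel's connectivity theorem gives $\pi_{s,w}^{\R} P_{2n+1} E = 0$ for $n > s - w - 1/2$, so in any fixed bidegree $\pi_{s,w}^{\R} P^{2n} E \to \pi_{s,w}^{\R} E$ is an iso for $n$ large. For the $\beta$-side, I would establish the analogous stabilization $\pi_{s,w}^{\R} \beta(P^{2n} E) \to \pi_{s,w}^{\R} \beta(E)$ in weight $w < 2$ by showing that $\pi_{s,w}^{\R} \beta(P_{2n+1} E)$ vanishes for $n$ sufficiently large. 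My approach here is to analyze the arithmetic square of $P_{2n+1} E$ from Proposition \ref{proparithmeticsquare} corner by corner, combining the Morel vanishing of $\pi_{*,*}^{\R} P_{2n+1} E$ with the compatibility of $(-)[\rho^{-1}]$, $(-)^{\complete}_\rho$, and $(-)[\tau^{-1}]$ with the slice filtration in the weight range $w < 2$, and then assembling via the Mayer--Vietoris long exact sequence. The 5-lemma applied to the ladder of long exact sequences then finishes the proof.

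The principal obstacle is precisely this last step: since $\beta$ is a left adjoint, it does not automatically commute with the slice tower limit, and one cannot pass directly to the limit in the induction. The delicate technical point is the stabilization of $\pi_{s,w}^{\R} \beta(P_{2n+1} E)$ in weight $w < 2$, which requires a case-by-case study of how the arithmetic-square constituents interact with the slice connectivity of $P_{2n+1} E$ — in particular, handling the extra weight freedom introduced by inverting $\tau$ against the slice-theoretic vanishing inherited from Morel's theorem.
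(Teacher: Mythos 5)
Your proof follows the same overall strategy the paper has in mind---a slice-tower induction with base case Proposition~\ref{propbetaMZ}, followed by a convergence argument ``similar to Proposition~\ref{propsliceconvergence}''---and you correctly identify the real subtlety: since $\beta$ is a left adjoint, one cannot pass directly to the limit of the slice tower, and one must separately show that $\pi_{s,w}^{\R}\beta(P_{2n+1}E)$ vanishes for $n$ large. Your induction over the truncated towers is fine, including the shift bookkeeping showing that the iso range $w-n<2$ for $\Sigma^{2n,n}M\Z$ covers all $w<2$ once $n\ge0$.

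Where you diverge from the cleanest line is the final stabilization step. Passing through the arithmetic square of $P_{2n+1}E$ corner by corner would work, but it requires you to verify, for each of the three localizations, that the slice connectivity of $P_{2n+1}E$ propagates to a connectivity statement after applying $\Gamma$---and the $\tau$-inverted corner in particular needs some care, since inverting $\tau$ decouples stem from weight at the level of bigraded homotopy. A more direct route, which is what the paper's phrase ``a similar argument'' is pointing at, is to use that the (genuine) Betti realization $\beta_0$ sends motivic slice $\ge t$ objects to $C_2$-equivariant slice $\ge t$ objects; this is the equivariant analogue of the fact quoted before Proposition~\ref{propsliceconvergence}. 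Then $\beta_0(P_{2n+1}E)$ is slice $\ge 2n+1$ in $\Sp^{C_2}$, and the HHR/Ullman slice-connectivity theorem gives $\pi_{a+b\sigma}^{C_2}\beta_0(P_{2n+1}E)=0$ once $a+b$ is below roughly $(2n+1)/2$. Translating via Lemma~\ref{lemma:homotopyofglobalsections} and the sphere dictionary $S^{s,w}\leftrightarrow S^{(s-w)+w\sigma}$, this kills $\pi_{s,w}^{\R}\beta(P_{2n+1}E)$ for fixed $(s,w)$ and $n$ large, exactly as needed. This avoids the case analysis and is the straightforward mirror of the proof of Proposition~\ref{propsliceconvergence} on the equivariant side. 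So: correct architecture, correct identification of the delicate point, but the proposed mechanism for the last step is more laborious than necessary.
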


\begin{corollary}\label{corhomologygammaE}
Suppose $E\in \mathcal{SH}_{\mathrm{cell}}(\R)$ is as in Corollary \ref{cor:betaE}. Then
\[H_*\Gamma(\Sigma^{s,w}E)\to H_*\Gamma(\Sigma^{s,w}\beta(E))\]
is an isomorphism for $w>-2$. Equivalently, the map
    \[\pi_{*,*}^\R (i_*H\F_2\otimes E)\to\pi_{*,*}^\R (i_*H\F_2\otimes\beta(E))=\pi_{*,*}^{\R}(\beta(i_*H\F_2\otimes E))\]
    is an isomorphism in weights $w<2$.
\end{corollary}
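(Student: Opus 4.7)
The plan is to deduce this directly from Corollary \ref{cor:betaE} via the projection formula $\Gamma(i_* H\F_2 \otimes X) \simeq H\F_2 \otimes \Gamma(X)$ established in Subsection \ref{subsubsecglobalsec}. First, I would record the equivalence of the two formulations: combining the projection formula with Lemma \ref{lemma:homotopyofglobalsections} yields a natural identification
\[
\pi_{s,w}^{\R}(i_* H\F_2 \otimes X) \cong \pi_s\bigl(H\F_2 \otimes \Gamma(\Sigma^{0,-w}X)\bigr) = H_s \Gamma(\Sigma^{0,-w}X)
\]
for any $X \in \mathcal{SH}_{\mathrm{cell}}(\R)$. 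Under this identification, the weight condition $w > -2$ in the first formulation corresponds precisely to $-w < 2$ in the second via the substitution $w \mapsto -w$, so it suffices to prove either statement.

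The main content then follows immediately from Corollary \ref{cor:betaE}: the map $\Gamma(\Sigma^{s,w}E) \to \Gamma(\Sigma^{s,w}\beta(E))$ is an equivalence of spectra for $w > -2$, and applying mod $2$ homology to an equivalence yields an isomorphism. Equivalently, letting $F$ denote the fiber of $E \to \beta(E)$, one has $\Gamma(\Sigma^{s,w}F) \simeq 0$ for $w > -2$, hence $H_*\Gamma(\Sigma^{s,w}F) = 0$ in the same range, and the long exact sequence associated to the cofiber sequence obtained by smashing with $i_*H\F_2$ delivers the isomorphism.

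The only remaining point is the identification $\pi_{*,*}^\R(i_*H\F_2 \otimes \beta(E)) = \pi_{*,*}^\R(\beta(i_*H\F_2 \otimes E))$ stated in the corollary. Since $\beta$ is symmetric monoidal as a left adjoint and $E \to \beta(E)$ is a $\beta$-equivalence, one has $\beta(i_*H\F_2 \otimes E) \simeq \beta(i_*H\F_2 \otimes \beta(E))$, so the required equality reduces to checking that the unit map $i_*H\F_2 \otimes \beta(E) \to \beta(i_*H\F_2 \otimes \beta(E))$ is an isomorphism on $\pi_{*,*}^\R$ in weights $w < 2$. I expect this to be the main (minor) obstacle, to be handled by inspecting the arithmetic square of $i_*H\F_2 \otimes \beta(E)$ via Proposition \ref{proparithmeticsquare}, leveraging that $\beta(E)$ is already $\beta$-local to control the $\rho$-localization and $\rho$-completion summands of the square.
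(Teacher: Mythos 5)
Your first two paragraphs are correct and exactly reproduce the (unstated) argument the paper intends: Lemma~\ref{lemma:homotopyofglobalsections} and the projection formula $\Gamma(i_*H\F_2\otimes X)\simeq H\F_2\otimes\Gamma(X)$ give the translation between the two formulations, and Corollary~\ref{cor:betaE} supplies the equivalence of spectra whose homology is being taken, so the corollary is indeed immediate.

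For the final identification, however, you are overcomplicating matters and you have also restricted to weights $w<2$ when the equality $\pi_{*,*}^\R (i_*H\F_2\otimes\beta(E))=\pi_{*,*}^{\R}(\beta(i_*H\F_2\otimes E))$ holds unconditionally. There is no need to inspect the arithmetic square: write $Y=\beta(E)\in\Sp^{C_2}$. Since $\beta$ is a symmetric monoidal left adjoint with $\beta\circ i_*=\mathrm{triv}$ and $\beta\circ\mathrm{Sing}=\mathrm{id}$, one has $\beta(i_*H\F_2\otimes E)\simeq\mathrm{triv}(H\F_2)\otimes Y$. Applying $\Gamma$ to the weight shifts of $i_*H\F_2\otimes\mathrm{Sing}(Y)$ via the projection formula and Proposition~\ref{propfixedpointsglobalsec} gives $H\F_2\otimes(\Sigma^{w-w\sigma}Y)^{C_2}$, while the corresponding computation for $\mathrm{Sing}(\mathrm{triv}(H\F_2)\otimes Y)$ gives $(\mathrm{triv}(H\F_2)\otimes\Sigma^{w-w\sigma}Y)^{C_2}$. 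These agree by the projection formula for genuine fixed points, $(\mathrm{triv}(K)\otimes Z)^{C_2}\simeq K\otimes Z^{C_2}$, which holds because $S^0$ is compact in $\Sp^{C_2}$ and hence $(-)^{C_2}$ preserves colimits --- exactly the same mechanism the paper already invoked for $\Gamma$. This settles the identification in all bidegrees without any appeal to the arithmetic square, and closes the one point you left open.
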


In particular, combining Corollary \ref{corhomologygammaE} with Proposition \ref{proparithmeticsquare}, one may compute the bigraded homology of $\Gamma(E)$ in weights $w<2$ in terms of that of the motivic spectra
\begin{align*}
&\Gamma(E[\rho^{-1}])
&&\Gamma(E^{\complete}_\rho[\tau^{-1}]) &&\Gamma(E^{\complete}_\rho[\tau^{-1},\rho^{-1}])
\end{align*}
by applying the Mayer--Vietoris sequence. We will apply this in Section \ref{secarithmeticsquare} for $E=BPGL$, the $2$-local summand of the algebraic cobordism spectrum. 

Recall there is a classifying map $L\to MGL_{*,*}^{\R}$ that is an isomorphism in bidegrees of the form $(2k,k)$, where $L\cong \Z[a_1,a_2,\ldots]$ is the Lazard ring, and $|a_i|=(2i,i)$ (see \cite{Heard} for this isomorphism and \cite{ravgreen} for the description of the Lazard ring). We say an $MGL$-module is a \textit{standard quotient of }$MGL$ if it is equivalent to the quotient of $MGL$ by some collection of polynomials that are $a_i$ modulo decomposables. Standard quotients of $MGL$ satisfy the hypotheses of Corollary \ref{cor:betaE}, by the following theorem of Hopkins--Morel and Levine--Tripathi \cite{levine}.

\begin{proposition}\label{propslicebp}
    Let $I\subset L$ be the ideal generated by some collection of the $a_i$'s. Then the slice associated graded of $MGL/I$ is
    \[M\Z[a_1,a_2,\ldots]/I:=M\Z[a_i|i\notin I]\]
\end{proposition}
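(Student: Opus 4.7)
The plan is to reduce this to the Hopkins--Morel theorem on the slices of $MGL$ itself. That theorem (as established in the motivic setting by Levine--Tripathi and using Hoyois' refinement of Hopkins--Morel) identifies the slice associated graded of $MGL$ with $M\Z[a_1, a_2, \ldots]$, where the $a_i$'s sit in tridegree $(2i, i, 0)$. This serves as the base case $I = 0$.

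For general $I$, I would construct $MGL/I$ as an iterated cofiber in $MGL$-modules. Let $\{a_{i_j}\}_{j \in J}$ be the chosen generating subset of the $a_i$'s. Each $a_{i_j}$ is represented by a map of $MGL$-modules
\[
a_{i_j} \colon \Sigma^{2i_j, i_j} MGL \to MGL,
\]
and $MGL/I$ is obtained by successively (or simultaneously, via a smash product over $MGL$) coning off these maps. Since $\Sigma^{2i_j, i_j} MGL$ is slice $\ge 2i_j$, these maps respect the slice filtration, and by the multiplicativity of the very effective slice tower (established by Pelaez and in the very effective setting by Bachmann and Spitzweck--Østvær), the induced map on slice associated gradeds is precisely multiplication by $a_{i_j}$ on $M\Z[a_1, a_2, \ldots]$.

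Because the slice functors $P^t_t$ are exact, applying them to the cofiber sequence $\Sigma^{2i_j, i_j} MGL \xrightarrow{a_{i_j}} MGL \to MGL/a_{i_j}$ identifies the slice associated graded of $MGL/a_{i_j}$ with $M\Z[a_1, a_2, \ldots]/a_{i_j}$. Iterating across the generating set then yields the claimed identification of the slice associated graded of $MGL/I$ with $M\Z[a_i : i \notin I]$.

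The main subtlety is handling the case when $I$ is generated by infinitely many $a_{i_j}$: one must check that the iterative construction stabilizes in each tridegree so that the slice associated graded of the colimit is the colimit of the slice associated gradeds. This is controlled by Morel's connectivity theorem exactly as in the proof of Proposition \ref{propsliceconvergence}: in any fixed bidegree, only finitely many of the $a_{i_j}$ contribute, since quotienting by $a_{i_j}$ only affects slices in degrees $\ge 2i_j$. Hence the colimit of the iterated cofibers has the expected slice associated graded bidegreewise.
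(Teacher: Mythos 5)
The paper does not give a proof of this proposition; it is cited as a known result of Hopkins--Morel and Levine--Tripathi \cite{levine}. Your outline is a reconstruction of what is essentially the standard argument in the cited sources, and it is correct: reduce to the Hopkins--Morel theorem ($I = 0$), use multiplicativity of the slice filtration together with the $MGL$-module structure to identify the induced map on slice associated gradeds with multiplication by $a_{i_j}$, and pass to cofibers. Two small remarks. First, the cofiber step silently uses that multiplication by $a_{i_j}$ on $M\Z[a_1,a_2,\ldots]$ is a \emph{split} injection of $M\Z$-modules (coming from the direct sum decomposition of the polynomial ring over powers of $a_{i_j}$), so that its cofiber is the expected quotient polynomial algebra; this is automatic but worth stating. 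Second, for the infinite case the slice truncation functors are built from right adjoints to inclusions of compactly generated localizing subcategories, hence commute with filtered colimits, so one can pass directly to the colimit of the finite-stage quotients rather than arguing degreewise via Morel connectivity --- both work, but the first is cleaner. Finally, note that the paper works with the very effective slice filtration of Spitzweck--{\O}stv{\ae}r rather than the effective one for which Hopkins--Morel was originally formulated; the identification of these filtrations for standard quotients of $MGL$ is exactly Heard's result, which the paper invokes just before stating this proposition, so your reduction is legitimate once that is in place.
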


Working $2$-locally, $MGL$ splits as a sum of shifts of the spectrum
\[BPGL:=MGL/(a_i|i\neq 2^k-1)\]
As is standard, we let $\ov_i:=a_{2^i-1}$, and our computations with the HSSS will focus on the truncated versions
\[BPGL\langle m\rangle=BPGL/(\ov_{m+1},\ov_{m+2},\ldots)\]

\subsection{The edge homomorphism}\label{subsecedgehom}

For any \(E\in \mathcal{SH}_{\mathrm{cell}}(\R)\), we have a natural map of motivic spectra
\[
    E\to P^k E,
\]
and so we can consider the induced map of slice towers. Since we have a map of filtered spectra, we have an induced map of spectral sequences. The slice spectral sequence for \(P^kE\) is exactly the truncation of the slice spectral sequence for \(E\) where we throw away all classes in degrees \((t-s,s)\) with \(t>k\). 

There is a very important case of the truncated slice tower: the case \(k=0\). For standard quotients $E$ of $MGL$, Proposition \ref{propslicebp} implies that $P^0 E\simeq M\Z$. We therefore have a map of HSSS's
\[
E_\ast^{s,w,t}\big(E;i_*H\F_2\big)\to E_\ast^{s,w,t}\big(M\Z;i_*H\F_2\big).
\]
The latter is concentrated on the line \(t=0\), but recognizing that this comes from a map of filtered spectra, we see that this gives the edge homomorphism
\[
\pi_{*,*}^{\R}(i_*H\F_2\otimes E)\to H_{*,*}^{\R}\big(i_*H\F_2;\Z\big).
\]
Again, this map is a map of \(\mathcal A^{\R}_{*,*}\)-comodule (algebras). By definition, every class in the image of the edge homomorphism admits a lift along this map, so we have the following. 

\begin{proposition}\label{propedgeispermanent}
    The image of the edge homomorphism in the HSSS consists of permanent cycles.
\end{proposition}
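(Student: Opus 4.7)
The strategy is to identify the image of the edge homomorphism with the $t=0$ line of the $E_\infty$-page of the HSSS, which consists of permanent cycles by construction. The edge homomorphism is induced by the natural map $E \to P^0 E$, which sits in a cofiber sequence $P_1 E \to E \to P^0 E$. Tensoring with $i_\ast H\F_2$ preserves cofiber sequences, and the resulting long exact sequence on bigraded homotopy identifies the kernel of the edge map with the first stage $F^1$ of the slice filtration on $\pi_{*,*}(i_\ast H\F_2 \otimes E)$. Thus the edge homomorphism factors as an injection
\[
\pi_{*,*}(i_\ast H\F_2 \otimes E)/F^1 \hookrightarrow \pi_{*,*}(i_\ast H\F_2 \otimes P^0 E) = E_2^{*,*,0}(E; i_\ast H\F_2).
\]

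The remaining task is to identify the source of this injection with the $t=0$ line of the $E_\infty$-page. For $E$ slice $\geq 0$ (the setting relevant to the applications, namely standard quotients of $MGL$), the differential $d_r$ shifts $t$ by $r-1$, so any differential landing on the $t=0$ line would have to originate from $t = 1-r \leq -1$ for $r \geq 2$. No such source exists because $E$ is slice-connective. Consequently $E_\infty^{*,*,0}(E)$ is a genuine subgroup of $E_2^{*,*,0}(E)$, consisting of exactly those classes supporting no outgoing differentials---the permanent cycles at $t=0$.

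Finally, the standard convergence apparatus of the spectral sequence---established in Proposition \ref{propsliceconvergence} for slice-bounded $E$ and $K$---identifies $\pi_{*,*}(i_\ast H\F_2 \otimes E)/F^1$ with $E_\infty^{*,*,0}(E)$. Combining with the injection above identifies the image of the edge homomorphism with this subgroup of permanent cycles. I expect no serious obstacle: the argument is largely formal, and the one point to verify carefully is that the filtration convention appearing in the convergence statement is the one derived from the cofiber sequence $P_1 E \to E \to P^0 E$, which is routine in the slice-connective setting of interest.
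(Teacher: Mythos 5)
Your argument is correct and follows essentially the same approach as the paper's, which simply observes that a class in the image of the edge homomorphism admits a lift to the abutment $\pi_{*,*}^{\R}(i_*H\F_2\otimes E)$ and is therefore a permanent cycle. The one genuine difference is that you route through the identification of the edge-image with $E_\infty^{*,*,0}$ via the strong convergence of Proposition \ref{propsliceconvergence}, which proves the slightly stronger equality with the set of permanent cycles but is heavier than needed: a class lifting to the abutment already has vanishing boundary and hence vanishing $d_r$ for all $r$, with no convergence input required.
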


\section{Homology of $\Gamma(BPGL\langle m\rangle)$ via the arithmetic square}\label{secarithmeticsquare}
We have seen in the previous section that computing the homology groups $H_*\Gamma(E)$ amounts to computing the motivic homotopy groups
\[\pi_{*,0}^{\R}(i_*H\F_2\otimes E)\]
and that, when $E$ is as in Proposition \ref{propslicebp}, this $\mathcal{A}_*$-comodule may be recovered from the arithmetic square of $i_*H\F_2\otimes E$. In this section, we explore this approach in the case $E=BPGL\langle m\rangle$. 
%
%
%

\subsection{The arithmetic square of $i_*H\F_2\otimes BPGL\langle m\rangle$}

\subsubsection{The $\tau$-local part}\label{subsecBredon}
In this section, we determine bottom row of the arithmetic square of $i_*H\F_2\otimes BPGL\langle m\rangle$, namely the map
\[\pi_{*,*}^{\R}(i_*H\F_2\otimes BPGL\langle m\rangle)^{\complete}_\rho[\tau^{-1}]\to \pi_{*,*}^{\R}(i_*H\F_2\otimes BPGL\langle m\rangle)^{\complete}_\rho[\tau^{-1},\rho^{-1}]\]
The main advantage of applying the functor $(-)^{\complete}_\rho[\tau^{-1}]$ is that it converts $i_*H\F_2$ homology to $M\F_2$-homology.

\begin{proposition}
\label{borelhomology}
The canonical map $i_*H\F_2\to M\F_2$ induces an equivalence
\[(i_*H\F_2\otimes E)^{\complete}_\rho[\tau^{-1}]\to (M\F_2\otimes E)^{\complete}_\rho[\tau^{-1}]\]
for any $E\in\mathcal{\mathcal{SH}}_{\mathrm{cell}}(\R)$.
\end{proposition}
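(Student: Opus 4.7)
The plan is to reduce to the case $E = S^{0,0}$ via a smashing argument, then identify both localizations with a common Borel completion via Betti realization.

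First, I would establish that $X \mapsto X^{\complete}_\rho[\tau^{-1}]$ is a smashing localization on $\mathcal{SH}_{\mathrm{cell}}(\R)^{\complete}_2$. Under the Behrens--Shah correspondence of Proposition \ref{proparithmeticsquare}, this functor corresponds (via $\beta$) to the Borel completion $F(EC_{2+}, -)$ on $\Sp^{C_2}$, which is classically a smashing localization with smashing factor $F(EC_{2+}, S^0)$. Granting this, for any $X, E \in \mathcal{SH}_{\mathrm{cell}}(\R)$ we obtain a natural equivalence
\[
(X \otimes E)^{\complete}_\rho[\tau^{-1}] \simeq X^{\complete}_\rho[\tau^{-1}] \otimes E,
\]
reducing the claim to showing that the map $(i_*H\F_2)^{\complete}_\rho[\tau^{-1}] \to (M\F_2)^{\complete}_\rho[\tau^{-1}]$ is an equivalence.

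Next, I would identify both sides via Betti realization. Since $\beta \circ i_* \colon \Sp \to \Sp^{C_2}$ is the unique symmetric monoidal colimit-preserving functor, one finds $\beta(i_*H\F_2) \simeq H\F_2$ equipped with trivial $C_2$-action, while $\beta(M\F_2) \simeq H\underline{\F_2}$, the constant Mackey functor spectrum. These are distinct as genuine $C_2$-spectra, but their underlying non-equivariant spectra are both $H\F_2$. Since Borel completion depends only on the underlying spectrum, it sends both to $F(EC_{2+}, H\F_2)$ and carries the canonical map to an equivalence. Transporting back along the fully faithful functor $\mathrm{Sing}$ of Proposition \ref{proparithmeticsquare} then gives the desired motivic equivalence.

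The main obstacle is the first step: establishing that $(-)^{\complete}_\rho[\tau^{-1}]$ is smashing. Ordinary $\rho$-completion is not a smashing localization, but combined with the further $\tau$-inversion it should become smashing, reflecting the fact that Borel completion of $C_2$-spectra is. Checking this carefully requires unpacking the Behrens--Shah identification and how it interacts with the motivic smash product in the $2$-complete cellular setting. Once this is in place, the rest of the argument is a routine consequence of the fact that Borel completion sees only the underlying non-equivariant spectrum, which $i_*H\F_2$ and $M\F_2$ share.
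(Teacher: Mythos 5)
Your reduction to $E=S^{0,0}$ rests on the claim that $(-)^{\complete}_\rho[\tau^{-1}]$ is a smashing localization of $\mathcal{SH}_{\mathrm{cell}}(\R)^{\complete}_2$, equivalently that $F(E{C_2}_+,-)$ is smashing on $(\Sp^{C_2})^{\complete}_2$. This is false, and is the genuine gap. Borel completion annihilates any $C_2$-spectrum whose underlying spectrum is contractible, so $F(E{C_2}_+,\tilde{E}C_2)\simeq *$; but if the localization were smashing one would instead get
\[
F(E{C_2}_+,S^0)\otimes\tilde{E}C_2 \;=\; (S^0)^{tC_2},
\]
which is nonzero. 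Motivically the same contradiction appears already in the arithmetic square for $S^{0,0}$: one has $(S^{0,0}[\rho^{-1}])^{\complete}_\rho[\tau^{-1}]\simeq *$, while $(S^{0,0})^{\complete}_\rho[\tau^{-1}]\otimes S^{0,0}[\rho^{-1}]$ is the nonzero lower-right corner. So the commutation $(X\otimes E)^{\complete}_\rho[\tau^{-1}]\simeq X^{\complete}_\rho[\tau^{-1}]\otimes E$ you want does not hold, and your first step fails.

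The paper avoids this by writing $(-)^{\complete}_\rho[\tau^{-1}]$ as the limit of the finite stages $(-)\otimes C(\rho^i)[\tau^{-1}]$ (where commuting a fixed tensor factor is trivial), reducing to $i=1$, identifying $C(\rho)\simeq\Spec(\mathbb{C})_+$ after $2$-completion and cellularization, and then base-changing to $\mathbb{C}$ where $\tau$-inversion and $\mathbb{C}$-Betti realization finish the job. However, your second observation — that Borel completion only sees the underlying naive $C_2$-spectrum — is both correct and already sufficient, making the smashing step unnecessary: since $\beta$ is symmetric monoidal and so is the forgetful functor to naive $C_2$-spectra, the map $\beta(i_*H\F_2\otimes E)\to\beta(M\F_2\otimes E)$ is the underlying equivalence $\mathrm{infl}(H\F_2)\to H\underline{\F_2}$ tensored with $\beta(E)$, hence an underlying equivalence for every $E$, hence an equivalence after $F(E{C_2}_+,-)$. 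Transporting back along $\mathrm{Sing}$ and the Behrens--Shah identification of $(-)^{\complete}_\rho[\tau^{-1}]$ with $\mathrm{Sing}\circ F(E{C_2}_+,-)\circ\beta$ then gives the result. Replace the smashing reduction with this and the argument goes through.
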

\begin{proof}
The map in question is obtained by taking the limit of the maps
\[i_*H\F_2\otimes E\otimes C(\rho^i)[\tau^{-1}]\to M\F_2\otimes E\otimes C(\rho^i)[\tau^{-1}]\]
(see \cite[Theorem 1.10]{BehrensShah}). By filtering $C(\rho^i)$, it suffices to show this map is an equivalence when $i=1$. By \cite[Proposition 8.3]{BehrensShah}, the map
\[C(\rho)\to \Spec(\mathbb C_+)\]
becomes an equivalence after applying $2$-completion and cellularization. It therefore suffices to show that the map
\[i_*H\F_2\otimes \Spec(\mathbb C_+)[\tau^{-1}]\to M\F_2\otimes \Spec(\mathbb C_+)[\tau^{-1}]\]
is an equivalence, for which it suffices to base change to $\mathbb C$ and show that 
\[i_*H\F_2[\tau^{-1}]\to M\F_2[\tau^{-1}]\]
is an equivalence in $\mathcal{SH}(\mathbb C)$, by \cite[Theorem 1.7]{BehrensShah}. Finally, by \cite[Theorem 1.1]{BehrensShah}, it suffices to apply $\mathbb C$-Betti realization, which gives the identity map of $H\F_2$.
\end{proof}

The motivic homology of $BPGL\langle m\rangle$ for $-1\le m\le \infty$ is described similarly to the classical case. We have the following computation of Ormsby \cite[Theorem 3.8]{ormsby}.

\begin{proposition}\label{prop:motivichomologybprm}
    The motivic homology of $BPGL\langle m\rangle$ is
    \[H_{*,*}^{\R}BPGL\langle m\rangle\cong \mathcal{A}_{*,*}^{\R}\square_{\mathcal{E}_{*,*}^{\R}(m)}{M\F_2}_{*,*}=\frac{\F_2[\rho,\tau,\overline{t}_i,c(\tau_j)|i\ge1,j\ge m+1]}{(c(\tau_j)^2=c(\tau_{j+1})\rho+c(\overline{t}_{j+1})\tau)}\]
\end{proposition}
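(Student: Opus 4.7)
The plan is to break the statement into two claims: first, that $H_{*,*}^{\R}BPGL\langle m\rangle$ is isomorphic as an $\cA_{*,*}^{\R}$-comodule algebra to the cotensor product $\cA_{*,*}^{\R}\square_{\mathcal{E}_{*,*}^{\R}(m)} {M\F_2}_{*,*}$; and second, that this cotensor product admits the explicit polynomial presentation stated. These are logically independent and can be handled in turn.

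For the first claim, the cleanest approach is via the dual identification: show that $H^{*,*}_{\R}BPGL\langle m\rangle$ is cyclic as a module over the motivic Steenrod algebra $\cA^{\R}$, generated by the Thom class inherited from $MGL\to M\F_2$, with annihilator precisely the left ideal generated by the motivic Milnor primitives $Q_0, Q_1, \ldots, Q_m$ (the operations dual to the $\tau_i$). The vanishing of $Q_i$ for $i \le m$ reflects the construction of $BPGL\langle m\rangle$ from $BPGL$ by iteratively coning off the classes $\ov_i$ for $i\ge m+1$, each detected by the corresponding motivic Milnor primitive. Dualizing a cyclic module description of this form yields the cotensor formula. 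Alternatively, one may proceed by downward induction on $m$ using the cofiber sequence
\[\Sigma^{2(2^{m+1}-1),\,2^{m+1}-1}BPGL\langle m+1\rangle \xrightarrow{\ov_{m+1}} BPGL\langle m+1\rangle \to BPGL\langle m\rangle,\]
with the base case $H_{*,*}^{\R}BPGL$ extracted from Hoyois's computation of $H_{*,*}^{\R}MGL$ together with the $2$-local splitting of $MGL$; yet another route is to run the $M\F_2$-homology slice spectral sequence of Example \ref{example:bredonSS}, whose $E_2$-page is identified via Proposition \ref{propslicebp} and which collapses by comparison with the classical case after Betti realization.

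For the second claim, the computation is purely algebraic. The cotensor product consists of $x \in \cA_{*,*}^{\R}$ with $(\pi\otimes 1)\circ\psi(x) = 1 \otimes x$, where $\pi\colon \cA_{*,*}^{\R}\to \mathcal{E}_{*,*}^{\R}(m)$ is the quotient map and $\psi$ the coaction. One verifies directly that $\rho, \tau$ are primitive as ground-ring classes; $\overline{t}_i$ is primitive because its coproduct has left factors $\overline{t}_j$, all lying outside $\mathcal{E}_{*,*}^{\R}(m)$; and $c(\tau_j)$ for $j \ge m+1$ is primitive because the Hopf antipode shifts any $\tau_i$-dependence to the right tensor factor of $\psi$, where the projection sends it to $1$. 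The defining relation is obtained by applying the antipode to
\[\tau_j^2 = \tau\overline{t}_{j+1} + \rho\tau_{j+1} + \rho\tau_0\overline{t}_{j+1}:\]
the extra term $\rho\, c(\tau_0)\, c(\overline{t}_{j+1})$ on the right-hand side vanishes in the cotensor product since $c(\tau_0) = \tau_0 \in \mathcal{E}_{*,*}^{\R}(m)$ for $m\ge 0$, leaving precisely $c(\tau_j)^2 = c(\tau_{j+1})\rho + c(\overline{t}_{j+1})\tau$. Freeness of the resulting algebra then follows from the Milnor--Moore structure theorem applied to the quotient coaction.

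The main obstacle is the annihilator identification in the first stage: proving that $\mathrm{Ann}_{\cA^{\R}}(1) = \cA^{\R}\cdot(Q_0, \ldots, Q_m)$ exactly, with no extraneous relations. This requires delicate motivic input, since the $\rho, \tau$-corrections in the motivic Steenrod algebra alter the classical degree-counting argument used to bound the annihilator, and some care is needed to track the motivic Hurewicz image of $\ov_{m+1}$ to ensure the induction closes correctly.
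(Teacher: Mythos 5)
The paper does not actually prove this proposition; it simply cites Ormsby \cite[Theorem 3.8]{ormsby}, and the subsequent remark observes that the result can also be recovered by running the $M\F_2$-homology slice spectral sequence of Example \ref{example:bredonSS}. Your third suggested route is exactly this, so on the question of \emph{which} argument to run you are aligned with what the authors have in mind.

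However, your explicit derivation of the defining relation contains a genuine error. You apply the antipode to
\[\tau_j^2 = \tau\overline{t}_{j+1} + \rho\tau_{j+1} + \rho\tau_0\overline{t}_{j+1}\]
and then claim the unwanted term $\rho\, c(\tau_0)\, c(\overline{t}_{j+1})$ ``vanishes in the cotensor product since $c(\tau_0)=\tau_0\in\mathcal{E}^{\R}_{*,*}(m)$.'' This is not how cotensor products behave: $\cA^{\R}_{*,*}\square_{\mathcal{E}^{\R}_{*,*}(m)}{M\F_2}_{*,*}$ is a \emph{subalgebra} of $\cA^{\R}_{*,*}$, not a quotient, so membership of a factor in $\mathcal{E}^{\R}_{*,*}(m)$ does not kill a term. (It would also make the argument fail for $m=-1$, where $\tau_0\notin\mathcal{E}^{\R}_{*,*}(-1)={M\F_2}_{*,*}$, yet the presentation is still valid and gives $\cA^{\R}_{*,*}$ itself.) The actual source of the cancellation is that conjugation swaps the two units of the Hopf algebroid, so $c(\tau)=\eta_R(\tau)=\tau+\rho\tau_0$. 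Applying $c$ therefore yields
\[c(\tau_j)^2 = (\tau+\rho\tau_0)\,c(\overline{t}_{j+1}) + \rho\, c(\tau_{j+1}) + \rho\tau_0\, c(\overline{t}_{j+1}) = \tau\, c(\overline{t}_{j+1}) + \rho\, c(\tau_{j+1}),\]
the two $\rho\tau_0 c(\overline{t}_{j+1})$ terms cancelling in characteristic $2$. This is a genuine identity in $\cA^{\R}_{*,*}$, independent of $m$.

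A smaller point: your explanation of why $c(\tau_j)$ lands in the cotensor product for $j\ge m+1$ is garbled. After conjugation, the coproduct is
\[\psi(c(\tau_j)) = c(\tau_j)\otimes 1 + 1\otimes c(\tau_j) + \sum_{i=0}^{j-1} c(\tau_i)\otimes c(\overline{t}_{j-i}^{2^i}),\]
so the residual $\tau_i$'s sit in the \emph{left} factor, not the right. What makes this land in the cotensor product is that the \emph{right} factors $c(\overline{t}_{j-i}^{2^i})$ for $i<j$ die under the quotient $\cA^{\R}_{*,*}\to\mathcal{E}^{\R}_{*,*}(m)$, since $\pi(\overline{t}_k)=0$ for all $k\ge 1$; the leftover $1\otimes c(\tau_j)$ term also dies because $\pi(\tau_j)=0$ for $j\ge m+1$. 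The Milnor primitive/annihilator discussion in your first stage is reasonable in outline and matches the classical template, but you should be aware that the duality with $\mathcal{E}^{\R}_{*,*}(m)$ is not quite the classical exterior-algebra duality because of the relation $\tau_i^2=\rho\tau_{i+1}$.
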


\begin{corollary}\label{cor:bottomrow}
The map
\[
\pi_{*,*}^{\R}(i_*H\F_2\otimes BPGL)^{\complete}_\rho[\tau^{-1}]\to\pi_{*,*}^{\R}(i_*H\F_2\otimes BPGL)^{\complete}_\rho[\tau^{-1},\rho^{-1}]
\]
is given by the inclusion
\[
\frac{\F_2[\rho,\tau^{\pm},\overline{t}_i,c(\tau_j)|i\ge1,j\ge m+1]}{(c(\tau_j)^2=c(\tau_{j+1})\rho+c(\overline{t}_{j+1})\tau)}^{\complete}_{\rho} \hookrightarrow \frac{\F_2[\rho,\tau^{\pm},\overline{t}_i,c(\tau_j)|i\ge1,j\ge m+1]}{(c(\tau_j)^2=c(\tau_{j+1})\rho+c(\overline{t}_{j+1})\tau)}^{\complete}_{\rho}[\rho^{-1}] 
\]
\end{corollary}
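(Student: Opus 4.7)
The plan is to combine Proposition \ref{borelhomology} with Ormsby's computation of motivic homology (Proposition \ref{prop:motivichomologybprm}), both of which have already been established.

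First, applying Proposition \ref{borelhomology} to $E = BPGL\langle m\rangle$ yields equivalences
\[
(i_*H\F_2 \otimes BPGL\langle m\rangle)^{\complete}_\rho[\tau^{-1}] \simeq (M\F_2 \otimes BPGL\langle m\rangle)^{\complete}_\rho[\tau^{-1}],
\]
and similarly after further inverting $\rho$ on both sides. This reduces the computation of the bigraded homotopy groups on each side of our map to the corresponding $\tau$-inverted, $\rho$-completed (resp.\ additionally $\rho$-inverted) $\R$-motivic mod 2 homology of $BPGL\langle m\rangle$. The arithmetic square map is, under this identification, precisely the canonical localization-at-$\rho$ map by construction.

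Next, substitute Ormsby's formula from Proposition \ref{prop:motivichomologybprm}. Because $\tau$ and $\rho$ appear as independent polynomial generators, and the only defining relations $c(\tau_j)^2 = c(\tau_{j+1})\rho + c(\overline{t}_{j+1})\tau$ are linear in both $\tau$ and $\rho$, inverting $\tau$ and completing at $\rho$ interact with the presentation in the expected way. This produces exactly the source ring displayed in the corollary, and further inverting $\rho$ produces the target.

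Finally, to justify that the map is indeed an inclusion, observe that $\rho$ is a regular element in the source algebra: it is a polynomial generator in $H_{*,*}^{\R}BPGL\langle m\rangle$, and this property is preserved by inverting $\tau$ and by $\rho$-completing (the $\rho$-completion of a $\rho$-torsion-free $\F_2[\rho]$-module remains $\rho$-torsion-free). Hence the $\rho$-localization map is injective. I do not foresee a real obstacle; the only care required is in bookkeeping the order of operations ``invert $\tau$'', ``$\rho$-complete'', ``invert $\rho$'' against the explicit polynomial presentation, and this bookkeeping is routine because these three operations each act in the standard way on the generators.
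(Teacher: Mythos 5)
Your proposal matches the paper's proof exactly: both reduce to $M\F_2$-homology via Proposition~\ref{borelhomology} and then read off the answer from Ormsby's computation in Proposition~\ref{prop:motivichomologybprm}. You have simply spelled out the routine bookkeeping and the injectivity observation that the paper leaves implicit.
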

\begin{proof}
By Proposition \ref{borelhomology}, we may prove this claim instead for the map
\[
\pi_{*,*}^{\R}(M\F_2\otimes BPGL)^{\complete}_\rho[\tau^{-1}]\to\pi_{*,*}^{\R}(M\F_2\otimes BPGL)^{\complete}_\rho[\tau^{-1},\rho^{-1}]
\]
which follows from Proposition \ref{prop:motivichomologybprm}.
\end{proof}


\begin{remark}
    Ormsby's computation may be recovered using the spectral sequence of Section \ref{subsecmotivicslice} with $K=M\F_2$ and $E=BPGL\langle m\rangle$. This computation mirrors exactly the classical computation of $H_*BP\langle m\rangle$ using the Atiyah--Hirzebruch spectral sequence.
\end{remark}

\subsubsection{The $\rho$-local part}\label{subsecTate}
Corollary \ref{cor:bottomrow} determines the bottom row of the arithmetic square for $i_*H\F_2\otimes BPGL\langle m\rangle$. The key input for this calculation was the fact that, working $\tau$-locally, $i_*H\F_2$ homology agrees with $M\F_2$-homology. On the other hand, working $\rho$-locally, we have the following \cite[Theorem 1.2]{BehrensShah}.

\begin{proposition}
   The functor
   \[\Gamma:\mathcal{SH}(\R)_{\mathrm{cell}}[\rho^{-1}]\to \Sp\]
   is an equivalence of categories with inverse given by $i_*(-)[\rho^{-1}]$.
\end{proposition}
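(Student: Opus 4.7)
The plan is to identify $\mathcal{SH}(\R)_{\mathrm{cell}}[\rho^{-1}]$ with $\rho$-local genuine $C_2$-spectra via the Betti realization $\beta$, and in turn with $\Sp$ via geometric fixed points, reducing the proposition to a formal consequence of Propositions \ref{proparithmeticsquare} and \ref{propfixedpointsglobalsec} together with classical equivariant homotopy theory.

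First I would observe that for any $\rho$-local $E \in \mathcal{SH}(\R)_{\mathrm{cell}}$, the $\rho$-adic completion $E^{\complete}_\rho$ vanishes (since $E/\rho \simeq 0$ when $\rho$ acts invertibly), so the arithmetic square of $E$ degenerates to a square whose bottom corners are zero and whose top row is the identity on $E$; this is trivially a pullback. Proposition \ref{proparithmeticsquare} then gives $E \simeq \mathrm{Sing}(\beta(E))$. Because $\beta$ is symmetric monoidal and sends the motivic $\rho$ to the equivariant Euler class $a_\sigma$, both $\beta$ and $\mathrm{Sing}$ are compatible with $\rho$-localization, so they descend to mutually inverse equivalences
\[\beta: \mathcal{SH}(\R)_{\mathrm{cell}}[\rho^{-1}] \xrightarrow{\simeq} \Sp^{C_2}[\rho^{-1}].\]
Combining this with the standard equivalence $\Phi^{C_2}: \Sp^{C_2}[\rho^{-1}] \xrightarrow{\simeq} \Sp$ (whose inverse sends an ordinary spectrum to the same spectrum with trivial $C_2$-action, then $\rho$-localizes), and using that genuine and geometric fixed points agree on $\rho$-local $C_2$-spectra, Proposition \ref{propfixedpointsglobalsec} identifies $\Gamma$ restricted to $\mathcal{SH}(\R)_{\mathrm{cell}}[\rho^{-1}]$ with the composite $\Phi^{C_2} \circ \beta$, which is an equivalence.

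For the inverse, the adjunction $i_* \dashv \Gamma$ restricts to an adjunction between $\Sp$ and $\mathcal{SH}(\R)_{\mathrm{cell}}[\rho^{-1}]$ whose left adjoint is $i_*(-)[\rho^{-1}]$, since $\rho$-localization is left adjoint to the inclusion of $\rho$-local spectra into $\mathcal{SH}(\R)_{\mathrm{cell}}$. A left adjoint to an equivalence is its inverse, so $i_*(-)[\rho^{-1}]$ is the essential inverse of $\Gamma$. The main potential obstacle is verifying explicitly that $\beta$ sends the motivic $\rho$ to the equivariant $\rho$ and hence preserves $\rho$-localization; this amounts to tracking how the class $-1 \in \R^\times$ is recorded in each framework and is implicit in the construction of $\beta$, but deserves explicit verification for a complete argument.
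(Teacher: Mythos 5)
The proposition in the paper is simply cited from Behrens--Shah \cite[Theorem 1.2]{BehrensShah}; no proof is given. Your argument is a genuine (and correct) re-derivation of that statement from the other Behrens--Shah results quoted in the paper, namely Propositions \ref{proparithmeticsquare} and \ref{propfixedpointsglobalsec}, routed through $C_2$-equivariant homotopy theory. The key observations are all sound: if $E$ is $\rho$-local then $E/\rho\simeq 0$, hence $E^\complete_\rho\simeq 0$, so the arithmetic square of $E$ is trivially a pullback and Proposition \ref{proparithmeticsquare} gives $E\simeq\mathrm{Sing}(\beta E)$; then $\Gamma(E)=\Gamma(\mathrm{Sing}(\beta E))=(\beta E)^{C_2}=\Phi^{C_2}(\beta E)$ since $\beta E$ is $a_\sigma$-local (here you write $\rho$-local on the equivariant side where you mean $a_\sigma$-local, but the intent is clear), and $\Phi^{C_2}$ on $a_\sigma$-local objects is the classical equivalence with $\Sp$. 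Identifying $i_*(-)[\rho^{-1}]$ as the restricted left adjoint and hence the inverse is also correct. The compatibilities you flag as needing verification---that $\beta(\rho)=a_\sigma$, and that $\mathrm{Sing}$ preserves $\rho$-locality (which follows from $\mathrm{Sing}(\Sigma^\sigma Y)\simeq\Sigma^{1,1}\mathrm{Sing}(Y)$ via the $\beta\dashv\mathrm{Sing}$ adjunction and monoidality of $\beta$)---are indeed standard.

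The trade-off relative to the paper's one-line citation: your route is self-contained modulo the other cited Behrens--Shah theorems and makes transparent \emph{why} the statement holds (degenerate arithmetic square plus classical facts about geometric fixed points on $a_\sigma$-local $C_2$-spectra). On the other hand, in the original Behrens--Shah development Theorem 1.2 is a more elementary building block that is likely used to establish Theorems 1.10 and 1.12, so your derivation inverts the logical order of the source; this is harmless in the present context, where all three are imported as black boxes, but it would be circular as an independent proof of the Behrens--Shah result. You should also be slightly careful that the paper works implicitly $2$-completely from Section \ref{subsecbetti} onward, so $\Sp$, $\Sp^{C_2}$, and $\Phi^{C_2}$ should be interpreted $2$-completely throughout; the argument goes through verbatim but is worth stating.
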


The following is easy to obtain by appealing to $C_2$-equivariant homotopy and using \cite[Theorem 1.5]{BehrensShah} and \cite[Proposition 4.9]{GreenleesMeier}, or directly by simply adapting the arguments of \cite[Proposition 4.9]{GreenleesMeier} to the motivic setting.

\begin{lemma}\label{lemmarholocalbpgl}
    Under this equivalence, $\Gamma$ sends $BPGL[\rho^{-1}]$ to $H\F_2$. Coning off $(\ov_{m+1},\ov_{m+2},\ldots)$, we have $\Gamma(BPGL\langle m\rangle[\rho^{-1}])\simeq H\F_2[y^{2^{m+1}}]$, as an $H\F_2$-module, where $|y|=1$.
\end{lemma}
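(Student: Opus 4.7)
The plan is to reduce to a known $C_2$-equivariant geometric fixed point computation via Betti realization. By the preceding proposition, $\Gamma$ restricts to an equivalence on the $\rho$-local cellular $\R$-motivic category with inverse $i_*(-)[\rho^{-1}]$. Moreover, by \cite[Theorem 1.5]{BehrensShah}, Betti realization intertwines $\rho$-inversion with $C_2$-geometric fixed points, yielding a natural equivalence $\Gamma(E[\rho^{-1}]) \simeq \Phi^{C_2}\beta(E)$ for any cellular $E \in \mathcal{SH}(\R)$. Specializing to $E = BPGL$ and $E = BPGL\langle m\rangle$ reduces the lemma to the $C_2$-equivariant statements $\Phi^{C_2}BP_\R \simeq H\F_2$ and $\Phi^{C_2}BP_\R\langle m\rangle \simeq H\F_2[y^{2^{m+1}}]$, which are the content of \cite[Proposition 4.9]{GreenleesMeier}.

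For the direct motivic argument the lemma parenthetically suggests, the base case is Voevodsky's computation $\pi_{*,*}M\F_2 = \F_2[\rho, \tau]$: inverting $\rho$ and restricting to weight zero yields the polynomial algebra $\F_2[y]$ with $y := \tau\rho^{-1}$ in topological degree $1$, so $\Gamma(M\F_2[\rho^{-1}]) \simeq H\F_2[y]$. This settles the $m=-1$ case via $BPGL\langle -1\rangle = M\F_2$. The general case would then follow by analyzing the defining cofiber sequences
\[
\Sigma^{2(2^{m+1}-1), 2^{m+1}-1}BPGL\langle m+1\rangle \xrightarrow{\ov_{m+1}} BPGL\langle m+1\rangle \to BPGL\langle m\rangle.
\]
Under $\rho$-inversion the bidegree $(s,w)$ sphere becomes the topological $(s-w)$-sphere, so $\ov_{m+1}$ contributes a self-map of topological degree $2^{m+1}-1$, which must be nullhomotopic on $H\F_2[y^{2^{m+2}}]$ since the target vanishes in degree $2^{m+1} - 1$ (an odd integer less than $2^{m+2}$). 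The cofiber thus splits as $H\F_2[y^{2^{m+2}}] \oplus \Sigma^{2^{m+1}} H\F_2[y^{2^{m+2}}]$, matching $H\F_2[y^{2^{m+1}}]$ as an $H\F_2$-module.

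The main obstacle in the direct approach is that this inductive argument is self-referential: the cofiber sequence expresses $BPGL\langle m\rangle$ in terms of $BPGL\langle m+1\rangle$, with both the source and target of the coning map involving $BPGL\langle m+1\rangle$ itself, so the analysis must simultaneously determine the underlying spectrum and verify the multiplicative identification of the new summand as $y^{2^{m+1}}\cdot H\F_2[y^{2^{m+2}}]$ inside the ambient polynomial ring $H\F_2[y]$ coming from $\Gamma(M\F_2[\rho^{-1}])$. This compatibility is what secures the polynomial structure rather than a mere $H\F_2$-module splitting, and requires tracing the ring maps $BPGL\langle m\rangle \to M\F_2$ through $\rho$-inverted sections. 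The equivariant route of Greenlees-Meier sidesteps this complication by working directly in $C_2$-spectra with the Real orientation, and is the more efficient path to the statement.
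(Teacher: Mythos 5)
Your first route is exactly the paper's: the paper itself only cites \cite[Theorem 1.5]{BehrensShah} (to translate $\Gamma((-)[\rho^{-1}])$ to $\Phi^{C_2}\beta(-)$) and \cite[Proposition 4.9]{GreenleesMeier}, rather than giving a proof, so your primary argument is a correct unpacking of precisely what the authors intend.

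On the ``direct'' alternative: your worry about circularity is real, but it stems from setting up the induction in the wrong direction. The cofiber sequence
\[
\Sigma^{2(2^{m+1}-1),\,2^{m+1}-1}BPGL\langle m+1\rangle \xrightarrow{\ov_{m+1}} BPGL\langle m+1\rangle \to BPGL\langle m\rangle
\]
produces $BPGL\langle m\rangle$ \emph{from} $BPGL\langle m+1\rangle$, so there is no way to ascend from the $m=-1$ base case $M\F_2$; the recursion must run downward. The non-circular version of the direct argument (which is what adapting Greenlees--Meier actually amounts to) is to first establish $\Gamma(BPGL[\rho^{-1}])\simeq H\F_2$ as a separate input --- this is the motivic shadow of $\Phi^{C_2}BP_\R \simeq H\F_2$, which in turn rests on $\Phi^{C_2}MU_\R \simeq MO$ and the vanishing of $\Phi^{C_2}\ov_i$ --- and then observe that $BPGL\langle m\rangle = BPGL/(\ov_{m+1},\ov_{m+2},\ldots)$, so $\Gamma(BPGL\langle m\rangle[\rho^{-1}])$ is the iterated cofiber of the elements $\Gamma(\ov_i[\rho^{-1}])\in\pi_{2^i-1}H\F_2 = 0$ for $i>m$. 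Each such cofiber splits off a suspension by $\Sigma^{2^i}$, giving $H\F_2\otimes\bigotimes_{i>m}(S^0\vee S^{2^i})$, whose Poincar\'e series $\prod_{i>m}(1+t^{2^i}) = (1-t^{2^{m+1}})^{-1}$ identifies it with $H\F_2[y^{2^{m+1}}]$ as an $H\F_2$-module. Your computation of $\Gamma(M\F_2[\rho^{-1}])\simeq H\F_2[y]$ is correct, but it serves as a consistency check rather than a base for induction.
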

 This tells us that the gluing map in the arithmetic square for $i_*H\F_2\otimes BPGL$
\[i_*H\F_2\otimes BPGL[\rho^{-1}]\to (i_*H\F_2\otimes BPGL)^{\complete}_\rho[\tau^{-1},\rho^{-1}]\]
is given on bigraded homotopy by some ring map of the form
\[\mathcal A_*[\rho^{\pm}]\to\F_2[\rho,\tau^{\pm},\overline{t}_i]^{\complete}_{\rho}[\rho^{-1}]\]
To determine this gluing map, we can embed both groups into
\[\pi_{*,*}^\R(M\F_2\otimes M\F_2)^{\complete}_\rho[\tau^{-1},\rho^{-1}]\]
via the Thom reduction map $BPGL\to M\F_2$ and the canonical map $i_*H\F_2\to M\F_2$. 

Understanding these embeddings reduces to understanding the composite 
\[i_*H\F_2[\rho^{-1}]\simeq BPGL[\rho^{-1}]\to M\F_2[\rho^{-1}]\]
Much of the subtlety in our computations comes from the observation that this map is not homotopic to the canonical map of \ref{subsubseccanonicalmap}
\[i_*H\F_2[\rho^{-1}]\to M\F_2[\rho^{-1}]\]
The two maps of course have the same effect in homotopy; we need to pass to homology to tell them apart. The following allows us to describe the effect of the canonical map in homology.

\begin{proposition}\label{propcounitimage}
    Regarding the classical Milnor generators $\xi_i$ as elements of $\mathcal{A}_{*,*}^{\R}$ via the composite
    \[\mathcal{A}_*=\pi_*(H\F_2\otimes H\F_2)\xrightarrow{i_*} \pi_{*,*}^{\R}(i_*H\F_2\otimes i_*H\F_2)\to \pi_{*,*}^{\R}(M\F_2\otimes M\F_2)=\mathcal{A}_{*,*}^{\R}\]
    one has the recursion formulas
    \aln{
    \overline{t}_{0}&:=1\\
    \rho^{2^i}\overline{t}_i&=\xi_{i-1}^2(\tau_0\rho+\tau)+\xi_i\rho+\overline{t}_{{i-1}}\tau^{2^{i-1}}
    }
\end{proposition}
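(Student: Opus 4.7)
The plan is to proceed by induction on $i$, using that the composite map is a morphism of Hopf algebroids compatible with the coproduct structure. The base case $i=1$ reduces (setting $\xi_0=\overline{t}_0=1$ and using characteristic $2$) to $\xi_1\rho=\rho^2\overline{t}_1+\tau_0\rho$, which in the $\rho$-torsion-free bidegree $(1,0)$ gives the familiar formula $\xi_1=\rho\overline{t}_1+\tau_0$. This can be verified directly by dualizing: the $\R$-motivic operation dual to $\tau_0$ restricts to classical $Sq^1$ under the canonical map of Steenrod algebras $\mathcal{A}\to\mathcal{A}^{\R}$, while the $\rho\overline{t}_1$ correction accounts for the nontrivial class in $\pi_{-1,-1}^{\R}M\F_2=\F_2\{\rho\}$.

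For the inductive step, assume the formula holds for $\xi_1,\dots,\xi_{i-1}$. The strategy is to apply the classical Milnor coproduct
\[\psi(\xi_i)=\sum_{j=0}^{i}\xi_{i-j}^{2^j}\otimes\xi_j\]
together with the $\R$-motivic Hopf algebroid coproducts on $\overline{t}_j$ and $\tau_j$, noting that $\rho$ and $\tau$ are primitive. Substituting the inductive expressions for the lower $\xi_j$'s into both coproducts and comparing the resulting identities in $\mathcal{A}_{*,*}^{\R}\otimes\mathcal{A}_{*,*}^{\R}$, one pins down $\xi_i\rho$ up to a $\rho$-torsion ambiguity; the torsion-freeness of $\mathcal{A}_{*,*}^{\R}$ over $\F_2[\rho,\tau]$ then gives $\xi_i$ uniquely.

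The key computational input is the defining quadratic relation $\tau_{i-1}^2=\tau\overline{t}_i+\rho\tau_i+\rho\tau_0\overline{t}_i$. Squaring the inductive expression for $\xi_{i-1}$ produces $\tau_0^2$ alongside $\rho^2\overline{t}_{i-1}^2$; applying the quadratic relation (with $i$ replaced by a smaller index, then iterating) converts $\tau_0^2$ into a sum containing $\tau\overline{t}_i$, which is exactly the mechanism that extracts the leading coefficient $\rho^{2^i}$ of $\overline{t}_i$ on the left-hand side of the stated recursion.

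The main technical obstacle will be controlling the bookkeeping of the $\tau$- and $\rho$-twisted correction terms arising from the motivic coproducts on $\overline{t}_j$ and $\tau_j$. Concretely, one must verify that, after substituting the inductive expressions and applying the quadratic relations, all higher-order corrections involving $\tau_j$ with $j\ge 1$ and $\overline{t}_j$ with $j<i-1$ cancel precisely against contributions from $\overline{t}_{i-1}\tau^{2^{i-1}}$ and the expansion of $\xi_{i-1}^2(\tau_0\rho+\tau)$. A cleaner alternative, avoiding some of this bookkeeping, is to verify the identity separately after $\tau$-inversion and after $\rho$-inversion --- using Proposition \ref{borelhomology} to reduce the $\tau$-local check to a statement about the image of $\xi_i$ in $M\F_2$-cohomology --- and to glue the two partial checks using the arithmetic square of Proposition \ref{proparithmeticsquare} together with the injectivity of the localization map in the relevant bidegrees.
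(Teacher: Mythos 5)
The paper's proof is a one-liner: it cites Hu--Kriz \cite[Theorem 6.18]{HK}, where exactly these recursion formulas are established in the $C_2$-equivariant dual Steenrod algebra $\mathcal{A}_{*,*}^{C_2}$, and then pulls the result back along the injection $\mathcal{A}_{*,*}^{\R}\hookrightarrow\mathcal{A}_{*,*}^{C_2}$ (Betti realization). Your proposal is a genuinely different route --- an internal inductive computation rather than an appeal to a known equivariant result --- but as written it is a plan rather than a proof, and the paper avoids precisely the work your plan would have to do.

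Two specific points. First, your base case is dimensionally correct --- after cancelling a factor of $\rho$ using that $\mathcal{A}_{*,*}^{\R}$ is $\rho$-torsion free, the $i=1$ formula reduces to $\xi_1 = \rho\overline{t}_1 + \tau_0$, and bidegree $(1,0)$ admits exactly $\tau_0$ and $\rho\overline{t}_1$. But "verified directly by dualizing" is doing real work: the map $\mathcal{A}_*\to\mathcal{A}_{*,*}^{\R}$ lands in weight $0$, and since both $\tau_0$ and $\rho\overline{t}_1$ are candidates, you need more than knowing that the operation dual to $\tau_0$ maps to classical $\mathrm{Sq}^1$; you need to rule out $\xi_1\mapsto\tau_0$ (no $\rho\overline{t}_1$ term). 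That extra coefficient is exactly the subtle content, and it is where Hu--Kriz's computation enters in the paper. Second, your inductive step relies on the coproduct comparison being rigid enough to pin down $\xi_i$, but the $\R$-motivic object is a Hopf algebroid over $\F_2[\rho,\tau]$ with nontrivial right unit $\eta_R(\tau)=\tau+\rho\tau_0$, so the coproduct comparison is not a straight Hopf-algebra comparison; you would need to track the right-unit twists in $\psi(\overline{t}_j)$ and $\psi(\tau_j)$ systematically. You flag this honestly as "the main technical obstacle," and it is --- I would not count the inductive step as established until this bookkeeping is carried out or bypassed. Your suggested $\rho$-local/$\tau$-local gluing alternative has the same status: after $\rho$-inversion the canonical map is the unit $\eta:H\F_2\to H\F_2[y]$, which on homotopy does not see the $\tau_0$ summand, so the gluing via the arithmetic square would carry most of the weight and would itself need careful justification. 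In short: your approach, if completed, would give a self-contained verification of the Hu--Kriz formulas and is a legitimate alternative; but what is on the page is a sketch with two acknowledged gaps where the paper instead substitutes a citation.
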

\begin{proof}
    We appeal to $C_2$-equivariant homotopy, where the corresponding formulas were computed by Hu--Kriz \cite[Theorem 6.18]{HK}. The result follows $\R$-motivically by the factorization
    \[
    \begin{tikzcd}
        \mathcal A_*\arrow[r]\arrow[dr]&\mathcal{A}_{*,*}^{\R}\arrow[d,hookrightarrow]\\
        &\mathcal{A}_{*,*}^{C_2}
    \end{tikzcd}
    \]
\end{proof}

Let can denote the equivalence
\[\mathrm{can}:H\F_2[y]\xrightarrow{\simeq} \Gamma(M\F_2[\rho^{-1}])\]
where $H\F_2[y]$ is the free $E_1$-$H\F_2$-algebra on a class $y$ in degree 1, and can is defined by giving $\Gamma(M\F_2[\rho^{-1}])$ an $H\F_2$-algebra structure via the canonical map of \ref{subsubseccanonicalmap} and by sending $y$ to $\tau/\rho\in\pi_1\Gamma(M\F_2[\rho^{-1}])$.

\begin{proposition}\label{propnotunit}
The map
\[H\F_2\simeq\Gamma(BPGL[\rho^{-1}])\to\Gamma(M\F_2[\rho^{-1}])\xrightarrow{\mathrm{can}^{-1}}H\F_2[y]\]
is not homotopic to the unit map $\eta:H\F_2\to H\F_2[y]$. 
\end{proposition}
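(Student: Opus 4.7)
The plan is to argue in the $\rho$-local motivic category. By the Behrens--Shah equivalence $\Gamma\colon\mathcal{SH}_{\mathrm{cell}}(\R)[\rho^{-1}]\xrightarrow{\simeq}\Sp$, the unit $\eta\colon H\F_2\to H\F_2[y]$ corresponds, through $\mathrm{can}^{-1}$, exactly to the $\rho$-local canonical map $c\colon i_*H\F_2[\rho^{-1}]\to M\F_2[\rho^{-1}]$. It therefore suffices to show $\psi\ne c$ as maps in $\mathcal{SH}_{\mathrm{cell}}(\R)[\rho^{-1}]$, where $\psi=\mathrm{Thom}\circ\Psi$ and $\Psi$ is the equivalence of Lemma \ref{lemmarholocalbpgl}. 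I argue by contradiction, assuming $\psi=c$.

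If $\psi=c$, then smashing with $i_*H\F_2$ on the left and post-composing with $c\otimes\mathrm{id}$ would give equal maps
\[c\otimes\psi=c\otimes c\colon(i_*H\F_2)^{\otimes 2}[\rho^{-1}]\to(M\F_2)^{\otimes 2}[\rho^{-1}]\]
I evaluate both on the class $\xi_1\in\pi_{1,0}((i_*H\F_2)^{\otimes 2}[\rho^{-1}])$ arising from $\xi_1\in\mathcal{A}_1$ via the symmetric monoidality of $i_*$ and Lemma \ref{lemma:homotopyofglobalsections}. By the $i=1$ case of Proposition \ref{propcounitimage} (using $\xi_0=\overline{t}_0=1$ and $2\tau=0$ mod 2),
\[(c\otimes c)_*(\xi_1)=\tau_0+\rho\overline{t}_1\in\mathcal{A}^{\R}_{1,0}[\rho^{-1}]\]

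On the other hand, using $c\otimes\mathrm{Thom}=(\mathrm{id}\otimes\mathrm{Thom})\circ(c\otimes\mathrm{id})$, the map $c\otimes\psi$ factors as
\[(i_*H\F_2)^{\otimes 2}[\rho^{-1}]\xrightarrow{\mathrm{id}\otimes\Psi}i_*H\F_2\otimes BPGL[\rho^{-1}]\xrightarrow{c\otimes\mathrm{id}}M\F_2\otimes BPGL[\rho^{-1}]\xrightarrow{\mathrm{id}\otimes\mathrm{Thom}}(M\F_2)^{\otimes 2}[\rho^{-1}]\]
By Proposition \ref{prop:motivichomologybprm} with $m=\infty$, the third bigraded homotopy ring is $\F_2[\rho^{\pm},\tau,\overline{t}_i\mid i\ge 1]$, which embeds into $\mathcal{A}^{\R}_{*,*}[\rho^{-1}]$ as a polynomial subring under $\mathrm{id}\otimes\mathrm{Thom}$. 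Hence $(c\otimes\psi)_*(\xi_1)$ lies in $\F_2[\rho^{\pm},\tau,\overline{t}_i]$. But the weight-zero stem-one part of $\mathcal{A}^{\R}_{*,*}[\rho^{-1}]$ is three-dimensional, spanned by the linearly independent classes $\tau_0$, $\rho\overline{t}_1$, $\rho^{-1}\tau$, of which the subring $\F_2[\rho^{\pm},\tau,\overline{t}_i]$ contains only $\rho\overline{t}_1$ and $\rho^{-1}\tau$. Since $\tau_0+\rho\overline{t}_1$ has nonzero $\tau_0$-component, it is not in the subring, contradicting $c\otimes\psi=c\otimes c$ on $\xi_1$.

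The main obstacle is bookkeeping: one must verify that $\xi_1$ gives a well-defined element of $\pi_{1,0}((i_*H\F_2)^{\otimes 2}[\rho^{-1}])$ hit consistently by both composites. This is handled by the projection formula $\Gamma(i_*K\otimes E)\simeq K\otimes\Gamma(E)$ and the identification $\pi_{s,0}(i_*X)\cong\pi_s X$ in weight zero, though it is somewhat delicate. Beyond this, the contradiction is essentially formal from the explicit formula in Proposition \ref{propcounitimage}.
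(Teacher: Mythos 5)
Your proof is correct and takes essentially the same approach as the paper: reduce to a $\rho$-local statement, assume the two maps agree, smash with $i_*H\F_2$, evaluate on $\xi_1$ using Proposition~\ref{propcounitimage} to get $\tau_0+\rho\overline{t}_1$, and derive a contradiction from a factorization through a smaller subring. The only genuine variation is in the finishing step: you factor through $M\F_2\otimes BPGL[\rho^{-1}]$ and appeal to Ormsby's computation (Proposition~\ref{prop:motivichomologybprm}) to identify the image with the polynomial subring $\F_2[\rho^{\pm},\tau,\overline{t}_i]$, then conclude by a dimension count in stem~1, weight~0; the paper instead factors through $M\F_2\otimes M\Z$ (using that $BPGL\to M\F_2$ passes through $M\Z$) and reaches the contradiction more indirectly by bootstrapping through Voevodsky's relations $\tau_i^2=\tau\overline{t}_{i+1}+\rho\tau_{i+1}+\rho\tau_0\overline{t}_{i+1}$ to show all $\tau_j$ would lie in the image. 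Your route is arguably a bit more direct and avoids passing to $\rho$-completion; both hinge on the same core observation that $\tau_0$ cannot appear in the image of the relevant cotensor-product subalgebra.
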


\begin{proof}
If it were, the composite 
\[i_*H\F_2\otimes i_*H\F_2[\rho^{-1}]\simeq i_*H\F_2\otimes BPGL[\rho^{-1}]\to M\F_2\otimes M\F_2[\rho^{-1}]\]
would be homotopic to the canonical map
\[i_*H\F_2\otimes i_*H\F_2[\rho^{-1}]\to M\F_2\otimes M\F_2[\rho^{-1}]\]
By Proposition \ref{propcounitimage}, this map sends the class $\xi_1$ to $\rho\overline{t}_1+\tau_0$. This would imply that $\tau_0$ is in the image of the (injective) map 
\[\pi_{*,*}^{\R}(M\F_2\otimes M\Z)^{\complete}_\rho[\tau^{-1},\rho^{-1}]\to\pi_{*,*}^{\R}(M\F_2\otimes M\F_2)^{\complete}_\rho[\tau^{-1},\rho^{-1}]\]
since the reduction map $BPGL\to M\F_2$ factors through $M\Z$.
Voevodsky's relations 
\[\tau_i^2=\tau \overline{t}_{i+1}+\rho\tau_{i+1}+\rho\tau_0\overline{t}_{i+1}\]
would then imply that all $\tau_i$'s are in the image of this map, so that it is an isomorphism, a contradiction.
\end{proof}


\begin{remark}\label{hukrizerror}
In \cite{BPO}, Hu--Kriz compute the homology groups $H_*BP_\R^{C_2}$ using the isotropy separation sequence. By Corollary \ref{cor:betaE}, this is the same as computing 
\[\pi_{*,0}^{\R}(i_*H\F_2\otimes BPGL)\]
Our computations differ from theirs, and this is related to what seems to be an error on page 114, where they claim that $\eta_R(u_\sigma^{-1})$ is in the image of the reduction map
\[f:\pi_\star(\tilde{E}C_2\otimes F(E{C_2}_+,H\underline{\F_2}\otimes BP_\R))\to \pi_\star(\tilde{E}C_2\otimes F(E{C_2}_+,H\underline{\F_2}\otimes H\underline{\F_2}))\]
In their notation, the claim is that $f(\rho)=\eta_R(\sigma)$. As in the proof of the previous proposition, this would imply that $\tau_0$ is in the image of this map, which leads to contradictions.
\end{remark}

The composite in Proposition \ref{propnotunit} gives \textit{some} ring map 
\[H\F_2\to H\F_2[y]\]
and we identify it by showing it is structured enough to apply Mahowald's theorem on $H\F_2$ \cite{mahow}.

\begin{theorem}\label{thmtatefrob}
The composite
\[H\F_2\simeq\Gamma(BPGL[\rho^{-1}])\to \Gamma(M\F_2[\rho^{-1}])\xrightarrow{\mathrm{can}^{-1}}H\F_2[y]\simeq\prod\limits_{i\ge0}\Sigma^i H\F_2\]
has components $Sq^i:H\F_2\to \Sigma^i H\F_2$.
\end{theorem}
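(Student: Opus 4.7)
The plan is to identify the given ring map with (the connective cover of) the Nikolaus--Scholze Tate-valued Frobenius $\varphi_2\colon H\F_2\to H\F_2^{tC_2}$, and then to use Mahowald's theorem on $H\F_2$ to pin down its components. First, by Behrens--Shah, $\rho$-localization corresponds under Betti realization to geometric fixed points, so the motivic composite is equivalent to the equivariant composite $H\F_2\simeq\Phi^{C_2}BP_\R\to\Phi^{C_2}H\underline{\F_2}$ displayed in the second half of Theorem \ref{mainthmtate}. Since the Thom reduction $BP_\R\to H\underline{\F_2}$ is a map of $E_\infty$-ring spectra and $\Phi^{C_2}$ is lax symmetric monoidal, this is an $E_\infty$-ring map from $H\F_2$ to the connective cover of $H\F_2^{tC_2}\simeq H\F_2[y^{\pm 1}]$. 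Moreover, this composite agrees with the connective cover of $\varphi_2$: applying $\Phi^{C_2}$ to $BP_\R\to H\underline{\F_2}$ produces the comparison map to the Tate construction because the Tate-valued Frobenius of an $E_\infty$-ring $R$ is assembled from the diagonal $R\to R^{\otimes 2}$ viewed as a $C_2$-equivariant map, and the Real structure of $BP_\R$ provides exactly such a diagonal.

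Second, Mahowald's theorem presents $H\F_2$ as the free $E_2$-algebra with a null-homotopy of $2$ (equivalently, as the Thom spectrum of an $E_2$-extension $\Omega^2S^3\to BO$ of $\eta$). Consequently, the space of $E_2$-ring maps $H\F_2\to R$ into any $E_2$-ring $R$ is a torsor (when nonempty) over the space of null-homotopies of $2\in\pi_0R$, and for $R=H\F_2[y]$ this rigidity implies that there is at most one non-trivial $E_2$-ring map up to homotopy. The total Steenrod square
\[
\mathrm{Sq}=\sum_{i\ge 0}Sq^i\cdot y^i\colon H\F_2\to H\F_2[y]
\]
is an $E_\infty$-ring map (multiplicativity is the Cartan formula, with the higher coherence coming from the $E_\infty$-structure on the Steenrod operations) and is non-trivial since $Sq^1\neq 0$. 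Since our composite is also a non-trivial $E_\infty$-ring map by Proposition \ref{propnotunit}, the Mahowald rigidity forces it to agree with $\mathrm{Sq}$, whose $i$-th component is $Sq^i$ by construction.

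The main obstacle will be the first step: precisely identifying our composite with the connective cover of the Tate-valued Frobenius. The Nikolaus--Scholze construction of $\varphi_2$ uses the commutative ring structure of $H\F_2$ and the diagonal $C_2$-equivariant map, whereas our composite arises from the geometric data of $BP_\R$; bridging the two requires unwinding the relationship between $\Phi^{C_2}$ applied to the Thom reduction and the Frobenius, using the universal property of $BP_\R$ as a Real oriented ring spectrum. If this identification proves difficult to make precise, an alternative is to bypass $\varphi_2$ entirely: both our composite and $\mathrm{Sq}$ are non-trivial $E_\infty$-ring maps $H\F_2\to H\F_2[y]$, so Mahowald's theorem applied directly forces them to coincide. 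As a fall-back, one can bootstrap componentwise, using that $\mathcal A^1=\F_2\{Sq^1\}$ is one-dimensional (pinning down the $y$-coefficient from non-triviality), and then extracting the higher components from multiplicativity (Cartan) and the $E_2$-structure (Adem), since these axioms together with $Sq^0=1$ characterize the $Sq^i$ uniquely.
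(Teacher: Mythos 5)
Your overall strategy --- show the composite is a non-trivial $E_2$-ring map $H\F_2\to H\F_2[y]$, invoke Mahowald's theorem to get that there is a unique such map up to homotopy, and identify it with a map whose components are $Sq^i$ (either the Nikolaus--Scholze Tate-valued Frobenius or the total square directly) --- is exactly the route the paper takes, and the fallback you describe in your last paragraph is essentially the paper's proof. However, there is a genuine gap in the step where you establish the structured multiplicativity of the composite.

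You write that ``the Thom reduction $BP_\R\to H\underline{\F_2}$ is a map of $E_\infty$-ring spectra,'' but $BP_\R$ is not known to carry an $E_\infty$-ring structure (it is only a multiplicative retract of $MU_\R$, and even non-equivariantly Lawson has shown $BP$ does \emph{not} admit an $E_\infty$, or even $E_{12}$, structure at $p=2$). Applying $\Phi^{C_2}$ therefore does not hand you an $E_\infty$-ring map out of $H\F_2$, and without that the Mahowald rigidity has nothing to grip. The paper sidesteps this by never passing through $BP_\R$ (or $BPGL$) as a ring: it observes that under the identification $H\F_2\simeq\Gamma(i_*H\F_2[\rho^{-1}])$ the composite is forced to agree with the \emph{unit} map $H\F_2\to\Gamma(MGL_{(2)}[\rho^{-1}])\simeq MO_{(2)}$ followed by the $E_\infty$ Thom reduction $MGL_{(2)}\to M\F_2$; both of those are $E_\infty$ on the nose, and the $BPGL$ summand only enters in showing that the unit map factors through it, not in any multiplicative capacity. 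If you want to run your argument equivariantly you would need to make the analogous move through $MU_\R\to H\underline{\F_2}$ and the unit of $MO$, together with a check that the equivalence $\Phi^{C_2}BP_\R\simeq H\F_2$ is compatible with these. The rest of your proposal --- the torsor description of $E_2$-maps out of $H\F_2$, the count $\pi_0\mathrm{Map}_{E_2}(H\F_2,H\F_2[y])\cong\F_2$, the invocation of Proposition \ref{propnotunit} to see the composite is not the unit, and the identification of the nontrivial map with $\varphi_2$ (hence $\sum_i Sq^i y^i$ via \cite[IV.1.5]{NS}) --- matches the paper and is correct; only the $E_\infty$ input needs repair.
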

\begin{proof}
We note first that the map
\[H\F_2=\Gamma(i_*H\F_2[\rho^{-1}])\simeq\Gamma(BPGL[\rho^{-1}])\to\Gamma(M\F_2[\rho^{-1}])\]
is an $E_\infty$ map since it may be factored as
\[\Gamma(i_*H\F_2[\rho^{-1}])\simeq\Gamma(BPGL[\rho^{-1}])\to\Gamma(MGL_{(2)}[\rho^{-1}])\to \Gamma(M\F_2[\rho^{-1}])\]
The first map is the unit map $H\F_2\to MO$, which is $E_\infty$, and the map $MGL_{(2)}\to M\F_2$ is $E_\infty$.

Mahowald's theorem on $H\F_2$ implies that there is a pushout square in $E_2$-algebras in $\Sp$
\[
\begin{tikzcd}
\text{Free}_{E_2}(\mathbb S)\arrow[r,"\overline{0}"]\arrow[d,"\overline{2}"']&\mathbb S\arrow[d]\\
\mathbb S\arrow[r]&H\F_2
\end{tikzcd}
\]
which gives
\[\pi_0\text{Map}_{E_2}(H\F_2,\Gamma(M\F_2[\rho^{-1}]))\cong\pi_1\Gamma(M\F_2[\rho^{-1}])=\F_2\]
There is thus a unique such $E_2$-map not homotopic to the unit map $\eta$. We conclude by observing that the Tate-valued Frobenius 
\[\varphi_2:H\F_2\to H\F_2^{tC_2}\]
of Nikolaus--Scholze is an $E_\infty$ map that factors thru the connective cover 
\[H\F_2[y]\simeq\Gamma(M\F_2[\rho^{-1}])\simeq\Phi^{C_2}H\underline{\F_2}\]
and admits the above description by \cite[IV.1.5]{NS}.
\end{proof}



\subsection{The Mayer--Vietoris sequence for $i_*H\F_2\otimes BPGL$}\label{subsecMVseq}
Theorem \ref{thmtatefrob} may be used to describe explicitly the gluing map in the arithmetic square for $i_*H\F_2\otimes BPGL$. To determine its effect on homotopy, we need the following lemma.
\begin{lemma}\label{lemmacapproduct}
The map induced on homotopy by 
\[H\F_2\otimes H\F_2\xrightarrow{1\otimes Sq^i}H\F_2\otimes \Sigma^i H\F_2\] 
is the cap product
\[-\cap Sq^i:\mathcal A_*\xrightarrow{\Delta}\mathcal A_*\otimes\mathcal A_*\xrightarrow{1\otimes\langle-,Sq^i\rangle}\mathcal A_{*-i}\]
\end{lemma}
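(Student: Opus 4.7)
The plan is to compare the two maps $\mathcal{A}_* \to \mathcal{A}_{*-i}$ by pairing both outputs with an arbitrary $\theta \in \mathcal{A}^{*-i}$ via the Kronecker pairing $\langle -, - \rangle : \mathcal{A}_* \otimes \mathcal{A}^* \to \F_2$, then invoking non-degeneracy of this pairing in each graded degree (each piece of $\mathcal{A}_*$ is finite-dimensional with $\mathcal{A}^*$ as $\F_2$-linear dual). This reduces a comparison of two maps $\mathcal{A}_n \to \mathcal{A}_{n-i}$ to a comparison of scalars in $\F_2$ indexed by $\theta \in \mathcal{A}^{n-i}$.

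The key topological input is the description of the pairing in our setup as $\langle \beta, \theta \rangle = \mu_* \circ (1 \otimes \theta)_*(\beta) \in \pi_0 H\F_2 = \F_2$ for $\beta \in \mathcal{A}_n$ and $\theta \in \mathcal{A}^n$, where $\mu$ is the multiplication of $H\F_2$. Using this, I would first handle the topological side: by functoriality, $(1 \otimes \theta) \circ (1 \otimes Sq^i) = 1 \otimes (\theta \circ Sq^i)$, and composition of stable cohomology operations realizes the product in $\mathcal{A}^*$, yielding
\[\langle (1 \otimes Sq^i)_* \alpha, \theta \rangle = \mu_* \circ (1 \otimes (\theta \cdot Sq^i))_*(\alpha) = \langle \alpha, \theta \cdot Sq^i \rangle\]

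Next, for the cap product side, with $\Delta(\alpha) = \sum \alpha' \otimes \alpha''$, the standard Hopf-algebra duality between the product in $\mathcal{A}^*$ and the coproduct $\Delta$ in $\mathcal{A}_*$ gives
\[\langle \alpha \cap Sq^i, \theta \rangle = \sum \langle \alpha', \theta \rangle \langle \alpha'', Sq^i \rangle = \langle \Delta(\alpha), \theta \otimes Sq^i \rangle = \langle \alpha, \theta \cdot Sq^i \rangle\]
Both expressions agree for every $\theta$, so the two maps coincide.

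The main obstacle is really just convention-bookkeeping: fixing the order of multiplication in $\mathcal{A}^*$ relative to composition, and identifying which tensor factor of $\Delta(\alpha)$ plays the role of $\alpha'$ versus $\alpha''$, so that the stated formula $(1 \otimes \langle -, Sq^i \rangle) \circ \Delta$ really produces the element that pairs with $\theta$ to yield $\langle \alpha, \theta \cdot Sq^i \rangle$. Once these are pinned down consistently, the argument reduces to the two short displayed computations above.
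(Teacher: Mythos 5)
Your proof is correct, but it takes a somewhat different route from the paper's. You reduce the comparison to the non-degeneracy of the Kronecker pairing $\mathcal{A}_* \otimes \mathcal{A} \to \F_2$ in each finite-dimensional graded piece, and then verify the pairing against every $\theta \in \mathcal{A}^{*-i}$: the topological side gives $\langle (1\otimes Sq^i)_*\alpha, \theta\rangle = \langle \alpha, \theta \cdot Sq^i\rangle$ by functoriality and the fact that composition of operations realizes the product in $\mathcal{A}$, and the algebraic side gives the same answer by the duality between $\Delta$ on $\mathcal{A}_*$ and multiplication on $\mathcal{A}$. The paper instead observes that $(1\otimes Sq^i)_*$ is a map of $\mathcal{A}_*$-comodules (being induced by a map of left $H\F_2$-modules) and that $\mathcal{A}_{*-i}$ is cofree, so such a map is already determined by its composition with the counit $\epsilon$; one then checks that $\epsilon \circ (1\otimes Sq^i)_* = \langle -, Sq^i\rangle$, which is essentially your computation specialized to $\theta = 1$. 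The two arguments are close cousins — cofreeness is doing the work that non-degeneracy does for you — but the paper's version only requires checking one scalar-valued composite, at the cost of invoking the comodule structure, whereas yours is self-contained linear algebra at the cost of checking a family of pairings. Your hedging about product/coproduct conventions is warranted: the Milnor convention $\langle \alpha, \theta\theta'\rangle = \sum \langle \alpha', \theta\rangle\langle \alpha'', \theta'\rangle$ with $\Delta(\alpha) = \sum \alpha' \otimes \alpha''$ is precisely what makes both displayed computations land on $\langle\alpha, \theta\cdot Sq^i\rangle$ rather than $\langle\alpha, Sq^i\cdot\theta\rangle$, and you should state this explicitly rather than flag it as a loose end.
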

\begin{proof}
The map induced on homotopy by $1\otimes Sq^i$ is an $\mathcal A_*$-comodule map, and $\mathcal A_{*-i}$ is a cofree $\mathcal A_*$-comodule. It suffices to show the composite 
\[\mathcal A_*\xrightarrow{(1\otimes Sq^i)_*}\mathcal A_{*-i}\xrightarrow{\epsilon}\F_2[i]\]
coincides with the composite
\[\mathcal A_*\xrightarrow{\Delta}\mathcal A_*\otimes\mathcal A_*\xrightarrow{1\otimes\langle-,Sq^i\rangle}\mathcal A_{*-i}\xrightarrow{\epsilon}\F_2[i]\]
where $\epsilon:\mathcal A_*\to \F_2$ is the coidentity map in the Hopf algebra $\mathcal A_*$. Under the isomorphism $\mathcal A\cong\Hom_{\F_2}(\mathcal A_*,\F_2)$, the latter map corresponds to $Sq^i$. The same is true of the former map as the pairing $\mathcal A_*\otimes_{\F_2}\mathcal A\to \F_2$
is induced by the map in $\Sp$
\[(H\F_2\otimes H\F_2)\otimes_{H\F_2}F(H\F_2,H\F_2)\to H\F_2\otimes H\F_2\xrightarrow{\mu}H\F_2\qedhere\]
\end{proof}

\begin{proposition}\label{proprightmap}
The map
\[\pi_{*,*}^{\R}(i_*H\F_2\otimes BPGL)[\rho^{-1}]\to \pi_{*,*}^{\R}(M\F_2\otimes BPGL)^{\complete}_\rho[\tau^{-1},\rho^{-1}]\]
is given by the map
\[\varphi:\mathcal A_*[\rho^{\pm}]\to \F_2[\tau^{\pm},\rho,\overline{t}_i]^{\complete}_{\rho}[\rho^{-1}]\]
of $\F_2[\rho^{\pm}]$-algebras sending
\[\varphi(\xi_i)=\rho^{2^i-1}\overline{t}_i+\Big(\frac{\tau^{2^{i-1}}}{\rho}\Big)\overline{t}_{{i-1}}\] 
\end{proposition}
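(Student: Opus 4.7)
The plan is to identify the map $\varphi$ by postcomposing with the inclusion into $\pi_{*,*}^{\R}(M\F_2\otimes M\F_2)^{\complete}_\rho[\tau^{-1},\rho^{-1}]$ induced by the Thom reduction on the second factor; this inclusion is injective by the computation in Proposition \ref{prop:motivichomologybprm}, so $\varphi(\xi_i)$ is determined by its image in $\mathcal{A}_{*,*}^{\R}[\tau^{-1},\rho^{-1}]^{\complete}_\rho$. By the naturality of the arithmetic square together with Proposition \ref{borelhomology}, this composite agrees on homotopy with the composite of motivic maps
\[
(i_*H\F_2\otimes BPGL)[\rho^{-1}]\simeq i_*H\F_2\otimes i_*H\F_2[\rho^{-1}]\xrightarrow{1\otimes g} i_*H\F_2\otimes M\F_2[\rho^{-1}]\xrightarrow{\mathrm{can}\otimes 1} M\F_2\otimes M\F_2[\rho^{-1}],
\]
where $g$ denotes the $\rho$-local Thom reduction $BPGL[\rho^{-1}]\to M\F_2[\rho^{-1}]$ transported along the equivalence of Lemma \ref{lemmarholocalbpgl}, followed by inclusion into the $\rho$-completed and $\tau$-inverted target.

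The first stage is analyzed using $\Gamma$: applying the projection formula identifies $\Gamma(1\otimes g)$ with $1\otimes \Gamma(g)$, and by Theorem \ref{thmtatefrob} the map $\Gamma(g)$ has components $Sq^j$ under the identification $\Gamma(M\F_2[\rho^{-1}])\simeq H\F_2[y]$ with $y$ corresponding to $\tau/\rho$. Lemma \ref{lemmacapproduct} then gives the induced map on $\pi_*$:
\[
\xi_i\longmapsto \sum_{j\ge 0}(\xi_i\cap Sq^j)\,y^j,
\]
and the Milnor coproduct $\psi(\xi_i)=\sum_{k+l=i}\xi_k^{2^l}\otimes \xi_l$ together with the Milnor duality pairing reduces this to a finite sum involving the $\xi_k^{2^{i-k}}$.

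The second stage is the canonical map on the first factor, which on $\pi_{*,0}^{\R}$ sends each classical Milnor generator in the summands $\xi_k^{2^{i-k}}$ to its canonical image in $\mathcal{A}_{*,*}^{\R}$ as prescribed by Proposition \ref{propcounitimage}, and sends $y\mapsto \tau/\rho$. One then rewrites the resulting element of $\mathcal{A}_{*,*}^{\R}[\rho^{-1}]$ using the relation $\tau_k^2=\tau\overline{t}_{k+1}+\rho\tau_{k+1}+\rho\tau_0\overline{t}_{k+1}$ to eliminate mixed terms, and the claim is that the expression collapses to $\rho^{2^i-1}\overline{t}_i+(\tau^{2^{i-1}}/\rho)\overline{t}_{i-1}$, which already lies in the $\tau_k$-free subring $\F_2[\tau^{\pm},\rho,\overline{t}_i]^{\complete}_\rho[\rho^{-1}]$ and is therefore precisely the image of $\varphi(\xi_i)$.

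The main obstacle is the final algebraic simplification: the combined formulas produce many mixed monomials in $\tau_0,\tau_1,\ldots,\overline{t}_1,\overline{t}_2,\ldots$, and one must verify that, after substituting the recursion of Proposition \ref{propcounitimage} and applying the Voevodsky relations, all terms involving the $\tau_k$'s cancel. I would carry this out by induction on $i$, using the inductive formula $\xi_i=\rho^{2^i-1}\overline{t}_i+\xi_{i-1}^2\tau_0+\xi_{i-1}^2(\tau/\rho)+\overline{t}_{i-1}\tau^{2^{i-1}}/\rho$ (obtained from Proposition \ref{propcounitimage}) to propagate the cancellation from $\xi_{i-1}$ to $\xi_i$, the base case $i=1$ reducing to the direct verification $\xi_1+y\mapsto \rho\overline{t}_1+\tau/\rho$ modulo the $\tau_k$-ideal.
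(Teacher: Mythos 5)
Your overall strategy matches the paper's: pass to $\pi_{*,*}^{\R}(M\F_2\otimes M\F_2)^{\complete}_\rho[\tau^{-1},\rho^{-1}]$ where the two maps become comparable, use Theorem~\ref{thmtatefrob} and Lemma~\ref{lemmacapproduct} to express the map as a sum of cap products, and then invoke Proposition~\ref{propcounitimage}. However, two of your steps are off in ways that would derail the computation.

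First, the assignment $y\mapsto\tau/\rho$ is wrong as written. The class $y$ lives in the \emph{right-hand} factor $M\F_2[\rho^{-1}]$, so in the two-sided dual Steenrod algebra $\pi_{*,*}^{\R}(M\F_2\otimes M\F_2)$ it lands on $\eta_R(\tau)/\rho=\tau_0+\tau/\rho$, not on the left copy $\tau/\rho$. This is exactly the distinction the paper is careful about. Your own base case exposes the issue: with $\xi_1=\rho\ov{t}_1+\tau_0$, one gets $\xi_1+y=\rho\ov{t}_1+\tau_0+\tau/\rho$ if $y=\tau/\rho$, and the $\tau_0$ does not go away --- the Voevodsky relation only rewrites $\tau_k^2$, never a bare $\tau_0$. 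Working ``modulo the $\tau_k$-ideal,'' as you propose, would only prove a congruence, not the exact identity of the proposition. Once $y$ is correctly identified as $\eta_R(\tau)/\rho$, the $\tau_0$ cancels on the nose.

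Second, your account of the cap-product sum leaves out the key simplification. From the Milnor coproduct $\psi(\xi_i)=\sum_{l}\xi_{i-l}^{2^l}\otimes\xi_l$ and the fact that $Sq^j$ is dual to $\xi_1^j$, the pairing $\langle\xi_l,Sq^j\rangle$ is nonzero only for $(l,j)\in\{(0,0),(1,1)\}$, so $\xi_i\cap Sq^j=0$ for $j\ge2$ and the sum collapses to $\xi_i+\xi_{i-1}^2\,y$. With $y=\eta_R(\tau)/\rho$, one application of the recursion of Proposition~\ref{propcounitimage} in the form $\xi_i=\rho^{2^i-1}\ov{t}_i+\xi_{i-1}^2\,\eta_R(\tau)/\rho+\ov{t}_{i-1}\,\tau^{2^{i-1}}/\rho$ gives the answer in one step --- no ``many mixed monomials,'' no induction on $i$, and no need to invoke the Voevodsky relations at all. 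The mess you anticipate is a symptom of the misidentification of $y$ rather than a genuine difficulty.
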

\begin{proof}
The diagram
\[
\begin{tikzcd}
(i_*H\F_2\otimes BPGL)[\rho^{-1}]\arrow[d]\arrow[r]&(M\F_2\otimes M\F_2)[\rho^{-1}]\arrow[d]\\
(M\F_2\otimes BPGL)^{\complete}_\rho[\tau^{-1},\rho^{-1}]\arrow[r]&(M\F_2\otimes M\F_2)^{\complete}_\rho[\tau^{-1},\rho^{-1}]
\end{tikzcd}
\]
is given on bigraded homotopy by
\[
\begin{tikzcd}
\mathcal A_*[\rho^{\pm}]\arrow[d,"\varphi"]\arrow[r]&\mathcal A_*[\tau,\eta_R(\tau),\rho^{\pm}]\arrow[d]\\
\F_2[\rho,\tau^{\pm},\overline{t}_i]^{\complete}_{\rho}[\rho^{-1}]\arrow[r]&\mathcal A_*[\tau^{\pm},\eta_R(\tau)^{\pm},\rho]^{\complete}_{\rho}[\rho^{-1}]
\end{tikzcd}
\]
Note that the top map is \textit{not} a map of $\mathcal A_*$-algebras; this is the content of Proposition \ref{propnotunit}. The bottom and righthand maps are injective, so it suffices to determine the top map. Combining Theorem \ref{thmtatefrob} and Lemma \ref{lemmacapproduct}, this is the map of $\F_2[\rho^{\pm}]$-algebras sending
\[\xi_i\mapsto\sum\limits_{j\ge0}(\xi_i\cap Sq^j)\cdot\bigg(\frac{\eta_R(\tau)}{\rho}\bigg)^j\]
Using the coproduct formula on $\mathcal A_*$ and the fact that $Sq^j$ is dual to $\xi_1^j$, we have
\[\xi_i\mapsto \xi_i+\xi_{i-1}^2\frac{\eta_R(\tau)}{\rho}\]
We conclude by applying the relations of Proposition \ref{propcounitimage}
\end{proof}

We move now to weight zero and set $\chi_i:=\tau^{2^i-1}\overline{t}_i$ and $z:=\rho/\tau=y^{-1}$. We may summarize the results of Section \ref{secarithmeticsquare} thus far as follows.

\begin{proposition}\label{proparithmeticsquarecomputation}
The arithmetic square of $i_*H\F_2\otimes BPGL$ is given by
\[
\begin{tikzcd}
H_*\Gamma(BPGL)\arrow[r]\arrow[d]&\mathcal A_*\arrow[d,"\varphi"]&\xi_i\arrow[d,mapsto]\\
\F_2[z,\chi_i|i\ge1]^{\complete}_{z}\arrow[r,hookrightarrow]&\F_2[z,\chi_i|i\ge1]^{\complete}_{z}[z^{-1}]&\chi_iz^{2^i-1}+\chi_{i-1}z^{-1}
\end{tikzcd}
\]
with $|z|=-1$ and $|\chi_i|=2(2^i-1)$. 
\end{proposition}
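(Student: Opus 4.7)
The plan is to assemble the calculations of Corollary \ref{cor:bottomrow} and Proposition \ref{proprightmap} into a unified description of the arithmetic square, after repackaging everything in weight-zero coordinates. Since the substantive input — the description of the right vertical map via Mahowald's theorem on $H\F_2$ — is already contained in Theorem \ref{thmtatefrob} and Proposition \ref{proprightmap}, the remaining work is essentially bookkeeping.

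First I would identify the three already-known corners. The top-right corner follows from Lemma \ref{lemmarholocalbpgl}: since $\Gamma(BPGL[\rho^{-1}])\simeq H\F_2$ and each Milnor generator $\xi_i$ lives in weight zero, the weight-zero part of $\pi_{*,*}^{\R}(i_*H\F_2\otimes BPGL)[\rho^{-1}]\cong \mathcal{A}_*[\rho^{\pm}]$ is just $\mathcal{A}_*$. For the bottom row, Corollary \ref{cor:bottomrow} (specialized to $m=\infty$, so no $c(\tau_j)$ generators appear) together with Proposition \ref{borelhomology} identifies the bottom corners, before passage to weight zero, as the inclusion
\[\F_2[\rho,\tau^{\pm},\overline{t}_i\mid i\ge 1]^{\complete}_{\rho} \hookrightarrow \F_2[\rho,\tau^{\pm},\overline{t}_i\mid i\ge 1]^{\complete}_{\rho}[\rho^{-1}].\]

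Next I would pass to weight zero by introducing $z:=\rho/\tau$ and $\chi_i:=\tau^{2^i-1}\overline{t}_i$, which have weight zero and stems $-1$ and $2(2^i-1)$ respectively. A direct degree count shows that every weight-zero monomial in $\rho,\tau^{\pm},\overline{t}_i$ is uniquely of the form $z^n\prod_i \chi_i^{n_i}$ with $n\in\Z$, where the sign of $n$ records whether the original monomial had nonnegative or nonpositive $\rho$-exponent. Consequently, $\rho$-completion restricts to $z$-completion in weight zero and $\rho$-inversion restricts to $z$-inversion, giving precisely the identifications claimed for the bottom row. To translate the map $\varphi$ of Proposition \ref{proprightmap} into these coordinates, I would start from
\[\varphi(\xi_i) \;=\; \rho^{2^i-1}\overline{t}_i + (\tau^{2^{i-1}}/\rho)\overline{t}_{i-1}\]
and simplify each term as $\rho^{2^i-1}\overline{t}_i=(\rho/\tau)^{2^i-1}(\tau^{2^i-1}\overline{t}_i)=z^{2^i-1}\chi_i$ and $(\tau^{2^{i-1}}/\rho)\overline{t}_{i-1}=(\tau/\rho)(\tau^{2^{i-1}-1}\overline{t}_{i-1})=z^{-1}\chi_{i-1}$, yielding the formula in the statement. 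The only real obstacle is the bookkeeping verification that the weight-zero monomial basis is compatible with $\rho$-completion, and this reduces immediately to the observation that $\rho$-completion only affects the power of $\rho$, which in weight zero is the power of $z$.
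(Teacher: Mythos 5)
Your proposal is correct and takes the same route as the paper. The paper presents the proposition as a summary ("We may summarize the results of Section \ref{secarithmeticsquare} thus far as follows") without spelling out the weight-zero reparametrization, and your explicit translation via $z=\rho/\tau$, $\chi_i=\tau^{2^i-1}\overline{t}_i$, together with the observation that $\rho$-completion and $\rho$-localization restrict to $z$-completion and $z$-localization in weight zero, is precisely the elided bookkeeping connecting Corollary \ref{cor:bottomrow}, Lemma \ref{lemmarholocalbpgl}, and Proposition \ref{proprightmap} to the stated form of the square.
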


To compute the associated Mayer--Vietoris sequence, we establish the following.

\begin{proposition}\label{propbndmap}
The arithmetic square of Proposition \ref{proparithmeticsquarecomputation} satisfies the following properties:
\begin{enumerate}
\item The map $\varphi$ is injective.
\item The map $\F_2[z,\chi_i|i\ge 1]^{\complete}_{z}\oplus\mathcal A_*\to \F_2[z,\chi_i|i\ge 1]^{\complete}_{z}[z^{-1}]$ is injective in nonzero degrees.
\item The maps 
\[H_*\Gamma(BPGL)\to H_*\Gamma(BPGL[\rho^{-1}])=\mathcal A_*\]
and
\[H_*\Gamma(BPGL)\to \pi_{*,0}^{\R}(i_*H\F_2\otimes BPGL)^{\complete}_\rho[\tau^{-1}]\]
are both zero in nonzero degrees
\item In positive degrees $j$, the following sequence is short exact 
\end{enumerate}
\adjustbox{scale=1,center}{
$0\to (\F_2[z,\chi_i|i\ge 1]^{\complete}_{z})_{j+1}\oplus\mathcal A_{j+1}\to (\F_2[z,\chi_i|i\ge 1]^{\complete}_{z}[z^{-1}])_{j+1}\xrightarrow{\partial}H_{j}\Gamma(BPGL)\to0$
}

\end{proposition}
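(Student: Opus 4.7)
The plan is to address the four claims in order: (1) and (2) are concrete computations about the ring map $\varphi$, while (3) and (4) then follow formally from (2) together with the long exact sequence associated to the arithmetic square, which by Corollary \ref{cor:betaE} becomes a homotopy pullback after applying $\Gamma$.

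For (1), I would filter $\F_2[z, \chi_i]^{\complete}_z[z^{-1}]$ by the highest power of $z$. For a monomial $\xi^I = \prod \xi_i^{e_i}$, the top $z$-degree term of $\varphi(\xi^I) = \prod(\chi_i z^{2^i-1}+\chi_{i-1}z^{-1})^{e_i}$ is $\prod \chi_i^{e_i} z^{|\xi^I|}$ where $|\xi^I| = \sum e_i(2^i-1)$, and distinct multi-indices $I$ yield distinct monomials in $\F_2[\chi_i]$. Since $\varphi$ is degree-preserving, it suffices to restrict to a homogeneous $P = \sum c_I \xi^I$ of internal degree $d$; then all terms share top $z$-power $d$, and the $z^d$-coefficient of $\varphi(P)$ is $\sum c_I \prod \chi_i^{e_i^I}$, which vanishes iff $P = 0$.

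For (2), the kernel of the difference map consists of pairs $(a, b)$ with $a = \varphi(b)$, forcing $\varphi(b)$ to have no negative $z$-powers. I would filter dually, by the \emph{lowest} $z$-power. Each $\varphi(\xi_i)$ has minimum $z$-power $-1$, so $\varphi(\xi^I)$ has minimum $z$-power $-|I|$ with $|I| := \sum e_i^I$, and coefficient $\prod_{j \geq 1}\chi_j^{e_{j+1}^I}$ (using $\chi_0 = 1$). For homogeneous $b = \sum c_I \xi^I$ of degree $d$, let $N := \max\{|I| : c_I \neq 0\}$; then the $z^{-N}$-coefficient of $\varphi(b)$ is $\sum_{|I|=N}c_I \prod_j \chi_j^{e_{j+1}^I}$. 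The crucial observation is that among $I$ with both $|I| = N$ and $|\xi^I| = d$ fixed, the $\chi$-monomial determines $e_i^I$ for $i \geq 2$, and the constraint $|I| = N$ then pins down $e_1^I$; hence distinct such $I$'s yield distinct $\chi$-monomials and this coefficient is nonzero. In positive internal degree, any nonzero $b$ must include a monomial with $|I| > 0$ (since $|I| = 0$ restricts $b$ to $\F_2$-constants), so $\varphi(b)$ has a negative $z$-power, a contradiction; hence $b = 0$, and $a = 0$ follows.

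For (3) and (4), I would invoke the long exact sequence of the homotopy pullback. In positive degrees, (2) gives injectivity at the middle of the LES, forcing the preceding map out of $H_j\Gamma(BPGL)$ to vanish and yielding (3) in positive degrees. In negative degrees $\Gamma(BPGL)$ is connective---the slices $P^{2t}_{2t}BPGL$ are non-negatively shifted copies of $M\Z$, whose global sections are connective via Lemma \ref{lemma:homotopyofglobalsections} combined with Morel's connectivity theorem---so $H_j\Gamma(BPGL) = 0$ there. Claim (4) then assembles: surjectivity of $\partial$ follows from (3), while injectivity on the left is exactly (2). I expect the principal technical obstacle to be (2), where potential collisions among $\chi$-monomials in the $z^{-N}$-coefficient must be ruled out by combining the $|I|$ constraint and the internal degree constraint together with $\chi_0 = 1$.
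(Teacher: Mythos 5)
Your proposal is correct, and for parts (2)–(4) it follows essentially the same route as the paper: filter $\varphi(b)$ by lowest $z$-power, observe that the leading $z^{-N}$-coefficient is a nonzero $\chi$-polynomial because $\chi_0 = 1$ plus the constraint $|I| = N$ recovers the exponent $e_1$, conclude the coefficient is nonzero for any nonzero homogeneous $b$ of positive degree, and then extract (3) and (4) from the Mayer--Vietoris sequence of the pullback square. Your treatment of (2) is actually a bit more explicit than the paper's, which simply asserts $q(\chi_{i-1}) \neq 0$ without spelling out the injectivity of $I \mapsto \chi^{I}$ on multi-indices with fixed $|I|$; you supply exactly the missing sentence. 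The extra connectivity remark for negative degrees is harmless but unneeded, since injectivity in (2) already handles all nonzero degrees.

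Where you genuinely diverge is part (1). The paper embeds $\varphi$ into a commutative square with $\pi_{*,*}^{\R}(M\F_2\otimes M\F_2)^{\complete}_\rho[\tau^{-1},\rho^{-1}]$ in the corner and deduces injectivity conceptually: the bottom and right edges are injective, and the top edge is injective because $\varphi_2$ differs from the unit $\eta:H\F_2 \to H\F_2[y]$ by an automorphism and hence is the inclusion of a retract (leaning on Theorem \ref{thmtatefrob}). Your approach is instead purely combinatorial: filter by \emph{highest} $z$-power, note that the top-degree term of $\varphi(\xi^I)$ is the monomial $\prod\chi_i^{e_i}\,z^{|\xi^I|}$, and observe that for a homogeneous $P$ of degree $d$ the $z^d$-coefficient of $\varphi(P)$ is the isomorphic image of $P$ under $\xi_i\mapsto\chi_i$. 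This is correct, self-contained once the formula from Proposition \ref{proparithmeticsquarecomputation} is in hand, and sits naturally alongside the lowest-power filtration you already use in (2); the paper's argument is shorter to state but bundles in the Mahowald-theorem machinery used to identify $\varphi_2$. Both are valid; yours buys elementarity at the cost of a second (dual) filtration calculation.
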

\begin{proof}
For (1), we have a diagram
\[
\begin{tikzcd}
\pi_{*,*}^{\R}(i_*H\F_2\otimes BPGL)[\rho^{-1}]\arrow[r]\arrow[d,"\varphi"]&\pi_{*,*}^{\R}(i_*H\F_2\otimes M\F_2)[\rho^{-1}]\arrow[d]\\
\pi_{*,*}^{\R}(M\F_2\otimes BPGL)^{\complete}_\rho[\tau^{-1},\rho^{-1}]\arrow[r]&\pi_{*,*}^{\R}(M\F_2\otimes M\F_2)^{\complete}_\rho[\tau^{-1},\rho^{-1}]
\end{tikzcd}
\]
and the righthand and bottom maps are injective. We have seen the top map is induced by the map
\[i_*H\F_2\otimes i_*H\F_2\xrightarrow{1\otimes 1\otimes i_*(\varphi_2)}i_*H\F_2\otimes i_*H\F_2[y]\]
$\varphi_2$ differs from the unit map $H\F_2\xrightarrow{\eta}H\F_2[y]$ by an automorphism of $H\F_2[y]$ and hence is the inclusion of a retract.

For (2), since $\varphi$ is injective, the map 
\[\F_2[z,\chi_i|i\ge 1]^{\complete}_{z}\oplus\mathcal A_*\to \F_2[z,\chi_i|i\ge 1]^{\complete}_{z}[z^{-1}]\]
fails to be injective in degree $j\neq 0$ if and only if there exists a class $0\neq x\in\mathcal A_j$ such that 
\[\varphi(x)\in\mathrm{image}\bigg(\F_2[z,\chi_i|i\ge 1]^{\complete}_{z}\hookrightarrow \F_2[z,\chi_i|i\ge 1]^{\complete}_{z}[z^{-1}]\bigg)\]
Note, however, that since $\varphi(\xi_i)=z^{-1}(\chi_iz^{2^i}+\chi_{i-1})$, a monomial $\xi_{i_1}^{j_1}\cdots\xi_{i_n}^{j_n}$ is sent to
\aln{
\varphi(\xi_{i_1}^{j_1}\cdots\xi_{i_n}^{j_n})&=z^{-j_1}(\chi_{i_1}z^{2^{i_1}}+\chi_{i_1-1})^{j_1}\cdots z^{-j_n}(\chi_{i_n}z^{2^{i_n}}+\chi_{i_n-1})^{j_n}\\
&=z^{-(j_1+\cdots +j_n)}\chi_{i_1-1}^{j_1}\cdots\chi_{i_n-1}^{j_n}+\cdots
}
where the omitted terms are of the form $z^mr(\chi_i)$ with $m>-(j_1+\cdots +j_n)$, where $r(\chi_i)$ is some polynomial in the $\chi_i$'s. In particular, for a polynomial 
\[p(\xi_i)=\sum\limits_{k=1}^N\xi_{i_{k,1}}^{j_{k,1}}\cdots\xi_{i_{k,n_k}}^{j_{k,n_k}}\]
in $\mathcal A_*$, let 
\[M:=\max\{j_{k,1}+\cdots+j_{k,n_k}\text{ : } 1\le k\le N\}\] 
\[q(\xi_i):=\sum\limits_{k\text{ : } j_{k,1}+\cdots+j_{k,n_k}=M}\xi_{i_{k,1}}^{j_{k,1}}\cdots\xi_{i_{k,n_k}}^{j_{k,n_k}}\]
so that $p(\xi_i)=q(\xi_i)+r(\xi_i)$ where 
\[r(\xi_i):=\sum\limits_{k\text{ : } j_{k,1}+\cdots+j_{k,n_k}<M}\xi_{i_{k,1}}^{j_{k,1}}\cdots\xi_{i_{k,n_k}}^{j_{k,n_k}}\]
Then 
\[\varphi(p(\xi_i))=z^{-M}q(\chi_{i-1})+\cdots\]
where the omitted terms are of the form $z^ms(\chi_i)$ with $m>-M$,  where $s(\chi_i)$ is some polynomial in the $\chi_i$'s. Note that $q(\chi_{i-1})\neq0$, so this sum cannot be in the image of the bottom map in the arithmetic square, as $M>0$. (3) and (4) are immediate from the Mayer--Vietoris sequence.
\end{proof}

This is already enough to describe $H_*\Gamma(BPGL)$ as an $\mathcal{A}_*$-comodule. For the following discussion, cf. \cite{BPO}. Let $K(\rho)$ be defined by the cofiber sequence
\[K(\rho)\to S^{0,0}\to S^{0,0}[\rho^{-1}]\]
We have a long exact sequence of left $\mathcal A_*$-comodules
\[\cdots\xrightarrow{\partial}H_*\Gamma(BPGL\otimes K(\rho))\to H_*\Gamma(BPGL)\to\mathcal A_*\xrightarrow{\partial}H_{*-1}\Gamma(BPGL\otimes K(\rho))\to\cdots\]
Proposition \ref{propbndmap} gives a splitting of left $\mathcal A_*$-comodules
\[H_*\Gamma(BPGL)\cong \F_2\{1\}\oplus\coker(\partial)\]
To describe $\coker(\partial)$, since our arithmetic square is a pullback in weight zero by Proposition \ref{proparithmeticsquare} and Corollary \ref{cor:betaE}, we have a diagram
\[
\begin{tikzcd}
\mathcal A_*\arrow[r,"\partial"]\arrow[d,"\varphi"]&H_{*-1}\Gamma(BPGL\otimes K(\rho))\arrow[d,"="]\\
\F_2[z,\chi_i]^{\complete}_{z}[z^{-1}]\arrow[r]&H_{*-1}\Gamma(BPGL\otimes K(\rho))
\end{tikzcd}
\]
Since $\F_2[z,\chi_i]^{\complete}_{z}\to \F_2[z,\chi_i]^{\complete}_{z}[z^{-1}]$ is an injection, we find that
\aln{
\Sigma H_*\Gamma(BPGL\otimes K(\rho))&=\coker(\F_2[z,\chi_i]^{\complete}_{z}\to \F_2[z,\chi_i]^{\complete}_{z}[z^{-1}])\\
&=\F_2[z^{-1},\chi_i]\{z^{-1}\}
}
Together with our explicit description of $\varphi$, we conclude:
\begin{theorem}\label{thmhomologybpr}
$H_*\Gamma(BPGL)$ splits as a left $\mathcal A_*$-comodule
\[H_*\Gamma(BPGL)=\F_2\{1\}\oplus\mathrm{coker}(\partial)\]
where $\partial$ may be described as the map
\aln{
\partial:\mathcal A_*&\to \Sigma^{-1}\F_2[z^{-1},\chi_i]\{z^{-1}\}\\
\xi_i&\mapsto \chi_iz^{2^i-1}+\zeta_{i-1}z^{-1}
}
which is to be understood as multiplicative on the generators $\xi_i$, while in the result, all nonnegative powers of $y$ are set to 0. The map $\partial$ is injective in positive degrees. 
\end{theorem}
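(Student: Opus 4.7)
The plan is to extract the description of $H_*\Gamma(BPGL)$ directly from the long exact sequence associated to the cofiber sequence $K(\rho) \to S^{0,0} \to S^{0,0}[\rho^{-1}]$, combined with the Mayer--Vietoris analysis of Proposition \ref{propbndmap}. Smashing the cofiber sequence with $i_*H\F_2 \otimes BPGL$ and applying $\Gamma$ produces a long exact sequence of left $\mathcal{A}_*$-comodules
\[
\cdots \to H_*\Gamma(BPGL \otimes K(\rho)) \to H_*\Gamma(BPGL) \to \mathcal{A}_* \xrightarrow{\partial} H_{*-1}\Gamma(BPGL \otimes K(\rho)) \to \cdots
\]
where I use Lemma \ref{lemmarholocalbpgl} to identify $H_*\Gamma(BPGL[\rho^{-1}]) = \mathcal{A}_*$. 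Proposition \ref{propbndmap}(3) forces the map $H_*\Gamma(BPGL) \to \mathcal{A}_*$ to vanish in positive degrees, and together with the obvious lift of the unit $1 \in \mathcal{A}_0$ this splits off a copy of $\F_2\{1\}$, leaving $H_*\Gamma(BPGL) = \F_2\{1\} \oplus \coker(\partial)$ as left $\mathcal{A}_*$-comodules.

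Next I would make $H_*\Gamma(BPGL \otimes K(\rho))$ explicit via the arithmetic square. Since $BPGL$ satisfies the hypotheses of Corollary \ref{cor:betaE}, the arithmetic square of $i_*H\F_2 \otimes BPGL$ is a pullback in weight zero. Smashing this pullback with $K(\rho)$ annihilates the right column (as $K(\rho)[\rho^{-1}] = 0$), so $\Gamma(BPGL \otimes K(\rho))$ is identified with the fiber of the bottom map of the arithmetic square. The bottom map is the inclusion $\F_2[z,\chi_i]^{\complete}_z \hookrightarrow \F_2[z,\chi_i]^{\complete}_z[z^{-1}]$, which is injective by Proposition \ref{propbndmap}(2), and its cokernel is the submodule spanned by strictly negative powers of $z$:
\[
\Sigma H_*\Gamma(BPGL \otimes K(\rho)) = \F_2[z,\chi_i]^{\complete}_z[z^{-1}] \big/ \F_2[z,\chi_i]^{\complete}_z = \F_2[z^{-1},\chi_i]\{z^{-1}\}.
\]

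Finally, the formula for $\partial$ is read off from Proposition \ref{proparithmeticsquarecomputation}: under the identification above, $\partial$ becomes $\varphi$ followed by the quotient projection onto negative powers of $z$. Applied to a generator, $\varphi(\xi_i) = \chi_iz^{2^i-1} + \chi_{i-1}z^{-1}$, and extending multiplicatively and projecting gives the stated formula. Injectivity of $\partial$ in positive degrees is immediate from the leading-order analysis in the proof of Proposition \ref{propbndmap}(2): for any nonzero $p(\xi_i) \in \mathcal{A}_*$ of positive degree, the expansion of $\varphi(p(\xi_i))$ has a nontrivial leading term $z^{-M}q(\chi_{i-1})$ with $M \geq 1$, which survives the quotient. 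The main subtlety is coherently packaging the multiplicative but quotient-valued description of $\partial$, so that the formula on $\xi_i$ uniquely determines its value on arbitrary monomials; once this bookkeeping convention is fixed, the theorem is a clean summary of the data assembled across Propositions \ref{proparithmeticsquarecomputation} and \ref{propbndmap}.
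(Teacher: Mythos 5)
Your proposal is correct and follows the paper's own argument essentially step for step: the long exact sequence of comodules associated to the cofiber sequence $K(\rho)\to S^{0,0}\to S^{0,0}[\rho^{-1}]$ tensored into $i_*H\F_2\otimes BPGL$, the splitting coming from Proposition \ref{propbndmap}(3), the identification of $H_{*-1}\Gamma(BPGL\otimes K(\rho))$ with the cokernel of the bottom map of the arithmetic square (using that the square is a pullback in weight zero so $K(\rho)$-smashing kills the right column), and reading $\partial$ off from $\varphi$ in Proposition \ref{proparithmeticsquarecomputation}, with injectivity in positive degrees coming from the leading-term analysis in the proof of Proposition \ref{propbndmap}(2). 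The only remarks worth recording are cosmetic: you correctly read the coefficient in the formula as $\chi_{i-1}z^{-1}$ (as in Proposition \ref{proparithmeticsquarecomputation}), whereas the theorem statement prints $\zeta_{i-1}$, which appears to be a typo since $\zeta_{i-1}$ does not lie in the target $\F_2[z^{-1},\chi_i]\{z^{-1}\}$; similarly the convention "set nonnegative powers of $y$ to zero" in the statement should read powers of $z$ given $y=z^{-1}$, and your description of the cokernel as negative powers of $z$ is the right one.
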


\begin{remark}
    It is possible to describe the $\mathcal A_*$-comodule structure on $\coker(\partial)$ with explicit formulas. We omit these formulas, however, as we believe they are too complicated to be useful. By contrast, as we will see, the HSSS of $BPGL$ describes $H_*\Gamma(BPGL)$ as a subquotient of an explicit and straightforward $\mathcal A_*$-comodule algebra.

    We remark that it is also possible to describe $H_*\Gamma(BPGL\langle m\rangle)$ as an $\mathcal{A}_*$-comodule somewhat explicitly in terms of the boundary map in the isotropy separation sequence, as in Theorem \ref{thmhomologybpr}. Due to the complicated nature of these formulas, however, we again prefer to use the HSSS to describe the global structure of homology.
\end{remark}

\subsection{The image of the edge homomorphism for $BPGL\langle m\rangle$}\label{subsecimageedgehom}
We will see in Section \ref{seccomputations} that in the HSSS for $BPGL\langle m\rangle$, the map $E_2\to E_2[\rho^{-1}]$ to the $\rho$-localized spectral sequence is an injection on $E_2$, and this allows us to determine the image of the edge homomorphism from that of the localized spectral sequence, using our results on the arithmetic square.

The edge homomorphism is natural, giving a commutative diagram
\[
\begin{tikzcd}
    \pi_{*,*}^{\R}(i_*H\F_2\otimes BPGL\langle m\rangle)\arrow[d]\arrow[r]&\pi_{*,*}^{\R}(i_*H\F_2\otimes BPGL\langle m\rangle)[\rho^{-1}]\arrow[d]\\
    \pi_{*,*}^{\R}(i_*H\F_2\otimes M\Z)\arrow[r]&\pi_{*,*}^{\R}(i_*H\F_2\otimes M\Z)[\rho^{-1}]
\end{tikzcd}
\]
In the lefthand side of this square, we have an isomorphism in weight zero
\[\mathcal A_*\square_{\mathcal A(0)_*}\F_2\cong H_*H\Z\cong \pi_{*,0}^{\R}(i_*H\F_2\otimes M\Z)\]
coming from the canonical map $i_*H\Z\to M\Z$. On the righthand side, using the equivalence 
\[i_*H\F_2[y^{2^{m+1}}][\rho^{-1}]\simeq BPGL\langle m\rangle[\rho^{-1}]\] 
of Lemma \ref{lemmarholocalbpgl}, the righthand map becomes the canonical inclusion
\[\mathcal A_*[y^{2^{m+1}},\rho^{\pm}]\hookrightarrow\mathcal A_*[y^{2},\rho^{\pm}]\]

Describing the bottom map in terms of these identifications therefore places us in the context of Theorem \ref{thmtatefrob}, which determines two identifications
\[H_*\Gamma(M\F_2[\rho^{-1}])\cong\mathcal A_*[y]\]
To be explicit, the canonical map $i_*H\F_2[\rho^{-1}]\to M\F_2[\rho^{-1}]$ gives an equivalence $H\F_2[y]\simeq \Gamma(M\F_2[\rho^{-1}])$, which determines an isomorphism of the above form denoted $\cong_{\mathrm{can}}$. On the other hand, the map $i_*H\F_2[\rho^{-1}]\simeq BPGL[\rho^{-1}]\to M\F_2[\rho^{-1}]$ determines an isomorphism of the same form, which we denote $\cong_{BP}$. Theorem \ref{thmtatefrob} and the proof of Proposition \ref{proprightmap} tell us how to translate between these identifications.

\begin{proposition}\label{prophomologytwist}
    The composition
    \[\mathcal A_*[y]\cong_{\mathrm{can}} H_*\Gamma(M\F_2[\rho^{-1}])\cong_{BP}\mathcal A_*[y]\]
    sends
    \aln{
    y&\mapsto y\\
    \zeta_i&\mapsto \zeta_i+\zeta_{i-1}y^{2^{i-1}}
    }
\end{proposition}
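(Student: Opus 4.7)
My plan is to identify the composition $T := \cong_{BP}^{-1} \circ \cong_{\mathrm{can}}$ as a ring automorphism of $\mathcal{A}_*[y]$, compute its inverse $T^{-1}$ on the generators $\zeta_n$ using Theorem \ref{thmtatefrob} and Lemma \ref{lemmacapproduct}, and then verify the stated formula for $T$ by direct inversion.

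First, both $\cong_{\mathrm{can}}$ and $\cong_{BP}$ arise as $E_1$-ring spectrum maps out of the free $E_1$-$H\F_2$-algebra $H\F_2[y]$, corresponding to the two distinct $H\F_2$-algebra structures on $\Gamma(M\F_2[\rho^{-1}])$, and both send $y$ to $\tau/\rho$. Forgetting the $H\F_2$-module structures on the target, each is an equivalence of underlying $E_1$-ring spectra, so $T$ is a ring automorphism of $\mathcal{A}_*[y]$ with $T(y) = y$. Hence $T$ is determined by its values $T(\zeta_n)$.

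Second, the restriction $T^{-1}|_{\mathcal{A}_*}$ is exactly the map on homology induced by the composite appearing in Theorem \ref{thmtatefrob}, whose components under $H\F_2[y] \simeq \bigvee_{i \geq 0}\Sigma^i H\F_2$ are the $Sq^i$. Smashing with $H\F_2$ and applying Lemma \ref{lemmacapproduct} to each $1 \otimes Sq^i$ yields
\[
T^{-1}(a) = \sum_{i \geq 0}(a \cap Sq^i)\, y^i,\qquad a \in \mathcal{A}_*.
\]
To evaluate this on $\zeta_n$, I use the coproduct $\Delta(\zeta_n) = \sum_{j+k=n}\zeta_j \otimes \zeta_k^{2^j}$, the duality $\langle \xi_1^i, Sq^i \rangle = 1$, and a short induction (from the antipode recursion $c(\xi_n) = \xi_n + \sum_{j \geq 1}\xi_{n-j}^{2^j}c(\xi_j)$) showing that the unique pure-$\xi_1$ summand of $\zeta_k$ is $\xi_1^{2^k - 1}$. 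Combining these,
\[
T^{-1}(\zeta_n) = \sum_{k=0}^{n}\zeta_{n-k}\, y^{(2^k - 1)\, 2^{n-k}}.
\]

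Finally, I verify that the proposed formula $T(\zeta_n) = \zeta_n + \zeta_{n-1}\, y^{2^{n-1}}$ (with $\zeta_0 = 1$, $\zeta_{-1} = 0$, extended multiplicatively with $T(y) = y$) satisfies $T \circ T^{-1}(\zeta_n) = \zeta_n$. Expanding $T$ termwise on $T^{-1}(\zeta_n)$ and using the arithmetic identity $(2^k - 1)\, 2^{n-k} + 2^{n-k-1} = (2^{k+1} - 1)\, 2^{n-k-1}$, the ``correction term'' $\zeta_{n-k-1}\, y^{(2^{k+1}-1)\, 2^{n-k-1}}$ coming from the summand at index $k$ matches exactly the ``leading term'' from the summand at index $k+1$. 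These pair off and cancel mod $2$, leaving only $\zeta_n$ from the $k = 0$ leading contribution. The one real obstacle is this combinatorial telescoping, but it is elementary once framed correctly.
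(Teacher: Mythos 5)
Your proposal is correct and takes essentially the same approach as the paper: both identify the inverse of the composition as a sum of cap products via Theorem \ref{thmtatefrob} and Lemma \ref{lemmacapproduct}, reduce the evaluation of $\zeta_n \cap Sq^i$ to the fact that $\xi_1^{2^n-1}$ is the unique pure-$\xi_1$ monomial in $\zeta_n$, and then invert the resulting formula $\zeta_n \mapsto \sum_j \zeta_j\, y^{2^n - 2^j}$. Your write-up simply spells out the telescoping cancellation that the paper leaves implicit in the phrase ``which implies the claimed description.''
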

\begin{proof}
Theorem \ref{thmtatefrob} and Lemma \ref{lemmacapproduct} describe the inverse of this composite as a sum of cap products. Using the congruence
\[\zeta_m\equiv \xi_1^{2^m-1}\mod (\xi_2,\xi_3,\ldots)\]
one has that the inverse of this composite sends 
\[\zeta_m\mapsto \sum\limits_{i=0}^m\zeta_iy^{2^m-2^i}\]
which implies the claimed description.
\end{proof}

\begin{corollary}\label{coredgeA(m)}
The image of the edge homomorphism in the HSSS of $BPGL\langle m\rangle$ contains $\mathcal A_*\square_{\mathcal A(m)_*}\F_2\subset H_*H\Z$.
\end{corollary}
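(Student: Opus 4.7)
The plan is to work with the commutative diagram
\[
\begin{tikzcd}
    H_*\Gamma(BPGL\langle m\rangle) \arrow[d, "e"'] \arrow[r] & H_*\Gamma(BPGL\langle m\rangle[\rho^{-1}]) \arrow[d, "e_\rho"] \\
    H_*H\Z \arrow[r, "j"] & H_*\Gamma(M\Z[\rho^{-1}])
\end{tikzcd}
\]
arising from naturality of the edge homomorphism under $\rho$-inversion, together with the forthcoming injectivity of $E_r \to E_r[\rho^{-1}]$ in the HSSS of $BPGL\langle m\rangle$ flagged at the start of this subsection. First I would identify the right column and the bottom row explicitly. By Lemma \ref{lemmarholocalbpgl}, using the $\cong_{BP}$ identification on both entries of the right column, $e_\rho$ becomes the standard subalgebra inclusion $\mathcal{A}_*[y^{2^{m+1}}] \hookrightarrow \mathcal{A}_*[y^2]$. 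Taking the canonical identification $H_*H\Z \cong \mathcal{A}_* \square_{\mathcal{A}(0)_*} \F_2 \subset \mathcal{A}_*$ on the bottom-left and $\cong_{BP}$ on the bottom-right, Proposition \ref{prophomologytwist} describes $j$ as the ring map extending $\zeta_1^2 \mapsto \zeta_1^2 + y^2$ and $\zeta_i \mapsto \zeta_i + \zeta_{i-1} y^{2^{i-1}}$ for $i \ge 2$.

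The core computation is to verify that each polynomial generator of $\mathcal{A}_* \square_{\mathcal{A}(m)_*} \F_2 = \F_2[\zeta_1^{2^{m+1}}, \zeta_2^{2^m}, \ldots, \zeta_{m+1}^2, \zeta_{m+2}, \ldots]$ lands inside $\mathcal{A}_*[y^{2^{m+1}}] = \operatorname{image}(e_\rho)$. For $1 \le j \le m+1$, Frobenius in characteristic $2$ gives
\[
j(\zeta_j^{2^{m+2-j}}) = (\zeta_j + \zeta_{j-1} y^{2^{j-1}})^{2^{m+2-j}} = \zeta_j^{2^{m+2-j}} + \zeta_{j-1}^{2^{m+2-j}} y^{2^{m+1}},
\]
which lies in $\mathcal{A}_*[y^{2^{m+1}}]$; and for $j \ge m+2$, the image $\zeta_j + \zeta_{j-1} y^{2^{j-1}}$ also lies there since $2^{j-1} \ge 2^{m+1}$. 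Multiplicative extension then yields $j(\mathcal{A}_* \square_{\mathcal{A}(m)_*} \F_2) \subset \operatorname{image}(e_\rho)$.

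To upgrade this to the desired containment $\mathcal{A}_* \square_{\mathcal{A}(m)_*} \F_2 \subset \operatorname{image}(e)$, I would use the fact that the $t = 0$ line of the HSSS receives no differential: the slices of $BPGL\langle m\rangle$ vanish in negative degrees, and in our grading conventions each $d_r$ strictly raises the slice index $t$. Hence a class $x \in H_*H\Z$ lies in $\operatorname{image}(e)$ precisely when it supports no differential. By naturality, a hypothetical nonzero $d_r(x)$ would produce $d_r(j(x)) \neq 0$ in the $\rho$-localized HSSS; but $j(x) \in \operatorname{image}(e_\rho)$ is a permanent cycle by Proposition \ref{propedgeispermanent}, so this is a contradiction as soon as one knows $E_r \to E_r[\rho^{-1}]$ is injective at the target tridegree of the differential.

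The main obstacle is this final injectivity claim, which must be deferred to Section \ref{seccomputations}. The expected strategy there is to establish $\rho$-injectivity on $E_2$ from the explicit presentation of Corollary \ref{corE_2description}, and then to propagate it through the later pages by checking, in light of the differential pattern of Theorem \ref{differentialsthm}, that the relevant subquotients remain $\rho$-torsion-free in the tridegrees that can receive differentials off the $t=0$ line.
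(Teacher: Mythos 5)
Your proposal diverges from the paper's proof after the $E_2$-level computation, and the divergent part has a genuine circularity problem.

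The agreement: both you and the paper start by verifying (via Proposition~\ref{prophomologytwist}) that each generator $\zeta_j^{2^{m+2-j}}$ of $\mathcal A_*\square_{\mathcal A(m)_*}\F_2$ lands in the subalgebra $\mathcal A_*[y^{2^{m+1}}]=\operatorname{image}(e_\rho)$ after $\rho$-localization, and both use the injectivity of the bottom row of the square, i.e.\ that $\pi_{*,*}^{\R}(i_*H\F_2\otimes M\Z)\to\pi_{*,*}^{\R}(i_*H\F_2\otimes M\Z)[\rho^{-1}]$ is injective. That part is fine.

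The divergence: the paper's proof does \emph{not} argue with the differentials at all. It works directly at the level of the abutment: having seen that the $\rho$-local class $\zeta_j^{2^{m+2-j}}+\zeta_{j-1}^{2^{m+2-j}}y^{2^{m+1}}$ lies in $\operatorname{image}(e_\rho)$, the paper produces an actual lift to $\pi_{*,*}^{\R}(i_*H\F_2\otimes BPGL\langle m\rangle)$ via the Mayer--Vietoris sequence of the arithmetic square, using the explicit descriptions of $\varphi_m$ (Proposition~\ref{proprightmap}) and of $\pi_{*,*}^{\R}(M\F_2\otimes BPGL\langle m\rangle)$ (Proposition~\ref{prop:motivichomologybprm}). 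The diagram chase plus injectivity of $j$ on the $t=0$ line then finishes. Only $E_2$-level torsion-freeness is ever invoked.

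Your version instead pushes a hypothetical nonzero $d_r(\zeta_j^{2^{m+2-j}})$ into the $\rho$-local spectral sequence. This requires $E_r\to E_r[\rho^{-1}]$ to be injective at the target tridegree for \emph{every} relevant page $r$, not just $r=2$, because a class that is $\rho$-torsion-free on $E_2$ may become $\rho$-torsion on $E_r$ if its $\rho$-multiples are hit by earlier differentials. You flag this as the main obstacle and propose to defer it by "checking, in light of the differential pattern of Theorem~\ref{differentialsthm}, that the relevant subquotients remain $\rho$-torsion-free." This is circular: Theorem~\ref{differentialsthm} is proved using Proposition~\ref{v_npure} and Proposition~\ref{dlower}, both of which explicitly cite Corollary~\ref{coredgeA(m)} (applied to $m-1$) to conclude that $\zeta_j^{2^{m+1-j}}$ is a permanent cycle in the HSSS for $BPGL\langle m-1\rangle$. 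So the differential pattern you want to lean on already presupposes the statement you are trying to prove. The paper's Mayer--Vietoris argument sidesteps this entirely, which is precisely why the logical order in the paper is: arithmetic square computations (Section~3) $\Rightarrow$ Corollary~\ref{coredgeA(m)} $\Rightarrow$ differentials (Section~4).
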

\begin{proof}
We claim that the image of each of the generators
\[\zeta_1^{2^{m+1}},\zeta_2^{2^m},\ldots,\zeta_{m+1}^2,\zeta_{m+2},\ldots,\]
of $\mathcal A_*\square_{\mathcal A(m)_*}\F_2$ along the map $E_2\to E_2[\rho^{-1}]$ is in the image of the edge homomorphism for the $\rho$-localized \sseq. Indeed, Proposition \ref{prophomologytwist} implies that the localization map sends
\aln{
\zeta_{i}^{2^{m+2-i}}\mapsto \zeta_i^{2^{m+2-i}}+\zeta_{i-1}^{2^{m+2-i}}y^{2^{m+1}}\hspace{0.5cm}&\mathrm{ for}\hspace{0.1cm}i\le m+2\\
\zeta_i\mapsto \zeta_i+\zeta_{i-1}y^{2^{i-1}}\hspace{0.5cm}&\mathrm{ for}\hspace{0.1cm}i>m+2
}
so that each of these generators lands in the subalgebra $\mathcal{A}_*[y^{2^{m+1}}]\subset E_2[\rho^{-1}]$.

We show in Corollary \ref{corE_2description} that $E_2\to E_2[\rho^{-1}]$ is an injection, and it therefore suffices to show that each of the classes
\aln{
\zeta_i^{2^{m+2-i}}+\zeta_{i-1}^{2^{m+2-i}}y^{2^{m+1}}\hspace{0.5cm}&\mathrm{ for}\hspace{0.1cm}i\le m+2\\
\zeta_i+\zeta_{i-1}y^{2^{i-1}}\hspace{0.5cm}&\mathrm{ for }\hspace{0.1cm}i>m+2
}
admits a lift along the map
\[\pi_{*,*}^{\R}(i_*H\F_2\otimes BPGL\langle m\rangle)\to\pi_{*,*}^{\R}(i_*H\F_2\otimes BPGL\langle m\rangle)[\rho^{-1}]\]
The Mayer--Vietoris sequence for the arithmetic square of $i_*H\F_2\otimes BPGL\langle m\rangle$ implies that it suffices to show that the image of each of these classes along the map 
\[\varphi_m:\pi_{*,*}^{\R}(i_*H\F_2\otimes BPGL\langle m\rangle)[\rho^{-1}]\to\pi_{*,*}^{\R}(i_*H\F_2\otimes BPGL\langle m\rangle)^{\complete}_\rho[\tau^{-1},\rho^{-1}]\]
is in the image of the map
\[\pi_{*,*}^{\R}(i_*H\F_2\otimes BPGL\langle m\rangle)^{\complete}_\rho[\tau^{-1}]\to\pi_{*,*}^{\R}(i_*H\F_2\otimes BPGL\langle m\rangle)^{\complete}_\rho[\tau^{-1},\rho^{-1}]\]
The map $\varphi_m$ is described by Proposition \ref{proprightmap} along with the fact that it sends $y^{2^{m+1}}$ to $(\eta_R(\tau)/\rho)^{2^{m+1}}$. 

Embedding into the case $m=-1$, the map $\varphi$ sends these classes back along the inverse of the composition in Proposition \ref{prophomologytwist}, i.e. to the image of each of the generators above along the map
\[i_*:\mathcal{A}_*\to \pi_{*,*}^{\R}(M \F_2\otimes M\F_2)^{\complete}_\rho[\tau^{-1},\rho^{-1}]\]
The image of the map
\[\pi_{*,*}^{\R}(i_*H\F_2\otimes BPGL\langle m\rangle)^{\complete}_\rho[\tau^{-1}]\to \pi_{*,*}^{\R}(M \F_2\otimes M\F_2)^{\complete}_\rho[\tau^{-1},\rho^{-1}]\]
is a sub-$\mathcal{A}_*$-comodule, so it suffices to show that its image contains 
\[i_*(\zeta_{m+2}),i_*(\zeta_{m+3}),\ldots\]
Conjugating the relations \cite[Theorem 2.12]{LSWX}, one has
\[i_*(\zeta_i)=\frac{1}{\tau}\big(c(\tau_{i-1})\eta_R(\tau)^{2^{i-1}}+\rho^{2^i}c(\tau_i)\big)\]
The result now follows from Proposition \ref{prop:motivichomologybprm}.
\end{proof}

Arguing similarly, we find the following classes in the image of the edge homomorphism in weight $-1$.
\begin{corollary}\label{corx_mpermanent}
For all $m\ge0$, the class $\rho\xi_m$ admits a lift along the map
\[\pi_{*,*}^{\R}(i_*H\F_2\otimes BPGL)\to\pi_{*,*}^{\R}(i_*H\F_2\otimes BPGL)[\rho^{-1}]\]
Therefore any lift of $\rho\xi_m\in E_2[\rho^{-1}]$ along $E_2\to E_2[\rho^{-1}]$ is in the image of the edge homomorphism.
\end{corollary}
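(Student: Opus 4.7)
The plan is to argue exactly as in Corollary \ref{coredgeA(m)}: use the arithmetic square Mayer--Vietoris sequence to produce a lift in bigraded homotopy, and then invoke injectivity of $E_2\to E_2[\rho^{-1}]$ to deduce the edge homomorphism statement. By Proposition \ref{proparithmeticsquare} together with Corollary \ref{cor:betaE}, the arithmetic square for $i_*H\F_2\otimes BPGL$ is a pullback on bigraded homotopy in weights $w<2$, and in particular in weight $-1$.

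The key computation is of $\varphi(\rho\xi_m)$ via Proposition \ref{proprightmap}. Multiplying through by $\rho$ absorbs the one negative power of $\rho$ appearing in $\varphi(\xi_m)$:
\[
\varphi(\rho\xi_m) = \rho^{2^m}\overline{t}_m + \tau^{2^{m-1}}\overline{t}_{m-1}.
\]
This expression lies visibly in $\F_2[\rho,\tau^{\pm},\overline{t}_i]^{\complete}_\rho$, hence in the image of the $\tau$-inverted, $\rho$-completed corner of the arithmetic square (invoking Proposition \ref{borelhomology} to identify this corner with the appropriate motivic $M\F_2$-homology). By exactness of Mayer--Vietoris, the matching pair $(\rho\xi_m,\, \varphi(\rho\xi_m))$ lifts to a class in $\pi_{*,-1}^{\R}(i_*H\F_2\otimes BPGL)$, which proves the first claim.

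For the second claim, Corollary \ref{corE_2description} gives injectivity of $E_2\to E_2[\rho^{-1}]$, so any lift $c\in E_2$ of $\rho\xi_m$ is unique and, since $\rho\xi_m$ is a permanent cycle in the $\rho$-localized HSSS, must itself be a permanent cycle. The edge homomorphism image of the $\pi_{*,-1}^{\R}$-lift constructed above agrees with $\rho\xi_m$ after $\rho$-inversion, and hence equals $c$ by injectivity, placing $c$ in the image of the edge homomorphism. I expect this argument to be essentially mechanical given the earlier material; the one subtlety worth flagging is that the identifications $\cong_{\mathrm{can}}$ and $\cong_{BP}$ of Proposition \ref{prophomologytwist} differ, but because $\varphi$ is written out explicitly in Proposition \ref{proprightmap}, this presents no real obstacle.
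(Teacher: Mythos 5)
Your proof is exactly the paper's argument, spelled out in more detail: the paper's own proof consists solely of the computation $\varphi(\rho\xi_m)=\rho^{2^m}\overline{t}_m+\tau^{2^{m-1}}\overline{t}_{m-1}$, with the Mayer--Vietoris step and the injectivity of $E_2\to E_2[\rho^{-1}]$ (Corollary~\ref{corE_2description}) left implicit as ``arguing similarly'' to Corollary~\ref{coredgeA(m)}. Your write-up fills in precisely those implicit steps correctly.
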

\begin{proof}
By Proposition \ref{proprightmap}, we have
\[\varphi(\rho\xi_m)=\rho^{2^m}\overline{t}_m+\tau^{2^{m-1}}\overline{t}_{m-1}\]
\end{proof}

\section{The HSSS for $BPGL\langle m\rangle$}\label{secdifferentials}
%

\subsection{The slice \texorpdfstring{\(E_2\)}{E Two} page}\label{subsecE_2}

By Proposition \ref{propslicebp}, the HSSS of $BPGL\langle m\rangle$ has $E_2$ page
\[\pi_{*,*}^{\R}(i_*H\F_2\otimes M\Z[\overline{v}_{1},\ldots,\overline{v}_{m}])=\pi_{*,*}^{\R}(i_*H\F_2\otimes M\Z)[\overline{v}_{1},\ldots,\overline{v}_{m}]\]
We therefore proceed to calculate the bigraded homotopy ring
\[\pi_{*,*}^{\R}(i_*H\F_2\otimes M\Z)\]
This has some unexpected multiplicative features; we first determine the additive structure.

\begin{proposition}\label{propE_2splitting}
The map $M\Z\to i_*H\mathbb{F}_2\otimes M\Z$ determines an isomorphism of left $\mathcal{A}_*$-comodules
\[\pi_{*,*}^{\R}(i_*H\F_2\otimes M\Z)\cong(\mathcal A_*\square_{\mathcal A(0)_*}\F_2)[\tau^2]\oplus\bigoplus\limits_{i\ge0,j>0}\mathcal A_*\{\tau^{2i}\rho^j\}\]
\end{proposition}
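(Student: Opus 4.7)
The plan is to combine the arithmetic square (Proposition~\ref{proparithmeticsquare}) with the $M\Z$-module structure on $i_*H\F_2\otimes M\Z$ induced by the unit $M\Z\to i_*H\F_2\otimes M\Z$. Since $M\Z=BPGL\langle 0\rangle$ satisfies the hypotheses of Corollary~\ref{cor:betaE}, the arithmetic square for $i_*H\F_2\otimes M\Z$ becomes a pullback after $\Gamma$ in all weights $w<2$, producing a Mayer--Vietoris long exact sequence. In weight~$0$, the projection formula identifies $\Gamma(i_*H\F_2\otimes M\Z)\simeq H\F_2\otimes\Gamma(M\Z)\simeq H\F_2\otimes H\Z$, whose homotopy is $H_*H\Z\cong \mathcal{A}_*\square_{\mathcal{A}(0)_*}\F_2$, matching the weight-$0$ contribution of the splitting.

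For the remaining corners of the arithmetic square, Lemma~\ref{lemmarholocalbpgl} gives $\Gamma(M\Z[\rho^{-1}])\simeq H\F_2[y^2]$, so the $\rho$-inverted corner has bigraded homotopy $\mathcal{A}_*[y^2,\rho^{\pm}]$, while Proposition~\ref{borelhomology} combined with the $m=0$ case of Proposition~\ref{prop:motivichomologybprm} identifies the $\tau$-complete corner with the $\rho$-completion of the $\tau$-inverted motivic $M\F_2$-homology of $M\Z$. The gluing map is determined by the Tate--Frobenius mechanism, following the same pattern as in Proposition~\ref{proprightmap}. Running Mayer--Vietoris and organizing the output by the $\pi_{*,*}^{\R}(M\Z)$-action: multiplication by $\tau^{2i}$ sends the weight-$0$ subalgebra $\mathcal{A}_*\square_{\mathcal{A}(0)_*}\F_2$ onto a summand in weight~$-2i$, producing the $(\mathcal{A}_*\square_{\mathcal{A}(0)_*}\F_2)[\tau^2]$ piece, while multiplication by $\tau^{2i}\rho^j$ for $j>0$ yields summands of shape $\mathcal{A}_*$. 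The enlargement from $\mathcal{A}_*\square_{\mathcal{A}(0)_*}\F_2$ to $\mathcal{A}_*$ in $\rho$-multiple weights reflects Theorem~\ref{thmtatefrob}: classes such as $\zeta_1\rho^j$ (for $j>0$), while absent in weight~$0$ since $\zeta_1\notin\mathcal{A}_*\square_{\mathcal{A}(0)_*}\F_2$, lift to $\pi_{*,*}^\R(i_*H\F_2\otimes M\Z)$ via the Mayer--Vietoris boundary map from the $\rho$-inverted corner.

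The main obstacle is verifying that the resulting decomposition is an honest direct sum of $\mathcal{A}_*$-comodules, rather than a nontrivial extension. This reduces to a verification on generators: each $\tau^{2i}$ and $\tau^{2i}\rho^j$ is a comodule primitive inherited from $\pi_{*,*}^{\R}(M\Z)$, and the injectivity properties of the bottom horizontal of the arithmetic square (in the spirit of Proposition~\ref{propbndmap}) allow one to exhibit explicit $\mathcal{A}_*$-comodule splittings weight by weight.
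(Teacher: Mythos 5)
Your proposal takes a genuinely different route from the paper's argument, and a considerably heavier one. The paper's proof is essentially a one-liner: $\Gamma(\Sigma^{0,-w}M\Z)$ is an $H\Z$-module with $\pi_s\Gamma(\Sigma^{0,-w}M\Z)\cong\pi_{s,w}^{\R}M\Z$, so Voevodsky's computation $\pi_{*,*}^{\R}M\Z\cong\Z[\tau^2,\rho]/(2\rho)$ shows at once that $\Gamma\bigl(\bigoplus_{b}\Sigma^{0,b}M\Z\bigr)$ splits, as an $H\Z$-module, into $\bigoplus_{i\ge0}H\Z\{\tau^{2i}\}\oplus\bigoplus_{i\ge0,j>0}H\F_2\{\tau^{2i}\rho^j\}$ (the $\tau^{2i}$ generate free $\Z$-summands, while the classes $\tau^{2i}\rho^j$ with $j>0$ are $2$-torsion and hence generate $H\F_2$-summands). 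Smashing with $H\F_2$ and applying $H_*H\Z\cong\mathcal{A}_*\square_{\mathcal{A}(0)_*}\F_2$ and $H_*H\F_2\cong\mathcal{A}_*$ gives the statement, comodule structure included. You already invoke the projection formula and the $H\Z$-module structure for the weight-$0$ piece, so you have the key ingredient in hand; the point is that the same bare-hands identification works uniformly in all weights, and the entire arithmetic-square machinery is unnecessary here.

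Beyond inefficiency, there are two substantive problems with your write-up. First, the claim that the enlargement from $\mathcal{A}_*\square_{\mathcal{A}(0)_*}\F_2$ to $\mathcal{A}_*$ ``reflects Theorem~\ref{thmtatefrob}'' is a misattribution. The Tate--Frobenius theorem governs the specific gluing map in the arithmetic square (and hence specific \emph{identifications} of classes), but it has nothing to do with the additive shape of the answer, which simply records the $2$-torsion in $\pi_{*,*}^{\R}M\Z$: for $w=-2i-j$ with $j>0$, the spectrum $\Gamma(\Sigma^{0,-w}M\Z)$ is a wedge of $H\F_2$'s, so its homology is cofree over $\mathcal{A}_*$ rather than over $\mathcal{A}(0)_*$. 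Second, the argument as written never actually runs the Mayer--Vietoris sequence to completion; you describe the corners, appeal to the ``pattern'' of Proposition~\ref{proprightmap} without computing the gluing map for $M\Z$, and then assert the output. The final paragraph, claiming the extensions can be ruled out ``in the spirit of Proposition~\ref{propbndmap},'' is not a proof: you would need to establish the analogue of that injectivity statement for $M\Z$ and then carry out the splitting weight by weight. The $H\Z$-module splitting makes this compatibility automatic.
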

\begin{proof}
Voevodsky's computation \cite{voe} of $\pi_{*,*}^{\R}M\Z$ (see also \cite{GHIR}) implies that the $H\Z$-module
\[\Gamma\bigg(\bigoplus\limits_{b\in\Z}\Sigma^{0,b}M\Z\bigg)\]
splits as 
\[\bigoplus\limits_{i\ge0}H\Z\{\tau^{2i}\}\oplus\bigoplus\limits_{i\ge0,j>0}H\F_2\{\tau^{2i}\rho^j\}\]
and the result follows upon taking homology.
\end{proof}
There are subtleties in computing the products in this ring, and these arise from the following class.

\begin{definition}\label{defx_1}
Let $x_1$ denote the class $[\rho\zeta_1]$ in bidegree $(0,-1)$. We use brackets to emphasize that $x_1$ is indecomposable in 
\[\pi_{*,*}^{\R}(i_*H\F_2\otimes M\Z)\]
In particular, there is no class $\zeta_1$, as $\zeta_1\notin\mathcal A_*\square_{\mathcal A(0)_*}\F_2$, and $x_1$ is not divisible by $\rho$.
\end{definition}

Most of the ring structure in $\pi_{*,*}^{\R}(i_*H\mathbb{F}_2\otimes M\Z)$ is determined by the ring maps
\[\pi_*(H\F_2\otimes H\Z)\to\pi_{*,*}^{\R}(i_*H\F_2\otimes M\Z)\]
and 
\[\pi_{*,*}^{\R}(M\Z)\to\pi_{*,*}^{\R}(i_*H\F_2\otimes M\Z)\]
However, the class $x_1$ is not in the subalgebra generated by the images of these maps. To determine products involving $x_1$, we instead work $\rho$-locally.

\begin{proposition}\label{propE_2localized}
There is an isomorphism of bigraded rings
\[\mathcal A_*[y_2,\rho^{\pm}]\to \pi_{*,*}^{\R}(i_*H\F_2\otimes M\Z)[\rho^{-1}]\]
where $|y_2|=2$, and the localization map
\[\phi:\pi_{*,*}^{\R}(i_*H\F_2\otimes M\Z)\to \pi_{*,*}^{\R}(i_*H\F_2\otimes M\Z)[\rho^{-1}]\]
sends
\begin{align*}
\tau^2&\mapsto \rho^2y_2
&x_1&\mapsto \zeta_1\rho
&\zeta_1^2&\mapsto\zeta_1^2+y_2
&\zeta_i&\mapsto\zeta_i+\zeta_{i-1}y_2^{2^{i-2}}
\end{align*}
where $\zeta_1^2,\zeta_i$ are regarded as the elements in the 
\[(\mathcal A_*\square_{\mathcal A(0)_*}\F_2)[\tau^2]\]
summand described in the previous proposition.
\end{proposition}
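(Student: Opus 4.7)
The plan is to first establish the ring isomorphism via the Behrens--Shah equivalence, identify the easy generators, and then chase the harder formulas through a comparison with $M\F_2$ using Proposition \ref{prophomologytwist}.

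Since $M\Z \simeq BPGL\langle 0\rangle$ is a standard quotient of $BPGL$, Lemma \ref{lemmarholocalbpgl} identifies $\Gamma(M\Z[\rho^{-1}]) \simeq H\F_2[y_2]$ as an $H\F_2$-algebra, with $|y_2|=2$. Because $\Gamma:\mathcal{SH}_{\mathrm{cell}}(\R)[\rho^{-1}]\simeq\Sp$ is symmetric monoidal, $\Gamma((i_*H\F_2\otimes M\Z)[\rho^{-1}])\simeq H\F_2\otimes H\F_2[y_2]$ has homotopy $\mathcal A_*[y_2]$, and the invertible class $\rho$ unfolds this into the bigraded ring $\mathcal A_*[y_2,\rho^{\pm}]$. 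The images of $\tau^2$ and $x_1$ are then immediate: $y_2=\tau^2/\rho^2$ by construction, so $\tau^2\mapsto\rho^2 y_2$; and $x_1=[\rho\zeta_1]$ is the unique nontrivial lift of the unique nonzero class of bidegree $(0,-1)$ in the localized ring, which is $\zeta_1\rho$.

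For the remaining two formulas, I chase the commutative diagram
\[
\begin{tikzcd}
H_*H\Z \arrow[r]\arrow[d,hookrightarrow] & \mathcal A_*[y_2]\arrow[d,hookrightarrow,"y_2\mapsto y^2"] \\
\mathcal A_* \arrow[r] & \mathcal A_*[y]
\end{tikzcd}
\]
whose vertical arrows come from the reductions $H\Z\to H\F_2$ and $M\Z\to M\F_2$, and whose horizontal arrows are the weight-zero $\rho$-localizations (with both right-hand targets identified via Lemma \ref{lemmarholocalbpgl}). The right vertical becomes the inclusion $y_2\mapsto y^2$ because the two applications of Lemma \ref{lemmarholocalbpgl} are compatible under the composition $BPGL\to M\Z\to M\F_2$. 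The bottom horizontal is precisely the composite of Proposition \ref{prophomologytwist}, which sends $\zeta_i\mapsto\zeta_i+\zeta_{i-1}y^{2^{i-1}}$. Chasing $\zeta_1^2$ around gives $(\zeta_1+y)^2=\zeta_1^2+y^2$, which lies in $\mathcal A_*[y_2]$ as $\zeta_1^2+y_2$; chasing $\zeta_i$ (for $i\geq 2$) gives $\zeta_i+\zeta_{i-1}y^{2^{i-1}}=\zeta_i+\zeta_{i-1}y_2^{2^{i-2}}$, again in $\mathcal A_*[y_2]$ since $2^{i-1}$ is even. Injectivity of the right vertical then forces the claimed formulas.

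The subtle point is that the identification of $\Gamma(M\Z[\rho^{-1}])$ with $H\F_2[y_2]$ coming from Lemma \ref{lemmarholocalbpgl} (the ``BP identification'') differs from the canonical one coming from $\pi_0=\F_2$: they are related by an even-degree truncation of the Tate-valued Frobenius of Theorem \ref{thmtatefrob}, which is precisely what produces the $y_2$-correction terms in the formulas for $\zeta_1^2$ and $\zeta_i$.
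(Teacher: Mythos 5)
Your proof is correct and takes essentially the same approach as the paper: identify the localized ring via the edge map from $BPGL[\rho^{-1}]\simeq i_*H\F_2[\rho^{-1}]$ together with the class $\tau^2/\rho^2$, then derive the formulas for $\zeta_1^2$ and $\zeta_i$ by comparing the canonical and $BPGL$ presentations via Proposition~\ref{prophomologytwist}. Your diagram chase through $M\F_2$ usefully makes explicit what the paper states tersely, though you should note (as the paper does, citing Proposition~\ref{propE_2splitting}) that $\phi$ is injective because the source is $\rho$-torsion-free, to conclude $\phi(x_1)\neq 0$ before pinning it down by the dimension count.
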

\begin{proof}
We define the map
\[\mathcal A_*[y_2,\rho^{\pm}]\to \pi_{*,*}^{\R}(i_*H\F_2\otimes M\Z)[\rho^{-1}]\]
via the map
\[\mathcal A_*[\rho^{\pm}]\cong\pi_{*,*}^{\R}(i_*H\F_2\otimes BPGL)[\rho^{-1}]\to\pi_{*,*}^{\R}(i_*H\F_2\otimes M\Z)[\rho^{-1}]\]
by sending $y_2$ to the image of $\tau^2/\rho^2$ along the right unit map
\[\pi_{*,*}^{\R}(M\Z)[\rho^{-1}]\to \pi_{*,*}^{\R}(i_*H\F_2\otimes M\Z)[\rho^{-1}]\]
This is an isomorphism as $M\Z[\rho^{-1}]\simeq i_*H\F_2[y_2][\rho^{-1}]$.

For the claims about the map $\phi$, note that Proposition \ref{propE_2splitting} implies that 
\[\pi_{*,*}^{\R}(i_*H\F_2\otimes M\Z)\]
is $\rho$-torsion free, so $\phi$ is an injection. The map
\[\mathcal A_*\square_{\mathcal A(0)_*}\F_2\cong\pi_{*,0}^{\R}(i_*H\F_2\otimes M\Z)\to \pi_{*,0}^{\R}(i_*H\F_2\otimes M\Z)[\rho^{-1}]\cong \mathcal A_*[y_2]\]
is described by Proposition \ref{prophomologytwist} since the left and right presentations come from the canonical and $BPGL$ $H\F_2$-algebra structures, respectively. Now $x_1\mapsto \rho\xi_1$ since $\rho\xi_1$ is the only nonzero class in this bidegree. 
\end{proof}

\begin{corollary}\label{corx_1rel}
In the ring $\pi_{*,*}^{\R}(i_*H\F_2\otimes M\Z)$, we have the relation
\[x_1^2=\zeta_1^2\rho^2+\tau^2\]
\end{corollary}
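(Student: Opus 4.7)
The plan is to reduce the identity to a computation after $\rho$-localization, using the injectivity of $\phi$. By Proposition \ref{propE_2splitting}, the ring $\pi_{*,*}^{\R}(i_*H\F_2 \otimes M\Z)$ decomposes as a direct sum of $\mathcal A_*$-comodules, each of which is a free module over the $\rho$-free subring $\F_2[\rho,\tau^2]$ of $\pi_{*,*}^{\R} M\Z$ (the $\F_2$-coefficients come from the Voevodsky-style collapse $2\rho = 0$). In particular the ring is $\rho$-torsion free, so the localization map
\[\phi : \pi_{*,*}^{\R}(i_*H\F_2 \otimes M\Z) \longrightarrow \pi_{*,*}^{\R}(i_*H\F_2 \otimes M\Z)[\rho^{-1}]\]
is injective. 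It therefore suffices to verify the claimed relation in the target, where it becomes a direct computation from the formulas recorded in Proposition \ref{propE_2localized}.

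Carrying out that computation, one has
\[\phi(x_1^2) = (\zeta_1 \rho)^2 = \zeta_1^2 \rho^2,\]
while
\[\phi(\zeta_1^2 \rho^2 + \tau^2) = (\zeta_1^2 + y_2)\rho^2 + \rho^2 y_2 = \zeta_1^2 \rho^2\]
since we work mod $2$. Thus $\phi(x_1^2) = \phi(\zeta_1^2 \rho^2 + \tau^2)$, and injectivity of $\phi$ upgrades this to the desired integral identity on the $E_2$-page.

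There is essentially no hard step: the work has been absorbed into Proposition \ref{propE_2localized}, whose formulas encode both the non-canonical $H\F_2$-algebra structure on $M\Z[\rho^{-1}]$ coming from $BPGL$ (responsible for the shift $\zeta_1^2 \mapsto \zeta_1^2 + y_2$) and the identification of $\tau^2$ with $\rho^2 y_2$ under localization. The one thing worth emphasizing in the write-up is that the relation $x_1^2 = \zeta_1^2 \rho^2 + \tau^2$ is not a consequence of multiplicativity of the canonical or $BPGL$ ring structures alone; it is precisely the interaction between the two — which Proposition \ref{propE_2localized} makes explicit via the map $\phi$ — that forces the $\tau^2$ correction term, consistent with the fact that $x_1$ itself is not in the subalgebra generated by the images of $\pi_*(H\F_2 \otimes H\Z)$ and $\pi_{*,*}^{\R}(M\Z)$.
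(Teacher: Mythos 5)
Your proof is correct and is the paper's argument verbatim: compare $\phi(x_1^2)=(\zeta_1\rho)^2=\zeta_1^2\rho^2$ with $\phi(\zeta_1^2\rho^2+\tau^2)=(\zeta_1^2+y_2)\rho^2+\rho^2 y_2 = \zeta_1^2\rho^2$ (mod $2$) and appeal to the injectivity of $\phi$, already asserted in Proposition \ref{propE_2localized}. One small slip in your warm-up paragraph: the summands in Proposition \ref{propE_2splitting} are $\mathcal A_*$-comodule summands, not $\F_2[\rho,\tau^2]$-module summands (multiplication by $\rho$ moves between them), though $\rho$-torsion-freeness of the whole sum --- and hence injectivity of $\phi$ --- is still immediate and is exactly what the proof of Proposition \ref{propE_2localized} records.
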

\begin{proof}
The map $\phi$ of the previous proposition is an injective ring map, and one has
\[\phi(\zeta_1^2\rho^2+\tau^2)=\zeta_1^2\rho^2+y_2\rho^2+y_2\rho^2=\phi(x_1^2)\qedhere\]
\end{proof}

\begin{corollary}\label{corE_2description}
The map
\[(\mathcal A_*\square_{\mathcal A(0)_*}\F_2)[\rho,x_1]\to\pi_{*,*}^{\R}(i_*H\F_2\otimes M\Z)\]
is an isomorphism of left $\mathcal A_*$-comodule algebras, where $\rho$ is primitive, and
\[\psi(x_1)=1\otimes x_1+\xi_1\otimes \rho\]
\end{corollary}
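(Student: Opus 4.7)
The plan is to define the comparison map, verify it is a map of $\mathcal A_*$-comodule algebras, and then establish bijectivity by combining surjectivity with a Poincar\'e series comparison. The key inputs are the additive splitting of the target in Proposition~\ref{propE_2splitting} and the multiplicative relation $\tau^2=x_1^2+\zeta_1^2\rho^2$ from Corollary~\ref{corx_1rel}.

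First I would define the map on generators: send $\rho$ to the image of $\rho\in\pi_{*,*}^\R M\Z$ under the unit $M\Z\to i_*H\F_2\otimes M\Z$, send $x_1$ to the class of Definition~\ref{defx_1}, and send $\mathcal A_*\square_{\mathcal A(0)_*}\F_2\cong H_*H\Z$ to its image under the canonical ring map induced from $i_*H\Z\to M\Z$. Since the source is a polynomial algebra over $\mathcal A_*\square_{\mathcal A(0)_*}\F_2$, this extends uniquely to a ring map. For comodule compatibility, by multiplicativity of coactions it suffices to check on generators: $\rho$ is primitive since it lifts to $\pi_{*,*}^\R M\Z$; the coaction on $B:=\mathcal A_*\square_{\mathcal A(0)_*}\F_2$ agrees with the standard coaction on $H_*H\Z$ by naturality of the canonical map; and for $x_1$, the interpretation $x_1=\zeta_1\rho$ inside the summand $\mathcal A_*\{\rho\}$ of Proposition~\ref{propE_2splitting} together with primitivity of $\rho$ and $\psi(\zeta_1)=\xi_1\otimes 1+1\otimes\xi_1$ give
\[\psi(x_1)=(1\otimes\rho)(\xi_1\otimes 1+1\otimes\xi_1)=\xi_1\otimes\rho+1\otimes x_1.\]

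For surjectivity I would decompose any target element according to Proposition~\ref{propE_2splitting}. The relation $\tau^2=x_1^2+\zeta_1^2\rho^2$ places $\tau^2$ in the image, and hence so is every element of $(\mathcal A_*\square_{\mathcal A(0)_*}\F_2)[\tau^2]$. For the summands $\mathcal A_*\{\tau^{2i}\rho^j\}$ with $j>0$, I would use the free-module decomposition $\mathcal A_*=B\oplus B\zeta_1$ to write $a=a_0+\zeta_1 a_1$ with $a_0,a_1\in B$, so that
\[a\tau^{2i}\rho^j=(a_0\rho^j+a_1\rho^{j-1}x_1)\tau^{2i}\]
is a product of elements in the image, using $j\ge 1$ to form $\zeta_1\rho=x_1$ and that $\tau^{2i}$ is already in the image. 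For injectivity, both sides are finite-dimensional in each bidegree, so it suffices to compare Poincar\'e series. A direct calculation using $|\rho|=(-1,-1)$, $|x_1|=(0,-1)$, $|\tau^2|=(0,-2)$, and $\dim(\mathcal A_*)_n=\dim B_n+\dim B_{n-1}$ shows the bigraded dimensions match, and surjectivity then forces isomorphism in each bidegree.

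The main delicate point is the role of Corollary~\ref{corx_1rel}: the source polynomial algebra contains no element corresponding to $\tau^2$, so surjectivity hinges on rewriting $\tau^2=x_1^2+\zeta_1^2\rho^2$ to bring $\tau^2$ into the image. Keeping track of the distinction between $\zeta_1^2\in B\subset$ source and its image in the various summands of the target is the most likely place to slip up.
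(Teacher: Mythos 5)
Your proof is correct and follows essentially the same approach as the paper: injectivity via a bigraded dimension count, surjectivity via the relation $\tau^2 = x_1^2 + \zeta_1^2\rho^2$ from Corollary~\ref{corx_1rel}. You spell out the Poincar\'e series computation and the $\mathcal A_* = B\oplus B\zeta_1$ decomposition needed to hit the summands $\mathcal A_*\{\tau^{2i}\rho^j\}$, steps the paper leaves implicit.
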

\begin{proof}
The source and target of the map are bigraded $\F_2$-vector spaces of the same (finite) graded dimension. It therefore suffices to show the map is surjective, and this follows from Corollary \ref{corx_1rel}, which implies that $\tau^2$ is in the image.
\end{proof}

\subsection{Differentials on subalgebras of the Steenrod algebra}\label{subsecdifferentials}
Corollary \ref{corE_2description} gives a description of our $E_2$-page \[E_2^{*,*,*}(BPGL\langle m\rangle;i_*H\mathbb F_2)\cong(\mathcal A_*\square_{\mathcal A(0)_*}\F_2)[\rho,x_1,\overline{v}_{1},\ldots,\overline{v}_{m}]\]
The classes $\rho,x_1,\overline{v}_{1},\ldots,\overline{v}_{m}$ are all permanent cycles, and the subalgebra
\[\mathcal A_*\square_{\mathcal A(m)_*}\F_2\subset \mathcal A_*\square_{\mathcal A(0)_*}\F_2\]
consists of permanent cycles by Corollary \ref{coredgeA(m)}. In this section, we describe a family of differentials in this spectral sequence on $\mathcal A_*\square_{\mathcal A(0)_*}\F_2$ that interpolate between $\mathcal A_*\square_{\mathcal A(0)_*}\F_2$ and $\mathcal A_*\square_{\mathcal A(m)_*}\F_2$.

For all $i$, $\mathcal A_*\square_{\mathcal A(i-1)_*}\F_2$ is a free $\mathcal A_*\square_{\mathcal A(i)_*}\F_2$-module given by
\[\mathcal A_*\square_{\mathcal A(i-1)_*}\F_2\cong (\mathcal A_*\square_{\mathcal A(i)_*}\F_2)\otimes E(\zeta_1^{2^{i}},\zeta_2^{2^{i-1}},\ldots,\zeta_{i+1})\]
We show that, for $i\le m$ each of these generators $\zeta_{j}^{2^{i+1-j}}$ supports a $d_{2^{i+1}-1}$ (Theorem \ref{differentialsthm}). Each of the squares of these generators is then a cycle and we are left with $\mathcal A_*\square_{\mathcal A(i)_*}\F_2$ on $E_{2^{i+1}}$ along with various Massey products (see Definition \ref{defx_m}) due to the fact that the values of $d_{2^{i+1}-1}$ on these generators are not algebraically independent. We deduce from these differentials that the image of the edge homomorphism is \textit{precisely} $\mathcal A_*\square_{\mathcal A(m)_*}\F_2$ (Theorem \ref{edgethm}).

To determine these differentials, we use an inductive argument on $m$ and Corollary \ref{coredgeA(m)} to deduce that the differential $d_{2^{m+1}-1}$ must take classes in $\mathcal A_*\square_{\mathcal A(m-1)_*}\F_2$ to a $\rho$-torsion free part of $E_{2^{m+1}-1}$. The differential is then determined by the structure of the $\rho$-localized spectral sequence, which we determine completely (Proposition \ref{proplocalizedSS}).

In this section, we use the following notation for our HSSS's.
\aln{
E_r^{*,*,*}\langle m\rangle&:=E_r^{*,*,*}(BPGL\langle m\rangle;i_*H\F_2)\\
E_r^{*,*,*}\langle m\rangle[\rho^{-1}]&:=E_r^{*,*,*}(BPGL\langle m\rangle;i_*H\F_2[\rho^{-1}])
}

\subsubsection{The localized spectral sequence}\label{subsubseclocalized} 
We begin by determining the structure of the localized spectral sequence
\[E_r[\rho^{-1}]=E_r^{*,*,*}(BPGL;i_*H\F_2[\rho^{-1}])\]
The spectral sequence $E_r\langle m\rangle[\rho^{-1}]$ for $m$ finite
can be read off from this by setting $f_i=0$ for $i>m$.

\begin{proposition}\label{proplocalizedSS}
In the spectral sequence $E_r[\rho^{-1}]$, we have 
\begin{enumerate}
\item $E_2[\rho^{-1}]\cong\mathcal A_*[y_2,f_1,f_2,\ldots][\rho^{\pm}]$,
where

\begin{itemize}
\item $f_m=\rho^{2^m-1}\overline{v}_{m}$ so that $|f_m|=(2^m-1,0,2^m-1)$
\item $\mathcal A_*$ is the image of the edge homomorphism
\[\mathcal A_*\cong\pi_{*,*}^{\R}(i_*H\F_2\otimes BPGL)[\rho^{-1}]\to\pi_{*,*}^{\R}(i_*H\F_2\otimes M\Z)[\rho^{-1}]=\mathcal A_*[y_2]\]
and $y_2$ is the image of the class $\tau^2/\rho^2$ along the map
\[\pi_{*,*}^{\R}(M\Z)[\rho^{-1}]\to \pi_{*,*}^{\R}(i_*H\F_2\otimes M\Z)[\rho^{-1}]\]
\item If $q\in\mathcal A_i$, the tridegree of $q$ is $(i,0,-i)$, and $|y_2|=(2,0,-2)$\\

\end{itemize}
\item The differential $d_k[\rho^{-1}]=0$ unless $k=2^{m+1}-1$ for $m\ge 1$, and
\[E_{2^{m+1}-1}[\rho^{-1}]=\mathcal A_*[y_2^{2^{m-1}},f_m,f_{m+1},\ldots][\rho^{\pm}]\]
\item The differential $d_{2^{m+1}-1}[\rho^{-1}]$ is determined by
\[d_{2^{m+1}-1}[\rho^{-1}](y_2^{2^{m-1}})=f_m\]
\end{enumerate}
\end{proposition}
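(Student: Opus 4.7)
Part (1) is a direct consequence of Proposition \ref{propE_2localized}, which identifies $\pi_{*,*}^\R(i_*H\F_2 \otimes M\Z)[\rho^{-1}] \cong \mathcal{A}_*[y_2, \rho^{\pm}]$, together with the slice associated graded of $BPGL$ from Proposition \ref{propslicebp}, namely $M\Z[\overline{v}_1, \overline{v}_2, \ldots]$. After $\rho$-inversion, the substitution $f_m := \rho^{2^m-1}\overline{v}_m$ is simply a change of generators yielding the claimed presentation, and the tridegree of $f_m$ follows by summing those of $\rho^{2^m-1}$ and $\overline{v}_m$.

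For parts (2) and (3), the first step is to determine the abutment and collect enough permanent cycles. By Lemma \ref{lemmarholocalbpgl}, $\Gamma(BPGL[\rho^{-1}]) \simeq H\F_2$, so the localized HSSS converges to $\pi_{*,*}^\R(i_*H\F_2 \otimes BPGL)[\rho^{-1}] \cong \mathcal{A}_*[\rho^{\pm}]$. The edge-homomorphism argument of Corollary \ref{coredgeA(m)}, specialized to $BPGL$ in the $\rho$-inverted setting, identifies $\mathcal{A}_* \subset E_2[\rho^{-1}]$ as permanent cycles; together with $\rho^{\pm 1}$ and the $\overline{v}_i$ (permanent cycles as slice generators lifting to $\pi_{*,*}^\R BPGL$), this produces a subalgebra $\mathcal{A}_*[f_1, f_2, \ldots][\rho^{\pm}]$ of permanent cycles.

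I would then prove (2) and (3) simultaneously by induction on $m \geq 1$, with base case $E_2[\rho^{-1}]$. Assuming $E_{2^{m+1}-1}[\rho^{-1}] = \mathcal{A}_*[y_2^{2^{m-1}}, f_m, f_{m+1}, \ldots][\rho^{\pm}]$, the class $y_2^{2^{m-1}}$ lies in tridegree $(2^m, 0, -2^m)$, and I would run a tridegree analysis on the possible targets for $d_r(y_2^{2^{m-1}})$. Writing a candidate target as $a \cdot y_2^{2^{m-1}j} \cdot \prod_n f_n^{k_n} \cdot \rho^\ell$ with $a \in \mathcal{A}_i$, the weight-zero constraint forces $\ell = 0$, and the stem/filtration equations combine to $2\sum_{n \geq m}(2^n - 1)k_n = r - 1$ and $2i + 2^{m+1}j = 2^{m+1} - 1 - r$. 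Since $2^n - 1 \geq 2^m - 1$ for $n \geq m$, the only nontrivial solution has $j = 0 = i$, $k_m = 1$, all other $k_n = 0$, and $r = 2^{m+1} - 1$, giving target $\F_2 \cdot f_m$. Because $y_2^{2^{m-1}}$ cannot survive to $E_\infty = \mathcal{A}_*[\rho^{\pm}]$, this unique available differential must be nontrivial, forcing $d_{2^{m+1}-1}(y_2^{2^{m-1}}) = f_m$. A Koszul-style computation in characteristic two (using that $d_{2^{m+1}-1}$ is a derivation with $y_2^{2^{m-1}} \mapsto f_m$ and with all other generators in the kernel) then identifies the homology as $\mathcal{A}_*[y_2^{2^m}, f_{m+1}, \ldots][\rho^{\pm}]$.

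The main obstacle is verifying that no differentials occur on the intermediate pages $E_r[\rho^{-1}]$ for $2^{m+1} \leq r < 2^{m+2} - 1$, so that the homology just computed is in fact $E_{2^{m+2}-1}[\rho^{-1}]$ in the form required to continue the induction. Since the only non-permanent generators of $E_{2^{m+1}}[\rho^{-1}]$ are powers of $y_2^{2^m}$, the Leibniz rule reduces this to $d_r(y_2^{2^m}) = 0$ in the stated range, which follows from the same tridegree argument applied with $m$ replaced by $m+1$, showing that the first available target is $f_{m+1}$ at $r = 2^{m+2} - 1$.
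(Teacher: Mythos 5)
Your proof is correct and takes essentially the same approach as the paper's, which is considerably terser: it simply observes that all algebra generators except $y_2$ are permanent cycles and that each $f_i$ must be hit, leaving the claimed pattern as the only possibility ``for degree reasons.'' The one small difference in emphasis is that the paper argues from the target ($f_m$ is a permanent cycle in $E_2[\rho^{-1}]$ that must die, hence must be a boundary, and the only possible source is $y_2^{2^{m-1}}$), whereas you argue from the source ($y_2^{2^{m-1}}$ cannot survive, so the unique candidate differential must be nonzero); your version leaves implicit the routine check that $y_2^{2^{m-1}}$ is not itself a boundary, which follows from the same tridegree bookkeeping since the equation $2\sum_{n\ge m}(2^n-1)k_n = 1-r$ has no solutions for $r\ge 2$.
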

\begin{proof}
The description of $E_2[\rho^{-1}]$ follows from Proposition \ref{propE_2localized}. The image of the edge homomorphism consists of permanent cycles, so the only algebra generator that is not a permanent cycle is $y_2$, and since the spectral sequence must converge to
\[\pi_{*,*}^{\R}(i_*H\F_2\otimes BPGL)[\rho^{-1}]\cong\mathcal A_*[\rho^{\pm}]\]
each $f_i$ must be hit by a differential. For degree reasons, the claimed pattern of differentials is the only possibility.
\end{proof}

Proposition \ref{propE_2localized} also gives us a description of the map $E_2\to E_2[\rho^{-1}]$, and there are important permanent cycles in $E_2^{*,*,*}(BPGL;i_*H\F_2)$ that lift the classes $\rho\xi_m$ along this map.

\begin{definition}\label{defx_m}
Define $x_0=\rho$, $x_1$ as in Definition \ref{defx_1}, and inductively
\[x_m=\sum\limits_{i=0}^{m-1}x_i\zeta_{m-i}^{2^i}\]
for $m>1$. The tridegree of $x_m$ is $(2^m-2,-1,2-2^m)$ in $E_r^{*,*,*}(BPGL;i_*H\F_2)$.
\end{definition}

\begin{proposition}\label{propmaptolocalized}
The map $E_2\to E_2[\rho^{-1}]$ sends
\begin{align*}
\zeta_1^2&\mapsto \zeta_1^2+y_2
&\zeta_i &\mapsto \zeta_i+\zeta_{i-1}y_2^{2^{i-2}}
&x_m&\mapsto \rho\xi_m
\end{align*}
\end{proposition}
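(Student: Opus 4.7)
The plan is to observe first that the formulas for $\zeta_1^2$ and $\zeta_i$ are already contained in Proposition \ref{propE_2localized}, so the only new content of the statement is the third identity $x_m \mapsto \rho \xi_m$, which I will establish by induction on $m$. The base cases are immediate: $x_0 = \rho = \rho \xi_0$ by definition, and Proposition \ref{propE_2localized} gives $x_1 \mapsto \zeta_1 \rho = \rho \xi_1$, using that $\zeta_1 = \xi_1$ at the prime $2$ (since $\xi_1$ is the unique primitive of its degree).

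For the inductive step with $m \geq 2$, I would apply the localization map termwise to the recursive definition
\[
x_m = \sum_{i=0}^{m-1} x_i\, \zeta_{m-i}^{2^i},
\]
substituting the inductive hypothesis $x_i \mapsto \rho \xi_i$ and the formulas from Proposition \ref{propE_2localized} raised to the appropriate Frobenius power. The key uniform observation to extract is that for every $0 \leq i \leq m-1$,
\[
\zeta_{m-i}^{2^i} \;\longmapsto\; \zeta_{m-i}^{2^i} + \zeta_{m-i-1}^{2^i}\, y_2^{2^{m-2}},
\]
under the convention $\zeta_0 = 1$. For $i \leq m-2$ this is just the Frobenius of the formula for $\zeta_{m-i}$; at the boundary $i = m-1$, one instead uses $\zeta_1^{2^{m-1}} = (\zeta_1^2)^{2^{m-2}} \mapsto (\zeta_1^2 + y_2)^{2^{m-2}} = \zeta_1^{2^{m-1}} + y_2^{2^{m-2}}$, which fits the same pattern.

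Substituting everything yields
\[
x_m \;\longmapsto\; \rho \sum_{i=0}^{m-1} \xi_i\, \zeta_{m-i}^{2^i} \;+\; \rho\, y_2^{2^{m-2}} \sum_{i=0}^{m-1} \xi_i\, \zeta_{m-i-1}^{2^i}.
\]
To finish, I invoke the antipode identity $\sum_{i=0}^{n} \xi_i\, \zeta_{n-i}^{2^i} = 0$ in $\mathcal{A}_*$, valid for $n \geq 1$. The first sum above is the $n = m$ antipode relation with the $i = m$ term (namely $\xi_m$) removed, so it collapses to $\xi_m$. The second sum is the full $n = m-1$ antipode relation, which is valid since $m \geq 2$ forces $n \geq 1$, so it vanishes. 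The result is $x_m \mapsto \rho \xi_m$, completing the induction. The only real subtlety lies in verifying that the $i = m-1$ case fits the uniform formula (since there is no element $\zeta_1$ in $E_2$, only $\zeta_1^2$) and in recognizing both resulting sums as specializations of the same antipode identity; there is no serious obstacle beyond this bookkeeping.
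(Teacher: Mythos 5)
Your proof is correct and carries out the induction that the paper's one-line proof merely gestures at. The formulas for $\zeta_1^2$ and $\zeta_i$ are read off directly from Proposition \ref{propE_2localized}, and your inductive computation of $x_m\mapsto\rho\xi_m$ via the recursion, the Frobenius-raised localization formulas (with the correct treatment of the boundary $i=m-1$ case where only $\zeta_1^2$ exists in the source), and the antipode identity $\sum_{i=0}^{n}\xi_i\zeta_{n-i}^{2^i}=0$ is exactly the intended argument.
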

\begin{proof}
The first two follow immediately from Proposition \ref{propE_2localized}, and the claim
\[x_m\mapsto \rho\xi_m\]
follows by induction on $m$.
\end{proof}

The fact that $E_2\to E_2[\rho^{-1}]$ is an injection also gives us the coactions on these classes.

\begin{corollary}\label{corcoactionx_m}
On $E_2$, the classes $x_m$ satisfy the following coaction formula
\aln{
\psi(x_0)&=1\otimes x_0\\
\psi(x_m)&=\sum\limits_{i=0}^m\xi_i^{2^{m-i}}\otimes x_{m-i}
}
\end{corollary}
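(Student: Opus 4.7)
The plan is to exploit the injectivity of the localization map $E_2 \to E_2[\rho^{-1}]$ that is established in Corollary \ref{corE_2description} and Proposition \ref{propE_2localized}, together with the explicit description of the image of $x_m$ given in Proposition \ref{propmaptolocalized}. Since $\mathcal{A}_*$ is free (hence flat) over $\F_2$, the induced map $\mathcal{A}_* \otimes E_2 \to \mathcal{A}_* \otimes E_2[\rho^{-1}]$ is also injective, so determining $\psi(x_m)$ reduces to computing its image in $\mathcal{A}_* \otimes E_2[\rho^{-1}]$, where everything is expressible in terms of honest products.

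The case $m = 0$ is immediate: $x_0 = \rho$ comes from a bigraded homotopy class of the motivic sphere spectrum, and hence is an $\mathcal{A}_*$-comodule primitive. For $m \geq 1$, Proposition \ref{propmaptolocalized} tells us that $x_m \mapsto \rho\xi_m$ in $E_2[\rho^{-1}] \cong \mathcal{A}_*[y_2, f_1, f_2, \ldots][\rho^{\pm}]$. Since $\rho$ is primitive and the subalgebra $\mathcal{A}_* \subset E_2[\rho^{-1}]$ (the image of the edge homomorphism from Proposition \ref{proplocalizedSS}) carries the standard dual Steenrod coaction, multiplicativity of $\psi$ gives
\[
\psi(\rho\xi_m) \;=\; (1\otimes\rho)\cdot\sum_{i=0}^m \xi_i^{2^{m-i}} \otimes \xi_{m-i} \;=\; \sum_{i=0}^m \xi_i^{2^{m-i}} \otimes \rho\xi_{m-i}.
\]

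On the other hand, the proposed expression $\sum_{i=0}^m \xi_i^{2^{m-i}} \otimes x_{m-i}$ is an element of $\mathcal{A}_* \otimes E_2$ whose image in $\mathcal{A}_* \otimes E_2[\rho^{-1}]$ is, term by term, $\sum_{i=0}^m \xi_i^{2^{m-i}} \otimes \rho\xi_{m-i}$ by Proposition \ref{propmaptolocalized}. These two elements of $\mathcal{A}_* \otimes E_2[\rho^{-1}]$ agree, and by injectivity of the comodule localization map we conclude $\psi(x_m) = \sum_{i=0}^m \xi_i^{2^{m-i}} \otimes x_{m-i}$. There is essentially no obstacle: all the required inputs (injectivity of $\rho$-localization, primitivity of $\rho$, identification of the image of $x_m$, and the standard $\mathcal{A}_*$-coaction on $\xi_m$) are in place in the preceding sections, and the only content of the corollary is that the inductive Massey-style definition of $x_m$ was chosen precisely so that this clean formula holds upstairs.
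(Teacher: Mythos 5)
Your argument is exactly what the paper intends: the paper states the corollary with only the remark that injectivity of $E_2\to E_2[\rho^{-1}]$ determines the coactions, and your proof fills in precisely that reasoning using $x_m\mapsto\rho\xi_m$ from Proposition \ref{propmaptolocalized}, the primitivity of $\rho$, and the Milnor coproduct on $\xi_m$. Correct, and the same approach.
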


\subsubsection{Differentials on \texorpdfstring{\((\cA(m)/\!/\cA(m-1))^\ast\)}{A of m mod A of m-1}}

The cofiber sequences
\[\Sigma^{2(2^m-1),2^m-1}BPGL\langle m\rangle\xrightarrow{\cdot\overline{v}_{m}} BPGL\langle m\rangle\to BPGL\langle m-1\rangle\]
allow us to set up an inductive argument to determine a family of differentials in $E_r^{*,*,*}\langle m\rangle$. We first state a standard lemma on morphisms of spectral sequences.

\begin{lemma}\label{filteredlemma}
Suppose $E_r^{s,t}(1)\to E_r^{s,t}(2)$ is a morphism of spectral sequences with the property that the map is an isomorphism for $t<N$ when $r=2$. Then
\begin{enumerate}
\item The map $E_r^{s,t}(1)\to E_r^{s,t}(2)$ is an isomorphism whenever $t+r<N+2$.
\item The map $E_r^{s,t}(1)\to E_r^{s,t}(2)$ is an injection whenever $t<N$.
\end{enumerate}
\end{lemma}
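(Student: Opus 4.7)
The plan is to prove both statements simultaneously by induction on $r$, with base case $r=2$ supplied by the hypothesis (noting that for $r=2$, the condition $t+r<N+2$ is equivalent to $t<N$, so (1) is the hypothesis itself and (2) follows since isomorphisms are injections). For the inductive step from $r$ to $r+1$, I would use the standard subquotient presentation
\[
E_{r+1}^{s,t} \;=\; \ker\bigl(d_r\colon E_r^{s,t}\to E_r^{s+r,t-1}\bigr)\big/\mathrm{im}\bigl(d_r\colon E_r^{s-r,t+1}\to E_r^{s,t}\bigr),
\]
implicitly using the convention that $d_r$ decreases the coordinate $t$ by $1$ per page, which matches the paper's Adams trigrading where each $d_r$ changes the stem by $-1$. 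The proof then consists of two five-lemma chases on the natural morphism of the cycle/boundary short exact sequences.

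For part (1), the observation is that $f_{r+1}^{s,t}$ is an isomorphism provided $f_r$ is an isomorphism at each of the three positions $(s,t)$, $(s+r,t-1)$, and $(s-r,t+1)$. Under the inductive hypothesis $t+r<N+2$, the binding constraint is the one imposed at the source $(s-r,t+1)$ of the incoming differential, which requires $(t+1)+r<N+2$, that is, $t+(r+1)<N+2$. This is exactly the range claimed on $E_{r+1}$, completing the inductive step for (1).

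For part (2), a direct diagram chase shows that $f_{r+1}^{s,t}$ is injective as soon as $f_r$ is injective at $(s,t)$ and $f_r$ is surjective at the source $(s-r,t+1)$ of the incoming $d_r$: given a cycle $x\in E_r^{s,t}(1)$ whose class is killed by $f_{r+1}$, one has $f_r(x)=d_r(y)$ for some $y$ at $(s-r,t+1)$, and lifting $y$ to $\tilde y$ on the $(1)$ side via the surjection gives $f_r(d_r(\tilde y))=d_r(y)=f_r(x)$, whence $x=d_r(\tilde y)$ by injectivity of $f_r$ at $(s,t)$. The injectivity at $(s,t)$ is the inductive hypothesis, while the surjectivity at the source is supplied by part (1) applied at the shifted bidegree. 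The main obstacle is the bigrading bookkeeping, in particular verifying that in the paper's convention the source of incoming $d_r$ to $(s,t)$ sits at $t$-coordinate $t+1$, so that the iso range of part (1) aligns exactly with the surjectivity needed to propagate the injection in part (2) across the full range $t<N$; once the grading conventions are nailed down, both chases are routine.
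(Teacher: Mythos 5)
Your overall strategy---simultaneous induction on $r$ via the homology-comparison form of the five lemma---is the right one, but you have the paper's grading convention backwards, and this changes which of the three positions governs the inductive step. That, in turn, exposes a genuine gap in your proof of part (1).

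In the paper (Definition~\ref{definition:generalizedslicess}) the \emph{Adams} tridegree $(t-s,w,s)$ changes by $(-1,0,r)$ under $d_r$, so in the $(s,w,t)$ indexing used in the lemma one has $\Delta s=r$ and $\Delta t=\Delta s+\Delta(t-s)=r-1$; that is,
\[
d_r\colon E_r^{s,t}\longrightarrow E_r^{s+r,\,t+r-1},
\]
not $E_r^{s+r,\,t-1}$. You conflated the slice index $t$ with the stem $t-s$. Consequently the source of the incoming differential, $(s-r,\,t-r+1)$, has the \emph{lowest} $t$-coordinate of the three relevant positions, and the target $(s+r,\,t+r-1)$ has the \emph{highest}. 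Your part (1) argument requires $f_r$ to be an isomorphism at all three positions; with the correct grading the binding constraint comes from the target and gives only $t+(2r-1)<N+2$, which is strictly weaker than the claimed $t+(r+1)<N+2$ once $r>2$. The missing idea is that the homology-comparison lemma needs only \emph{injectivity} at the target, and that is exactly what part (2) supplies in the range $t+r-1<N$, i.e.\ $t<N+1-r$---the claimed range for $E_{r+1}$. So the inductive step for (1) must invoke (2$_r$) at the target; the pure three-isomorphism chase does not close the induction. Your chase for part (2) is correct once the grading is fixed: the source sits at $t-r+1\le t$, so (1$_r$) furnishes surjectivity there for $t<N+1$, which covers $t<N$. (With the grading as you wrote it, part (2) would in fact fail: the source would sit at $t+1$, and (1$_r$) gives surjectivity there only for $t<N+1-r$, short of the full range $t<N$.)
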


In our case, we have the following.

\begin{proposition}\label{prop:EquivOfSlices}
For \(t< 2^{m+1}-2\), the map
\[
P^tBPGL\langle m\rangle\to P^tBPGL\langle m-1\rangle
\]
is an equivalence.
\end{proposition}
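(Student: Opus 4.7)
The plan is to deduce the claim by applying the slice truncation functor $P^t$ to the defining cofiber sequence for $BPGL\langle m-1\rangle$ as the quotient of $BPGL\langle m\rangle$ by multiplication by $\overline{v}_m$. The functor $P^t\colon \mathcal{SH}(k)\to \mathcal{SH}(k)$ is exact, since it is the Bousfield localization away from the localizing subcategory $\mathcal{SH}(k)^{t+1,\mathrm{veff}}$ generated by $\mathcal{K}_{t+1}$. Applying it to the cofiber sequence
\[
\Sigma^{2(2^m-1),\,2^m-1} BPGL\langle m\rangle \xrightarrow{\cdot\overline{v}_m} BPGL\langle m\rangle \to BPGL\langle m-1\rangle
\]
thus yields a new cofiber sequence
\[
P^t\Sigma^{2(2^m-1),\,2^m-1} BPGL\langle m\rangle \to P^t BPGL\langle m\rangle \to P^t BPGL\langle m-1\rangle,
\]
and the proposition reduces to showing that the leftmost term is contractible when $t<2^{m+1}-2$.

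The key observation is then a slice-connectivity estimate. By Proposition \ref{propslicebp}, $BPGL\langle m\rangle$ is slice $\ge 0$, since its slice associated graded is the nonnegatively graded object $M\Z[\overline{v}_1,\ldots,\overline{v}_m]$. With the doubled grading conventions of Section \ref{subsecmotivicslice}, where $\Sigma^{2a,a}\Sigma^\infty_+ X \in \mathcal{K}_{2a}$, the bigraded suspension $\Sigma^{2a,a}$ raises the slice filtration by $2a$. Hence $\Sigma^{2(2^m-1),\,2^m-1} BPGL\langle m\rangle$ is slice $\ge 2(2^m-1)=2^{m+1}-2$. When $t<2^{m+1}-2$, the inequality $t+1\le 2^{m+1}-2$ holds, so this shifted spectrum lies in $\mathcal{SH}(k)^{t+1,\mathrm{veff}}$, and $P^t$ annihilates it by the universal property of the slice truncation. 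This completes the argument.

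There is no substantial obstacle: the only subtlety is bookkeeping in the doubled grading conventions (so that the slice-shift of $\Sigma^{2(2^m-1),\,2^m-1}$ is $2^{m+1}-2$ rather than $2^m-1$), and verifying that $P^t$ is exact so that we may apply it to cofiber sequences. Both are immediate from the definitions recalled in Section \ref{subsecmotivicslice}.
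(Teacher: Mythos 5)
Your proof takes the same route as the paper: apply $P^t$ to the defining cofiber sequence $\Sigma^{2(2^m-1),2^m-1}BPGL\langle m\rangle\xrightarrow{\ov_m}BPGL\langle m\rangle\to BPGL\langle m-1\rangle$ and observe that the source is slice $\ge 2(2^m-1)=2^{m+1}-2$, hence killed by $P^t$ when $t<2^{m+1}-2$. The paper states this tersely; you have correctly spelled out the two ingredients (exactness of $P^t$ as a Bousfield localization, and the slice-shift of $\Sigma^{2a,a}$ in the doubled grading).
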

\begin{proof}
We have a cofiber sequence of \(BPGL\)-modules
\[
\Sigma^{2(2^m-1),2^m-1}BPGL\langle m\rangle\xrightarrow{\bar{v}_{m}}BPGL\langle m\rangle\to BPGL\langle m-1\rangle
\]
The source of \(\bar{v}_{m}\)-multiplication is slice \(2(2^{m}-1)\)-connective, and the result follows.
\end{proof}




We begin by showing that each of the classes $\zeta_j^{2^{i+1-j}}$ survives to $E_{2^{i+1}-1}$.

\begin{proposition}\label{dlower}
For $m\ge 1$, in $E_r^{*,*,*}\langle m\rangle$,
\[d_r(\zeta_j^{2^{i+1-j}})=0\]
for all $1\le i\le m$, $1\le j\le i+1$, and $r<2^{i+1}-1$.
\end{proposition}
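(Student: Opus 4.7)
The plan is to induct on $m$, reducing the claim in $\langle m\rangle$ to one in $\langle m-1\rangle$ via the cofiber sequence
\[
\Sigma^{2(2^m-1),\,2^m-1} BPGL\langle m\rangle \xrightarrow{\ov_m} BPGL\langle m\rangle \to BPGL\langle m-1\rangle.
\]
By Proposition \ref{prop:EquivOfSlices}, the truncated towers $P^tBPGL\langle m\rangle$ and $P^tBPGL\langle m-1\rangle$ agree for $t<2^{m+1}-2$, which smashed with $i_*H\F_2$ yields a map of HSSSs $\pi: E_r\langle m\rangle \to E_r\langle m-1\rangle$ that is an isomorphism on $E_2^{s,w,t}$ for $t<2^{m+1}-2$. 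Lemma \ref{filteredlemma} then promotes this to injectivity of $\pi$ on $E_r^{s,w,t}$ for $t<2^{m+1}-2$.

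Next I unwind the tridegree bookkeeping. The change $(-1,0,r)$ of Adams tridegree under $d_r$ shifts the slice filtration $t = (t-s)+s$ by $r-1$. Since $\zeta_j^{2^{i+1-j}}$ sits at $t=0$, the value $d_r(\zeta_j^{2^{i+1-j}})$ lives in $E_r^{*,*,r-1}$. For $r < 2^{i+1}-1 \le 2^{m+1}-1$, one has $r-1 < 2^{m+1}-2$, so $\pi$ is injective on the target group. By naturality of the spectral sequence it therefore suffices to prove $d_r(\zeta_j^{2^{i+1-j}}) = 0$ in $E_r\langle m-1\rangle$.

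The argument then splits into two cases. For $i<m$, the vanishing in $\langle m-1\rangle$ is the inductive hypothesis applied to the same $(i,j,r)$ at the lower height. For $i=m$, the classes $\{\zeta_j^{2^{m+1-j}} : 1\le j\le m+1\}$ are precisely the algebra generators of
\[
\mathcal{A}_*\square_{\mathcal{A}(m-1)_*}\F_2 = \F_2[\zeta_1^{2^m},\zeta_2^{2^{m-1}},\ldots,\zeta_m^2,\zeta_{m+1},\zeta_{m+2},\ldots];
\]
Corollary \ref{coredgeA(m)} applied to $\langle m-1\rangle$ places this subalgebra in the image of the edge homomorphism, so by Proposition \ref{propedgeispermanent} these classes are permanent cycles in $E_r\langle m-1\rangle$. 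The base case $m=1$ is the $i=m=1$ instance applied to the HSSS of $BPGL\langle 0\rangle = M\Z$, which is concentrated in a single slice and thus collapses at $E_2$.

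The main technical point is the slice-filtration bookkeeping guaranteeing injectivity of $\pi$ on $E_r^{*,*,r-1}$ across the entire range $r<2^{i+1}-1$; once this is in place, the combinatorial observation that the extremal case $i=m$ produces exactly the generators appearing in Corollary \ref{coredgeA(m)} is what allows the induction to close without needing any further input about the structure of higher-page differentials.
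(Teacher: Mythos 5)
Your proof is correct and follows the same inductive strategy as the paper, reducing from $\langle m\rangle$ to $\langle m-1\rangle$ via Proposition \ref{prop:EquivOfSlices} and Lemma \ref{filteredlemma}, and splitting the $i<m$ case (inductive hypothesis) from the $i=m$ case (Corollary \ref{coredgeA(m)}). The only differences are cosmetic: you track the target slice degree $t=r-1$ more explicitly than the paper's exposition, and you handle the base case $m=1$ by reducing to the trivial HSSS for $BPGL\langle 0\rangle = M\Z$ rather than observing, as the paper does, that $d_{2k}=0$ outright because the odd slices of $BPGL\langle m\rangle$ vanish.
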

\begin{proof}
We proceed by induction on $m$. When $m=1$, this follows from the fact that $d_2=0$; in fact $d_{2k}=0$ for all $k$ and $m$ as the odd slices of $BPGL\langle m\rangle$ vanish. By Proposition  \ref{prop:EquivOfSlices} and Lemma \ref{filteredlemma}, the map of spectral sequences
\[E_r^{s,0,t}\langle m\rangle\to E_r^{s,0,t}\langle m-1\rangle\]
is an injection whenever $t<2^{m+1}-2$. The classes $\zeta_j^{2^{i+1-j}}$ have $t=0$, and thus the $i<m$ case of the proposition follows by the inductive hypothesis. The $i=m$ case then follows in the same way from the fact that the classes $\zeta_j^{2^{m+1-j}}$ are permanent cycles in $E_r^{*,*,*}\langle m-1\rangle$ by Corollary \ref{coredgeA(m)}.  
\end{proof}
We know, therefore, that the first possible nonzero differential on $\zeta_j^{2^{i+1-j}}$ is a $d_{2^{i+1}-1}$. We show that this differential is nonzero and can be read off from the corresponding differential in the localized spectral sequence $E_r^{*,*,*}\langle m\rangle[\rho^{-1}]$. For this, we need to know that the differential lands in an $\rho$-torsion free part of the spectral sequence, for which we need the following lemma.

%

\begin{lemma}\label{subspacelemma}
In $E_k^{*,*,*}\langle m\rangle$, for $k<2^{m+1}-1$, there are no differentials of the form
\[d_k(x)=\ov_m p(\rho,x_1)q(\zeta_i)+r(\overline{v}_{i},\rho,x_1)s(\zeta_i)\]
where 
\aln{
q(\zeta_i),s(\zeta_i)&\in\mathcal A_*\square_{\mathcal A(0)_*}\F_2\\
p(\rho,x_1)&\in\F_2[\rho,x_1]\\
r(\overline{v}_{i},\rho,x_1)&\in\F_2[\rho,x_1,\overline{v}_{1},\ldots,\overline{v}_{{m-1}}]
}
such that $pq$ is nonzero.
\end{lemma}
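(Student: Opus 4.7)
The plan is to argue by contradiction via $\rho$-localization, exploiting the very explicit structure of $E_k[\rho^{-1}]\langle m\rangle$ furnished by Proposition \ref{proplocalizedSS}. Suppose such an $x$ exists. First I would compute $d_k(x)[\rho^{-1}]$ at the $E_2[\rho^{-1}]$-level by applying Proposition \ref{propmaptolocalized}: localization sends $x_1\mapsto\zeta_1\rho$, $\overline{v}_l\mapsto f_l/\rho^{2^l-1}$, and each generator of $\mathcal{A}_*\square_{\mathcal{A}(0)_*}\F_2$ to its shifted counterpart in $\mathcal{A}_*[y_2,\rho^{\pm}]$, so the claimed image becomes a specific element of $\mathcal{A}_*[y_2,f_1,\ldots,f_m][\rho^{\pm}]$.

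The key observation is to extract the $\overline{v}_m$-coefficient and then its constant-in-$y_2$ part. Since $r(\overline{v}_i,\rho,x_1)s(\zeta_i)$ involves only $\overline{v}_1,\ldots,\overline{v}_{m-1}$, it contributes nothing to the $\overline{v}_m$-coefficient. The first term contributes $p(\rho,\zeta_1\rho)\cdot q(\text{shifted generators})$, whose specialization at $y_2=0$ is $p(\rho,\zeta_1\rho)\cdot q(\zeta_i)$, which may be identified with $pq$ viewed inside $\mathcal{A}_*[\rho^{\pm}]$. This is nonzero by hypothesis because $\F_2[\rho,x_1]\to\F_2[\rho,\zeta_1]$ (via $x_1\mapsto\zeta_1\rho$) and $\mathcal{A}_*\square_{\mathcal{A}(0)_*}\F_2\hookrightarrow\mathcal{A}_*$ are both injective on monomial bases.

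Now I would split into cases using Proposition \ref{proplocalizedSS}(2). If $k\neq 2^{j+1}-1$ for any $j\ge 1$, then $d_k[\rho^{-1}]=0$, forcing $d_k(x)[\rho^{-1}]=0$ and hence its $\overline{v}_m$-coefficient vanishes; but its $y_2=0$ component is $pq\ne 0$, a contradiction. If instead $k=2^{j_0+1}-1$ with $1\le j_0<m$ (using $k<2^{m+1}-1$), then on $E_k[\rho^{-1}]\langle m\rangle=\mathcal{A}_*[y_2^{2^{j_0-1}},f_{j_0},\ldots,f_m][\rho^{\pm}]$ multiplicativity of the HSSS makes $d_k[\rho^{-1}]$ a derivation sending $y_2^{2^{j_0-1}}\mapsto f_{j_0}$ and annihilating every other polynomial generator, so its image is contained in the principal ideal $(f_{j_0})=(\overline{v}_{j_0})$. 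Taking $\overline{v}_m$-coefficients, the $\overline{v}_m$-coefficient of $d_k(x)[\rho^{-1}]$ must lie in the $\overline{v}_{j_0}$-ideal of $\mathcal{A}_*[y_2^{2^{j_0-1}},f_{j_0},\ldots,f_m][\rho^{\pm}]$; but its $y_2=0$ component is $pq\in\mathcal{A}_*[\rho^{\pm}]$, which contains no $\overline{v}_{j_0}$, so $\overline{v}_{j_0}$-divisibility forces $pq=0$, again a contradiction.

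The main obstacle I anticipate is bookkeeping: verifying that the $y_2=0$ component of the $\overline{v}_m$-coefficient actually survives the passage from the $E_2[\rho^{-1}]$-level representative through the intermediate pages down to $E_k[\rho^{-1}]$. This should follow because the earlier differentials $d_3[\rho^{-1}],\ldots,d_{2^{j_0}-1}[\rho^{-1}]$ all annihilate $\mathcal{A}_*[\rho^{\pm}]$ and $\overline{v}_m$, so the functional ``evaluate the $\overline{v}_m$-coefficient at $y_2=0$'' factors through each subquotient; nevertheless this preservation needs to be checked explicitly against the derivation formula for each prior differential rather than appealed to by general naturality.
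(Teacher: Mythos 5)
Your strategy coincides with the paper's at the broad level — argue by contradiction by passing to the $\rho$-localized spectral sequence and comparing against the explicit description in Proposition \ref{proplocalizedSS} — but you handle the delicate step with a different device. The paper takes $k$ minimal, uses minimality to show $y=d_k(x)$ is $\rho$-torsion free on $E_k$ (so that its image in $E_k[\rho^{-1}]$ is nonzero), and then leaves implicit why such a nonzero class cannot lie in the image of $d_{2^{j_0+1}-1}[\rho^{-1}]$ for $j_0<m$. You instead define an $\F_2[\rho^{\pm}]$-linear functional $\phi$ on $E_2[\rho^{-1}]=\mathcal{A}_*[y_2,f_1,\dots,f_m][\rho^{\pm}]$ — extract the coefficient of $f_m$, then set $y_2=0$ and, implicitly, $f_1=\dots=f_{m-1}=0$ — landing in $\mathcal{A}_*[\rho^{\pm}]$, and verify $\phi(y[\rho^{-1}])=\rho^{-(2^m-1)}p(\rho,\zeta_1\rho)\,q(\zeta_i)\ne 0$. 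This makes the paper's final step explicit and, as a bonus, makes the minimality and $\rho$-torsion discussion unnecessary: if $\phi$ descends, it already shows $y[\rho^{-1}]$ cannot become a boundary at any earlier stage either.

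The point to tighten is exactly the one you flag, and the reason you sketch is not quite the right one. The descent of $\phi$ does not follow merely from the earlier differentials ``annihilating $\mathcal{A}_*[\rho^{\pm}]$ and $\ov_m$''; rather, it follows because $\phi$ kills the ideal $(f_1,\dots,f_{m-1})$ while, for every $r<2^{m+1}-1$, the boundary submodule $B_r\subset E_2[\rho^{-1}]$ lies inside that ideal. Indeed each $d_{2^{i+1}-1}[\rho^{-1}]$ with $i<m$ is the derivation sending $y_2^{2^{i-1}}\mapsto f_i$, so its image consists of $f_i$-divisible elements, and an easy induction gives $B_r\subseteq (f_1,\dots,f_{m-1})$ for all $r<2^{m+1}-1$. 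Since $y$ is a $d_k$-boundary, naturality gives $y[\rho^{-1}]\in B_k$, so $\phi(y[\rho^{-1}])=0$, contradicting the nonvanishing computed above. Note this argument closes both of your cases simultaneously (whether or not $k$ has the form $2^{j_0+1}-1$), so the case split is not actually needed once the descent is in place.
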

\begin{proof}
Suppose to the contrary that, for some $k<2^{m+1}-1$, there is some nonzero differential $d_k(x)=y$ for $y$ of the above form, and let $k$ be minimal with respect to this property. It follows that $y$ is $\rho$-torsion free; since $y$ is $\rho$-torsion free on $E_2$, if we had $\rho^Ny=0\in E_k$, we must have a nonzero differential $d_{k'}(x')=\rho^Ny$ for some $k'<k$, contradicting minimality of $k$.

Since $y\in E_k$ is $\rho$-torsion free, its image in the localized spectral sequence $E_k^{*,*,*}\langle m\rangle[\rho^{-1}]$ is nonzero. By naturality, this determines a nonzero differential $d_k$ in the localized spectral sequence. This contradicts the description of the differentials given in Proposition \ref{proplocalizedSS}.
\end{proof}

\begin{remark}
In particular, letting $rs=0$, it follows that the subspace
\[B_m:=\overline{v}_{m}\cdot\mathrm{Sym}^{2^m-1}(\F_2\{\rho,x_1\})\cdot (\mathcal A_*\square_{\mathcal A(0)_*}\F_2)\]
of $E_2$ receives no differentials $d_r$ for $r<2^{m+1}-1$. In particular, the subspace of $B_m$ of $d_1,\ldots,d_r$ cycles for $r<2^{m+1}-1$ is a $\rho$-torsion free subspace of $E_{2^{m+1}-1}$, since $B_m$ is $\rho$-torsion free on $E_2$.
\end{remark}

\begin{proposition}\label{v_npure}
In $E_{2^{m+1}-1}\langle m\rangle$, the composition
\[\mathcal A_*\square_{\mathcal A(m-1)_*}\F_2\hookrightarrow E_{2^{m+1}-1}\xrightarrow{d_{2^{m+1}-1}}E_{2^{m+1}-1}\]
lands in the subspace of
\[\overline{v}_{m}\cdot\mathrm{Sym}^{2^m-1}(\F_2\{\rho,x_1\})\cdot (\mathcal A_*\square_{\mathcal A(0)_*}\F_2)\]
that survives to $E_{2^{m+1}-1}$.
\end{proposition}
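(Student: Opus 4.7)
The plan is to exploit naturality of the HSSS along the canonical quotient $BPGL\langle m\rangle\to BPGL\langle m-1\rangle$, writing $\pi\colon E_r\langle m\rangle\to E_r\langle m-1\rangle$ for the induced map of spectral sequences; on $E_2$ the map $\pi$ is the quotient by the ideal $(\ov_m)$. For any $q\in\mathcal{A}_*\square_{\mathcal{A}(m-1)_*}\F_2$, Corollary~\ref{coredgeA(m)} applied to $BPGL\langle m-1\rangle$ says $q$ is a permanent cycle in $E_r\langle m-1\rangle$, so by naturality the class $y:=d_{2^{m+1}-1}(q)$ satisfies $\pi(y)=0$ in $E_{2^{m+1}-1}\langle m-1\rangle$. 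Since $d_r$ raises the slice filtration $t$ by $r-1$, the class $y$ lives at slice $t=2^{m+1}-2$. A direct tridegree count shows that at this tridegree and weight $0$ the $E_2$-kernel of $\pi$ is exactly $B_m$: any class $\ov_m\cdot X$ of weight $0$ forces $X$ to have weight $-(2^m-1)$ and $t=0$, placing $X\in\mathrm{Sym}^{2^m-1}(\F_2\{\rho,x_1\})\cdot(\mathcal{A}_*\square_{\mathcal{A}(0)_*}\F_2)$.

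It remains to promote this $E_2$-level kernel description to $E_{2^{m+1}-1}$. Choose any $E_2$-representative $\tilde y=y_B+y_0$ of $y$ with $y_B\in B_m$ and $y_0$ in the complementary subspace (containing no $\ov_m$-factor). Then $\pi(\tilde y)=y_0$, and since the class of $y_0$ in $E_{2^{m+1}-1}\langle m-1\rangle$ vanishes, $y_0$ is hit in $\langle m-1\rangle$ by a differential $d_r$ with $r$ odd (as $d_{\mathrm{even}}=0$) and $3\le r\le 2^{m+1}-3$. The source $w$ of this differential lives at slice $2^{m+1}-1-r<2^{m+1}-2$, where Proposition~\ref{prop:EquivOfSlices} provides a canonical lift $\tilde w\in E_r\langle m\rangle$ at the same slice. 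Running $d_r$ on $\tilde w$ in $\langle m\rangle$ and comparing with $y_0$ via $\pi$, the difference $d_r(\tilde w)-y_0$ lies in $\ker\pi$; after an inductive correction through lower pages (described below) it admits an $E_2$-representative in $B_m$. Adjusting $\tilde y$ by the boundary $d_r(\tilde w)$ therefore produces a representative of $y$ on $E_{r+1}\langle m\rangle$ lying in $B_m$, and by Lemma~\ref{subspacelemma} (which forbids any $d_{r'}$ with $r'<2^{m+1}-1$ from targeting $B_m$) this $B_m$-representative persists unchanged through all further pages up to $E_{2^{m+1}-1}$.

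The main obstacle is making the "inductive correction" precise. The $E_2$-representative of $d_r(\tilde w)-y_0$ may project under $\pi$ not to $0$ but to a boundary of some lower-length differential in $\langle m-1\rangle$, so one must iteratively peel off these lower boundaries---at each step lifting the source via Proposition~\ref{prop:EquivOfSlices}, computing the corresponding $\langle m\rangle$-boundary, and subtracting it---until the remaining representative genuinely lies in $\ker\pi|_{E_2}=B_m$. The iteration terminates after finitely many pages, and the bound $r\ge 3$ (which comes from the vanishing of even-length differentials in $E_r\langle m\rangle$) keeps each source strictly below slice $2^{m+1}-2$ at every stage, so that Proposition~\ref{prop:EquivOfSlices} applies to every lift encountered along the way.
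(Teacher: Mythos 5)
Your proposal follows essentially the same strategy as the paper's proof: exploit naturality along the truncation $BPGL\langle m\rangle\to BPGL\langle m-1\rangle$, use Corollary~\ref{coredgeA(m)} to get $\pi(d_{2^{m+1}-1}(q))=0$ in $\langle m-1\rangle$, lift the killing differential back to $\langle m\rangle$, and conclude via Lemma~\ref{subspacelemma}. The paper packages this as a single proof by contradiction (assume the $\ov_m$-free part $rs$ is nonzero on $E_{2^{m+1}-1}$; lift its killing $d_k(x)$; apply Lemma~\ref{subspacelemma} to force the $B_m$-component $p'q'$ of $d_k(\tilde x)$ to vanish, showing $rs$ is already a boundary in $\langle m\rangle$). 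You instead run an explicit iterative correction of the $E_2$-representative. Both are valid, but two points in your writeup should be tightened. First, at each lifting step you cite Proposition~\ref{prop:EquivOfSlices}, which only gives an equivalence of the $E_2$-pages in low slices; what you actually need is Lemma~\ref{filteredlemma}, which promotes that $E_2$-level statement to an isomorphism of $E_r$-pages at the tridegree of the source (the bound $t+r<2^{m+1}$ you implicitly verify with $t=2^{m+1}-1-r$ is exactly the hypothesis of that lemma, not of the proposition). Second, the real reason your iteration lands in $B_m$ and leaves the $B_m$-part of $\tilde y$ untouched is precisely Lemma~\ref{subspacelemma}: it says every $d_{r'}$-boundary with $r'<2^{m+1}-1$ has zero $B_m$-component, so each correction $d_{r'}(\tilde w')$ you subtract is $\ov_m$-free, and the $B_m$-part $y_B$ never changes. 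You invoke Lemma~\ref{subspacelemma} only at the very end (to say the $B_m$-representative ``persists''), but persistence is automatic since your final representative is $\tilde y$ minus boundaries, hence a cycle; the essential use of the lemma is mid-iteration, as in the paper's $p'q'=0$ step, and making that explicit would both justify the claim that the iteration terminates in $B_m$ and shorten the argument to a single correction step as in the published proof.
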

\begin{proof}
The class $z:=d_{2^{m+1}-1}(\zeta_i^{2^{m+1-i}})$ may be written as a sum of monomials of the form
\[y:=\rho^ax_1^bm(\overline{v}_{i})q(\zeta_i)\]
written as an element in a subquotient of $E_2$, where $m(\overline{v}_{i})$ is a monomial in $\F_2[\overline{v}_{1},\ldots,\overline{v}_{m}]$ and $q(\zeta_i)$ is a monomial in $\mathcal A_*\square_{\mathcal A(0)_*}\F_2$. Note that the class $\zeta_i^{2^{m+1-i}}$ has $t=0$, hence $y$ has $t=2^{m+1}-2$. It follows that  $\overline{v}_{m}\cdot\overline{v}_{j}$ does not divide $m(\overline{v}_{i})$ for any $j\le m$.

We therefore have that $z$ can be written as 
\[\overline{v}_{m}p(\rho,x_1)q(\zeta_i)+r(\rho,x_1,\overline{v}_{i})s(\zeta_i)\]
as in the lemma. The class $\overline{v}_{m}p(\rho,x_1)q(\zeta_i)$ lies in
\[\overline{v}_{m}\cdot\mathrm{Sym}^{2^m-1}(\F_2\{\rho,x_1\})\cdot (\mathcal A_*\square_{\mathcal A(0)_*}\F_2)\]
since $z$ is in weight zero. Note that $pq$ is nonzero by naturality and the fact that 
\[d_{2^{m+1}-1}[\rho^{-1}](y_2^{2^{m-1}})=f_m\]
by Proposition \ref{proplocalizedSS}. We claim that $r(\overline{v}_{i},\rho,x_1)s(\zeta_i)=0\in E_{2^{m+1}-1}$, completing the proof. 

If $r(\overline{v}_{i},\rho,x_1)s(\zeta_i)\neq 0\in E_{2^{m+1}-1}$, since 
\[\zeta_i^{2^{m+1-i}}\in E_r^{*,*,*}\langle m-1\rangle\] 
is a permanent cycle, we must have that $r(\overline{v}_{i},\rho,x_1)s(\zeta_i)=0$ on $E_{2^{m+1}-1}^{*,*,*}\langle m-1\rangle$. Hence there must be a differential in the latter spectral sequence of the form 
\[d_k(x)=r(\overline{v}_{i},\rho,x_1)s(\zeta_i)\]
for $k<2^{m+1}-1$. The class $x$ lies in 
\[E_k^{2^{m+1-i}-1-k,0,2^{m+1}-1-k}\langle m-1\rangle\]
and by Lemma \ref{filteredlemma} (1), the map
\[E_k^{2^{m+1-i}-1-k,0,2^{m+1}-1-k}\langle m\rangle\to E_k^{2^{m+1-i}-1-k,0,2^{m+1}-1-k}\langle m-1\rangle\]
is an isomorphism. Therefore, the differential $d_k(x)$ lifts to a differential in $E_k^{*,*,*}\langle m\rangle$ of the form
\[d_k(\tilde{x})=\overline{v}_{m}p'(\rho,x_1)q'(\zeta_i)+r'(\rho,x_1,\overline{v}_{i})s'(\zeta_i)\]
But we must have $p'q'=0$ by Lemma \ref{subspacelemma}, and $E_k^{*,*,*}\langle m\rangle\to E_k^{*,*,*}\langle m-1\rangle$ sends
\[r'(\rho,x_1,\overline{v}_{i})s'(\zeta_i)\mapsto r(\rho,x_1,\overline{v}_{i})s(\zeta_i)\] 
An argument similar to that of Lemma \ref{subspacelemma} shows that $r's'=rs\in E_k\langle m\rangle$.
\end{proof}

\begin{theorem}\label{differentialsthm}
In $E_{2^{i+1}-1}^{*,*,*}(BPGL\langle m\rangle;i_*H\F_2)$, we have the differentials
\[d_{2^{i+1}-1}(\zeta_j^{2^{i+1-j}})=\overline{v}_{i}\rho^{2^i-1}\bigg(p_{j-1}\bigg(\frac{x_1}{\rho},\ldots,\frac{x_{j-1}}{\rho}\bigg)\bigg)^{2^{i+1-j}}\]
for all $1\le i\le m$ and $1\le j\le i+1$, where $p_j$ is the polynomial
\[\zeta_{j}=p_j(\xi_1,\ldots,\xi_j)\]
given by the inversion formulas in the Hopf algebra $\mathcal A_*$.
\end{theorem}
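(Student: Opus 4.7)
The plan is to pin down each differential $d_{2^{i+1}-1}(\zeta_j^{2^{i+1-j}})$ in $E_r\langle m\rangle$ by pushing it to the $\rho$-localized HSSS, whose differentials are completely determined by Proposition \ref{proplocalizedSS}. By Proposition \ref{dlower}, the class $\zeta_j^{2^{i+1-j}}$ survives to $E_{2^{i+1}-1}\langle m\rangle$, so $d_{2^{i+1}-1}$ is the first potentially nonzero differential on it. For $i < m$, iterating Proposition \ref{prop:EquivOfSlices} and invoking Lemma \ref{filteredlemma}(2) shows that the natural map $E_r\langle m\rangle \to E_r\langle i\rangle$ is injective in all tridegrees with $t < 2^{i+2}-2$, in particular at both the source ($t = 0$) and target ($t = 2^{i+1}-2$) of this differential. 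I therefore reduce to the case $m = i$.

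In the case $m = i$, Proposition \ref{v_npure} places $d_{2^{i+1}-1}(\zeta_j^{2^{i+1-j}})$ inside the subspace
\begin{equation*}
\overline{v}_{i}\cdot \mathrm{Sym}^{2^i-1}(\F_2\{\rho, x_1\}) \cdot (\mathcal{A}_*\square_{\mathcal{A}(0)_*}\F_2),
\end{equation*}
which is $\rho$-torsion free on $E_{2^{i+1}-1}\langle i\rangle$ by the remark following Lemma \ref{subspacelemma}. Hence the localization map is injective on the codomain of $d_{2^{i+1}-1}$, and it suffices to identify the differential after inverting $\rho$.

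For the localized computation, Proposition \ref{propmaptolocalized} gives $\zeta_1^2 \mapsto \zeta_1^2 + y_2$ and $\zeta_j \mapsto \zeta_j + \zeta_{j-1}y_2^{2^{j-2}}$ for $j \ge 2$. Raising to the power $2^{i+1-j}$ in characteristic two, I find
\begin{equation*}
\zeta_j^{2^{i+1-j}} \longmapsto \zeta_j^{2^{i+1-j}} + \zeta_{j-1}^{2^{i+1-j}} y_2^{2^{i-1}},
\end{equation*}
with the convention $\zeta_0 = 1$ handling $j = 1$ uniformly. The first summand on the right is a permanent cycle in the localized spectral sequence by Proposition \ref{proplocalizedSS}, and the Leibniz rule applied to the second together with $d_{2^{i+1}-1}[\rho^{-1}](y_2^{2^{i-1}}) = f_i = \rho^{2^i-1}\overline{v}_{i}$ yields
\begin{equation*}
d_{2^{i+1}-1}[\rho^{-1}](\zeta_j^{2^{i+1-j}}) = \overline{v}_{i}\rho^{2^i-1}\zeta_{j-1}^{2^{i+1-j}}.
\end{equation*}
Translating back via $\xi_k = x_k/\rho$ in $E_2[\rho^{-1}]$ from Proposition \ref{propmaptolocalized} and the inversion formula $\zeta_{j-1} = p_{j-1}(\xi_1, \ldots, \xi_{j-1})$ in $\mathcal{A}_*$ produces the formula of the theorem.

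The main obstacle is confirming the right-hand side is a well-defined class in the integral spectral sequence $E_{2^{i+1}-1}\langle m\rangle$ rather than merely in its $\rho$-localization: when $\bigl(p_{j-1}(x_1/\rho, \ldots, x_{j-1}/\rho)\bigr)^{2^{i+1-j}}$ is expanded as a sum of monomials in the $x_k$'s, every monomial must have a $\rho$-exponent at least $-(2^i - 1)$, so that multiplication by the leading $\rho^{2^i-1}$ clears all denominators. This follows from a weight count: each monomial in $p_{j-1}$ is a product of at most $2^{j-1}-1$ Milnor generators (attained by $\xi_1^{2^{j-1}-1}$, using $\zeta_{j-1} \equiv \xi_1^{2^{j-1}-1}$ modulo $(\xi_2, \xi_3, \ldots)$), so after substitution and exponentiation the worst $\rho$-denominator is $\rho^{-(2^i - 2^{i+1-j})}$, which is strictly milder than $\rho^{-(2^i-1)}$ for $j \le i+1$. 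This upgrades the localized identity to one in $E_{2^{i+1}-1}\langle m\rangle$.
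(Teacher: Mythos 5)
Your proof follows essentially the same route as the paper's: establish that the target of $d_{2^{i+1}-1}$ on $\zeta_j^{2^{i+1-j}}$ lies in a $\rho$-torsion-free subspace (Lemma \ref{subspacelemma} and Proposition \ref{v_npure}), then read the differential off the $\rho$-localized spectral sequence via Propositions \ref{propmaptolocalized} and \ref{proplocalizedSS}. You make two steps explicit that the paper leaves tacit or relegates to a remark after the statement, and both are correct and worth having: (1) the reduction from general $m > i$ to $m = i$ via iterated injectivity of the comparison $E_r\langle m\rangle\to E_r\langle i\rangle$ in the range $t < 2^{i+2}-2$, which is needed because Proposition \ref{v_npure} as stated only treats the top differential $d_{2^{m+1}-1}$; (2) the verification that the formula on the right-hand side is defined integrally, i.e.\ that $\rho^{2^i-1}$ clears all denominators in $\bigl(p_{j-1}(x_1/\rho,\ldots)\bigr)^{2^{i+1-j}}$, which the paper asserts without proof in the discussion following the statement. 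Your weight bound $\zeta_{j-1}\equiv\xi_1^{2^{j-1}-1}\pmod{(\xi_2,\ldots)}$ is the right argument; just note that the worst $\rho$-denominator $\rho^{-(2^i-2^{i+1-j})}$ is \emph{at most as bad as} $\rho^{-(2^i-1)}$ (with equality when $j=i+1$), not strictly milder, so the final $\rho$-exponent $2^{i+1-j}-1$ is merely nonnegative rather than positive in the boundary case. The conclusion is unaffected.
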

\begin{proof}
We know from Lemma \ref{subspacelemma} and Proposition \ref{v_npure} that these differentials land in an $\rho$-torsion free part of the spectral sequence, so we use naturality and Proposition \ref{proplocalizedSS}. In particular, the map $E_2\to E_2[\rho^{-1}]$ sends
\[\zeta_j^{2^{i+1-j}}\mapsto \zeta_j^{2^{i+1-j}}+\zeta_{j-1}^{2^{i+1-j}}y_2^{2^{i-1}}\]
and
\[d_{2^{i+1}-1}[\rho^{-1}](\zeta_j^{2^{i+1-j}}+\zeta_{j-1}^{2^{i+1-j}}y_2^{2^{i-1}})=\zeta_{j-1}^{2^{i+1-j}}f_i\]
\end{proof}

\begin{remark}\label{capremark}
There is another way to view these differentials. The composition
\[\mathcal A_*\square_{\mathcal A(i-1)_*}\F_2\hookrightarrow E_{2^{i+1}-1}\xrightarrow{d_{2^{i+1}-1}}\Sigma E_{2^{i+1}-1}\to \Sigma E_{2^{i+1}-1}[\rho^{-1}]\]
lands in $\Sigma\mathcal A_*\cdot f_i\cong \mathcal A_{*-2^i}$, and the last map in the composition is an injection on 
\[d_{2^{i+1}-1}(\mathcal A_*\square_{\mathcal A(i-1)_*}\F_2)\]
by Lemma \ref{subspacelemma} and Proposition \ref{v_npure}. The formulas given in the theorem tell us this map coincides with the $\mathcal A_*$-comodule map
\[\mathcal A_*\square_{\mathcal A(i-1)_*}\F_2\xrightarrow{-\cap Sq^{2^i}}\mathcal A_{*-2^i}\]
Indeed, we have
\aln{
\Delta(\zeta_j^{2^{i+1-j}})&=\sum\limits_{l+k=j}\zeta_l^{2^{i+1-j}}\otimes\zeta_k^{2^{i+1-j+l}}\\
&\equiv \sum\limits_{l+k=j}\zeta_l^{2^{i+1-j}}\otimes(\xi_1^{2^k-1})^{2^{i+1-j+l}}\in\mathcal A_*\otimes\mathcal A_*/(\xi_2,\xi_3,\ldots)\\
&\equiv \sum\limits_{l+k=i}\zeta_l^{2^{i+1-j}}\otimes\xi_1^{2^{i+1}-2^{i+1-j+l}}\in\mathcal A_*\otimes\mathcal A_*/(\xi_2,\xi_3,\ldots)
}
where we have used the congruence
\[\zeta_k\equiv\xi_1^{2^k-1}\mod (\xi_2,\xi_3,\ldots)\]
The righthand tensor factor is $\xi_1^{2^i}$ if and only if $l=j-1$, so pairing on the right with $Sq^{2^i}$ gives $\zeta_{j-1}^{2^{i+1-j}}$.
\end{remark}

\begin{theorem}\label{edgethm}
The image of the edge homomorphism in weight zero in 
\[E_\infty^{*,*,*}(BPGL\langle m\rangle;i_*H\F_2)\]
is precisely $\mathcal A_*\square_{\mathcal A(m)_*}\F_2$.
\end{theorem}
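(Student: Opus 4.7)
The plan is to prove the reverse inclusion (the forward one is Corollary \ref{coredgeA(m)}) as follows. Since the HSSS for $P^0 BPGL\langle m\rangle=M\Z$ is concentrated in slice $t=0$, the image of the edge homomorphism in weight zero is precisely the image of the $t=0$ row $E_\infty^{\ast,0,0}$ of the HSSS for $BPGL\langle m\rangle$ inside $\mathcal A_\ast\square_{\mathcal A(0)_\ast}\F_2$. By Corollary \ref{corE_2description} this $t=0$, weight-zero row on the $E_2$-page is exactly $\mathcal A_\ast\square_{\mathcal A(0)_\ast}\F_2$: the algebra generators $\rho$ and $x_1$ have weight $-1$, while $\ov_i$ has $t=2^{i+1}-2>0$. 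Since each $d_r$ raises $t$ by $r-1\ge 1$, no $t=0$ class can be a boundary, so the image of the edge equals $\bigcap_r\ker(d_r|_{\mathcal A_\ast\square_{\mathcal A(0)_\ast}\F_2})$.

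I would then induct on $i=1,\ldots,m$, proving that the subspace of $\mathcal A_\ast\square_{\mathcal A(0)_\ast}\F_2$ of cycles for all $d_r$ with $r\le 2^{i+1}-1$ is precisely $\mathcal A_\ast\square_{\mathcal A(i)_\ast}\F_2$. For the inductive step, I would use the free-module decomposition
\[
\mathcal A_\ast\square_{\mathcal A(i-1)_\ast}\F_2\cong(\mathcal A_\ast\square_{\mathcal A(i)_\ast}\F_2)\otimes E(\zeta_1^{2^i},\zeta_2^{2^{i-1}},\ldots,\zeta_{i+1}),
\]
together with Remark \ref{capremark}: this identifies $d_{2^{i+1}-1}$ on $\mathcal A_\ast\square_{\mathcal A(i-1)_\ast}\F_2$, after projection to the $\rho$-localized spectral sequence, with the cap product $-\cap Sq^{2^i}\colon\mathcal A_\ast\square_{\mathcal A(i-1)_\ast}\F_2\to\mathcal A_{\ast-2^i}$. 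Lemma \ref{subspacelemma} and Proposition \ref{v_npure} guarantee the projection is injective on the image of $d_{2^{i+1}-1}$, so $\ker(d_{2^{i+1}-1}|_{\mathcal A_\ast\square_{\mathcal A(i-1)_\ast}\F_2})=\ker(-\cap Sq^{2^i})$.

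Finally, I would identify $\ker(-\cap Sq^{2^i}|_{\mathcal A_\ast\square_{\mathcal A(i-1)_\ast}\F_2})$ with $\mathcal A_\ast\square_{\mathcal A(i)_\ast}\F_2$ via a standard Hopf-algebra calculation: because $\mathcal A(i)$ is generated over $\mathcal A(i-1)$ by the single class $Sq^{2^i}$, the subspace of $(\mathcal A/\!/\mathcal A(i-1))^\ast$ annihilated by cap product with $Sq^{2^i}$ is precisely $(\mathcal A/\!/\mathcal A(i))^\ast$. Running the induction from $i=1$ through $i=m$ yields the reverse containment, and Corollary \ref{coredgeA(m)} confirms that the surviving classes in $\mathcal A_\ast\square_{\mathcal A(m)_\ast}\F_2$ are indeed permanent cycles, so no further differentials reduce this subspace. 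The hard part will be verifying the last Hopf-algebra identification cleanly, which requires tracking the coproduct on $\mathcal A_\ast$ in terms of the conjugate Milnor generators $\zeta_j$ that appear in the differential formulas of Theorem \ref{differentialsthm}.
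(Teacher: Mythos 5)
Your proof is correct and takes essentially the same route as the paper: both prove the reverse inclusion by identifying $d_{2^{i+1}-1}$ restricted to $\mathcal A_*\square_{\mathcal A(i-1)_*}\F_2$ with the cap product $-\cap Sq^{2^i}$ (via Remark~\ref{capremark}), identifying the kernel as $\mathcal A_*\square_{\mathcal A(i)_*}\F_2$ using that $\mathcal A(i)$ is generated over $\mathcal A(i-1)$ by $Sq^{2^i}$, and invoking Corollary~\ref{coredgeA(m)} for the forward inclusion.

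One small organizational difference worth noting: you run an \emph{internal} induction on $i=1,\dots,m$ inside the fixed HSSS for $BPGL\langle m\rangle$, while the paper inducts on the height $m$ itself, comparing the spectral sequences for $BPGL\langle m\rangle$ and $BPGL\langle m-1\rangle$ via Lemma~\ref{filteredlemma}(1) and Proposition~\ref{prop:EquivOfSlices}. Your version is arguably cleaner in that it doesn't require invoking the comparison lemma at the top level, but it does implicitly need Proposition~\ref{dlower} in the inductive step: to pass from ``cycles for $d_r$, $r\le 2^i-1$, are $\mathcal A_*\square_{\mathcal A(i-1)_*}\F_2$'' to considering $d_{2^{i+1}-1}$ on that subspace, you must first know there are no nonzero $d_r$ on $\mathcal A_*\square_{\mathcal A(i-1)_*}\F_2$ for $2^i\le r\le 2^{i+1}-2$. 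This is exactly what Proposition~\ref{dlower} supplies (combined with the Leibniz rule and the fact that $\mathcal A_*\square_{\mathcal A(m)_*}\F_2$ consists of permanent cycles); you tacitly assume it when you write $\mathcal A_*\square_{\mathcal A(i-1)_*}\F_2\hookrightarrow E_{2^{i+1}-1}$ via Remark~\ref{capremark}, but it should be cited explicitly. With that citation added, the argument closes.
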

\begin{proof}
We proceed by induction, where the case $m=0$ is clear. By induction, Proposition \ref{dlower}, and Lemma \ref{filteredlemma} (1), we see that a class $x\in \mathcal A_*\square_{\mathcal A(0)_*}\F_2$ is a $d_2,\ldots,d_{2^{m+1}-2}$-cycle if and only if $x\in \mathcal A_*\square_{\mathcal A(m-1)_*}\F_2$. As in Remark \ref{capremark}, the restriction of $d_{2^{m+1}-1}$ to $\mathcal A_*\square_{\mathcal A(m-1)_*}\F_2$ may be identified with the map
\[\mathcal A_*\square_{\mathcal A(m-1)_*}\F_2\xrightarrow{-\cap Sq^{2^m}}\mathcal A_{*-2^m}\]
The kernel of this map is precisely $\mathcal A_*\square_{\mathcal A(m)_*}\F_2$ because $\mathcal A_*\square_{\mathcal A(m)_*}\F_2$ is dual to the quotient $\mathcal A/\!/\mathcal A(m)$, and $\mathcal{A}(m)$ is the subalgebra of the Steenrod algebra generated by the classes 
\[\{Sq^{2^i}\st 0\le i\le m\}\qedhere\]
\end{proof}

\section{The homology of $\Gamma(BPGL\langle m\rangle)$ for $m\le 3$}\label{seccomputations}
In Section \ref{secdifferentials}, we determined the $E_2$ page of the HSSS for $BPGL\langle m\rangle$, the image of the edge homomorphism, and a family of differentials. Modulo comodule algebra extension problems, this reduces the computation of $H_*\Gamma(BPGL\langle m\rangle)$ to two problems:
\begin{enumerate}
    \item Show that the differentials of Theorem \ref{differentialsthm} are \textit{all} the differentials in the HSSS for $BPGL\langle m\rangle$. In particular, show that this spectral sequence collapses on $E_{2^{m+1}}$.
    \item Compute homology with respect to these differentials.
\end{enumerate}
We know of no way to solve problem (1) except by way of our knowledge of the image of the edge homomorphism from Section \ref{subsecimageedgehom} and sparseness arguments afforded by solving problem (2) for small values of $m$. Problem (2) is purely algebraic; however, the computation quickly becomes very complicated as $m$ grows, and it requires the use of algebraic Bockstein spectral sequences.

Nevertheless we solve both problems for $m\le 3$, completely determining the $E_\infty$ page of the HSSS as a comodule algebra. Moreover, we solve all comodule algebra extension problems for $m\le 2$. We can solve the extension problems for $m=3$ case by case in an ad hoc manner, but the number of such extensions is very large, so we do not attempt to include such a computation here.

When $m=1$, $\mathrm{kgl}$ is a form of $BPGL\langle 1\rangle$ and $\Gamma(\mathrm{kgl})\simeq \mathrm{ko}$, so this gives a quick computation of $H_*\mathrm{ko}$ that does not require knowledge of $\pi_*\mathrm{ko}$ or the Wood cofiber sequence. When $m=2$, we have an equivalence
\[\Gamma(BPGL\langle 2\rangle)\simeq BP_\R\langle 2\rangle^{C_2}\]
by Proposition \ref{propfixedpointsglobalsec}, and it is a theorem of Hill and Meier that $\mathrm{tmf}_1(3)$ is a form of $BP_\R\langle 2\rangle$ \cite{hillmeier}. Since $\mathrm{tmf}_1(3)^{C_2}\simeq \mathrm{tmf}_0(3)$, we give a complete computation of the comodule algebra
\[H_*\mathrm{tmf}_0(3)\]
which is new (Theorem \ref{thmt03}). We use this to deduce a Wood-type splitting of $\mathrm{tmf}$-modules
\[\mathrm{tmf}\wedge X\simeq \mathrm{tmf}_0(3)\]
where $X$ is a certain 10-cell complex that was predicted by Davis and Mahowald \cite{davismahowald}.

When $m=3$, it is not known if $\Gamma(BPGL\langle 3\rangle)$ even admits a ring structure. However, we may use the ring structure of $BPGL$ and the fact that the HSSS of $BPGL\langle 3\rangle$ is a module over that of $BPGL$.

\subsection{The homology of $\Gamma(BPGL\langle 1\rangle)$}\label{subsecn=1computation}
Combining Corollary \ref{corE_2description}, Theorem \ref{differentialsthm}, and Corollary \ref{coredgeA(m)}, we have the following.
\begin{theorem}
There is a spectral sequence of $\mathcal A_*$-comodule algebras with $E_2$-term given by
\[E_2^{*,*,*}=\F_2[\zeta_1^2,\zeta_2,\ldots][\rho,x_1,\ov_1]\]
that converges to $H_*\Gamma(BPGL\langle 1\rangle)$. We have $d_3$-differentials 
\begin{align*}
d_3(\zeta_1^2)&=\rho\ov_1
&d_3(\zeta_2)&=x_1\ov_1
\end{align*}
and the subalgebra generated by
\[\rho,x_1,x_2,\ov_1,\zeta_1^4,\zeta_2^2,\zeta_3,\ldots\]
consists of permanent cycles.
\end{theorem}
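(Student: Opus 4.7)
The plan is to obtain this statement as a direct specialization of the general results developed in Section \ref{secdifferentials} to the case $m=1$.

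First, I would identify the $E_2$-term using Corollary \ref{corE_2description}, which gives $E_2 \cong (\mathcal A_\ast \square_{\mathcal A(0)_\ast} \F_2)[\rho, x_1, \ov_1, \ldots, \ov_m]$ as an $\mathcal A_\ast$-comodule algebra. Setting $m=1$ and using the standard identification $\mathcal A_\ast \square_{\mathcal A(0)_\ast} \F_2 \cong \F_2[\zeta_1^2, \zeta_2, \ldots]$ recovers the claimed form. Convergence as a spectral sequence of $\mathcal A_\ast$-comodule algebras to $H_\ast \Gamma(BPGL\langle 1\rangle)$ follows from Proposition \ref{propvanishinglines} (noting that both $i_\ast H\F_2$ and $BPGL\langle 1\rangle$ are slice $\geq 0$), together with multiplicativity of the slice filtration.

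Second, I would read off the $d_3$-differentials by specializing Theorem \ref{differentialsthm} to $m=1$. The constraint $1 \leq i \leq m$ forces $i=1$, and then $j \in \{1,2\}$. For $(i,j)=(1,1)$, the polynomial $p_0$ is the empty product $1$, giving $d_3(\zeta_1^2) = \ov_1 \rho^{2^1-1} = \rho\ov_1$. For $(i,j)=(1,2)$, one has $p_1(\xi_1)=\xi_1$ (since $\zeta_1=\xi_1$), and substituting $x_1/\rho$ yields $d_3(\zeta_2) = \ov_1 \rho \cdot (x_1/\rho) = x_1 \ov_1$, as claimed.

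Finally, I would verify the stated list of permanent cycles by citation. The classes $\rho, x_1, \ov_1$ are permanent cycles for all $m$ (as noted in the discussion surrounding Theorem \ref{mainthmedge}). The class $x_2$ is a permanent cycle by Corollary \ref{corx_mpermanent}. The subalgebra $\F_2[\zeta_1^4, \zeta_2^2, \zeta_3, \ldots] = \mathcal A_\ast \square_{\mathcal A(1)_\ast} \F_2$ consists of permanent cycles by Theorem \ref{edgethm} (equivalently, Corollary \ref{coredgeA(m)}), since it is the image of the edge homomorphism to $H_\ast \Gamma(P^0 BPGL\langle 1\rangle) = H_\ast H\Z$. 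The only real bookkeeping point — hardly an obstacle — is checking that the convention $p_0=1$ makes the $j=1$ formula of Theorem \ref{differentialsthm} produce $\rho\ov_1$ on the nose; everything else is an immediate specialization.
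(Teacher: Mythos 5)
Your proposal is correct and follows the paper's own route exactly: the paper obtains this theorem precisely by combining Corollary \ref{corE_2description} (for the $E_2$-term), Theorem \ref{differentialsthm} specialized to $m=1$ (for the $d_3$'s, with the same check that $p_0=1$ and $p_1(\xi_1)=\xi_1$), and Corollary \ref{coredgeA(m)} together with Corollary \ref{corx_mpermanent} (for the permanent cycles). No substantive difference from the paper's argument.
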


We display the $E_3$ page of this spectral sequence in Figure \ref{E3kr}. The black dots represent classes in the dual Steenrod algebra, and brown dots represent classes divisible by $\ov_1$. Brown structure lines represent multiplication by $\rho\ov_1$, which detects $\eta\in\pi_1S^0$ (see \cite{LSWX}), and we have drawn the vanishing lines of Proposition \ref{propvanishinglines} in green. It is straightforward to compute homology with respect to these differentials.

\begin{sseqdata}[ name = E3kr, Adams grading, classes = {fill, show name=below},
grid = go, xrange ={0}{7},yrange={-7}{7},xscale=1,yscale=1,x tick step =2, y tick step =2,run off differentials = {->},struct lines = red!25!brown ]
\class[name = 1](0,0)
\class[red!25!brown,name = \rho\ov_1](1,1)
\DoUntilOutOfBoundsThenNMore{2}{
    \class[red!25!brown](\lastx+1,\lasty+1)
    \structline
}
\structline(0,0)(1,1)
\class[red!25!brown,name = x_1\ov_1](2,0)
\DoUntilOutOfBoundsThenNMore{3}{
    \class[red!25!brown](\lastx+1,\lasty+1)
    \structline
}
\class[name = \zeta_1^2](2,-2)
\DoUntilOutOfBoundsThenNMore{3}{
    \d3
    \class[red!25!brown](\lastx+1,\lasty+1)
    \structline
}
\class[red!25!brown](4,0)
\DoUntilOutOfBoundsThenNMore{3}{
    \class[red!25!brown](\lastx+1,\lasty+1)
    \structline
}
\class[name = \zeta_2](3,-3)
\DoUntilOutOfBoundsThenNMore{3}{
    \d3
    \class[red!25!brown](\lastx+1,\lasty+1)
    \structline
}
\class[red!25!brown](4,-2)
\DoUntilOutOfBoundsThenNMore{3}{
    \d3
    \class[red!25!brown](\lastx+1,\lasty+1)
    \structline
}
\class[name=\zeta_1^4](4,-4)
\DoUntilOutOfBoundsThenNMore{3}{
    \class[red!25!brown](\lastx+1,\lasty+1)
    \structline
}

\class[red!25!brown](5,-3)
\DoUntilOutOfBoundsThenNMore{3}{
    \d3(\lastx,\lasty,2,2)
    \class[red!25!brown](\lastx+1,\lasty+1)
    \structline
}
\class[red!25!brown](6,-2)
\DoUntilOutOfBoundsThenNMore{3}{
    \d3(\lastx,\lasty,3,2)
    \class[red!25!brown](\lastx+1,\lasty+1)
    \structline
}
\class[red!25!brown](6,0)
\DoUntilOutOfBoundsThenNMore{3}{
    \class[red!25!brown](\lastx+1,\lasty+1)
    \structline
}
\class(5,-5)
\DoUntilOutOfBoundsThenNMore{3}{
    \d3
    \class[red!25!brown](\lastx+1,\lasty+1)
    \structline
}
\class[red!25!brown](6,-4)
\DoUntilOutOfBoundsThenNMore{3}{\d3
    \class[red!25!brown](\lastx+1,\lasty+1)
    \structline
}
\class[red!25!brown](7,-3)
\DoUntilOutOfBoundsThenNMore{3}{
    \d3(\lastx,\lasty,3,3)
    \class[red!25!brown](\lastx+1,\lasty+1)
    \structline
}
\class(6,-6)
\DoUntilOutOfBoundsThenNMore{3}{
    \d3
    \class[red!25!brown](\lastx+1,\lasty+1)
    \structline
}
\class[name=\zeta_2^2](6,-6)
\DoUntilOutOfBoundsThenNMore{3}{
    \class[red!25!brown](\lastx+1,\lasty+1)
    \structline
}
\class[red!25!brown](7,-5)
\DoUntilOutOfBoundsThenNMore{3}{\d3(\lastx,\lasty,3,2)
    \class[red!25!brown](\lastx+1,\lasty+1)
    \structline
}
\class(7,-7)
\DoUntilOutOfBoundsThenNMore{3}{
    \d3
    \class[red!25!brown](\lastx+1,\lasty+1)
    \structline
}
\class(8,-8)
\DoUntilOutOfBoundsThenNMore{3}{
    \d3(\lastx,\lasty,1,2)
    \class[red!25!brown](\lastx+1,\lasty+1)
    \structline
}
\class(48,48)
\class(60,-60)
\structline[green!38!black](0,0)(60,-60)
\end{sseqdata}
\begin{figure}[!htbp]
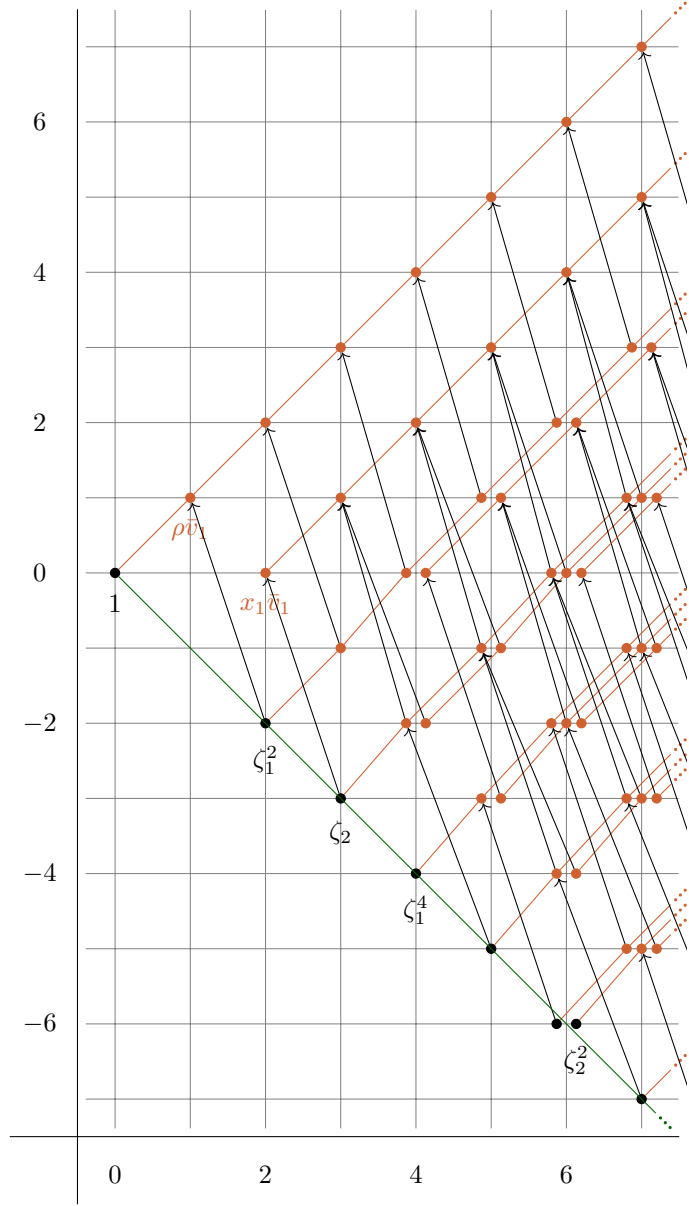

\centering
\printpage[name = E3kr]
\caption{The $E_3$ page of the HSSS for $BPGL\langle 1\rangle$.}
\label{E3kr}
\end{figure}

\begin{corollary}
    In the HSSS of $BPGL\langle 1\rangle$, we have
    \[E_\infty=E_4=\frac{\F_2[\zeta_1^4,\zeta_2^2,\zeta_3,\ldots][\rho,x_1,x_2,\ov_1]}{(\rho\ov_1,x_1\ov_1,x_2\ov_1,x_2^2+\rho^2\zeta_2^2+x_1^2\zeta_1^4)}\]
\end{corollary}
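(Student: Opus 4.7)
The plan falls into two independent parts: first, establishing collapse at $E_4$, and second, computing $E_4=H(E_3,d_3)$ by recognizing $d_3$ as a Koszul-type differential.

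For the collapse, I would argue that every algebra generator of $E_4$ is a permanent cycle. The $E_4$-page is generated as an $\F_2$-algebra by $\rho, x_1, \ov_1$, by the class $x_2 := x_1\zeta_1^2 + \rho\zeta_2$, and by the generators $\zeta_1^4, \zeta_2^2, \zeta_3, \zeta_4, \ldots$ of $\mathcal A_*\square_{\mathcal A(1)_*}\F_2$. These last are permanent cycles by the identification of the image of the edge homomorphism in Theorem \ref{edgethm}, combined with Proposition \ref{propedgeispermanent}; the classes $\rho, x_1, \ov_1$ are permanent cycles as recalled in the discussion preceding Theorem \ref{differentialsthm}; and $x_2$ is a permanent cycle by Corollary \ref{corx_mpermanent}. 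Multiplicativity of the spectral sequence together with the Leibniz rule then forces $E_\infty = E_4$.

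For the computation of $E_4 = H(E_3, d_3)$, note that $d_3$ vanishes on the algebra generators $\rho, x_1, \ov_1$ and on $\zeta_j$ for $j \ge 3$, so the Leibniz rule gives
\[
H(E_3, d_3) \cong H(B, d_3) \otimes_{\F_2} \F_2[\zeta_3, \zeta_4, \ldots], \qquad B := \F_2[\zeta_1^2, \zeta_2][\rho, x_1, \ov_1].
\]
Setting $u = \zeta_1^2$, $v = \zeta_2$, $P = \F_2[\rho, x_1, \ov_1]$, and $Q = P[u^2, v^2]$, the subcomplex $B$ is free of rank $4$ over $Q$ on the basis $\{1, u, v, uv\}$, and $d_3$ becomes the $Q$-linear three-term complex
\[
Q\cdot uv \;\xrightarrow{(x_1\ov_1,\,\rho\ov_1)^T}\; Q\cdot u\oplus Q\cdot v \;\xrightarrow{(\rho\ov_1,\,x_1\ov_1)}\; Q\cdot 1,
\]
i.e.\ the Koszul complex $K(\rho\ov_1, x_1\ov_1; Q)$.

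The main computational step is identifying the homology of this Koszul complex. $H_2$ vanishes because $Q$ is an integral domain. For $H_1$, the shared factor $\ov_1$ is the source of the nontriviality: a pair $(q_1, q_2) \in Q^2$ lies in the kernel iff $\rho q_1 = x_1 q_2$, and since $\rho$ and $x_1$ are coprime in the polynomial ring $Q$, this forces $(q_1, q_2) = q' \cdot (x_1, \rho)$ for a unique $q' \in Q$; the image of the left-hand map is $\ov_1 Q \cdot (x_1, \rho)$. Hence $H_1 \cong (Q/\ov_1) \cdot x_2$ with $x_2 = x_1\zeta_1^2 + \rho\zeta_2$, while $H_0 = Q/(\rho\ov_1, x_1\ov_1)$. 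Squaring $x_2$ in $E_3$ in characteristic $2$ yields the relation $x_2^2 = x_1^2 \zeta_1^4 + \rho^2 \zeta_2^2$, and the Leibniz computation $d_3(\zeta_1^2 \zeta_2) = \ov_1 x_2$ produces $x_2 \ov_1 = 0$. Reassembling and restoring the polynomial factor $\F_2[\zeta_3, \zeta_4, \ldots]$ gives exactly the stated presentation. The hard part is this $H_1$ identification; the rest is bookkeeping with the Leibniz rule.
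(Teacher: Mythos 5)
Your proposal is correct and follows the same overall strategy as the paper: use the preceding theorem for the $d_3$-differentials and the list of permanent cycles, compute the homology of $(E_3,d_3)$, and conclude collapse via the Leibniz rule since $E_4$ is generated by permanent cycles. The Koszul-complex reading of $d_3$ is a clean and economical way to make explicit the homology computation that the paper dismisses as ``straightforward,'' and it has the advantage over the $\rho$-Bockstein approach (which the paper uses at heights $2$ and $3$) of producing the relation $x_2^2+\rho^2\zeta_2^2+x_1^2\zeta_1^4$ directly, with no extension problem to resolve.
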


In weight zero, the relations imply that there is no contribution on $E_\infty$ from the generators in nonzero weights, and we recover the classical computation of the homology of $ko$ (displayed in Figure \ref{einftykr}). Each nonzero stem has exactly one nonzero filtration, so there are no nontrivial comodule algebra extensions.

\begin{sseqdata}[ name = Einftykr, Adams grading, classes = {fill, show name=below},
grid = go, xrange ={0}{9},yrange={-9}{1},xscale=0.5,yscale=0.5,x tick step =2, y tick step =2]
\class[name = 1](0,0)
\class[name=\zeta_1^4](4,-4)
\class[name=\zeta_2^2](6,-6)
\class[name=\zeta_3](7,-7)
\class[name=\zeta_1^8](8,-8)
\class(48,48)
\class(60,-60)
\structline[green!50!black](0,0)(48,48)
\structline[green!50!black](0,0)(60,-60)
\end{sseqdata}
\begin{figure}[!htbp]
\centering
\printpage[name = Einftykr]
\caption{The $E_\infty$ page of the HSSS for $BPGL\langle 1\rangle$.}
\label{einftykr}
\end{figure}

\begin{corollary}
    There is an isomorphism of $\mathcal{A}_*$-comodule algebras
    \[H_*(\mathrm{ko})\cong\mathcal{A}_*\square_{\mathcal{A}(1)_*}\F_2=\F_2[\zeta_1^4,\zeta_2^2,\zeta_3,\ldots]\]
\end{corollary}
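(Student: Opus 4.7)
My plan is to read the answer off the previous corollary combined with the identification $\Gamma(BPGL\langle 1\rangle)\simeq \mathrm{ko}$. Using Corollary \ref{cor:betaE} and Proposition \ref{propfixedpointsglobalsec}, we have $\Gamma(BPGL\langle 1\rangle)\simeq BP_\R\langle 1\rangle^{C_2}\simeq \mathrm{ko}$, so the weight-zero part of the HSSS converges to $H_*\mathrm{ko}$ as an $\mathcal{A}_*$-comodule algebra, and it suffices to identify the weight-zero part of $E_\infty$ together with any extensions.

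The first step is to extract the weight-zero piece of
\[E_\infty=\frac{\F_2[\zeta_1^4,\zeta_2^2,\zeta_3,\ldots][\rho,x_1,x_2,\ov_1]}{(\rho\ov_1,\,x_1\ov_1,\,x_2\ov_1,\,x_2^2+\rho^2\zeta_2^2+x_1^2\zeta_1^4)}.\]
Recall from Corollary \ref{corE_2description} and Corollary \ref{corx_mpermanent} that $|\rho|=(-1,-1,1)$, $|x_1|=(0,-1,0)$, $|x_2|=(2,-1,-2)$, $|\ov_1|=(2,1,0)$ and the $\zeta$-classes are weight zero. Any monomial $\rho^a x_1^b x_2^c \ov_1^d$ has weight $d-a-b-c$, so weight zero forces $d=a+b+c$. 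If $d>0$ then the monomial contains both $\ov_1$ and at least one of $\rho$, $x_1$, $x_2$ and hence vanishes by the first three relations; thus $d=0$ and then $a=b=c=0$. Consequently the weight-zero part of $E_\infty$ is precisely the subalgebra $\F_2[\zeta_1^4,\zeta_2^2,\zeta_3,\ldots]$.

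Next I would verify that the filtration splits on the nose in weight zero. The surviving classes $\zeta_1^{4i_1}\zeta_2^{2i_2}\zeta_3^{i_3}\cdots$ all lie on the line $s=-(t-s)$ in the displayed chart, so in each fixed stem there is at most one surviving filtration. In particular there are no additive extensions, and likewise no multiplicative or comodule extensions, since any such extension would raise filtration into an empty tridegree.

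Finally, for the comodule algebra structure I would use the edge homomorphism
\[H_*\mathrm{ko}=H_*\Gamma(BPGL\langle 1\rangle)\longrightarrow H_*H\Z\cong \mathcal{A}_*\square_{\mathcal{A}(0)_*}\F_2.\]
By Theorem \ref{edgethm} (applied to $m=1$), the image of this edge map in weight zero is exactly $\mathcal{A}_*\square_{\mathcal{A}(1)_*}\F_2=\F_2[\zeta_1^4,\zeta_2^2,\zeta_3,\ldots]$, and by the filtration splitting above the edge map is itself an isomorphism onto this image. Since the edge map is a map of $\mathcal{A}_*$-comodule algebras and the target is equipped with its standard $\mathcal{A}_*$-comodule algebra structure as a subcomodule algebra of $H_*H\Z$, this yields the desired isomorphism. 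There is no real obstacle here beyond assembling these ingredients; the possible multiplicative extensions are excluded by sparsity in weight zero, which is the only subtle point.
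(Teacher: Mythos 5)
Your proof is correct and follows essentially the same route as the paper: you compute the weight-zero part of $E_\infty$ (the paper notes tersely that "the relations imply that there is no contribution on $E_\infty$ from the generators in nonzero weights," which is exactly your $d = a+b+c$ argument), and then use the observation that each stem has a unique surviving filtration to rule out extensions. Your invocation of the edge homomorphism together with Theorem \ref{edgethm} to nail down the comodule algebra structure is a slightly more explicit way of saying what the paper asserts with "Each nonzero stem has exactly one nonzero filtration, so there are no nontrivial comodule algebra extensions"; it is the same mechanism the paper uses in the $m=2$ case, where the projection onto the top filtration is observed to be a comodule algebra map. No gaps.
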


\subsection{The homology of $\Gamma(BPGL\langle 2\rangle)$}\label{subsecn=2computation}

We move to height 2, and we display the $E_7$ page of the HSSS in Figure \ref{e7bp2}. As before, black dots represent classes in the dual Steenrod algebra, and red dots represent classes divisible by $\ov_1$. Orange structure lines represent multiplication by $\rho^3\ov_2$, which detects $\nu\in\pi_3S^0$, and we have drawn the vanishing lines of Proposition \ref{propvanishinglines} in green.

\begin{theorem}
There is a spectral sequence of \(\cA_{\ast}\)-comodule algebras with
\(E_{2}\)-term given by
\[E_2^{*,*,*}=\F_2[\zeta_1^2,\zeta_2,\ldots][\rho,x_1,\ov_1,\ov_2]\]
that converges to $H_*\Gamma(BPGL\langle 2\rangle)$. We have differentials 
\begin{align*}
d_3(\zeta_1^2)&=\rho\ov_1
&d_3(\zeta_2)&=x_1\ov_1
&d_7(\zeta_1^4)&=\rho^3\ov_2\\
d_7(\zeta_2^2)&=\rho x_1^2\ov_2
&d_7(\zeta_3)&=(x_1^3+\rho^2x_2)\ov_2
\end{align*}
The subalgebra of $E_2$ generated by the classes
\[\rho,x_1,x_2,x_3,\ov_1,\ov_2,\zeta_1^8,\zeta_2^4,\zeta_3^2,\zeta_4,\ldots\]
consists of permanent cycles.
\end{theorem}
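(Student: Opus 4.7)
The plan is to assemble results already established in Section~\ref{secdifferentials} at $m=2$. First, the $E_2$-term is read off directly from Corollary~\ref{corE_2description}, yielding
\[E_2 \;\cong\; (\mathcal{A}_* \square_{\mathcal{A}(0)_*} \F_2)[\rho, x_1, \ov_1, \ov_2] \;\cong\; \F_2[\zeta_1^2, \zeta_2, \ldots][\rho, x_1, \ov_1, \ov_2]\]
as $\mathcal{A}_*$-comodule algebras. Convergence to $H_*\Gamma(BPGL\langle 2\rangle)$ follows from Proposition~\ref{propsliceconvergence} together with the vanishing lines of Proposition~\ref{propvanishinglines}, using that $BPGL\langle 2\rangle$ is slice $\ge 0$ (Proposition~\ref{propslicebp}) and that $i_*H\F_2$ is slice connective.

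Next I would instantiate Theorem~\ref{differentialsthm} at $m=2$ for the five admissible pairs $(i,j)$ with $1 \le i \le 2$ and $1 \le j \le i+1$. This requires evaluating the Milnor inversion polynomials $p_j(\xi_1, \ldots, \xi_j)$ expressing $\zeta_j$ in terms of $\xi_1, \ldots, \xi_j$. The antipode relations $\sum_{i+j=n} c(\xi_i)\xi_j^{2^i}=0$ in $\mathcal{A}_*$ give inductively $p_0=1$, $p_1(\xi_1)=\xi_1$, and $p_2(\xi_1,\xi_2)=\xi_1^3+\xi_2$. Substituting $\xi_k \mapsto x_k/\rho$ and multiplying through by the prefactor $\ov_i \rho^{2^i-1}$ (which clears all apparent negative powers of $\rho$) reproduces the five stated formulas; e.g.\ for $(i,j)=(2,3)$ one gets $\ov_2\rho^3\cdot\bigl((x_1/\rho)^3+x_2/\rho\bigr)=(x_1^3+\rho^2 x_2)\ov_2$.

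For the permanent-cycles claim, the argument has three pieces. The generators $\zeta_1^8, \zeta_2^4, \zeta_3^2, \zeta_4, \ldots$ of $\mathcal{A}_* \square_{\mathcal{A}(2)_*} \F_2$ lie in the image of the edge homomorphism by Corollary~\ref{coredgeA(m)}, hence are permanent by Proposition~\ref{propedgeispermanent}. The classes $x_1, x_2, x_3$ (together with $\rho=x_0$) are permanent by Corollary~\ref{corx_mpermanent}. Finally, $\ov_1$ and $\ov_2$ detect genuine homotopy classes of $BPGL\langle 2\rangle$ inherited from the $MGL$-module structure, so their images in $\pi_{*,*}^{\R}(i_*H\F_2\otimes BPGL\langle 2\rangle)$ under the unit map are permanent cycles. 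Because the HSSS differentials are derivations, the subalgebra generated by all these classes consists of permanent cycles.

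The main obstacle here is essentially none: this theorem is a packaging of results already proved, and the real work lies upstream in Theorems~\ref{differentialsthm} and~\ref{edgethm}. What remains genuinely nontrivial, and what I would tackle next, is computing homology with respect to these differentials, verifying collapse on $E_8$, and resolving the comodule algebra extension problems in order to identify $H_*\Gamma(BPGL\langle 2\rangle)\cong H_*\mathrm{tmf}_0(3)$ as stated in Theorem~\ref{mainthmhtmf}.
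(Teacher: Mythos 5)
Your proposal is correct and matches the paper's implicit argument: the paper states this theorem without a separate proof precisely because it is the $m=2$ instantiation of Corollary~\ref{corE_2description}, Theorem~\ref{differentialsthm}, Corollary~\ref{coredgeA(m)}, and Corollary~\ref{corx_mpermanent}, and your computation of the inversion polynomials $p_0=1$, $p_1=\xi_1$, $p_2=\xi_1^3+\xi_2$ and their substitution into the differential formula is accurate. Your observation that the genuine work lies upstream and that what remains is the homology computation and extension problems is also exactly right.
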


The formulas for $d_3$ imply that, as a dga, one has an isomorphism
\[E_3^{*,*,*}(BPGL\langle 2\rangle;i_*H\F_2)\cong E_3^{*,*,*}(BPGL\langle 1\rangle;i_*H\F_2)[\ov_2]\]
and we deduce the following.

\begin{corollary}
The \(E_{4}=E_7\) page is given by
\[
\frac{\F_2[\zeta_1^4,\zeta_2^2,\zeta_3,\ldots]
[\rho,x_1,x_2,\ov_{1},\ov_{2}]}{(\rho\ov_1,x_1\ov_1,x_2\ov_1,x_2^2+\rho^2\zeta_2^2+x_1^2\zeta_1^4)}\]
\end{corollary}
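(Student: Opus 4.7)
The plan is to compute $E_4$ as the $d_3$-homology of $E_3=E_2$ by bootstrapping from the $m=1$ case handled in the previous subsection, and then to verify that no further differentials can occur before $d_7$.

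First, I would record that $d_{2k}=0$ for all $k$ in the HSSS of any $BPGL\langle m\rangle$, since the odd slices vanish. Thus $E_3=E_2$, and the content of the corollary reduces to computing $H_{d_3}(E_2)$ and checking that $d_5=0$ on the result.

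The key structural observation is that
\[
    E_3^{*,*,*}\langle 2\rangle \;\cong\; E_3^{*,*,*}\langle 1\rangle \otimes_{\F_2} \F_2[\ov_2]
\]
as differential bigraded algebras. The underlying algebra identification is immediate from Corollary~\ref{corE_2description}; compatibility of $d_3$ with this tensor factorization follows because, by Theorem~\ref{differentialsthm}, the $d_3$-differentials on $\zeta_1^2$ and $\zeta_2$ involve only $\ov_1$, not $\ov_2$, while $\ov_2$ itself is a permanent cycle by Corollary~\ref{coredgeA(m)}. Since $\F_2$ is a field, the K\"unneth formula then gives
\[
    E_4^{*,*,*}\langle 2\rangle \;\cong\; E_4^{*,*,*}\langle 1\rangle \otimes_{\F_2} \F_2[\ov_2],
\]
and substituting the $m=1$ formula from the previous subsection produces the claimed description.

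Finally, I would show $d_5=0$ on $E_5=E_4$. Since $d_5$ is a derivation of the comodule algebra $E_4$, it suffices to check vanishing on a set of algebra generators. As an $\F_2$-algebra, $E_4$ is generated by the permanent cycles $\rho,x_1,x_2,x_3,\ov_1,\ov_2,\zeta_4,\zeta_5,\ldots$ (furnished by Corollary~\ref{corx_mpermanent} and Corollary~\ref{coredgeA(m)}) together with the classes $\zeta_1^4,\zeta_2^2,\zeta_3$. Permanent cycles support no differentials by definition, and the latter three classes are $d_r$-cycles for all $r<7$ by Proposition~\ref{dlower}. Hence $d_5=0$, and $E_4=E_5=E_6=E_7$. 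The main subtlety in this plan is the verification that $d_3$ respects the tensor splitting with $\F_2[\ov_2]$; this is essentially formal from Theorem~\ref{differentialsthm} but pins down the reduction to the $m=1$ case, after which everything follows from results already in hand.
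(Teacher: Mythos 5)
Your proof takes essentially the same approach as the paper: the key step in both is the dga isomorphism $E_3^{*,*,*}\langle 2\rangle \cong E_3^{*,*,*}\langle 1\rangle[\ov_2]$, after which the $m=1$ computation produces the stated $E_4$-page. Your additional verification that $d_5$ vanishes on the algebra generators (and hence $E_4=E_7$) is a useful elaboration of a point the paper leaves implicit, and it is correct: $d_4=d_6=0$ because the odd slices vanish, and $d_5$ vanishes on $\zeta_1^4,\zeta_2^2,\zeta_3$ by Proposition~\ref{dlower} and on the remaining generators because they are permanent cycles.
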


Computing homology with respect to $d_7$ is much trickier as - unlike $E_3$ - $E_7$ is not $\rho$-torsion free, and hence there are a number of Massey products one must take into account. For example, we have the $d_7$-cycle
\[\ov_1\zeta_1^4\in\langle \ov_1,\rho^3,\ov_2\rangle\]
We are interested primarily in the computation in weight zero, however, and none of these classes can contribute to stems of weight zero. Indeed, we observe the following.

\begin{proposition}\label{propwithrelationslist}
The map of dga's
\[E_7\to E_7/(\ov_1)=\frac{\F_2[\zeta_1^4,\zeta_2^2,\zeta_3,\ldots]
[\rho,x_1,x_2,\ov_{2}]}{(x_2^2+\rho^2\zeta_2^2+x_1^2\zeta_1^4)}\]
is an isomorphism in weights $\le0$.
\end{proposition}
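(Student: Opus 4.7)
The plan is a direct weight-counting argument on the kernel. Since the quotient map is surjective by construction, it suffices to show that the ideal $(\ov_1) \subset E_7$ has no nonzero elements in weights $\le 0$.

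First I would use the relations $\rho\ov_1 = x_1\ov_1 = x_2\ov_1 = 0$ appearing in the presentation of $E_7$ from the previous corollary: these kill $\ov_1$ against precisely the negative-weight algebra generators $\rho, x_1, x_2$. Consequently, every element of $(\ov_1)$ may be rewritten as an $\F_2$-linear combination of monomials of the form
\[
\ov_1^a \, \ov_2^b \cdot p(\zeta_1^4, \zeta_2^2, \zeta_3, \ldots)
\]
with $a \ge 1$. The remaining relation $x_2^2 + \rho^2\zeta_2^2 + x_1^2\zeta_1^4$ involves neither $\ov_1$ nor $\ov_2$ nor any factor of $\ov_1$, so it plays no role in this reduction.

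Next I would read off the weights. The generators $\zeta_1^4, \zeta_2^2, \zeta_3, \ldots$ all sit in weight $0$, while $|\ov_1| = 1$ and $|\ov_2| = 3$. Every such monomial therefore has weight $a + 3b \ge 1$, so the kernel is concentrated in strictly positive weights and the quotient map is an isomorphism in weights $\le 0$. There is no serious obstacle here: the content is merely the observation that the $d_3$-differentials have already forced every interaction of $\ov_1$ with a negative-weight generator to vanish on $E_7$, so that the weight of any class in the ideal $(\ov_1)$ is controlled entirely by the non-negative-weight generators. This is one of the reasons restricting to weights $\le 0$ simplifies the subsequent $d_7$-analysis.
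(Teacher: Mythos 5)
Your argument is correct and follows essentially the same line as the paper's (very terse) proof: the $E_7$ relations $\rho\ov_1 = x_1\ov_1 = x_2\ov_1 = 0$ force the ideal $(\ov_1)$ to be spanned by monomials $\ov_1^a\ov_2^b\,p(\zeta_1^4,\zeta_2^2,\zeta_3,\ldots)$ with $a\ge 1$, all of which have weight $a+3b\ge 1$, so the kernel of the projection vanishes in weights $\le 0$. One small omission: the proposition also asserts that the projection is a map of \emph{dga's}, which requires checking that $(\ov_1)$ is a differential ideal. The paper notes that in fact $(\ov_1)\subset\ker(d_7)$, and this again follows from the same relations: every term of every $d_7$-value listed in Theorem \ref{differentialsthm} is divisible by $\rho$, $x_1$, or $x_2$, so multiplying a $d_7$-value by any $\ov_1^a$ with $a\ge 1$ gives zero. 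You should add a sentence to this effect to fully justify the statement.
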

\begin{proof}
The relations $\rho\ov_1=x_1\ov_1=x_2\ov_1=0$ imply the map is an isomorphism in weights $\le0$. It is a map of dga's because the ideal $(\ov_1)\subset E_7$ is  contained in $\ker(d_7)$.
\end{proof}

We therefore instead compute the homology of the simpler dga $(E_7/(\ov_1),d_7)$. We claim that $\ker(d_7)$ is the subalgebra of $E_7/(\ov_1)$ generated by the classes
\[\rho,x_1,x_2,x_3,\ov_2,\zeta_1^8,\zeta_2^4,\zeta_3^2,\zeta_4,\ldots\]
It follows that we have the presentation
\[
\ker(d_7)=\frac{\F_2[\zeta_1^8,\zeta_2^4,\zeta_3^2,\zeta_4,\ldots][\rho,x_1,x_2,x_3,\ov_2]}{(x_2^4+\rho^4\zeta_2^4+x_1^4\zeta_1^8,x_3^2+\rho^2\zeta_3^2+x_1^2\zeta_2^4+x_2^2\zeta_1^8)}
\]
Writing our differentials in terms of this description of the kernel, we have
\begin{align*}
d_7(\zeta_1^4)&=\rho^3\ov_2
&d_7(\zeta_2^2)&=\rho x_1^2\ov_2\\
d_7(\zeta_3)&=(x_1^3+\rho^2x_2)\ov_2
&d_7(\zeta_1^4\zeta_2^2)&=\rho x_2^2\ov_2\\
d_7(\zeta_1^4\zeta_3)&=(x_1x_2^2+\rho^2x_3)\ov_2
&d_7(\zeta_2^2\zeta_3)&=(x_1^2x_3+x_2^3)\ov_2\\
d_7(\zeta_1^4\zeta_2^2\zeta_3)&=(x_1^2x_2\zeta_1^8+x_2^2x_3+\rho^2x_1\zeta_2^4)\ov_2
\end{align*}
Letting $I_2$ be the ideal in $\ker(d_7)$ generated by this list of relations, it follows that 
\[H_*(E_7/(\ov_1))=\frac{\F_2[\zeta_1^8,\zeta_2^4,\zeta_3^2,\zeta_4,\ldots][\rho,x_1,x_2,x_3,\ov_2]}{I_2+(x_2^4+\rho^4\zeta_2^4+x_1^4\zeta_1^8,x_3^2+\rho^2\zeta_3^2+x_1^2\zeta_2^4+x_2^2\zeta_1^8)}\]

It remains to verify that the claimed list of elements indeed generate $\ker(d_7)$. We achieve this by running the $\rho$-Bockstein spectral sequence, which converges since $E_7/(\ov_1)$ is (graded) $\rho$-complete. We begin by computing the associated graded of the $\rho$-adic filtration.

\begin{proposition}
    The $E_0$-page of the $\rho$-Bockstein spectral sequence is given by
    \[\frac{\F_2[\zeta_1^4,\zeta_2^2,\zeta_3,\ldots]
[\rho,x_1,x_2,\ov_{2}]}{(x_2^2+x_1^2\zeta_1^4)}\]
This spectral sequence converges to the homology of $E_7/(\ov_1)$. We have differentials
\begin{align*}
\delta_0(\zeta_3)&=x_1^3\ov_2
&\delta_1(\zeta_2^2)&=\rho x_1^2\ov_2
&\delta_3(\zeta_1^4)&=\rho^3\ov_2
\end{align*}
\end{proposition}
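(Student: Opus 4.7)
The plan is to realize the $\rho$-Bockstein spectral sequence as that associated to the multiplicative $\rho$-adic filtration $F^n = \rho^n \cdot (E_7/(\ov_1))$ on the differential graded algebra $(E_7/(\ov_1), d_7)$, identify $E_0$ directly, and then read off each of the three differentials from the corresponding $d_7$-formula already established above the proposition.

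For $E_0$, I would first fix a monomial $\F_2[\rho]$-basis for $E_7/(\ov_1)$: using the single relation $x_2^2 = \rho^2\zeta_2^2 + x_1^2\zeta_1^4$ to eliminate $x_2^{\ge 2}$, a basis consists of monomials $\zeta^a x_1^b x_2^{\epsilon} \ov_2^d$ with $\epsilon\in\{0,1\}$ and $\zeta^a$ a monomial in $\zeta_1^4,\zeta_2^2,\zeta_3,\ldots$. Passing to the associated graded of the $\rho$-adic filtration, the only surviving relation is the $\rho$-adic leading term of $x_2^2+\rho^2\zeta_2^2+x_1^2\zeta_1^4=0$, namely $x_2^2+x_1^2\zeta_1^4=0$; the $\rho^2\zeta_2^2$ piece is pushed into strictly higher $\rho$-filtration, so $\zeta_2^2$ persists as a free polynomial generator in $E_0$. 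This matches the stated presentation. For convergence, the $\rho$-adic filtration is exhaustive, and the graded $\rho$-completeness of $E_7/(\ov_1)$ noted in Proposition \ref{propwithrelationslist} makes it Hausdorff in each bidegree, so standard arguments yield conditional (in fact strong, since the filtration is bounded below in each stem by Proposition \ref{propvanishinglines}) convergence to $H_*(E_7/(\ov_1), d_7)$.

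For the differentials, since $\rho$ is a permanent $d_7$-cycle one may decompose $d_7 = \delta_0 + \delta_1 + \delta_2 + \cdots$ where $\delta_k$ shifts $\rho$-filtration by exactly $k$, and the $\delta_k$ are precisely the Bockstein differentials on $E_0$. The three stated formulas then read off directly from the three $d_7$-differentials on the algebra generators: $d_7(\zeta_1^4)=\rho^3\ov_2$ is concentrated in $\rho$-filtration $3$, giving $\delta_3(\zeta_1^4)=\rho^3\ov_2$; $d_7(\zeta_2^2)=\rho x_1^2\ov_2$ is concentrated in $\rho$-filtration $1$, giving $\delta_1(\zeta_2^2)=\rho x_1^2\ov_2$; and $d_7(\zeta_3)=(x_1^3+\rho^2x_2)\ov_2$ has $\rho$-adic leading term $x_1^3\ov_2$, giving $\delta_0(\zeta_3)=x_1^3\ov_2$.

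The only real obstacle is careful bookkeeping of the monomial basis to be certain no additional relations appear in $E_0$; this is straightforward but worth checking, since the presence of the quadratic relation $x_2^2 + x_1^2\zeta_1^4$ means one must argue that the $\F_2[\rho]$-submodule $\F_2[\rho]\{x_2^2\}$ is not accidentally identified with $\F_2[\rho]\{x_1^2\zeta_1^4\}$ at the associated graded level. The higher $\rho$-order piece $\rho^2x_2\ov_2$ of $d_7(\zeta_3)$ plays no role at $E_0$ but will control a later Bockstein differential, which is where the remaining computation in this section takes place.
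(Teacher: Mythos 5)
Your proposal is correct and uses the same argument the paper has in mind; the paper gives no proof of this proposition at all, treating it as bookkeeping that follows immediately from the $d_7$-formulas stated just above it and from the general setup of the algebraic $\rho$-Bockstein spectral sequence. Your identification of $E_0 = \mathrm{gr}^\rho(E_7/(\ov_1))$ is right: since $E_7/(\ov_1)$ is a polynomial ring modulo the single relation $x_2^2+\rho^2\zeta_2^2+x_1^2\zeta_1^4$, whose reduction mod $\rho$ is $x_2^2+x_1^2\zeta_1^4$, and since that reduction is a non-zero-divisor in the polynomial ring $\F_2[\zeta_1^4,\zeta_2^2,\ldots][x_1,x_2,\ov_2]$, the quotient $E_7/(\ov_1)$ is $\rho$-torsion-free and $\mathrm{gr}^\rho$ is just $(E_7/(\ov_1,\rho))[\rho]$, which is the stated ring. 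Reading off $\delta_0(\zeta_3)$, $\delta_1(\zeta_2^2)$, $\delta_3(\zeta_1^4)$ from the $\rho$-adic leading terms of $d_7$ is exactly the intended argument. Two minor remarks on the write-up: the graded $\rho$-completeness of $E_7/(\ov_1)$ is asserted in the running text of Section \ref{subsecn=2computation}, not in Proposition \ref{propwithrelationslist}, which is about passing to the quotient by $(\ov_1)$; and when you say the $\delta_k$ ``are precisely the Bockstein differentials on $E_0$'' you should really say that on a class whose earlier Bocksteins vanish, the next Bockstein is represented by the lowest-order nonzero $\rho$-homogeneous piece of $d_7$ (which is what you in fact use for $\zeta_2^2$ and $\zeta_1^4$, where one needs $\delta_0$, resp.\ $\delta_0,\delta_1,\delta_2$, to vanish first and the target to survive).
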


The annihilator ideal of $\delta_0(\zeta_3)$ in $E_0$ is trivial, so $\delta_0$ does not create any Massey product cycles. We conclude the following.

\begin{proposition}
    The $E_1$-page of the $\rho$-Bockstein spectral sequence is given by
    \[\frac{\F_2[\zeta_1^4,\zeta_2^2,\zeta_3^2,\zeta_4,\ldots]
[\rho,x_1,x_2,\ov_{2}]}{(x_2^2+x_1^2\zeta_1^4,x_1^3\ov_2)}\]
\end{proposition}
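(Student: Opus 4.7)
The plan is to exploit that $\delta_0$ is a derivation on $E_0$ inherited from $d_7$ being a derivation, together with the observation that among the algebra generators displayed for $E_0$, only $\zeta_3$ supports a nontrivial $\delta_0$. Setting
\[B:=\F_2[\zeta_1^4,\zeta_2^2,\zeta_4,\zeta_5,\ldots][\rho,x_1,x_2,\ov_2]/(x_2^2+x_1^2\zeta_1^4),\]
I will view $E_0=B[\zeta_3]$ as a polynomial ring in $\zeta_3$ over $B$, with $\delta_0$ acting as zero on $B$ and by $\delta_0(\zeta_3)=x_1^3\ov_2$. The Leibniz rule together with characteristic $2$ then gives $\delta_0(\zeta_3^{2k})=0$ and $\delta_0(\zeta_3^{2k+1})=\zeta_3^{2k}x_1^3\ov_2$, so every element of $E_0$ can be written uniquely as $a+b\zeta_3$ with $a,b\in B[\zeta_3^2]$, and
\[\delta_0(a+b\zeta_3)=b\cdot x_1^3\ov_2.\]

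The first main step is to verify that multiplication by $x_1^3\ov_2$ is injective on $B[\zeta_3^2]$. I will set $B':=\F_2[\zeta_1^4,\zeta_2^2,\zeta_4,\ldots][\rho,x_1,\ov_2]$, which is a polynomial ring and hence a domain. The defining relation of $B$ realizes it as a free $B'$-module of rank $2$ with basis $\{1,x_2\}$, so it suffices to check that $x_1^2\zeta_1^4$ is not a square in $B'$: then $B=B'[x_2]/(x_2^2-x_1^2\zeta_1^4)$ is itself a domain and the claim follows. Since $\zeta_1^4$ is a polynomial generator of $B'$ appearing to an odd power in $x_1^2\zeta_1^4$, this product is not a square, which completes the verification.

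Given this injectivity, the formula for $\delta_0(a+b\zeta_3)$ above forces $\ker\delta_0=B[\zeta_3^2]$ and identifies $\mathrm{im}\,\delta_0=(x_1^3\ov_2)\cdot B[\zeta_3^2]$, hence
\[E_1=B[\zeta_3^2]/(x_1^3\ov_2),\]
which, upon substituting the definition of $B$ and adjoining $\zeta_3^2$, is exactly the asserted presentation.

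The main (and only really substantive) obstacle is the nonzerodivisor claim for $x_1^3\ov_2$; everything else reduces to the Leibniz rule, the vanishing of even binomial coefficients modulo $2$, and the bookkeeping of splitting $B[\zeta_3]$ into the parts of even and odd $\zeta_3$-degree.
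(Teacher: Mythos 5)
Your proof is correct and takes essentially the same approach as the paper, which simply asserts that "the annihilator ideal of $\delta_0(\zeta_3)$ in $E_0$ is trivial" and concludes. You supply the detail the paper omits, namely that $B=B'[x_2]/(x_2^2-x_1^2\zeta_1^4)$ is a domain because $x_1^2\zeta_1^4$ is not a square in $\mathrm{Frac}(B')$ (odd $\zeta_1^4$-degree), from which the non-zero-divisor claim and hence the identification of $\ker\delta_0$ and $\mathrm{im}\,\delta_0$ follow by the even/odd $\zeta_3$-degree decomposition.
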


To compute the $E_2$-page, we note that the class $x_3$ is a cycle in $E_7/(\ov_1)$ and therefore it projects to a permanent cycle in the $\rho$-Bockstein spectral sequence. We also denote its projection
\[x_1\zeta_2^2+x_2\zeta_1^4\]
by $x_3$.

\begin{proposition}
    The $E_3$-page of the $\rho$-Bockstein spectral sequence is given by 
    \[
    \frac{\F_2[\zeta_1^4,\zeta_2^4,\zeta_3^2,\zeta_4,\ldots][\rho,x_1,x_2,x_3,\ov_2]}{(x_3^2+x_2^2\zeta_1^8+x_1^2\zeta_2^4,x_2^2+x_1^2\zeta_1^4,x_1^3\ov_2,\rho x_1^2\ov_2,(x_1^2x_3+x_2^3)\ov_2)}
    \]
\end{proposition}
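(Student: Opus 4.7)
The plan is to first observe that $E_3=E_2$, since $\delta_2$ vanishes. Indeed, the Bockstein differentials listed on generators are $\delta_0(\zeta_3)=x_1^3\ov_2$, $\delta_1(\zeta_2^2)=\rho x_1^2\ov_2$, and $\delta_3(\zeta_1^4)=\rho^3\ov_2$, and by the Leibniz rule this forces $\delta_2\equiv 0$; alternatively, the new class $x_3$ emerging on $E_2$ is represented by the $d_7$-cycle $\rho\zeta_3+x_1\zeta_2^2+x_2\zeta_1^4$ in $E_7/(\ov_1)$, hence carries no nontrivial Bockstein differential. It therefore suffices to compute $E_2=H(E_1,\delta_1)$.

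To this end, I regard $E_1$ as the polynomial ring $R_0[\zeta_2^2]$ over the base
\[R_0:=\F_2[\zeta_1^4,\zeta_3^2,\zeta_4,\ldots][\rho,x_1,x_2,\ov_2]/(x_2^2+x_1^2\zeta_1^4,\,x_1^3\ov_2),\]
equipped with the $R_0$-linear derivation $\delta_1$ determined by $\delta_1(\zeta_2^2)=a$ where $a:=\rho x_1^2\ov_2$. A direct characteristic-2 Koszul calculation, together with the annihilator computations $\mathrm{ann}_{R_0}(a)=(x_1)$ and $\mathrm{ann}_{R_0}(x_1)=(x_1^2\ov_2)$ (both following from the single relation $x_1^3\ov_2=0$ and the $\rho$-torsion-freeness of $R_0$), yields an isomorphism of $R_0[\zeta_2^4]$-modules
\[E_2\ \cong\ (R_0/aR_0)[\zeta_2^4]\ \oplus\ x_1\zeta_2^2\cdot\bigl(R_0/(x_1^2\ov_2)\bigr)[\zeta_2^4].\]

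Finally, I match this direct sum with the stated presentation. Introducing $x_3:=x_1\zeta_2^2+x_2\zeta_1^4$ (the shift by the $\delta_1$-cycle $x_2\zeta_1^4\in R_0$ reflects that the permanent cycle $\rho\zeta_3+x_1\zeta_2^2+x_2\zeta_1^4$ in $E_7/(\ov_1)$ has its $\rho\zeta_3$-summand killed by $\delta_0$), squaring gives $x_3^2=x_1^2\zeta_2^4+x_2^2\zeta_1^8$. The relation $\rho x_1^2\ov_2=0$ is exactly the quotient by $aR_0$, and the relation $(x_1^2x_3+x_2^3)\ov_2=0$ is the translation of the annihilator condition $x_1\cdot x_1^2\ov_2=0$ into the new generator:
\[x_1^2\ov_2\cdot x_3\ =\ x_1^3\ov_2\zeta_2^2+x_1^2x_2\zeta_1^4\ov_2\ =\ 0+x_2\cdot x_2^2\ov_2\ =\ x_2^3\ov_2,\]
using $x_1^3\ov_2=0$ and $x_2^2=x_1^2\zeta_1^4$. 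The main obstacle is essentially bookkeeping: one must verify that the four listed relations generate the ideal of relations and that no additional relations lurk in the module structure of the odd summand.
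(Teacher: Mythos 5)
Your strategy is sound and recognizably different in organization from the paper's. You compute the homology of $(E_1,\delta_1)$ directly via the parity decomposition $E_1=R_0[\zeta_2^4]\oplus \zeta_2^2 R_0[\zeta_2^4]$, identify the even part of $E_2$ with $(R_0/aR_0)[\zeta_2^4]$ and the odd part with $\mathrm{ann}_{R_0}(a)\cong R_0/(x_1^2\ov_2)$ twisted by $\zeta_2^2$, and then match this module description against the stated presentation via the substitution $x_3:=x_1\zeta_2^2+x_2\zeta_1^4$. The paper instead first produces a presentation of $\ker(\delta_1)$ by the auxiliary ring $S_{\mathrm{paper}}$ (without the $\rho x_1^2\ov_2$ relation), verifies surjectivity using the same annihilator computation $\mathrm{ann}_{E_1}(\rho x_1^2\ov_2)=(x_1)$, and proves injectivity by embedding $S_{\mathrm{paper}}$ into $S_{\mathrm{paper}}[y]/(y^2+\zeta_2^4,\,x_1 y+x_3+x_2\zeta_1^4)\cong E_1$; only then does it quotient by the image. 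Your route makes the underlying module structure explicit and surfaces the second annihilator $\mathrm{ann}_{R_0}(x_1)=(x_1^2\ov_2)$ as the controlling datum for the odd summand, while the paper's extension-ring embedding gives a cleaner one-line injectivity argument.

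Two small cautions. First, the annihilator computations rest not on $\rho$-torsion-freeness of $R_0$ but on $x_1$-torsion-freeness of the core ring $T=\F_2[\zeta_1^4,\zeta_3^2,\ldots][x_1,x_2]/(x_2^2+x_1^2\zeta_1^4)$ over which $R_0$ is $T[\rho,\ov_2]/(x_1^3\ov_2)$; the $\rho$-torsion-freeness of $R_0$ is true but is not what makes the argument run. Second, the final ``bookkeeping'' is a genuine step, not just a formality: you must check that under the $B:=(R_0/aR_0)[\zeta_2^4]$-module map $B\{1,x_3\}\to E_2$ the kernel is exactly the cyclic submodule $B\cdot(x_2^3\ov_2,\,x_1^2\ov_2)$, which requires both annihilator facts and the identity $x_1^2\ov_2\cdot x_2\zeta_1^4=x_2^3\ov_2$. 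This does go through, but it is the same content the paper dispatches via its extension-ring injectivity, so it should be written out rather than left as an aside.
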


\begin{proof}
    The kernel of $\delta_1$ is generated as an algebra by the classes 
    \[\rho,x_1,x_2,\ov_2,\zeta_1^4,\zeta_2^4,\zeta_3^2,\zeta_4,\ldots\]
    and $\zeta_2^2\cdot\mathrm{ann}_{E_1}(\rho x_1^2\ov_2)$, and we claim $\mathrm{ann}_{E_1}(\rho x_1^2\ov_2)=(x_1)$. To see this, let
    \[R:=\frac{\F_2[\zeta_1^4,\zeta_2^2,\zeta_3^2,\zeta_4,\ldots]
[\rho,x_1,x_2,\ov_{2}]}{(x_2^2+x_1^2\zeta_1^4)}\]
There is a surjection $\pi:R\to R/(x_1^3\ov_2)=E_1$, so if
\[\pi(r)\cdot \rho x_1^2\ov_2=0\]
then $r\rho x_1^2\ov_2\in(x_1^3\ov_2)\subset(x_1^3)$ in $R$. Using that $R$ is of the form $T[\rho,\ov_2]$ for $T$ an $x_1$-torsion free ring, it follows that $r\in(x_1)$.

Therefore, the map
\[S:=\frac{\F_2[\zeta_1^4,\zeta_2^4,\zeta_3^2,\zeta_4,\ldots][\rho,x_1,x_2,x_3,\ov_2]}{(x_3^2+x_2^2\zeta_1^8+x_1^2\zeta_2^4,x_2^2+x_1^2\zeta_1^4,x_1^3\ov_2,(x_1^2x_3+x_2^3)\ov_2)}\to\ker(\delta_1)\]
is surjective. To see that it is injective, note that the map
\[S\to S[y]/(y^2+\zeta_2^4,x_1y+x_3+x_2\zeta_1^4)\]
is injective, and the latter is easily identified with $E_1$ by setting $y=\zeta_2^2$. Now, $\mathrm{im}(\delta_1)$ is the ideal in $S$ generated by $\rho x_1^2\ov_2$, and the result follows.
\end{proof}

\begin{proposition}\label{propEinftyrhobocksteinn=2}
    The $E_4$ page of the $\rho$-Bockstein spectral sequence is given by the quotient of 
    \[
    \F_2[\zeta_1^8,\zeta_2^4,\zeta_3^2,\zeta_4,\ldots][\rho,x_1,x_2,x_3,\ov_2]
    \]
    by the ideal generated by the elements
    \begin{align*}
        &x_3^2+x_2^2\zeta_2^4+x_1^2\zeta_1^8
        &&x_2^4+x_1^4\zeta_1^8
        &&x_1^3\ov_2\\
        &\rho x_1^2\ov_2
        &&\rho^3\ov_2
        &&(x_1^2x_3+x_2^3)\ov_2\\
        &x_2^2x_3\ov_2+\zeta_1^8x_1^2x_2\ov_2
        &&x_2^2\rho\ov_2
        &&x_1x_2^2\ov_2
    \end{align*}
    
\end{proposition}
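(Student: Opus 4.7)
The plan is to compute $E_4$ as the cohomology of the derivation $\delta_3$ on $E_3$. Since $\delta_3$ vanishes on every listed algebra generator except $\zeta_1^4$, and $\delta_3(\zeta_1^4) = \rho^3\ov_2$, the image of $\delta_3$ is the principal ideal $(\rho^3\ov_2) \subseteq E_3$; the content of the proposition is thus a presentation of $\ker(\delta_3)/(\rho^3\ov_2)$.

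First I would verify that each of the nine listed ideal generators vanishes in $E_4$. The new relation $\rho^3\ov_2$ is exactly $\delta_3(\zeta_1^4)$, while all eight remaining relations already hold in $E_3$: three are directly inherited ($x_1^3\ov_2$, $\rho x_1^2\ov_2$, $(x_1^2x_3+x_2^3)\ov_2$); the relation $x_2^4+x_1^4\zeta_1^8$ is the square of the $E_3$-relation $x_2^2 = x_1^2\zeta_1^4$; the relation on $x_3^2$ is a rewriting of the corresponding $E_3$-relation; and the three remaining ones ($x_1x_2^2\ov_2$, $x_2^2\rho\ov_2$, $x_2^2x_3\ov_2 + \zeta_1^8x_1^2x_2\ov_2$) each reduce immediately to a $\zeta_1^4$-multiple of an $E_3$ $\ov_2$-relation via the substitution $x_2^2 = x_1^2\zeta_1^4$.

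Next I would identify $\ker(\delta_3)$. Let $C \subseteq E_3$ be the subalgebra generated by the cycles $\rho, x_1, x_2, x_3, \ov_2, \zeta_1^8, \zeta_2^4, \zeta_3^2, \zeta_4,\ldots$; by Leibniz, $C \subseteq \ker(\delta_3)$. A normal-form argument, in which every element of $E_3$ is written as $\alpha + \beta\zeta_1^4$ with $\alpha,\beta\in C$ (reducing higher powers via $(\zeta_1^4)^2 = \zeta_1^8\in C$), reduces the kernel calculation to computing $\mathrm{Ann}_{E_3}(\rho^3\ov_2)$. A direct check using the $E_3$-relations $\rho x_1^2\ov_2 = 0$ and $(x_1^2x_3 + x_2^3)\ov_2 = 0$ shows this annihilator is generated by $\ov_2$, $x_1^2$, and $x_1^2x_3 + x_2^3$; multiplying each by $\zeta_1^4$ and applying $\zeta_1^4 x_1^2 = x_2^2$ places the resulting cycles back inside $C$, so $\ker(\delta_3) = C$.

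The main obstacle will be the final injectivity check for the surjection from the presented algebra onto $C/(\rho^3\ov_2)$, i.e., ensuring that no further hidden relations are required. I would handle this by choosing a monomial basis for the presented algebra (using the listed relations to normalize powers of the $x_i$ and of the $\zeta$'s) and matching it, tridegree-by-tridegree, with a corresponding basis of $C$ after quotienting by $(\rho^3\ov_2)$. The bookkeeping is intricate because $x_2^2$ and $\zeta_1^4$ interact nontrivially through the $E_3$-relations, but no new phenomena beyond those encoded in the nine listed relations appear.
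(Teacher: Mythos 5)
Your overall strategy — identify $\ker(\delta_3)$ via the subalgebra $C$ of listed generators plus $\zeta_1^4\cdot\mathrm{Ann}_{E_3}(\rho^3\ov_2)$, then present $\ker(\delta_3)/(\rho^3\ov_2)$ — is the same as the paper's, but your computation of the annihilator contains an error. You claim $\ov_2\in\mathrm{Ann}_{E_3}(\rho^3\ov_2)$, i.e.\ that $\rho^3\ov_2^2=0$ in $E_3$. This is false: on $E_3$ the only $\ov_2$-relations are $x_1^3\ov_2=\rho x_1^2\ov_2=(x_1^2x_3+x_2^3)\ov_2=0$, all of which are multiples of $x_1^2$, and $\rho^3\ov_2^2$ is not (writing $E_3$ as a quotient of $T[\rho,\ov_2]$ with $T$ an $x_1$-torsion-free ring, the coefficient of $\rho^3\ov_2^2$ is $1\notin(x_1^2)$). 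The paper's computation gives $\mathrm{Ann}_{E_3}(\rho^3\ov_2)=(x_1^2)$ exactly; your extra generator $x_1^2x_3+x_2^3$ is at least correct but redundant, since $x_2^3=x_2\cdot x_1^2\zeta_1^4\in(x_1^2)$, but the $\ov_2$ generator is simply wrong. This propagates: you then assert that $\zeta_1^4\ov_2$ ``places back inside $C$,'' but $\zeta_1^4\ov_2$ is neither a $\delta_3$-cycle (its differential is $\rho^3\ov_2^2\ne0$) nor an element of $C$. You arrive at $\ker(\delta_3)=C$, which is correct, but the argument for it is broken at this step.

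Separately, your proposed final injectivity check ``by choosing a monomial basis and matching tridegree-by-tridegree'' is left as a promissory note. The paper does this much more cleanly: it observes that $S$ embeds into $S[y]/(y^2+\zeta_1^8,\,x_1^2y+x_2^2)$, and that the latter is identified with $E_3$ by setting $y=\zeta_1^4$, so the map $S\to\ker(\delta_3)\subset E_3$ is injective with no bookkeeping. You should adopt that identification rather than a bare dimension count, since the latter is exactly where hidden relations would be missed.
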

\begin{proof}
    The kernel of $\delta_3$ is generated by the classes 
    \[\rho,x_1,x_2,x_3,\ov_2,\zeta_1^8,\zeta_2^4,\zeta_3^2,\zeta_4,\ldots\]
    and $\zeta_1^4\cdot\mathrm{ann}_{E_3}(\rho^3\ov_2)$, and we claim $\mathrm{ann}_{E_3}(\rho^3\ov_2)=(x_1^2)$. To see this, set
    \[R:=\frac{\F_2[\zeta_1^4,\zeta_2^4,\zeta_3^2,\zeta_4,\ldots][\rho,x_1,x_2,x_3,\ov_2]}{(x_3^2+x_1^2\zeta_2^4+x_2^2\zeta_1^8,x_2^2+x_1^2\zeta_1^4)}
    \]
    We have a surjection 
    \[\pi:R\to R/(\rho x_1^2\ov_2,x_1^3\ov_2,(x_1^2x_3+x_2^3)\ov_2)=E_3\]
    So  if $\pi(r)\cdot \rho^3\ov_2=0$, then 
    \[r\rho^3\ov_2\in (\rho x_1^2\ov_2,x_1^3\ov_2,(x_1^2x_3+x_2^3)\ov_2)\subset(x_1^2)\]
  in $R$. Using that $R$ is of the form $T[\rho,\ov_2]$ for $T$ an $x_1$-torsion free ring, we see that $r\in (x_1^2)$.
    
    In particular, we do not pick up any new Massey product cycles as $x_1^2\zeta_1^4=x_2^2$, and therefore the map
    \[S:=\F_2[\zeta_1^8,\zeta_2^4,\zeta_3^2,\zeta_4,\ldots][\rho,x_1,x_2,x_3,\ov_2]/J\to \ker(\partial_7)\]
    is surjective, where $J$ is the ideal generated by the list in the statement of the proposition, with the element $\rho^3\ov_2$ removed. To see that this map is injective, note that the map
    \[S\to S[y]/(y^2+\zeta_1^8,x_1^2y+x_2^2)\]
    is injective, and the latter is easily identified with $E_3$ by setting $y=\zeta_1^4$.
\end{proof}

The remaining algebra generators are all permanent cycles, and so the $\rho$-Bockstein spectral sequence collapses on $E_4$. This verifies the discussion following Proposition \ref{propwithrelationslist}.

\subsubsection{The homology of $\mathrm{tmf}_0(3)$} Moving to weight zero, we have the subalgebra $\mathcal A_*\square_{\mathcal A(2)_*}\F_2$ along with the following ten elements
\begin{align*}
1 & \in H_{0} 
    &\rho^{2}x_1\ov_{2}
      &\in H_{4}
        & \rho^2 x_{2}\ov_{2} &\in H_{6}\\
    \rho x_1 x_2\ov_{2}
      &\in H_{7}
        &x_1^{2}x_2\ov_{2}
          &\in H_{8}
&\rho^2 x_{3}\ov_{2}
  &\in H_{10}\\ 
    \rho x_1x_3\ov_{2}
      &\in H_{11}   
        &x_1^{2}x_3\ov_{2}
          &\in H_{12}
&\rho x_{2}x_{3}\ov_{2}
  &\in H_{13} \\
    x_1x_{2}x_{3}\ov_{2}
      &\in H_{14} 
\end{align*}

These form an $\mathcal A(2)_*$-comodule algebra $M_2$ of dimension 10 with coactions determined by Corollary \ref{corcoactionx_m} and the fact that $\rho$ and $\ov_2$ are primitive. We display $M_2$ in Figure \ref{figM2}, where we have omitted the unit class as it generates a trivial comodule summand.

\begin{theorem}\label{thmt03}
    The homology of $\mathrm{\mathrm{tmf}}_0(3)\simeq\Gamma(BPGL\langle 2\rangle)$ is isomorphic as an $\mathcal A_*$-comodule algebra to
    \[\mathcal A_*\square_{\mathcal A(2)_*}M_2\]
    where $\overline{M}_{2}$ is as in Figure \ref{figM2}, and the multiplication in $\overline{M}_{2}$ is square zero.
\end{theorem}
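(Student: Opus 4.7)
The plan is to read off the weight-zero $E_\infty$-page from Section 5.2, identify it with $\mathcal{A}_*\square_{\mathcal{A}(2)_*} M_2$ as a comodule algebra, and lift this to $H_*\mathrm{tmf}_0(3)$. First, a weight-zero monomial in the ring of Proposition \ref{propEinftyrhobocksteinn=2} factors as $f\cdot u\cdot \ov_2^k$ with $f\in\mathcal{A}_*\square_{\mathcal{A}(2)_*}\F_2 = \F_2[\zeta_1^8,\zeta_2^4,\zeta_3^2,\zeta_4,\ldots]$ and $u$ a degree-$3k$ monomial in $\{\rho,x_1,x_2,x_3\}$, since each of the latter has weight $-1$ while $\ov_2$ has weight $3$. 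For $k=0$ this recovers $\mathcal{A}_*\square_{\mathcal{A}(2)_*}\F_2$, the image of the edge homomorphism by Theorem \ref{edgethm}. For $k=1$, the relations of Proposition \ref{propEinftyrhobocksteinn=2} (together with the expansion $x_3^2 = x_2^2\zeta_2^4 + x_1^2\zeta_1^8$) reduce the independent classes to exactly the nine generators of $\overline{M}_2$ listed in the statement. Applied to $k\ge 2$, the same case-check shows every weight-zero monomial $u\ov_2^k$ factors through one of $\rho^3\ov_2$, $\rho x_1^2\ov_2$, $x_1^3\ov_2$, $\rho x_2^2\ov_2$, or $x_1 x_2^2\ov_2$, hence vanishes on $E_\infty$. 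This both identifies $E_\infty$ in weight zero as the free $\mathcal{A}_*\square_{\mathcal{A}(2)_*}\F_2$-module on the ten generators and forces $\overline{M}_2\cdot\overline{M}_2 = 0$, since any such product has an $\ov_2^2$ factor.

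Next I would determine the coaction. The subalgebra $\mathcal{A}_*\square_{\mathcal{A}(2)_*}\F_2$ carries its usual comodule structure, while $\rho$ and $\ov_2$ are primitive. Corollary \ref{corcoactionx_m} gives $\psi(x_n)=\sum_{i=0}^{n}\xi_i^{2^{n-i}}\otimes x_{n-i}$, and for $n\le 3$ only $\xi_1,\xi_2,\xi_3\in\mathcal{A}(2)_*$ appear; hence the ten generators span an $\mathcal{A}(2)_*$-subcomodule $M_2$, and the $E_\infty$-page is isomorphic to $\mathcal{A}_*\square_{\mathcal{A}(2)_*} M_2$ as an $\mathcal{A}_*$-comodule algebra. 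Expanding the reduced coactions on the nine non-unit generators directly from Corollary \ref{corcoactionx_m} yields exactly the diagram of Figure \ref{figM2}.

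Finally I would rule out hidden extensions and identify the target with $\mathrm{tmf}_0(3)$. No extension affects the $\mathcal{A}_*\square_{\mathcal{A}(2)_*}\F_2$ subalgebra because it lifts tautologically through the edge homomorphism. Any hidden multiplicative or comodule extension involving the remaining generators would land in a bidegree of the same stem and weight but strictly higher filtration; by the vanishing lines of Proposition \ref{propvanishinglines}, each such candidate bidegree contains only finitely many classes, and inspecting the description of $E_\infty$ produced above (together with naturality of the coaction through $\mathcal{A}(2)_*$) shows that no target is available in the relevant bidegrees. The identification with $\mathrm{tmf}_0(3)$ is then immediate: Proposition \ref{propfixedpointsglobalsec} and Corollary \ref{cor:betaE} give $\Gamma(BPGL\langle 2\rangle)\simeq BP_\R\langle 2\rangle^{C_2}$, and the Hill--Meier theorem identifies $BP_\R\langle 2\rangle$ as a form of $\mathrm{tmf}_1(3)$, whose $C_2$-fixed points are $\mathrm{tmf}_0(3)$. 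The main obstacle will be the bookkeeping needed to rule out hidden extensions: while each check is mechanical given the vanishing lines and the explicit $M_2$, they must be performed uniformly across all stems intertwining the nine $\overline{M}_2$-generators with the infinite base subalgebra $\mathcal{A}_*\square_{\mathcal{A}(2)_*}\F_2$.
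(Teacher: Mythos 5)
Your outline --- reading off the weight-zero $E_\infty$-page from the $\rho$-Bockstein computation, identifying $M_2$ and its $\mathcal{A}(2)_*$-comodule algebra structure via Corollary \ref{corcoactionx_m}, and using Proposition \ref{propfixedpointsglobalsec}, Corollary \ref{cor:betaE}, and the Hill--Meier theorem to identify $\Gamma(BPGL\langle 2\rangle)$ with $\mathrm{tmf}_0(3)$ --- follows the paper. The genuine divergence is in how hidden comodule algebra extensions are ruled out, which you yourself flag as the ``main obstacle.'' The paper dispatches this with one structural observation rather than the open-ended bookkeeping you propose: each class of $M_2$ occupies the highest filtration of its stem (the $\overline{M}_2$ class in stem $n$, $4\le n\le 14$, sits in filtration $6-n$, strictly above the filtration $-n$ of the $\mathcal{A}_*\square_{\mathcal{A}(2)_*}\F_2$ contribution to that stem). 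This makes the projection $H_*\mathrm{tmf}_0(3)\to M_2$ a map of $\mathcal{A}(2)_*$-comodule algebras, and the cofree adjunction $\mathrm{Res}\dashv\mathcal{A}_*\square_{\mathcal{A}(2)_*}(-)$ then upgrades it in one step to an isomorphism of $\mathcal{A}_*$-comodule algebras $H_*\mathrm{tmf}_0(3)\xrightarrow{\sim}\mathcal{A}_*\square_{\mathcal{A}(2)_*}M_2$. Your degree and vanishing-line approach is rooted in the same underlying fact --- $M_2$ sits at the top of the filtration, so there is nothing above it to receive an extension --- but because you do not isolate this as the single uniform reason and instead defer to ``mechanical checks across all stems intertwining the nine $\overline{M}_2$-generators with the infinite base subalgebra,'' you leave an infinite list of verifications open where the paper has a two-line functorial argument. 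Two smaller points: your claim that the subalgebra $\mathcal{A}_*\square_{\mathcal{A}(2)_*}\F_2$ ``lifts tautologically through the edge homomorphism'' is loose (Theorem \ref{edgethm} gives a surjection onto that image, not a section), though the adjunction argument renders it moot; and the expansion $x_3^2=x_2^2\zeta_2^4+x_1^2\zeta_1^8$ you quote from Proposition \ref{propEinftyrhobocksteinn=2} is dimensionally inconsistent and should read $x_3^2=x_2^2\zeta_1^8+x_1^2\zeta_2^4$, as in the $E_3$-page and in the discussion preceding Proposition \ref{propwithrelationslist} --- not your error, but worth flagging.
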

\begin{proof}
   The description follows for the $E_\infty$ page of the HSSS for $BPGL\langle 2\rangle$ by the discussion following Proposition \ref{propwithrelationslist}. This page is displayed in Figure \ref{einftybp2}.
   
   It suffices now to observe that there are no nontrivial comodule algebra extensions in the HSSS. Indeed, each class in $M_2$ is in the highest filtration in its respective stem so the projection $H_*\mathrm{tmf}_0(3)\to M_2$ is a map of $\mathcal{A}(2)_*$-comodule algebras, which is adjoint to an isomorphism of $\mathcal{A}_*$-comodule algebras
   \[H_*\mathrm{tmf}_0(3)\to \mathcal A_*\square_{\mathcal A(2)_*}M_2\qedhere\]
\end{proof}

Prior to the results of Hill--Lawson \cite{hilllawson}, it was not known how to construct a connective model $\mathrm{tmf}_0(3)$ of the periodic spectrum $\mathrm{TMF}_0(3)$ via a derived algebraic geometry approach. Nonetheless, computational aspects of such a spectrum were studied in detail, and Davis-Mahowald proposed several definitions of such a spectrum. In particular, they constructed a certain 10-cell complex $X$ and considered the spectrum $\mathrm{tmf}\wedge X$ as a good connective model. They give an explicit construction of $X$, but we show that it exists by Toda obstruction theory.

\begin{proposition}\label{proptoda}
    Let $N$ be the $\mathcal{A}$-module with one generator in each dimension $3,5,6,7,9,10,11,12,13$, where the following Steenrod squares are nonzero on the generator $g$ of dimension 3
    \[\mathrm{Sq}^2,\mathrm{Sq}^3,\mathrm{Sq}^4,\mathrm{Sq}^4\mathrm{Sq}^2,\mathrm{Sq}^5\mathrm{Sq}^2,\mathrm{Sq}^6\mathrm{Sq}^2=\mathrm{Sq}^8,\mathrm{Sq}^6\mathrm{Sq}^3,\mathrm{Sq}^7\mathrm{Sq}^3\]
    and $\mathrm{Sq}^6(g)=0$. Then there exists a unique 2-complete bounded below spectrum $Y$ with $H^*Y\cong N$. Moreover, there is a map $Y\to S^0$ extending $2\nu$.
\end{proposition}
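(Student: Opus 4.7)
The plan is to construct $Y$ by iterated cell attachment in the spirit of Toda obstruction theory. We build a sequence of spectra
\[Y_3 \to Y_5 \to Y_6 \to Y_7 \to Y_9 \to Y_{10} \to Y_{11} \to Y_{12} \to Y_{13} = Y,\]
with $Y_3 = S^3$, where $Y_k$ is to realize in cohomology the submodule $N_{\le k} \subset N$ generated by the cells in dimensions $\le k$. At each stage, writing $Y_{k'}$ for the previous spectrum, attaching a new $k$-cell with the prescribed Steenrod structure amounts to choosing an attaching map $\alpha \in \pi_{k-1}(Y_{k'})^{\complete}_2$ whose cone realizes the $\cA$-module extension dictated by the listed nonzero squares into the new generator. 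The existence of such an $\alpha$, along with the required vanishing $\Sq^6 g = 0$, is verified using the Adams spectral sequence
\[E_2^{s,t} = \Ext_{\cA}^{s,t}(N_{\le k'}, \F_2) \Rightarrow \pi_{t-s}(Y_{k'})^{\complete}_2,\]
locating the attaching class in low filtration and checking that no forbidden operation is forced.

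The first attachments are familiar: $Y_5 = S^3 \cup_\eta e^5 = \Sigma^3 C\eta$, and then the $6$-, $7$-, and $9$-cells are attached by classes detecting $\Sq^3$, $\Sq^4$, and $\Sq^4\Sq^2$ respectively, read off from the Adams $E_2$-page at low filtration. The cells in dimensions $10, 11, 12, 13$ are attached by iterating the same procedure; the module relation $\Sq^6\Sq^2 g = \Sq^8 g$ and the constraint $\Sq^6 g = 0$ pin down each attaching map up to ambiguity in strictly higher Adams filtration that cannot alter cohomology.

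Uniqueness of $Y$ as a $2$-complete bounded-below realization of $N$ follows from the same filtered analysis: any two such realizations yield identical attaching data through each stage modulo classes with trivial Hurewicz image, and such classes lie in higher Adams filtration and therefore produce equivalent cofibers after all cells are attached. Equivalently, one checks $\Ext_\cA^{1,t}(N,N) = 0$ in the relevant range of internal degrees, ensuring no nontrivial deformations of the realization exist.

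For the map $Y \to S^0$ extending $2\nu \in \pi_3(S^0)$, the approach is again inductive: given $f_{k'} \colon Y_{k'} \to S^0$, the obstruction to extending over the new $k$-cell is the composite $f_{k'} \circ \alpha \in \pi_{k-1}(S^0)^{\complete}_2$. Since the $2$-primary stable stems $\pi_j^s$ for $j \le 12$ are classically known with explicit generators $\eta,\nu,\sigma,\epsilon,\ldots$ and their product relations, each obstruction is identified as a specific element of the form $(2\nu)\cdot \beta$ and killed using the relations $(2\nu)\eta = 0$, $2\nu^2 = 0$, and analogous vanishings through the $10$-stem. \emph{The main obstacle} is bookkeeping: tracking all nine attaching maps, matching each resulting obstruction with the appropriate vanishing relation in the low-dimensional stable stems, and verifying that the indeterminacies at each inductive step can be chosen consistently so that the partial extensions lift all the way to $Y$.
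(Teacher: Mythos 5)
Your argument contains a concrete error in the uniqueness step. You claim that $\Ext_{\mathcal{A}}^{1,t}(N,N) = 0$ in the relevant range of internal degrees ensures uniqueness, but in Toda realization theory the obstruction to uniqueness lies in the $-1$ stem of $\Ext_{\mathcal{A}}^{*,*}(N,N)$ in Adams filtration $\ge 2$, i.e.\ in $\Ext_{\mathcal{A}}^{s,s-1}(N,N)$ for $s\ge 2$; the filtration-$1$ piece you want to vanish need not vanish and does not need to be checked. Your preceding heuristic --- that two attaching maps differing only in higher Adams filtration necessarily produce equivalent cofibers once all cells are attached --- is also false in general; parametrizing the failure of this claim is exactly what those filtration-$\ge 2$ classes do. And even granting a corrected uniqueness criterion, your existence argument is incomplete: you locate each attaching class on the $E_2$ page of the Adams spectral sequence for $Y_{k'}$ but never verify that those classes are permanent cycles, which is the entirety of the bookkeeping you defer.

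The paper sidesteps all of this. It applies the Toda obstruction theorem (which you cite but do not actually use), reducing existence and uniqueness respectively to the assertions that the $-2$ stem of $\Ext_{\mathcal{A}}^{*,*}(N,N)$ vanishes and that the $-1$ stem is concentrated in filtrations $\le 1$; both are checked at once with Bruner's \texttt{ext} software. The map $Y\to S^0$ is then produced following Davis--Mahowald, broadly along the lines of your final paragraph.
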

\begin{proof}
    Toda obstruction theory (see \cite[Theorem 3.2]{BE} or \cite{Toda}) implies that it suffices to show that the $-2$ stem in
    \[\Ext_{\mathcal{A}}^{*,*}(N,N)\]
    is trivial. This is easily checked using Bruner's software, and we display the chart in Figure \ref{bruner4}. The $-1$-stem contains no classes in filtration higher than 1, so uniqueness follows. The existence of the map $Y\to S^0$ may also be checked directly from the Adams spectral sequence, and this is the argument used in \cite[Theorem 2.1(b)]{davismahowald}.
\end{proof}
 
\begin{figure}
\centering
\includegraphics[scale=0.35]{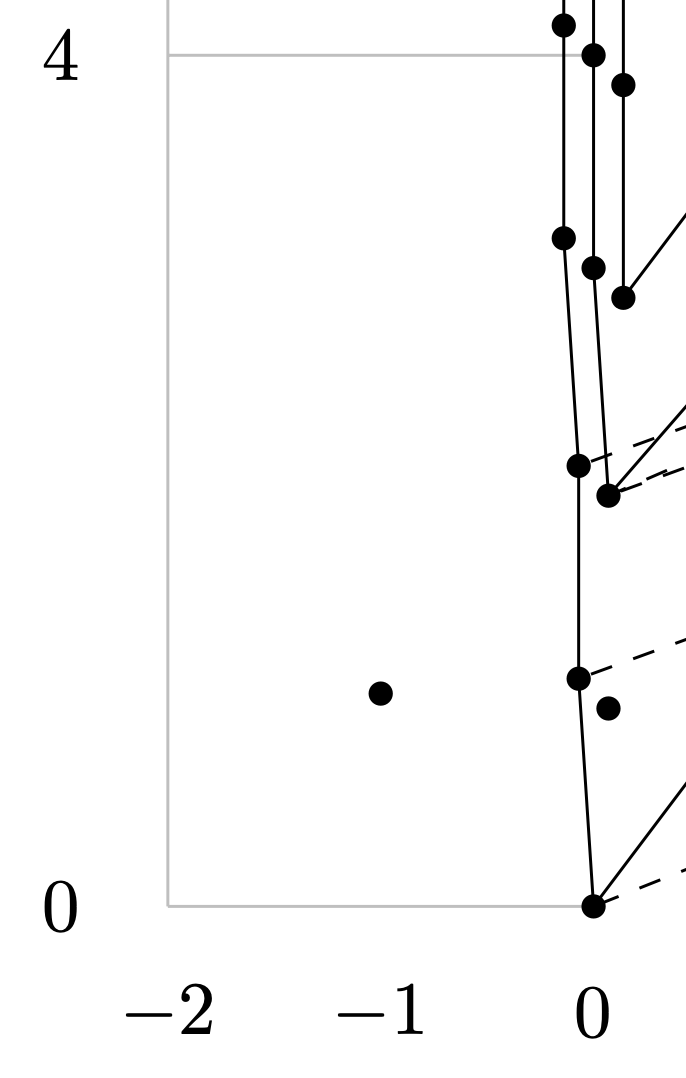}
\caption{$\Ext_{\mathcal{A}}(N,N)$}
\label{bruner4}
\end{figure}

It is easy to check that the $\mathcal{A}$-module $N$ has the property that its restriction to $\mathcal{A}(2)$ is dual to the $\mathcal{A}(2)_*$-comodule $\Sigma^{-1}\overline{M}_{2}$. We conclude the following.

\begin{corollary}\label{cortmfsplitting}
    Let $X$ be the cofiber of the map $Y\to S^0$ constructed in Proposition \ref{proptoda}. There is a 2-local equivalence of $\mathrm{tmf}$-modules
    \[\mathrm{tmf}\wedge X\to \mathrm{tmf}_0(3)\]
\end{corollary}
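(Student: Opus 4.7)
The plan is to construct a map $f\colon X\to \mathrm{tmf}_0(3)$ of spectra realizing, on mod 2 homology, the identification $H_*X\cong M_2$ of $\mathcal{A}(2)_*$-comodules, extend it to a $\mathrm{tmf}$-module map $\tilde f\colon \mathrm{tmf}\wedge X\to \mathrm{tmf}_0(3)$, and verify that $\tilde f$ is a mod 2 equivalence. Since both source and target are 2-complete connective spectra of finite type, this will imply the required 2-local equivalence.

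First I would identify the $\mathcal{A}(2)_*$-comodule structure on $H_*X$. The cofiber sequence $Y\to S^0\to X$ of Proposition \ref{proptoda} yields a short exact sequence of $\mathcal{A}$-modules $0\to \Sigma N\to H^*X\to \F_2\to 0$; the connecting map $H^*S^0\to H^*Y$ vanishes for degree reasons, since $N$ is concentrated in degrees $\ge 3$. By construction, $\Sigma N|_{\mathcal{A}(2)}$ is dual to $\overline{M}_{2}$, and the attachment of the bottom cell, controlled by the extension of $2\nu$ chosen in Proposition \ref{proptoda}, matches the square-zero algebra structure $M_2=\F_2\{1\}\oplus \overline{M}_{2}$ appearing in Theorem \ref{thmt03}. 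Dualizing gives $H_*X\cong M_2$ as $\mathcal{A}(2)_*$-comodules.

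Next I would construct $f$ using the mod 2 Adams spectral sequence for $[X,\mathrm{tmf}_0(3)]_*$. Since $X$ is finite and $\mathrm{tmf}_0(3)$ is bounded below of finite type, this spectral sequence has
\[E_2^{s,t}=\Ext_{\mathcal{A}_*}^{s,t}\bigl(H_*X,H_*\mathrm{tmf}_0(3)\bigr)\cong \Ext_{\mathcal{A}(2)_*}^{s,t}(M_2,M_2),\]
using Theorem \ref{thmt03} together with the standard change-of-rings isomorphism for the cotensor product along $\mathcal{A}(2)_*\hookrightarrow \mathcal{A}_*$. The identity endomorphism of $M_2$ gives a class in $E_2^{0,0}$, and I would argue it is a permanent cycle by checking that $\Ext_{\mathcal{A}(2)_*}^{r,r-1}(M_2,M_2)$ vanishes for the relevant small $r$, computed directly with Bruner's software much as in the proof of Proposition \ref{proptoda}. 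This produces $f$ with $f_*$ realizing the canonical inclusion $M_2\hookrightarrow \mathcal{A}_*\square_{\mathcal{A}(2)_*}M_2=H_*\mathrm{tmf}_0(3)$. The main obstacle of the argument is precisely this permanent-cycle verification together with arranging $f$ to hit the correct copy of $M_2$ rather than a shifted one.

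Finally I would define $\tilde f$ as the composite
\[\mathrm{tmf}\wedge X\xrightarrow{1\wedge f}\mathrm{tmf}\wedge \mathrm{tmf}_0(3)\xrightarrow{\mu}\mathrm{tmf}_0(3),\]
using the $\mathrm{tmf}$-algebra structure on $\mathrm{tmf}_0(3)$ from Hill--Lawson. By Milnor--Moore, $\mathcal{A}_*$ splits as an $(\mathcal{A}_*\square_{\mathcal{A}(2)_*}\F_2)$-module tensor with $\mathcal{A}(2)_*$, and cotensoring in the $M_2$ variable shows that $H_*\mathrm{tmf}_0(3)=\mathcal{A}_*\square_{\mathcal{A}(2)_*}M_2$ is free as an $H_*\mathrm{tmf}$-module with basis the image of $M_2$. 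Under the identification from Step 1, $\tilde f_*$ is the $H_*\mathrm{tmf}$-linear extension of this inclusion, hence an isomorphism of $\mathcal{A}_*$-comodules. Finite type plus 2-completeness upgrades this mod 2 equivalence to the asserted equivalence of 2-local $\mathrm{tmf}$-modules.
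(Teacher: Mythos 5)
Your overall strategy — build a map $f\colon X\to \mathrm{tmf}_0(3)$ inducing the coaugmentation $M_2\hookrightarrow \mathcal A_*\square_{\mathcal A(2)_*}M_2$ on homology, extend to a $\mathrm{tmf}$-module map, and conclude by finite type — is the same as the paper's, and the framing (Adams spectral sequence with $E_2=\Ext_{\mathcal{A}(2)_*}(M_2,M_2)$ via change of rings) is right. But there is a genuine gap at the permanent-cycle step.

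You propose to show $\iota$ is a permanent cycle by \emph{checking that $\Ext_{\mathcal{A}(2)_*}^{r,r-1}(M_2,M_2)$ vanishes for the relevant small $r$}. This vanishing is false. The possible targets of $d_r(\iota)$ live in the $-1$ stem, and the paper's Bruner chart for $\Ext_{\mathcal{A}(2)_*}(M_2,M_2)$ (Figure~\ref{bruner}) shows nonzero classes in the $-1$ stem in filtrations $2$, $3$, and higher — in fact an $h_0$-tower. So there are potential targets for $d_2(\iota)$ and $d_3(\iota)$, and a direct vanishing argument cannot rule them out. Moreover, for $\iota$ to survive one must also know it is not itself a boundary (irrelevant here since it has filtration $0$), but more importantly the nonvanishing means extra input is required to exclude the nonzero differentials.

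The paper's actual argument works around this by comparing with the $4$-skeleton of $X$, which is $C(2\nu)$. Since $2\nu=0$ in $\pi_3\mathrm{tmf}_0(3)$, the unit extends over $C(2\nu)$, so the image of $\iota$ in $\Ext_{\mathcal{A}_*}(H_*C(2\nu),H_*\mathrm{tmf}_0(3))$ is a permanent cycle. Because the restriction map is an isomorphism in bidegrees $(0,0)$, $(-1,2)$, $(-1,3)$, this forces $d_2(\iota)=d_3(\iota)=0$. One then needs a separate argument — that a nonzero $d_2$ emanates from the other $h_0$-tower in the $-1$ stem, pulled back from $\Ext_{\mathcal{A}_*}(H_*C(2\nu),\F_2)$ using $D(C(2\nu))\not\simeq S^{-4}\vee S^0$ — to clear out the $-1$ stem at $E_3$ by $h_0$-linearity. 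Your proposal is missing both of these ingredients, and without them the permanent-cycle claim does not follow. (As a side effect, the paper even records that the Adams spectral sequence for $\mathrm{tmf}_0(3)$ itself does \emph{not} collapse at $E_2$, which is a further sign that naive vanishing arguments are too optimistic here.) The remainder of your proposal — the identification $H_*X\cong M_2$, the extension $\tilde f=\mu\circ(1\wedge f)$, and the finite-type conclusion — is fine.
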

\begin{proof}
    The homotopy groups of $\mathrm{tmf}$ and $\mathrm{tmf}_0(3)$ are degreewise finitely generated, so it suffices to produce a $2$-complete equivalence, which follows from the existence of any map $X\to \mathrm{tmf}_0(3)$ that induces the map of $\mathcal{A}_*$-comodules $H_*X\to \mathcal A_*\square_{\mathcal A(2)_*}M_2$ adjoint to the identity map of the $\mathcal{A}(2)_*$-comodule $M_2$.
    
    For this, it suffices to show that in the Adams spectral sequence
    \[\Ext_{\mathcal{A}_*}(H_*X,H_*\mathrm{tmf}_0(3))\cong\Ext_{\mathcal{A}(2)_*}(M_2,M_2)\implies [X,\mathrm{tmf}_0(3)]^{\hat{}}_2\]
    the identity map $\iota$ is a permanent cycle. By Proposition \ref{proptoda}, the 4-skeleton of $X$ is the 2-cell complex $C(2\nu)$. Since $2\nu=0\in\pi_3\mathrm{tmf}_0(3)$ (see \cite{hillmeier} or \cite{LSWX}), the unit map $S^0\to \mathrm{tmf}_0(3)$ extends over $C(2\nu)$, which implies that the map of Adams spectral sequences
    \[\Ext_{\mathcal{A}_*}(H_*X,H_*\mathrm{tmf}_0(3))\to \Ext_{\mathcal{A}_*}(H_*C(2\nu),H_*\mathrm{tmf}_0(3))\]
    sends $\iota$ to a permanent cycle.
    
    We use Bruner's ext software to produce the former in Figure \ref{bruner} and the latter in Figure \ref{bruner1}. The map is an isomorphism in bidegrees $(0,0)$, $(-1,2)$ and $(-1,3)$, sending $\iota$ to the generator of the lefthand $h_0$-tower in Figure \ref{bruner1}. It follows by naturality that $\iota$ does not support a $d_{<4}$ in Figure \ref{bruner}. We claim the generator of the righthand $h_0$-tower in Figure \ref{bruner1} supports a nontrivial $d_2$. Given this, the nontrivial $d_2$ lifts to Figure \ref{bruner}, and it follows from $h_0$-linearity that the $-1$ stem is then trivial on the $E_3$-page.

    It remains to show the claimed $d_2$ in Figure \ref{bruner1}. This follows by naturality via the map
    \[\Ext_{\mathcal{A}_*}(H_*C(2\nu),\F_2)\to\Ext_{\mathcal{A}_*}(H_*C(2\nu),H_*\mathrm{tmf}_0(3))\]
    and we display the former $E_2$ page in Figure \ref{bruner2}. Since 
    \[D(C(2\nu))\not\simeq S^{-4}\vee S^0\]
    the class in bidegree $(0,0)$ must support a differential. It follows easily from the long exact sequence in homotopy that $\pi_{-1}D(C(2\nu))=\pi_3C(2\nu)=\Z/2$, which implies the claimed $d_2$.
\end{proof}

\begin{corollary}\label{cortmf03collapse}
    The Adams spectral sequence for $\mathrm{tmf}_0(3)$ does not collapse on $E_2$. In particular, we have $d_2(x)=h_0h_2$, where $x$ is the nontrivial class in bidegree $(4,0)$.
\end{corollary}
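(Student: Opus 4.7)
The plan is to exploit the vanishing $2\nu = 0 \in \pi_3 \mathrm{tmf}_0(3)$ recalled in the proof of Corollary \ref{cortmfsplitting}, combined with naturality of the Adams spectral sequence along the unit $S^0 \to \mathrm{tmf}_0(3)$. Since $h_0 h_2$ detects $2\nu$ in the sphere's Adams spectral sequence, its image in the Adams $E_2$ of $\mathrm{tmf}_0(3)$ (still denoted $h_0 h_2$) detects the image of $2\nu$ in $\pi_3\mathrm{tmf}_0(3)$, which vanishes. Hence the class $h_0 h_2$ cannot survive to $E_\infty$.

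Next I would carry out a short bidegree analysis. The class $h_0 h_2$ lies in Adams bidegree $(t-s, s) = (3, 2)$, so any incoming $d_r$ must originate in bidegree $(4, 2-r)$; since $r \geq 2$, this forces $r = 2$ with source in $(4, 0)$. By Theorem \ref{thmt03}, $H_*\mathrm{tmf}_0(3) \cong \mathcal{A}_* \square_{\mathcal{A}(2)_*} M_2$, so via the standard change-of-rings the $s = 0$ line of the Adams $E_2$-page is identified with $M_2^\vee$. By Figure \ref{figM2}, $M_2$ has a unique nonzero class in degree $4$, namely the image of $\rho^2 x_1 \ov_2$, so the only candidate class in bidegree $(4, 0)$ is $x$. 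Therefore the only possible incoming $d_2$ into $h_0 h_2$ is $d_2(x) = h_0 h_2$.

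The main obstacle is the complementary possibility that $h_0 h_2$ supports an outgoing differential, which would also give $h_0 h_2 = 0$ in $E_\infty$ without involving $x$. To exclude this, I would observe that an outgoing $d_r$ on $h_0 h_2$ lands in bidegree $(2, 2 + r)$, and I would verify directly from a chart of $\Ext_{\mathcal{A}(2)_*}(\F_2, M_2)$ in this narrow range that no permissible target is available, using that $h_0$ is a permanent cycle so that the Leibniz rule forces compatibility with the $h_0$-tower through $h_0 h_2$. This is a finite low-degree check, analogous in spirit to the Bruner-software verifications already used in the proof of Corollary \ref{cortmfsplitting}. Once the outgoing differential is ruled out, the differential $d_2(x) = h_0 h_2$ is established, and in particular the Adams spectral sequence does not collapse on $E_2$.
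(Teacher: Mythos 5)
Your approach is correct but genuinely different from the paper's. The paper obtains the differential $d_2(x)=h_0h_2$ by pushing forward the known $d_2$ in the Adams spectral sequence of $C(2\nu)$ along the map $C(2\nu)\to\mathrm{tmf}_0(3)$ constructed for Corollary \ref{cortmfsplitting}; you instead use only the unit map $S^0\to\mathrm{tmf}_0(3)$ together with the vanishing $2\nu=0\in\pi_3\mathrm{tmf}_0(3)$ to force the image of $h_0h_2$ to die, and then locate the source of the killing differential by a degree count combined with Theorem \ref{thmt03}. Both arguments are valid; yours has the advantage of not needing the $d_2$ in the Adams spectral sequence of $C(2\nu)$ or the existence of the map $C(2\nu)\to\mathrm{tmf}_0(3)$, whereas the paper's has the advantage of producing the differential directly by naturality with no need to enumerate possible sources.

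However, your Step 4 (ruling out an outgoing differential on $h_0h_2$ by inspecting a chart of $\Ext_{\mathcal{A}(2)_*}(\F_2,M_2)$) is unnecessary, and the worry that motivates it reflects a small conceptual gap. Since $h_0h_2$ is a permanent cycle in the Adams spectral sequence of $S^0$, and the unit induces a map of spectral sequences, the image $f_2(h_0h_2)\in E_2(\mathrm{tmf}_0(3))$ is automatically a permanent cycle: $d_rf_r(h_0h_2)=f_r d_r(h_0h_2)=0$ for all $r$. Hence $f_2(h_0h_2)$ can never support an outgoing differential; the only way for it to vanish on $E_\infty$ (which it must, since $\iota_*(2\nu)=0$) is to be a boundary. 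Your Steps 2 and 3 then pin down the unique possible source in bidegree $(4,0)$, and the proof is complete without any chart verification. You should also note, if only in passing, that $f_2(h_0h_2)$ is nonzero on $E_2$, which is immediate from the splitting $M_2=\F_2\oplus\overline{M}_2$ of $\mathcal{A}(2)_*$-comodules: the unit map factors through the $\Ext_{\mathcal{A}(2)_*}(\F_2,\F_2)$ summand, where $h_0h_2\ne0$.
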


\begin{proof}
    The claimed $d_2$ follows by naturality via the map $C(2\nu)\to\mathrm{tmf}_0(3)$, where the Adams spectral sequence for $C(2\nu)$ supports the nontrivial $d_2$ discussed in the proof of Corollary \ref{cortmfsplitting}. We display the Adams $E_2$-page for $\mathrm{tmf}_0(3)$ in Figure \ref{bruner3}.
\end{proof}

\begin{remark}
    Corollary \ref{cortmf03collapse} stands in contrast to the motivic Adams spectral sequence, which collapses for $BPGL\langle m\rangle$ for all $m$ (similarly for the $C_2$-equivariant Adams spectral sequence for $BP_\R\langle m\rangle$).
\end{remark}

\begin{figure}[!htbp]
\centering
\includegraphics[scale=0.4]{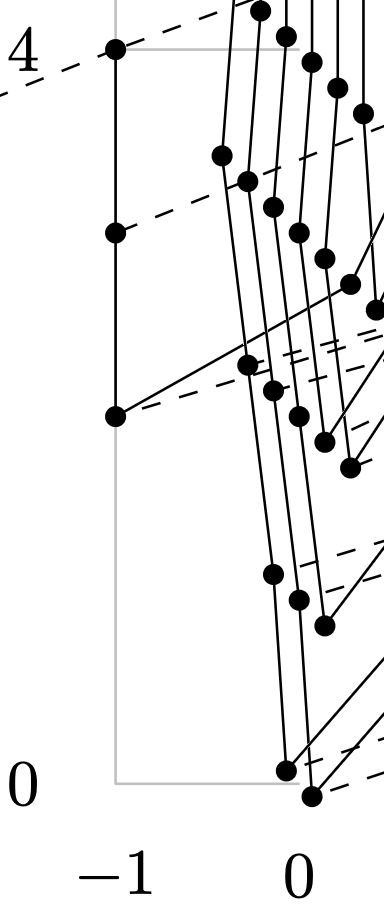}
\caption{$\Ext_{\mathcal{A}_*}(H_*X,H_*\mathrm{tmf}_0(3))$}
\label{bruner}
\end{figure}

\begin{figure}[!htbp]
\centering
\begin{subfigure}{.5\textwidth}
  \centering
  \includegraphics[width=\textwidth]{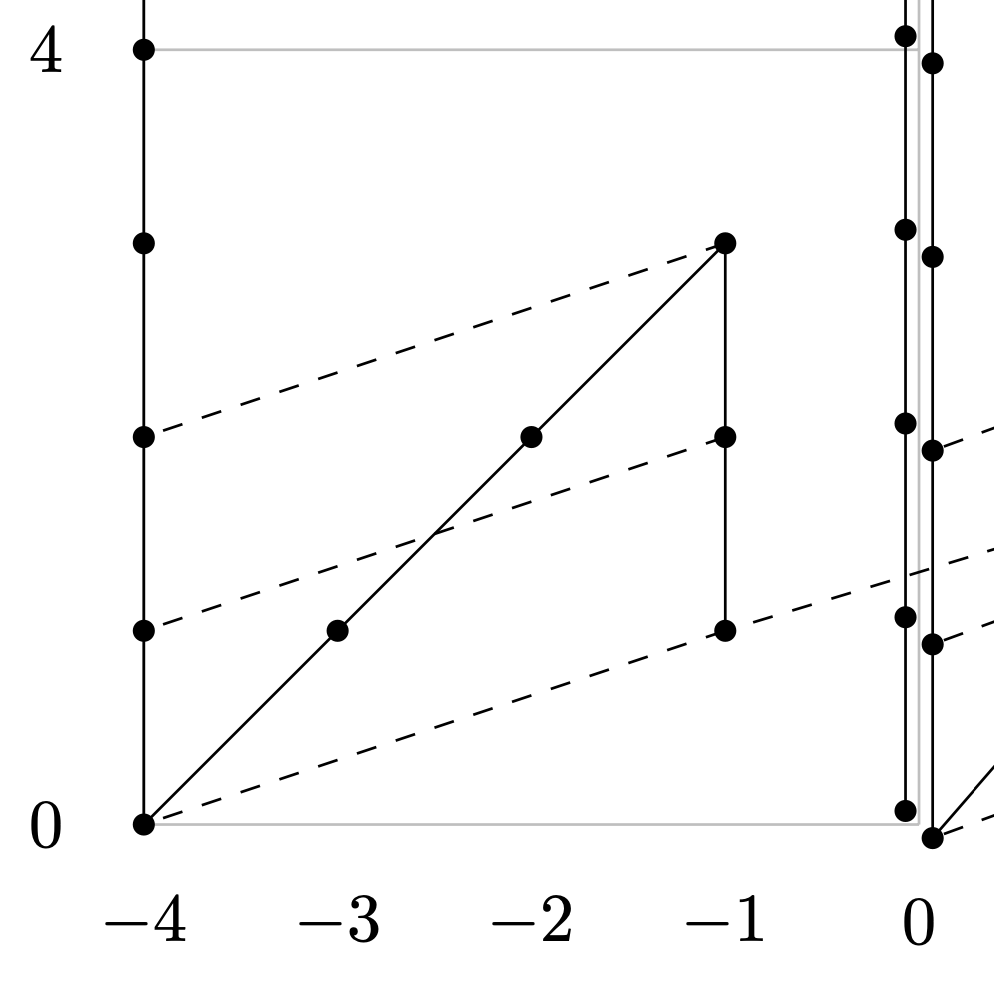}
\caption{$\Ext_{\mathcal{A}_*}(H_*C(2\nu),H_*\mathrm{tmf}_0(3))$}
\label{bruner1}
\end{subfigure}%
\begin{subfigure}{.5\textwidth}
  \centering
  \includegraphics[width=\textwidth]{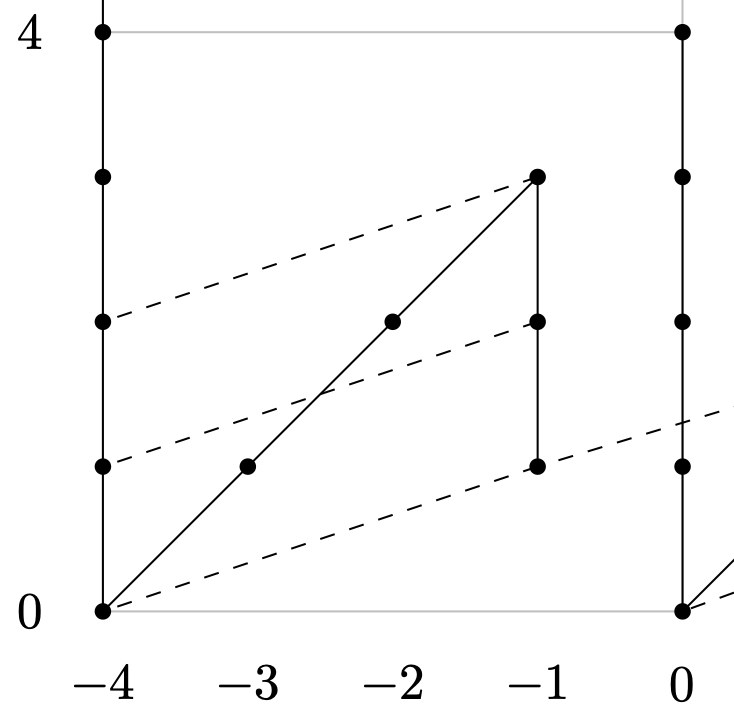}
\caption{$\Ext_{\mathcal{A}_*}(H_*C(2\nu),H_*S^0)$}
\label{bruner2}
\end{subfigure}
\caption{}
\end{figure}

\begin{figure}[!htbp]
\centering
\includegraphics[scale=0.4]{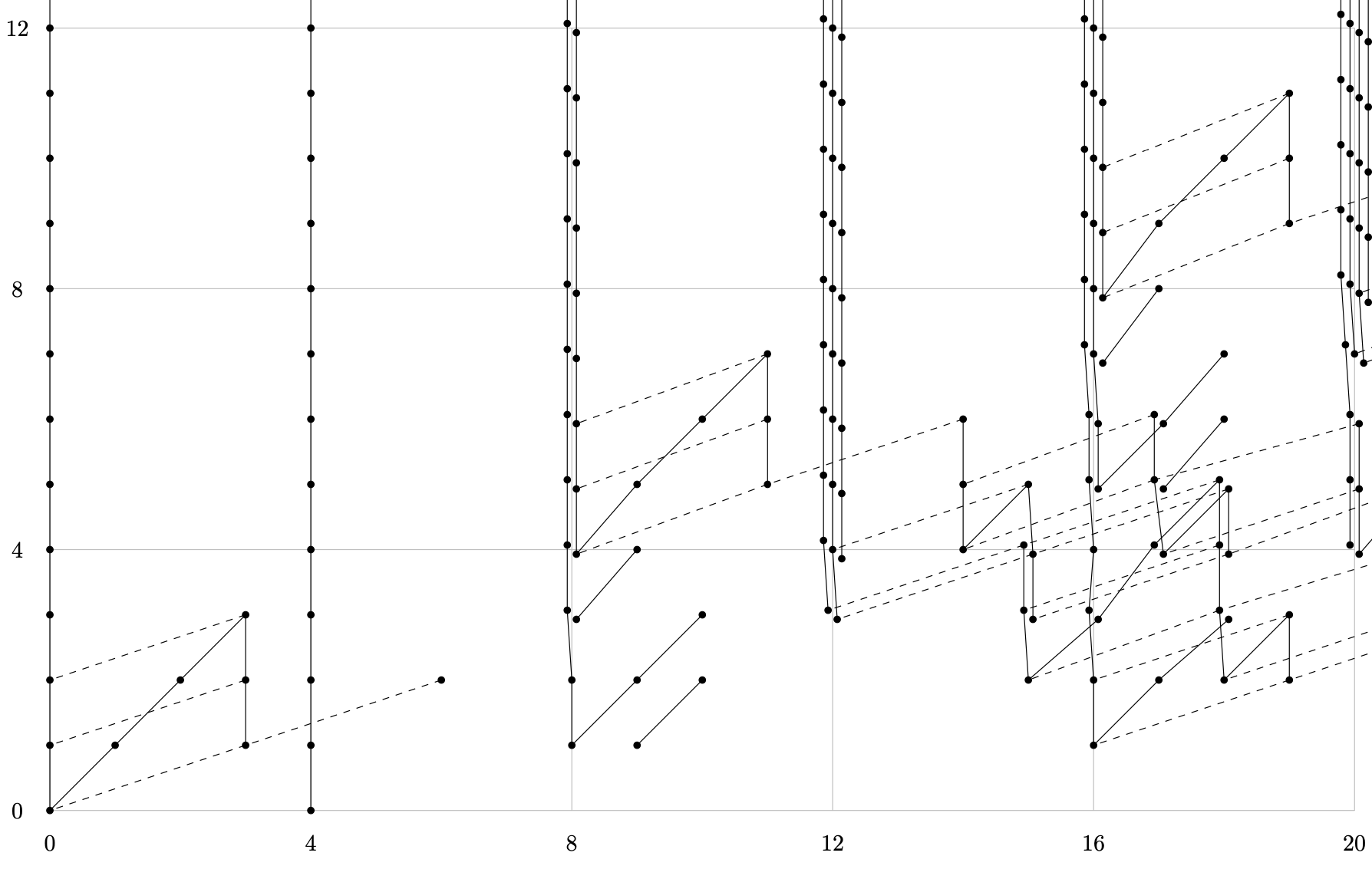}
\caption{$\Ext_{\mathcal{A}_*}(\F_2,H_*\mathrm{tmf}_0(3))$}
\label{bruner3}
\end{figure}

\begin{sseqdata}[ name = E7bp2, Adams grading, classes = {fill, show name=below},
grid = go, xrange ={0}{14},yrange={-14}{14},xscale=0.5,yscale=0.5,x tick step =2, y tick step =2,run off differentials = {->},struct lines = red!88!black ]
\class[name = 1](0,0)
\class[red!88!black,name = \rho^3\ov_2](3,3)
\DoUntilOutOfBoundsThenNMore{2}{
    \class[red!88!black](\lastx+3,\lasty+3)
    \structline
}

\structline(0,0)(3,3)
\class[red!88!black](4,2)
\DoUntilOutOfBoundsThenNMore{3}{
    \class[red!88!black](\lastx+3,\lasty+3)
    \structline
}
\class[red!88!black,name=\rho x_1^2\ov_2](5,1)
\DoUntilOutOfBoundsThenNMore{3}{
    \class[red!88!black](\lastx+3,\lasty+3)
    \structline
}
\class[red!88!black,name=x_1^3\ov_2](6,0)
\DoUntilOutOfBoundsThenNMore{3}{
    \class[red!88!black](\lastx+3,\lasty+3)
    \structline
}
\class[red!88!black](6,0)
\DoUntilOutOfBoundsThenNMore{3}{
    \class[red!88!black](\lastx+3,\lasty+3)
    \structline
}
\class[name = \zeta_1^4](4,-4)
\DoUntilOutOfBoundsThenNMore{3}{
    \d7
    \class[red!88!black](\lastx+3,\lasty+3)
    \structline
}
\class[red!88!black](7,-1)
\DoUntilOutOfBoundsThenNMore{3}{
    \class[red!88!black](\lastx+3,\lasty+3)
    \structline
}
\class[red!88!black](10,2)
\DoUntilOutOfBoundsThenNMore{3}{
    \class[red!88!black](\lastx+3,\lasty+3)
    \structline
}
\class[red!88!black](8,-2)
\DoUntilOutOfBoundsThenNMore{1}{\d7
    \class[red!88!black](\lastx+3,\lasty+3)
    \structline
}
\class[red!88!black](8,-2)
\DoUntilOutOfBoundsThenNMore{1}{
    \class[red!88!black](\lastx+3,\lasty+3)
    \structline
}
\class[red!88!black](11,1)
\DoUntilOutOfBoundsThenNMore{1}{
    \class[red!88!black](\lastx+3,\lasty+3)
    \structline
}
\class[name = \zeta_2^2](6,-6)
\DoUntilOutOfBoundsThenNMore{3}{
    \d7
    \class[red!88!black](\lastx+3,\lasty+3)
    \structline
}
\class[red!88!black](9,-3)
\DoUntilOutOfBoundsThenNMore{1}{
    \class[red!88!black](\lastx+3,\lasty+3)
    \structline
}
\class[red!88!black](12,0)
\DoUntilOutOfBoundsThenNMore{1}{
    \class[red!88!black](\lastx+3,\lasty+3)
    \structline
}
\class[red!88!black](12,0)
\DoUntilOutOfBoundsThenNMore{1}{
    \class[red!88!black](\lastx+3,\lasty+3)
    \structline
}
\class[name = \zeta_3](7,-7)
\DoUntilOutOfBoundsThenNMore{3}{
    \d7
    \class[red!88!black](\lastx+3,\lasty+3)
    \structline
}
\class[red!88!black](10,-4)
\DoUntilOutOfBoundsThenNMore{1}{\d7(\lastx,\lasty,2,2)
    \class[red!88!black](\lastx+3,\lasty+3)
    \structline
}
\class[red!88!black](10,-4)
\DoUntilOutOfBoundsThenNMore{1}{\d7(\lastx,\lasty,3,2)
    \class[red!88!black](\lastx+3,\lasty+3)
    \structline
}
\class[red!88!black](10,-4)
\DoUntilOutOfBoundsThenNMore{1}{
    \class[red!88!black](\lastx+3,\lasty+3)
    \structline
}
\class[red!88!black](13,-1)
\DoUntilOutOfBoundsThenNMore{1}{
    \class[red!88!black](\lastx+3,\lasty+3)
    \structline
}
\class[name = \zeta_1^8](8,-8)
\DoUntilOutOfBoundsThenNMore{3}{
    \class[red!88!black](\lastx+3,\lasty+3)
    \structline
}
\class[red!88!black](11,-5)
\DoUntilOutOfBoundsThenNMore{1}{\d7(\lastx,\lasty,2,2)
    \class[red!88!black](\lastx+3,\lasty+3)
    \structline
}
\class[red!88!black](11,-5)
\DoUntilOutOfBoundsThenNMore{1}{\d7(\lastx,\lasty,3,2)
    \class[red!88!black](\lastx+3,\lasty+3)
    \structline
}
\class[red!88!black](11,-5)
\DoUntilOutOfBoundsThenNMore{1}{\d7(\lastx,\lasty,4,3)
    \class[red!88!black](\lastx+3,\lasty+3)
    \structline
}
\class[red!88!black](12,-6)
\DoUntilOutOfBoundsThenNMore{3}{\d7(\lastx,\lasty,1,2)
    \class[red!88!black](\lastx+3,\lasty+3)
    \structline
}
\class[red!88!black](12,-6)
\DoUntilOutOfBoundsThenNMore{3}{\d7(\lastx,\lasty,2,3)
    \class[red!88!black](\lastx+3,\lasty+3)
    \structline
}
\class[red!88!black](12,-6)
\DoUntilOutOfBoundsThenNMore{3}{\d7(\lastx,\lasty,3,3)
    \class[red!88!black](\lastx+3,\lasty+3)
    \structline
}
\class[red!88!black](12,-6)
\DoUntilOutOfBoundsThenNMore{3}{
    \class[red!88!black](\lastx+3,\lasty+3)
    \structline
}
\class[red!88!black](12,-6)
\DoUntilOutOfBoundsThenNMore{3}{
    \class[red!88!black](\lastx+3,\lasty+3)
    \structline
}
\class(10,-10)
\DoUntilOutOfBoundsThenNMore{1}{
    \d7(\lastx,\lasty,1,2)
    \class[red!88!black](\lastx+3,\lasty+3)
    \structline
}
\class[red!88!black](13,-7)
\DoUntilOutOfBoundsThenNMore{2}{\d7(\lastx,\lasty,2,2)
    \class[red!88!black](\lastx+3,\lasty+3)
    \structline
}
\class[red!88!black](13,-7)
\DoUntilOutOfBoundsThenNMore{1}{\d7(\lastx,\lasty,3,3)
    \class[red!88!black](\lastx+3,\lasty+3)
    \structline
}
\class[red!88!black](13,-7)
\DoUntilOutOfBoundsThenNMore{1}{\d7(\lastx,\lasty,4,4)
    \class[red!88!black](\lastx+3,\lasty+3)
    \structline
}
\class[red!88!black](13,-7)
\DoUntilOutOfBoundsThenNMore{3}{
    \class[red!88!black](\lastx+3,\lasty+3)
    \structline
}
\class(11,-11)
\DoUntilOutOfBoundsThenNMore{3}{
    \d7
    \class[red!88!black](\lastx+3,\lasty+3)
    \structline
}
\class[red!88!black](14,-8)
\DoUntilOutOfBoundsThenNMore{2}{\d7(\lastx,\lasty,2,2)
    \class[red!88!black](\lastx+3,\lasty+3)
    \structline
}
\class[red!88!black](14,-8)
\DoUntilOutOfBoundsThenNMore{2}{\d7(\lastx,\lasty,3,3)
    \class[red!88!black](\lastx+3,\lasty+3)
    \structline
}
\class[red!88!black](14,-8)
\DoUntilOutOfBoundsThenNMore{2}{\d7(\lastx,\lasty,4,4)
    \class[red!88!black](\lastx+3,\lasty+3)
    \structline
}
\class[red!88!black](14,-8)
\DoUntilOutOfBoundsThenNMore{3}{\d7
    \class[red!88!black](\lastx+3,\lasty+3)
    \structline
}
\class[red!88!black](14,-8)
\DoUntilOutOfBoundsThenNMore{3}{\d7
    \class[red!88!black](\lastx+3,\lasty+3)
    \structline
}
\class(12,-12)
\DoUntilOutOfBoundsThenNMore{3}{\d7
    \class[red!88!black](\lastx+3,\lasty+3)
    \structline
}
\class(12,-12)
\DoUntilOutOfBoundsThenNMore{3}{
    \class[red!88!black](\lastx+3,\lasty+3)
    \structline
}
\class[red!88!black](15,-9)
\DoUntilOutOfBoundsThenNMore{3}{
    \d7(\lastx,\lasty,3,4)
    \class[red!88!black](\lastx+3,\lasty+3)
    \structline
}
\class[red!88!black](15,-9)
\DoUntilOutOfBoundsThenNMore{3}{
    \d7
    \class[red!88!black](\lastx+3,\lasty+3)
    \structline
}
\class[red!88!black](15,-9)
\DoUntilOutOfBoundsThenNMore{3}{
    \d7
    \class[red!88!black](\lastx+3,\lasty+3)
    \structline
}
\class[red!88!black](15,-9)
\DoUntilOutOfBoundsThenNMore{3}{
    \d7
    \class[red!88!black](\lastx+3,\lasty+3)
    \structline
}
\class(13,-13)
\DoUntilOutOfBoundsThenNMore{3}{
    \d7(\lastx,\lasty,1,5)
    \class[red!88!black](\lastx+3,\lasty+3)
    \structline
}
\class(14,-14)
\DoUntilOutOfBoundsThenNMore{1}{
    \d7(\lastx,\lasty,1,5)
    \class[red!88!black](\lastx+3,\lasty+3)
    \structline
}
\class(14,-14)
\DoUntilOutOfBoundsThenNMore{3}{
    \class[red!88!black](\lastx+3,\lasty+3)
    \structline
}
\class(15,-15)
\DoUntilOutOfBoundsThenNMore{3}{
    \d7(\lastx,\lasty,1,5)
    \class[red!88!black](\lastx+3,\lasty+3)
    \structline
}
\class(48,48)
\class(60,-60)
\structline[green!50!black](0,0)(60,-60)
\end{sseqdata}
\begin{figure}[!htbp]
\centering
\printpage[name = E7bp2]
\caption{The $E_7$ page of the HSSS for $BPGL\langle 2\rangle$.}
\label{e7bp2}
\end{figure}

\begin{sseqdata}[ name = Einftybp2, Adams grading, classes = {fill, show name=below},
grid = go, xrange ={0}{16},yrange={-16}{3},xscale=0.5,yscale=0.5,x tick step =2, y tick step =2, font = \tiny ]
\class[name = 1](0,0)
\class[name=\zeta_1^8](8,-8)
\class[name=\zeta_2^4](12,-12)
\class[name=\zeta_3^2](14,-14)
\class[red!88!black,name=\rho^2x_1\ov_2](4,2)
\class[red!88!black,name=\rho^2x_2\ov_2](6,0)
\class[red!88!black,name=\rho x_1x_2\ov_2](7,-1)
\class[red!88!black,name=x_1^2x_2\ov_2](8,-2)
\class[red!88!black,name=\rho^2x_3\ov_2](10,-4)
\class[red!88!black,name=\rho x_1x_3\ov_2](11,-5)
\class[red!88!black,name=x_1^2x_3\ov_2](12,-6)
\class[red!88!black](12,-6)
\class[red!88!black,name=\rho x_2x_3\ov_2](13,-7)
\class[red!88!black,name=x_1x_2x_3\ov_2](14,-8)
\class[red!88!black](14,-8)
\class[red!88!black,name=\zeta_1^8\rho x_1x_2\ov_2](15,-9)
\class[red!88!black,name=\zeta_1^8x_1^2x_2\ov_2](16,-10)
\class[red!88!black](16,-10)
\class[name=\zeta_4](15,-15)
\class(48,48)
\class(60,-60)
\structline[green!50!black](0,0)(48,48)
\structline[green!50!black](0,0)(60,-60)
\end{sseqdata}
\begin{figure}[!htbp]
\centering
\printpage[name = Einftybp2]
\caption{The $E_\infty$ page of the HSSS for $BPGL\langle 2\rangle$.}
\label{einftybp2}
\end{figure}

\subsection{The homology of $\Gamma(BPGL\langle 3\rangle)$}\label{subsecn=3computation}
We follow arguments similar to the $m=2$ case to compute the HSSS for $BPGL\langle 3\rangle$. The $E_{15}$ page is displayed in Figure \ref{e15bp3}, with conventions as before, where $\ov_3$ classes are represented by blue dots, and blue structure lines indicate multiplication by $\rho^7\ov_3$, which detects $\sigma\in\pi_7 S^0$.

\begin{theorem}\label{thmn=3SS}
There is a spectral sequence of \(\cA_{\ast}\)-comodule algebras with
\(E_{2}\)-term given by
\[E_2^{*,*,*}=\F_2[\zeta_1^2,\zeta_2,\ldots][\rho,x_1,\ov_1,\ov_2,\ov_3]\]
We have differentials 
\begin{align*}
d_3(\zeta_1^2)&=\rho\ov_1
&d_3(\zeta_2)&=x_1\ov_1
&d_7(\zeta_1^4)&=\rho^3\ov_2\\
d_7(\zeta_2^2)&=\rho x_1^2\ov_2
&d_7(\zeta_3)&=(x_1^3+\rho^2x_2)\ov_2
&d_{15}(\zeta_1^8)&=\rho^7\ov_3\\
d_{15}(\zeta_2^4)&=\rho^3 x_1^4\ov_3
&d_{15}(\zeta_3^2)&=(\rho x_1^6+\rho^5x_2^2)\ov_3
&d_{15}(\zeta_4)&=(x_1^7+\rho^2x_1^4x_2+\rho^4x_1x_2^2+\rho^6x_3)\ov_3
\end{align*}
The subalgebra of $E_2$ generated by the classes
\[\rho,x_1,x_2,x_3,x_4,\ov_1,\ov_2,\ov_3,\zeta_1^{16},\zeta_2^{8},\zeta_3^4,\zeta_4^2,\zeta_5,\ldots\]
consists of permanent cycles.
\end{theorem}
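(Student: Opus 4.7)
The theorem is a direct specialization of the general results of Section \ref{secdifferentials} to the case $m=3$, together with an explicit evaluation of the inversion polynomials $p_j$ in the dual Steenrod algebra. The description of the $E_2$-page is immediate from Corollary \ref{corE_2description}, using the identification $\mathcal{A}_*\square_{\mathcal{A}(0)_*}\F_2 \cong \F_2[\zeta_1^2, \zeta_2, \zeta_3, \ldots]$ and adjoining the three slice generators $\ov_1, \ov_2, \ov_3$.

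The three families of differentials arise by specializing Theorem \ref{differentialsthm} to $m=3$ and to the three values $i=1,2,3$. The $d_3$ and $d_7$ families are identical in form to those already obtained for $m=1$ and $m=2$ in Sections \ref{subsecn=1computation} and \ref{subsecn=2computation}, so the only new computational content lies in making the $d_{15}$ family explicit. For this, I would evaluate $p_j$ for $j \leq 3$ using the standard Hopf-algebra inversion formulas
\begin{align*}
\zeta_1 &= \xi_1, &
\zeta_2 &= \xi_2 + \xi_1^3, &
\zeta_3 &= \xi_3 + \xi_1 \xi_2^2 + \xi_1^4 \xi_2 + \xi_1^7,
\end{align*}
substitute $\xi_k \mapsto x_k/\rho$, and multiply by $\ov_3 \rho^7$. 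For $j=1,2,3,4$ this produces precisely the expressions displayed in the theorem; one checks in each case that every net power of $\rho$ appearing after expansion is nonnegative, so that no fractional $\rho$-denominators remain and the result is well-defined in the HSSS.

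The list of permanent cycles is assembled from already-established facts. Corollary \ref{corx_mpermanent} (with the convention $x_0 = \rho$) supplies $\rho, x_1, x_2, x_3, x_4$ as permanent cycles in the HSSS for $BPGL\langle 3\rangle$. The classes $\ov_1, \ov_2, \ov_3$ are permanent cycles because they lift to homotopy classes in $\pi^{\R}_{*,*}BPGL$ under the slice spectral sequence for $BPGL$ itself, and hence remain so after smashing with $i_* H\F_2$. Finally, the polynomial subalgebra $\F_2[\zeta_1^{16}, \zeta_2^8, \zeta_3^4, \zeta_4^2, \zeta_5, \ldots]$ coincides with $\mathcal{A}_*\square_{\mathcal{A}(3)_*}\F_2$, which by Corollary \ref{coredgeA(m)} lies in the image of the edge homomorphism and is therefore contained in the permanent cycles via Proposition \ref{propedgeispermanent}. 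There is no substantive obstacle here; the only calculational effort is the expansion of the inversion polynomials performed above for the $d_{15}$ family.
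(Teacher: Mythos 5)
Your proposal is correct and follows essentially the same route as the paper. Theorem~\ref{thmn=3SS} is not given a separate proof in the text; like the $m=1$ and $m=2$ cases, it is presented as the direct specialization of Corollary~\ref{corE_2description} (for the $E_2$-page), Theorem~\ref{differentialsthm} (for the three families of differentials, expanding $p_{j-1}$ with $p_0=1$, $p_1=\xi_1$, $p_2=\xi_2+\xi_1^3$, $p_3=\xi_3+\xi_1\xi_2^2+\xi_1^4\xi_2+\xi_1^7$, substituting $\xi_k\mapsto x_k/\rho$, raising to the power $2^{i+1-j}$, and clearing denominators with $\rho^{2^i-1}$), and Corollaries~\ref{corx_mpermanent} and~\ref{coredgeA(m)} together with Proposition~\ref{propedgeispermanent} (for the permanent cycles). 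Your arithmetic for the $d_{15}$ family checks out and agrees with the displayed formulas, and your reasons for permanence of the $x_n$'s, the $\ov_i$'s, and $\mathcal A_*\square_{\mathcal A(3)_*}\F_2$ are exactly the ones the paper relies on.
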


\begin{sseqdata}[ name = E15bp3, Adams grading, classes = {fill, show name=below},
grid = go, xrange ={0}{19},yrange={-19}{19},xscale=0.5,yscale=0.45,x tick step =2, y tick step =2,run off differentials = {->},struct lines = blue!85!black ]
\class[name = 1](0,0)
\class[red!88!black](4,2)
\class[red!88!black](6,0)
\class[red!88!black](7,-1)
\class[red!88!black](8,-2)
\class[red!88!black](10,-4)
\class[red!88!black](11,-5)
\class[red!88!black](12,-6)
\class[red!88!black](12,-6)
\class[red!88!black](13,-7)
\class[red!88!black](14,-8)
\class[red!88!black](14,-8)
\class[red!88!black](15,-9)
\class[red!88!black](16,-10)
\class[red!88!black](16,-10)
\class[red!88!black](18,-12)
\class[red!88!black](18,-12)
\class[red!88!black](18,-12)
\class[red!88!black](19,-13)
\class[red!88!black](19,-13)
\class[red!88!black](19,-13)
\class[blue!85!black,name = \rho^7\ov_3](7,7)
\DoUntilOutOfBoundsThenNMore{3}{
    \class[blue!85!black](\lastx+7,\lasty+7)
    \structline
}
\structline(0,0)(7,7)
\class[blue!85!black](8,6)
\DoUntilOutOfBoundsThenNMore{3}{
    \class[blue!85!black](\lastx+7,\lasty+7)
    \structline
}
\class[blue!85!black](9,5)
\DoUntilOutOfBoundsThenNMore{3}{
    \class[blue!85!black](\lastx+7,\lasty+7)
    \structline
}
\class[blue!85!black](10,4)
\DoUntilOutOfBoundsThenNMore{3}{
    \class[blue!85!black](\lastx+7,\lasty+7)
    \structline
}
\class[blue!85!black](10,4)
\DoUntilOutOfBoundsThenNMore{3}{
    \class[blue!85!black](\lastx+7,\lasty+7)
    \structline
}
\class[blue!85!black](11,3)
\DoUntilOutOfBoundsThenNMore{3}{
    \class[blue!85!black](\lastx+7,\lasty+7)
    \structline
}
\class[blue!85!black,name=\rho^3x_1^4\ov_3](11,3)
\DoUntilOutOfBoundsThenNMore{3}{
    \class[blue!85!black](\lastx+7,\lasty+7)
    \structline
}
\class[blue!85!black](12,2)
\DoUntilOutOfBoundsThenNMore{3}{
    \class[blue!85!black](\lastx+7,\lasty+7)
    \structline
}
\class[blue!85!black](12,2)
\DoUntilOutOfBoundsThenNMore{3}{
    \class[blue!85!black](\lastx+7,\lasty+7)
    \structline
}
\class[blue!85!black,name=\rho x_1^6\ov_3](13,1)
\DoUntilOutOfBoundsThenNMore{3}{
    \class[blue!85!black](\lastx+7,\lasty+7)
    \structline
}
\class[blue!85!black](13,1)
\DoUntilOutOfBoundsThenNMore{3}{
    \class[blue!85!black](\lastx+7,\lasty+7)
    \structline
}
\class[blue!85!black](13,1)
\DoUntilOutOfBoundsThenNMore{3}{
    \class[blue!85!black](\lastx+7,\lasty+7)
    \structline
}
\class[blue!85!black,name=x_1^7\ov_3](14,0)
\DoUntilOutOfBoundsThenNMore{3}{
    \class[blue!85!black](\lastx+7,\lasty+7)
    \structline
}
\class[blue!85!black](14,0)
\DoUntilOutOfBoundsThenNMore{3}{
    \class[blue!85!black](\lastx+7,\lasty+7)
    \structline
}
\class[blue!85!black](14,0)
\DoUntilOutOfBoundsThenNMore{3}{
    \class[blue!85!black](\lastx+7,\lasty+7)
    \structline
}
\class[blue!85!black](14,0)
\DoUntilOutOfBoundsThenNMore{3}{
    \class[blue!85!black](\lastx+7,\lasty+7)
    \structline
}
\class[name=\zeta_1^8](8,-8)
\DoUntilOutOfBoundsThenNMore{3}{
\d15
    \class[blue!85!black](\lastx+7,\lasty+7)
    \structline
}
\class[blue!85!black](15,-1)
\DoUntilOutOfBoundsThenNMore{3}{
    \class[blue!85!black](\lastx+7,\lasty+7)
    \structline
}
\class[blue!85!black](15,-1)
\DoUntilOutOfBoundsThenNMore{3}{
    \class[blue!85!black](\lastx+7,\lasty+7)
    \structline
}
\class[blue!85!black](15,-1)
\DoUntilOutOfBoundsThenNMore{3}{
    \class[blue!85!black](\lastx+7,\lasty+7)
    \structline
}
\class[blue!85!black](15,-1)
\DoUntilOutOfBoundsThenNMore{3}{
    \class[blue!85!black](\lastx+7,\lasty+7)
    \structline
}
\class[blue!85!black](16,-2)
\DoUntilOutOfBoundsThenNMore{3}{\d15
    \class[blue!85!black](\lastx+7,\lasty+7)
    \structline
}
\class[blue!85!black](16,-2)
\DoUntilOutOfBoundsThenNMore{3}{
    \class[blue!85!black](\lastx+7,\lasty+7)
    \structline
}
\class[blue!85!black](16,-2)
\DoUntilOutOfBoundsThenNMore{3}{
    \class[blue!85!black](\lastx+7,\lasty+7)
    \structline
}
\class[blue!85!black](16,-2)
\DoUntilOutOfBoundsThenNMore{3}{
    \class[blue!85!black](\lastx+7,\lasty+7)
    \structline
}
\class[blue!85!black](16,-2)
\DoUntilOutOfBoundsThenNMore{3}{
    \class[blue!85!black](\lastx+7,\lasty+7)
    \structline
}
\class[blue!85!black](23,5)
\DoUntilOutOfBoundsThenNMore{3}{
    \class[blue!85!black](\lastx+7,\lasty+7)
    \structline
}
\class[blue!85!black](17,-3)
\DoUntilOutOfBoundsThenNMore{3}{
    \class[blue!85!black](\lastx+7,\lasty+7)
    \structline
}
\class[blue!85!black](17,-3)
\DoUntilOutOfBoundsThenNMore{3}{\d15
    \class[blue!85!black](\lastx+7,\lasty+7)
    \structline
}
\class[blue!85!black](17,-3)
\DoUntilOutOfBoundsThenNMore{3}{
    \class[blue!85!black](\lastx+7,\lasty+7)
    \structline
}
\class[blue!85!black](17,-3)
\DoUntilOutOfBoundsThenNMore{3}{
    \class[blue!85!black](\lastx+7,\lasty+7)
    \structline
}
\class[blue!85!black](17,-3)
\DoUntilOutOfBoundsThenNMore{3}{
    \class[blue!85!black](\lastx+7,\lasty+7)
    \structline
}
\class[blue!85!black](24,4)
\DoUntilOutOfBoundsThenNMore{3}{
    \class[blue!85!black](\lastx+7,\lasty+7)
    \structline
}
\class[blue!85!black](18,-4)
\DoUntilOutOfBoundsThenNMore{3}{\d15
    \class[blue!85!black](\lastx+7,\lasty+7)
    \structline
}
\class[blue!85!black](18,-4)
\DoUntilOutOfBoundsThenNMore{3}{
    \class[blue!85!black](\lastx+7,\lasty+7)
    \structline
}
\class[blue!85!black](18,-4)
\DoUntilOutOfBoundsThenNMore{3}{
    \class[blue!85!black](\lastx+7,\lasty+7)
    \structline
}
\class[blue!85!black](18,-4)
\DoUntilOutOfBoundsThenNMore{3}{\d15(\lastx,\lasty,4,2)
    \class[blue!85!black](\lastx+7,\lasty+7)
    \structline
}
\class[blue!85!black](18,-4)
\DoUntilOutOfBoundsThenNMore{3}{
    \class[blue!85!black](\lastx+7,\lasty+7)
    \structline
}
\class[blue!85!black](18,-4)
\DoUntilOutOfBoundsThenNMore{3}{
    \class[blue!85!black](\lastx+7,\lasty+7)
    \structline
}
\class[name=\zeta_2^4](12,-12)
\DoUntilOutOfBoundsThenNMore{3}{
\d15
    \class[blue!85!black](\lastx+7,\lasty+7)
    \structline
}
\class[blue!85!black](19,-5)
\DoUntilOutOfBoundsThenNMore{3}{
    \class[blue!85!black](\lastx+7,\lasty+7)
    \structline
}
\class[blue!85!black](19,-5)
\DoUntilOutOfBoundsThenNMore{3}{
    \class[blue!85!black](\lastx+7,\lasty+7)
    \structline
}
\class[blue!85!black](19,-5)
\DoUntilOutOfBoundsThenNMore{3}{\d15(\lastx,\lasty,4,2)
    \class[blue!85!black](\lastx+7,\lasty+7)
    \structline
}
\class[blue!85!black](19,-5)
\DoUntilOutOfBoundsThenNMore{3}{
    \class[blue!85!black](\lastx+7,\lasty+7)
    \structline
}
\class[blue!85!black](19,-5)
\DoUntilOutOfBoundsThenNMore{3}{
    \class[blue!85!black](\lastx+7,\lasty+7)
    \structline
}
\class[name=\zeta_3^2](14,-14)
\DoUntilOutOfBoundsThenNMore{3}{
\d15
    \class[blue!85!black](\lastx+7,\lasty+7)
    \structline
}
\class[name=\zeta_4](15,-15)
\DoUntilOutOfBoundsThenNMore{3}{
\d15
    \class[blue!85!black](\lastx+7,\lasty+7)
    \structline
}
\class[name=\zeta_1^{16}](16,-16)
\DoUntilOutOfBoundsThenNMore{3}{
\d15
    \class[blue!85!black](\lastx+7,\lasty+7)
    \structline
}
\class(20,-20)
\DoUntilOutOfBoundsThenNMore{3}{
\d15
    \class[blue!85!black](\lastx+7,\lasty+7)
    \structline
}
\class[blue!85!black](20,-6)
\DoUntilOutOfBoundsThenNMore{3}{\d15
    \class[blue!85!black](\lastx+7,\lasty+7)
    \structline
}
\class[blue!85!black](20,-6)
\DoUntilOutOfBoundsThenNMore{3}{\d15(\lastx,\lasty,2,2)
    \class[blue!85!black](\lastx+7,\lasty+7)
    \structline
}
\class(48,48)
\class(60,-60)
\structline[green!50!black](0,0)(60,-60)
\end{sseqdata}
\begin{figure}
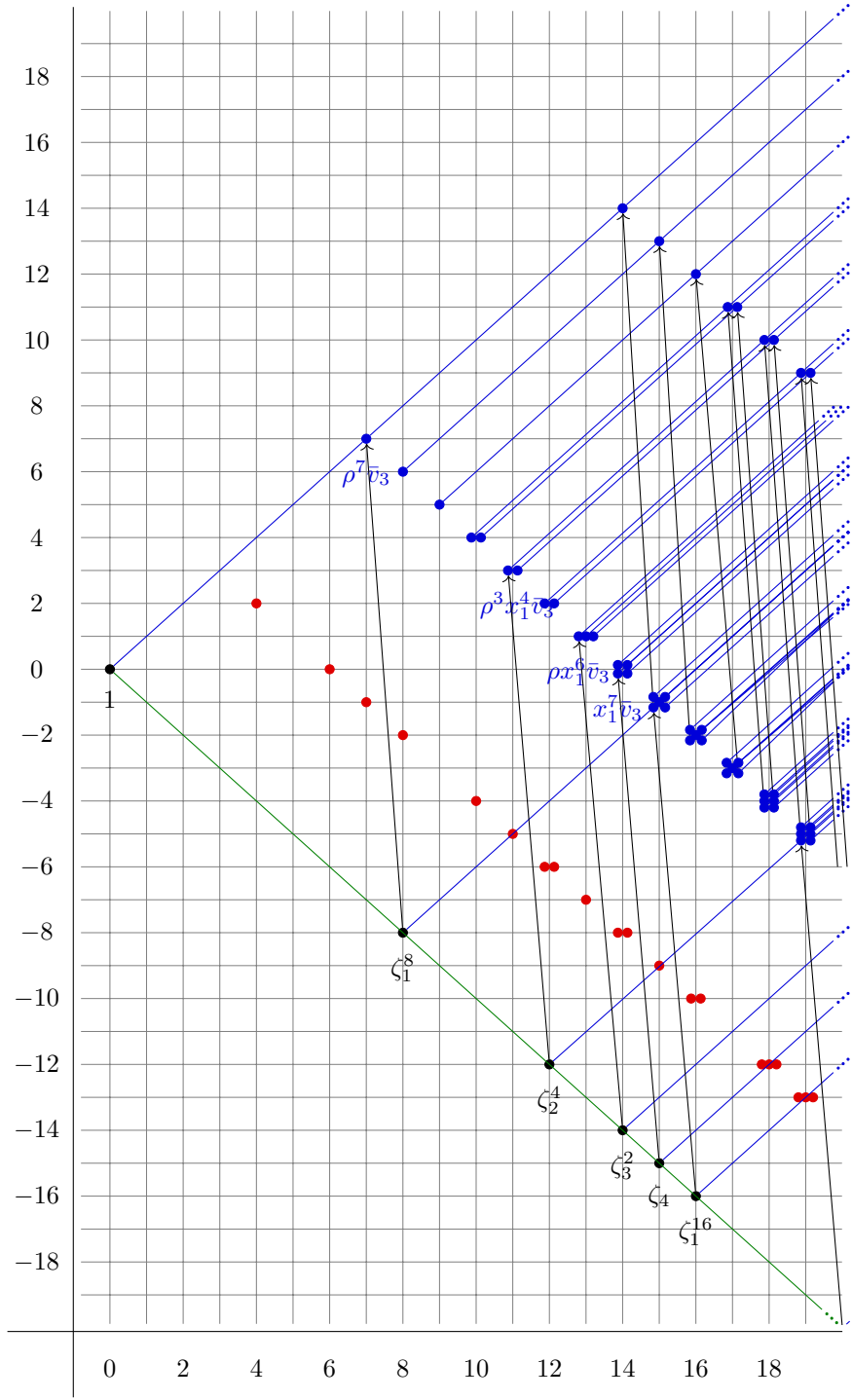

\centering
\printpage[name = E15bp3]
\caption{The $E_{15}$ page of the HSSS for $BPGL\langle 3\rangle$.}
\label{e15bp3}
\end{figure}

The formulas for $d_{\le 7}$ imply that, as a dga, one has an isomorphism
\[E_{15}^{*,*,*}(BPGL\langle 3\rangle;i_*H\F_2)\cong E_8^{*,*,*}(BPGL\langle 2\rangle;i_*H\F_2)[\ov_3]\]
The ideal $(\overline{v}_{1},\overline{v}_{2})\subset E_{15}$ is contained in $\ker(d_{15})$, and we have maps of dga's
\[E_{15}\to E_{15}/(\overline{v}_{1},\overline{v}_{2}^2)\to E_{15}/(\overline{v}_{1},\overline{v}_{2})\]
There is nothing in weight 0 contributed by $\overline{v}_{1}$ because of the $d_3$'s, and there is nothing in weight 0 contributed by $\overline{v}_{2}^2$ because of the $d_7$'s. In fact, the first map is an isomorphism in non-positive weights, and the second map is surjective with square zero kernel consisting of $d_{15}$-cycles (for degree reasons). Using this, it is not difficult to recover the $E_{16}$ page from $H_*(E_{15}/(\overline{v}_{1},\overline{v}_{2}))$, and we have an isomorphism
\[E_{15}/(\overline{v}_{1},\overline{v}_{2})\cong \frac{\F_2[\zeta_1^8,\zeta_2^4,\zeta_3^2,\zeta_4,\ldots][\rho,x_1,x_2,x_3,\overline{v}_{3}]}{(x_2^4+\zeta_1^8x_1^4+\zeta_2^4\rho^4,x_3^2+\zeta_1^8x_2^2+\zeta_2^4x_1^2+\zeta_3^2\rho^2)}\]
with $d_{15}$ determined by the four explicit differentials given above. 

We claim that $\ker(d_{15})$ is the subalgebra of $E_{15}/(\ov_1,\ov_2)$ generated by the classes
\[\zeta_1^{16},\zeta_2^8,\zeta_3^4,\zeta_4^2,\zeta_5,\ldots,\rho,x_1,x_2,x_3,x_4,\ov_3\]
It follows that we have the presentation
\[\ker(d_{15})=\frac{\F_2[\zeta_1^{16},\zeta_2^8,\zeta_3^4,\zeta_4^2,\zeta_5,\ldots][\rho,x_1,x_2,x_3,x_4,\ov_3]}{(x_2^8+\rho^8\zeta_2^8+x_1^8\zeta_1^{16},x_3^4+\rho^4\zeta_3^4+x_1^4\zeta_2^8+x_2^4\zeta_1^{16},x_4^2+\rho^2\zeta_4^2+x_1^2\zeta_3^4+x_2^2\zeta_2^8+x_3^2\zeta_1^{16})}\]

Writing our differentials in terms of this description of the kernel, we have
{
\allowdisplaybreaks
\aln{
d_{15}(\zeta_1^8)&=\rho^7\ov_3\\
d_{15}(\zeta_2^4)&=\rho^3 x_1^4\ov_3\\
d_{15}(\zeta_3^2)&=(\rho x_1^6+\rho^5 x_2^2)\ov_3\\
d_{15}(\zeta_4)&=(x_1^7+\rho^2x_1^4x_2+\rho^4x_1x_2^2+\rho^6x_3)\ov_3\\
d_{15}(\zeta_1^8\zeta_2^4)&=\rho^3x_2^4\ov_3\\
d_{15}(\zeta_1^8\zeta_3^2)&=(\rho^5x_3^2+\rho x_1^2x_2^4)\ov_3\\
d_{15}(\zeta_1^8\zeta_4)&=(x_1^3x_2^4+\rho^2x_2^5+\rho^4x_1x_3^2+\rho^6x_4)\ov_3\\
d_{15}(\zeta_2^4\zeta_3^3)&=(\rho x_1^4x_3^2+\rho x_2^6)\ov_3\\
d_{15}(\zeta_2^4\zeta_4)&=(x_1^5x_3^2+x_1x_2^6+\rho^2x_1^4x_4+\rho^2x_2^4x_3)\ov_3\\
d_{15}(\zeta_3^2\zeta_4)&=(x_2^7+x_1^6x_4+x_1^4x_2x_3^2+x_1^2x_2^4x_3+\rho^4x_2^2x_4+\rho^4x_3^3)\ov_3\\
d_{15}(\zeta_1^8\zeta_2^4\zeta_3^2)&=(\rho x_2^4x_3^2+\rho x_1^4x_2^2\zeta_1^{16}+\rho^5x_1^2\zeta_2^8)\ov_3\\
d_{15}(\zeta_1^8\zeta_2^4\zeta_4)&=(x_1x_2^4x_3^2+x_1^5x_2^2\zeta_1^{16}+\rho^2x_2^4x_4+\rho^2x_1^4x_3\zeta_1^{16}+\rho^4x_1^3\zeta_2^8+\rho^6x_2\zeta_2^8)\ov_3\\
d_{15}(\zeta_1^8\zeta_3^2\zeta_4)&=(x_1^2x_2^4x_4+x_2^5x_3^2+x_1^4x_2^3\zeta_1^{16}+\rho^4x_3^3x_4+\rho^4x_2^2x_3\zeta_1^{16}+\rho^4x_1^2x_2\zeta_2^8+\rho^6x_1\zeta_3^4)\ov_3\\
d_{15}(\zeta_2^4\zeta_3^2\zeta_4)&=(x_1^4x_3^2x_4+x_2^6x_4+x_2^4x_3^3+x_1^4x_2^2x_3\zeta_1^{16}+(x_1^6x_2+\rho^4x_1^2x_3+\rho^4x_2^3)\zeta_2^8+\rho^2x_1^5\zeta_3^4)\ov_3\\
d_{15}(\zeta_1^8\zeta_2^4\zeta_3^2\zeta_4)&=(x_2^4x_3^2x_4+x_1^2x_2^5\zeta_2^8+x_1^4x_2^2x_4\zeta_1^{16}+x_1^4x_3^3\zeta_1^{16}+x_2^6x_3\zeta_1^{16}+\rho^2x_1x_2^4\zeta_3^4\\
&+\rho^4x_1^2x_4\zeta_2^8+\rho^4x_2x_3^2\zeta_2^8)\ov_3
}
}
Letting $I_3$ be the ideal in $\ker(d_{15})$ generated by this list of relations, it follows that $H_*(E_{15}/(\ov_1,\ov_2))=\ker(d_{15})/I_3$. It remains to verify that the claimed list of elements indeed generate $\ker(d_{15})$. We achieve this by running the $\rho$-Bockstein spectral sequence as before, and our arguments follow closely those of the $m=2$ case.

\begin{proposition}\label{propn=3list}
    The $E_0$-page of the $\rho$-Bockstein spectral sequence is given by
    \[\frac{\F_2[\zeta_1^8,\zeta_2^4,\zeta_3^2,\zeta_4,\ldots][\rho,x_1,x_2,x_3,\ov_3]}{(x_2^4+\zeta_1^8x_1^4,x_3^2+\zeta_1^8x_2^2+\zeta_2^4x_1^2)}\]
We have differentials
\begin{align*}
\delta_0(\zeta_4)&=x_1^7\ov_3
&\delta_1(\zeta_3^2)&=\rho x_1^6\ov_3
&\delta_3(\zeta_2^4)&=\rho^3 x_1^4\ov_3
&\delta_7(\zeta_1^8)&=\rho^7\ov_3
\end{align*}
\end{proposition}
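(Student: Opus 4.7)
The plan has two parts: first identify the $E_0$-page as the associated graded of the $\rho$-adic filtration on $E_{15}/(\ov_1,\ov_2)$, and then extract each $\delta_r$ from the leading $\rho$-order summand of the corresponding $d_{15}$-formula in Theorem \ref{thmn=3SS}.

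For the $E_0$-page, start from the presentation
\[E_{15}/(\ov_1,\ov_2) \cong \frac{\F_2[\zeta_1^8,\zeta_2^4,\zeta_3^2,\zeta_4,\ldots][\rho,x_1,x_2,x_3,\ov_3]}{(x_2^4+\zeta_1^8x_1^4+\zeta_2^4\rho^4,\; x_3^2+\zeta_1^8x_2^2+\zeta_2^4x_1^2+\zeta_3^2\rho^2)}\]
established in the paragraphs preceding the proposition. Assuming $\rho$-torsion freeness of $E_{15}/(\ov_1,\ov_2)$---inherited from Proposition \ref{propE_2splitting} via the fact that $\rho$ is a permanent cycle and that $\ov_1,\ov_2$ are $\rho$-torsion-free cycles---the associated graded of the $\rho$-adic filtration replaces each relation by its summand of lowest $\rho$-degree, since the $\rho^4\zeta_2^4$ and $\rho^2\zeta_3^2$ summands lie in strictly higher $\rho$-filtration. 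This produces precisely the displayed $E_0$ presentation.

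For the differentials, use that the $\rho$-Bockstein differential $\delta_r$ sends $x$ to $\rho^{-r}d_{15}(x)$ modulo $F^{r+1}$, applied when $d_{15}(x)\in \rho^r\cdot A$. Each of the four $d_{15}$-formulas in Theorem \ref{thmn=3SS} has a unique leading $\rho$-order summand: from
\begin{align*}
d_{15}(\zeta_1^8) &= \rho^7\ov_3, \\
d_{15}(\zeta_2^4) &= \rho^3 x_1^4\ov_3, \\
d_{15}(\zeta_3^2) &= (\rho x_1^6+\rho^5 x_2^2)\ov_3, \\
d_{15}(\zeta_4) &= (x_1^7+\rho^2 x_1^4 x_2+\rho^4 x_1 x_2^2+\rho^6 x_3)\ov_3,
\end{align*}
one reads off minimal $\rho$-orders $7,3,1,0$ respectively, giving $\delta_7(\zeta_1^8)=\rho^7\ov_3$, $\delta_3(\zeta_2^4)=\rho^3x_1^4\ov_3$, $\delta_1(\zeta_3^2)=\rho x_1^6\ov_3$, and $\delta_0(\zeta_4)=x_1^7\ov_3$ as stated. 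The higher-$\rho$-order summands are invisible at these pages and govern subsequent Bockstein differentials.

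The main obstacle will be the $\rho$-torsion freeness required for the $E_0$ identification to be clean; without it, hidden relations could appear upon passing to the associated graded. The plan is to handle this by tracking $\rho$-torsion through the slice spectral sequence, using that the whole spectral sequence is a module over $\F_2[\rho]$ and that $\rho$ neither supports nor receives a differential.
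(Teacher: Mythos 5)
Your plan is essentially the argument the paper leaves implicit: take the $\rho$-adic associated graded of $E_{15}/(\ov_1,\ov_2)$ to get $E_0$, and read off each $\delta_r$ from the lowest-$\rho$-order summand of the corresponding $d_{15}$. That reasoning is right, and the four differentials you extract agree with the statement.

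The one place the argument wobbles is the justification of $\rho$-torsion-freeness. Appealing to Proposition \ref{propE_2splitting} ``via the fact that $\rho$ is a permanent cycle and $\ov_1,\ov_2$ are $\rho$-torsion-free cycles'' does not go through: torsion-freeness of $E_2$ is not inherited by subquotient pages (indeed $E_{15}$ itself has $\rho$-torsion, e.g.\ $\rho\ov_1=0$), and quotienting by $(\ov_1,\ov_2)$ could in principle create new torsion. Your fallback plan of ``tracking $\rho$-torsion through the slice spectral sequence'' is also more than is needed. The clean fix is direct: the explicit presentation
\[
E_{15}/(\ov_1,\ov_2)\cong \frac{\F_2[\zeta_1^8,\zeta_2^4,\zeta_3^2,\zeta_4,\ldots][\rho,x_1,x_2,x_3,\ov_3]}{(x_2^4+\zeta_1^8x_1^4+\zeta_2^4\rho^4,\;x_3^2+\zeta_1^8x_2^2+\zeta_2^4x_1^2+\zeta_3^2\rho^2)}
\]
shows this ring is a free module over $\F_2[\zeta_1^8,\zeta_2^4,\zeta_3^2,\zeta_4,\ldots][\rho,x_1,\ov_3]$ with basis $\{x_2^ax_3^b : 0\le a<4,\,0\le b<2\}$, since the two relations just rewrite $x_2^4$ and $x_3^2$. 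Free over $\F_2[\rho]$ implies $\rho$-torsion-free, and then $\mathrm{gr}_\rho$ simply truncates each relation to its $\rho$-degree-zero part, giving the displayed $E_0$ presentation. With that substitution your proof is complete.
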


The annihilator ideal of $\delta_0(\zeta_4)$ in $E_0$ is trivial, so $\delta_0$ does not create any Massey product cycles. We conclude the following.

\begin{proposition}
    The $E_1$-page of the $\rho$-Bockstein spectral sequence is given by
    \[\frac{\F_2[\zeta_1^8,\zeta_2^4,\zeta_3^2,\zeta_4^2,\ldots][\rho,x_1,x_2,x_3,\ov_3]}{(x_2^4+\zeta_1^8x_1^4,x_3^2+\zeta_1^8x_2^2+\zeta_2^4x_1^2,x_1^7\overline{v}_{3})}\]
\end{proposition}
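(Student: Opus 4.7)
The plan is to compute the $\delta_0$-homology of the algebra $E_0$ presented in Proposition~\ref{propn=3list}. Since $\delta_0$ is a graded derivation that vanishes on every named polynomial generator except $\zeta_4$, we have the splitting $E_0\cong R[\zeta_4]$, where
\[R=\frac{\F_2[\zeta_1^8,\zeta_2^4,\zeta_3^2,\zeta_5,\zeta_6,\ldots][\rho,x_1,x_2,x_3,\ov_3]}{(x_2^4+\zeta_1^8x_1^4,\ x_3^2+\zeta_1^8x_2^2+\zeta_2^4x_1^2)}\]
is the subalgebra generated by everything but $\zeta_4$ (both relations make sense in $R$ since neither involves $\zeta_4$). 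Using the derivation property in characteristic $2$, the differential acts on $R[\zeta_4]=\bigoplus_{k\ge 0}R\cdot\zeta_4^k$ by
\[\delta_0(r\zeta_4^k)=k\,r\,x_1^7\ov_3\,\zeta_4^{k-1},\]
which vanishes on even powers of $\zeta_4$ and equals multiplication by $x_1^7\ov_3$ on odd powers. Thus computing $H_\ast(E_0,\delta_0)$ reduces to understanding the multiplication map $x_1^7\ov_3\colon R\to R$.

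The main technical step is to show that $x_1^7\ov_3$ is a non-zero-divisor in $R$. Since $\ov_3$ appears in neither defining relation of $R$, we have $R\cong R_0[\ov_3]$, where $R_0$ is obtained from $R$ by omitting $\ov_3$; hence it suffices to prove $x_1^7$ is a non-zero-divisor in $R_0$. The defining relations for $R_0$ are monic of degree $4$ in $x_2$ and of degree $2$ in $x_3$; consequently $R_0$ is free as a module over the polynomial ring $S=\F_2[\zeta_1^8,\zeta_2^4,\zeta_3^2,\zeta_5,\zeta_6,\ldots][\rho,x_1]$ with basis $\{x_2^ix_3^j:0\le i<4,\,0\le j<2\}$. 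Since $x_1^7$ is a non-zero-divisor in the polynomial ring $S$, it acts injectively on any free $S$-module, and hence on $R_0$.

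With the non-zero-divisor property in hand, the $\delta_0$-kernel on $R\zeta_4^{2k+1}$ is trivial, while on $R\zeta_4^{2k}$ it is all of $R$, with image $(x_1^7\ov_3)\zeta_4^{2k}$ coming in from $R\zeta_4^{2k+1}$. Assembling,
\[H_\ast(E_0,\delta_0)\cong\big(R/(x_1^7\ov_3)\big)[\zeta_4^2],\]
which is exactly the presentation in the proposition. The main obstacle is the non-zero-divisor verification; the rest is formal from the derivation structure of $\delta_0$ and the fact that the coefficient of the Leibniz expansion vanishes in characteristic $2$ for even exponents.
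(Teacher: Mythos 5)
Your proof is correct and takes essentially the same approach as the paper: the paper's entire argument is the one-line assertion that the annihilator of $\delta_0(\zeta_4)=x_1^7\ov_3$ in $E_0$ is trivial, hence $\delta_0$ creates no new Massey product cycles, and your write-up is exactly a careful justification of that assertion (via $E_0\cong R[\zeta_4]$, $R\cong R_0[\ov_3]$, and $R_0$ free over $S=\F_2[\zeta_1^8,\zeta_2^4,\zeta_3^2,\zeta_5,\ldots][\rho,x_1]$ with the regular element $x_1^7\in S$) together with the routine derivation computation of the resulting homology. The only difference is expository: you make explicit the module-theoretic reason the annihilator vanishes, which the paper leaves to the reader.
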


The annihilator ideal of $\delta_1(\zeta_3^2)$ in $E_1$ is $(x_1)$, which gives our Massey product $x_4$.

\begin{proposition}
The $E_3$-page of the $\rho$-Bockstein spectral sequence is given by the quotient of the polynomial ring
    \[\F_2[\zeta_1^8,\zeta_2^4,\zeta_3^4,\zeta_4^2,\zeta_5,\ldots][\rho,x_1,x_2,x_3,x_4,\ov_3]\]
    by the ideal generated by the elements
    \begin{align*}
        &x_2^4+\zeta_1^8x_1^4,
        &&x_3^2+\zeta_1^8x_2^2+\zeta_2^4x_1^2,\\
        &x_4^2+x_1^2\zeta_3^4+x_2^2\zeta_2^8+x_3^2\zeta_1^{16},
        &&x_1^7\ov_3,\\
        &(x_2^7+x_1^6x_4+x_1^4x_2x_3^2+x_1^2x_2^4x_3)\ov_3,
        &&\rho x_1^6\ov_3.
    \end{align*}
\end{proposition}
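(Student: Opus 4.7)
The plan is to follow the pattern of the analogous $E_3$-page computation in the $m=2$ $\rho$-Bockstein spectral sequence. Since $\delta_1$ is an $\F_2$-derivation on $E_1$ whose only nontrivial value on the polynomial generators is $\delta_1(\zeta_3^2) = \rho x_1^6 \ov_3$, and $E_1$ is free as a module over the subalgebra $A$ generated by the other generators, the kernel is generated as an $\F_2$-algebra by the cycle generators
\[\rho,\ x_1,\ x_2,\ x_3,\ \ov_3,\ \zeta_1^8,\ \zeta_2^4,\ \zeta_3^4,\ \zeta_4^2,\ \zeta_5,\ \ldots\]
together with Massey product classes from $\zeta_3^2 \cdot \mathrm{ann}_{E_1}(\rho x_1^6 \ov_3)$.

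I first compute the annihilator by the lifting trick of the $m=2$ case. Let
\[R := \frac{\F_2[\zeta_1^8, \zeta_2^4, \zeta_3^2, \zeta_4^2, \zeta_5, \ldots][\rho, x_1, x_2, x_3, \ov_3]}{(x_2^4 + \zeta_1^8 x_1^4,\ x_3^2 + \zeta_1^8 x_2^2 + \zeta_2^4 x_1^2)},\]
so that $E_1 = R/(x_1^7 \ov_3)$. If $\pi(r) \cdot \rho x_1^6 \ov_3 = 0$ in $E_1$, then $r \rho x_1^6 \ov_3 \in (x_1^7 \ov_3) \subseteq (x_1)$ in $R$. Since $R$ is of the form $T[\rho, \ov_3]$ for an $x_1$-torsion free ring $T$, this forces $r \in (x_1)$, so $\mathrm{ann}_{E_1}(\rho x_1^6 \ov_3) = (x_1)$. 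The resulting Massey product generator is
\[x_4 := x_1 \zeta_3^2 + x_2 \zeta_2^4 + x_3 \zeta_1^8,\]
for which $\delta_1(x_4) = \rho x_1^7 \ov_3 = 0$ in $E_1$. Squaring in characteristic two yields $x_4^2 = x_1^2 \zeta_3^4 + x_2^2 \zeta_2^8 + x_3^2 \zeta_1^{16}$, which accounts for the stated relation. The relation $(x_2^7 + x_1^6 x_4 + x_1^4 x_2 x_3^2 + x_1^2 x_2^4 x_3)\ov_3 = 0$ is verified by substituting the expression for $x_4$ and reducing via $x_1^7 \ov_3 = 0$, $x_2^4 = \zeta_1^8 x_1^4$, and $x_3^2 = \zeta_1^8 x_2^2 + \zeta_2^4 x_1^2$.

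Let $S$ denote the quotient ring from the statement with the relation $\rho x_1^6 \ov_3$ omitted; the preceding paragraph produces a surjection $S \to \ker(\delta_1)$. The main obstacle is proving injectivity. I adapt the extension trick from the $m=2$ case by forming
\[\tilde{S} := S[y]\big/\bigl(y^2 + \zeta_3^4,\ x_1 y + x_4 + x_2 \zeta_2^4 + x_3 \zeta_1^8\bigr),\]
showing $S \hookrightarrow \tilde{S}$ is injective (compatibility of the two defining relations is ensured by the $S$-relation $x_4^2 + x_1^2 \zeta_3^4 + x_2^2 \zeta_2^8 + x_3^2 \zeta_1^{16} = 0$), and identifying $\tilde{S} \cong E_1$ by setting $y = \zeta_3^2$. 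Under this identification, the second defining relation recovers the definition of $x_4$ in $E_1$, the first is automatic, and the $S$-relations reduce to consequences of those of $E_1$ as verified above. The composite $S \hookrightarrow \tilde{S} \cong E_1$ factors through $\ker(\delta_1)$, yielding $S \cong \ker(\delta_1)$. Finally, quotienting by $\mathrm{im}(\delta_1) = (\rho x_1^6 \ov_3)$ gives the claimed presentation of $E_3$.
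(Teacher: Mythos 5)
Your proof is correct and follows essentially the same route as the paper: compute $\mathrm{ann}_{E_1}(\rho x_1^6\ov_3)=(x_1)$ by lifting to the $x_1$-torsion-free polynomial cover, identify the Massey-product generator $x_4=x_1\zeta_3^2+x_2\zeta_2^4+x_3\zeta_1^8$, deduce surjectivity of $S\to\ker(\delta_1)$, and obtain injectivity via the extension $S\hookrightarrow S[y]/(y^2+\zeta_3^4,\,x_1y+x_2\zeta_2^4+x_3\zeta_1^8+x_4)\cong E_1$. You spell out a few details (the direct verification that $\delta_1(x_4)=0$ and that $(x_2^7+x_1^6x_4+x_1^4x_2x_3^2+x_1^2x_2^4x_3)\ov_3$ maps to zero) that the paper leaves implicit, but the argument and its structure are the same.
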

\begin{proof}
    The description of $\mathrm{ann}_{E_1}(\rho x_1^6\overline{v}_{3})$ implies that the map
    \[S:=\F_2[\zeta_1^8,\zeta_2^4,\zeta_3^4,\zeta_4^2,\zeta_5,\ldots][\rho,x_1,x_2,x_3,x_4,\ov_3]/J\to \ker(\delta_1)\]
    is surjective, where $J$ is the ideal generated by the list of elements in the statement of the proposition, with $\rho x_1^6\ov_3$ removed. To see that it is injective, note that the map
    \[S\to S[y]/(y^2+\zeta_3^4,x_1y+x_2\zeta_2^4+x_3\zeta_1^8+x_4)\]
    is injective, and the latter is easily identified with $E_1$ by setting $y=\zeta_3^2$.
\end{proof}

\begin{proposition}
The $E_7$-page of the $\rho$-Bockstein spectral sequence is given by the quotient of the polynomial ring
    \[\F_2[\zeta_1^8,\zeta_2^8,\zeta_3^4,\zeta_4^2,\zeta_5,\ldots][\rho,x_1,x_2,x_3,x_4,\ov_3]\]
    by the ideal generated by the elements
    \begin{align*}
        &x_2^4+\zeta_1^8x_1^4,
        &&x_3^4+\zeta_1^{16}x_2^4+\zeta_2^8x_1^4,\\
        &x_4^2+x_1^2\zeta_3^4+x_2^2\zeta_2^8+x_3^2\zeta_1^{16},
        &&x_1^7\ov_3,\\
        &(x_2^7+x_1^6x_4+x_1^4x_2x_3^2+x_1^2x_2^4x_3)\ov_3,
        &&\rho x_1^6\ov_3,\\
        &\rho^3 x_1^4\ov_3,
&&(x_1^5x_3^2+x_1x_2^6)\ov_3,\\
&(\rho x_1^4x_3^2+\rho x_2^6)\ov_3,
&&(x_1^4x_3^2x_4+x_2^6x_4+x_2^4x_3^3+x_1^4x_2^2x_3\zeta_1^{16}+x_1^6x_2\zeta_2^8)\ov_3.
    \end{align*}
\end{proposition}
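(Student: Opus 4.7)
The plan is to mirror the argument used for the $E_4$-page computation in the $m=2$ case, computing $E_7 = H(E_3,\delta_3)$ where $\delta_3(\zeta_2^4) = \rho^3 x_1^4 \ov_3$. First I would identify the algebra generators of $\ker(\delta_3)$: since $\delta_3$ is a derivation in characteristic $2$, $\zeta_2^8 = (\zeta_2^4)^2$ is automatically a cycle, while $\zeta_2^4$ is not. All other algebra generators of $E_3$---namely $\rho, x_1, x_2, x_3, x_4, \ov_3, \zeta_1^8, \zeta_3^4, \zeta_4^2, \zeta_5, \ldots$---are $\delta_3$-cycles. Any additional cycle is then a sum of products of the form $\zeta_2^4 \cdot y$ with $y \in \mathrm{ann}_{E_3}(\rho^3 x_1^4 \ov_3)$.

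Next, I would verify by direct computation that the three unfamiliar $\ov_3$-relations appearing in the claimed presentation already hold in $E_3$, using the quadratic relations $x_2^4 = \zeta_1^8 x_1^4$ and $x_3^2 = \zeta_1^8 x_2^2 + \zeta_2^4 x_1^2$, together with the expansion $x_4 = x_3\zeta_1^8 + x_2\zeta_2^4 + x_1\zeta_3^2 + \rho\zeta_4$. A short calculation in characteristic $2$ yields
\begin{align*}
(x_1^5x_3^2 + x_1x_2^6)\ov_3 &= \zeta_2^4\,(x_1^7 \ov_3) = 0,\\
(\rho x_1^4x_3^2 + \rho x_2^6)\ov_3 &= \zeta_2^4\,(\rho x_1^6 \ov_3) = 0,\\
(x_1^4x_3^2x_4 + x_2^6x_4 + x_2^4x_3^3 + x_1^4x_2^2x_3\zeta_1^{16} + x_1^6x_2\zeta_2^8)\ov_3 &= \zeta_2^4 x_1^6(x_1\zeta_3^2 + \rho\zeta_4)\ov_3 = 0.
\end{align*}
These identities are consequences of the quadratic $x_3^2$-relation in $E_3$; in $E_7$ only its square is available, so they must be imposed as independent relations.

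Third, I would construct the comparison map $S \to E_7$, where $S$ is the claimed ring. For surjectivity, one must show that every cycle $\zeta_2^4 y$ with $y \in \mathrm{ann}_{E_3}(\rho^3 x_1^4 \ov_3)$ can be rewritten in terms of the $E_7$-generators modulo the listed relations; I would show that the annihilator is generated by $x_1^2$ together with elements arising from the degree-$7$ $\ov_3$-relation, and that each such generator, after multiplication by $\zeta_2^4$ and reduction via the quadratic $x_3^2$-relation, lies in the subring of $E_7$-generators modulo the three new $\ov_3$-relations established above. For injectivity I would use the same extension device as in the $m=2$ case: the natural map
\[
S \longrightarrow S[y]/(y^2 + \zeta_2^8,\ x_1^2 y + x_3^2 + \zeta_1^8 x_2^2)
\]
is injective by construction, and the target is identified with $E_3/(\rho^3 x_1^4 \ov_3)$ by sending $y \mapsto \zeta_2^4$, matching the $E_3$ presentation with the new boundary relation imposed.

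The main obstacle is the annihilator computation in the surjectivity step. Unlike the $m=2$ case, where the annihilator was simply the principal ideal $(x_1)$ and the reduction was essentially one line, here the third $\ov_3$-relation of $E_3$---a seven-term polynomial identity in six variables---contributes nontrivially to $\mathrm{ann}_{E_3}(\rho^3 x_1^4 \ov_3)$. Systematically enumerating a finite generating set for this annihilator, and tracing how each generator interacts with the quadratic $x_3^2$-relation after being multiplied by $\zeta_2^4$, requires a careful case analysis that is the core combinatorial content of the proof.
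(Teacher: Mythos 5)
Your overall template is the right one and matches the paper's: compute $E_7 = H(E_3,\delta_3)$ by identifying $\ker(\delta_3)$ as the subalgebra generated by the obvious cycles plus $\zeta_2^4\cdot\mathrm{ann}_{E_3}(\rho^3x_1^4\ov_3)$, observe that the three new $\ov_3$-relations are $\zeta_2^4$ times the three $\ov_3$-relations from $E_3$ rewritten via the quadratic relations, and prove injectivity of $S\to E_7$ by adjoining a square root $y=\zeta_2^4$. That last step is verbatim what the paper does.

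The gap is in the annihilator computation, where you anticipate the "core combinatorial content" to be a case analysis tracing contributions from the degree-$7$ $\ov_3$-relation. In fact $\mathrm{ann}_{E_3}(\rho^3x_1^4\ov_3)$ is exactly $(x_1^2)$, with no extra generators, and the argument is short. Let $R$ denote the ring with the three quadratic relations but none of the $\ov_3$-relations. The key point is that the ideal $I = (x_1^7\ov_3,\ \rho x_1^6\ov_3,\ (x_2^7+x_1^6x_4+x_1^4x_2x_3^2+x_1^2x_2^4x_3)\ov_3)$ defining $E_3 = R/I$ satisfies $I\subset(x_1^6)$ inside $R$. For the first two generators this is obvious; for the third, a reduction using $x_2^4=\zeta_1^8x_1^4$ and $x_3^2=\zeta_1^8x_2^2+\zeta_2^4x_1^2$ gives
\[
x_2^7+x_1^6x_4+x_1^4x_2x_3^2+x_1^2x_2^4x_3 \;=\; x_1^6\bigl(x_4+\zeta_2^4x_2+\zeta_1^8x_3\bigr)\quad\text{in }R.
\]
Hence if $\pi(r)\cdot\rho^3x_1^4\ov_3=0$ in $E_3$, then $r\rho^3x_1^4\ov_3\in(x_1^6)$ in $R$; writing $R=T[\rho,\ov_3]$ with $T$ an $x_1$-torsion-free ring and comparing $(\rho,\ov_3)$-degrees gives $rx_1^4\in(x_1^6)$ and so $r\in(x_1^2)$. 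That there are no new algebra generators is then the one-line observation $\zeta_2^4x_1^2 = x_3^2+\zeta_1^8x_2^2$. You should replace the vague "careful case analysis" plan with this containment argument.

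A secondary point: your verification of the third new relation invokes the expansion $x_4 = x_3\zeta_1^8+x_2\zeta_2^4+x_1\zeta_3^2+\rho\zeta_4$, but $\zeta_3^2$ and $\zeta_4$ do not survive to the $\rho$-Bockstein $E_3$-page (they are killed by $\delta_1$ and $\delta_0$). The identity you want is purely internal to $E_3$: multiplying the degree-$7$ $\ov_3$-relation by $\zeta_2^4$ and repeatedly applying $\zeta_2^4 x_1^2 = x_3^2+\zeta_1^8x_2^2$ together with $x_2^4=\zeta_1^8x_1^4$ and $x_3^4=\zeta_1^{16}x_2^4+\zeta_2^8x_1^4$ yields exactly $(x_1^4x_3^2x_4+x_2^6x_4+x_2^4x_3^3+x_1^4x_2^2x_3\zeta_1^{16}+x_1^6x_2\zeta_2^8)\ov_3$. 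That is the calculation to present rather than the heuristic involving $\zeta_3^2$ and $\zeta_4$.
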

\begin{proof}
    We claim that $\mathrm{ann}_{E_3}(\rho^3 x_1^4\ov_3)=(x_1^2)$. To see this, let
\[R=\frac{\F_2[\zeta_1^8,\zeta_2^4,\zeta_3^4,\zeta_4^2,\ldots][\rho,x_1,x_2,x_3,x_4,\ov_3]}{(x_2^4+\zeta_1^8x_1^4,x_3^2+\zeta_1^8x_2^2+\zeta_2^4x_1^2,x_4^2+x_1^2\zeta_3^4+x_2^2\zeta_2^8+x_3^2\zeta_1^{16})}\]
There is a surjection
\[\pi:R\to R/(x_1^7\overline{v}_{3},\rho x_1^6\ov_3,(x_2^7+x_1^6x_4+x_1^4x_2x_3^2+x_1^2x_2^4x_3)\ov_3)\cong E_3\]
So if $\pi(r)\cdot \rho^3 x_1^4\ov_3=0$, we have
\[r\rho^3 x_1^4\ov_3\in (x_1^7\overline{v}_{3},\rho x_1^6\ov_3,(x_2^7+x_1^6x_4+x_1^4x_2x_3^2+x_1^2x_2^4x_3)\ov_3)\subset (x_1^6)\]
Using that $R=T[\rho,\ov_3]$ for $T$ an $x_1$-torsion free ring, it follows that $rx_1^4\in (x_1^6)$ and therefore $r\in (x_1^2)$. Since $x_1^2\zeta_2^4=x_3^2+\zeta_1^8x_2^2$, this creates no new cycles.

It follows that the map
\[S:=\F_2[\zeta_1^8,\zeta_2^8,\zeta_3^4,\zeta_4^2,\zeta_5,\ldots][\rho,x_1,x_2,x_3,x_4,\ov_3]/J\to \ker(\delta_3)\]
is surjective, where $J$ is the ideal generated by the list of elements in the statement of the proposition, with $\rho^3 x_1^4\ov_3$ removed. To see that it is injective, note that the map
    \[S\to S[y]/(y^2+\zeta_2^8,x_1^2y+x_2^2\zeta_1^8+x_3^2)\]
    is injective, and the latter is easily identified with $E_3$ by setting $y=\zeta_2^4$.
\end{proof}

\begin{proposition}
The $E_8$-page of the $\rho$-Bockstein spectral sequence is given by the quotient of the polynomial ring
    \[\F_2[\zeta_1^{16},\zeta_2^8,\zeta_3^4,\zeta_4^2,\zeta_5,\ldots][\rho,x_1,x_2,x_3,x_4,\ov_3]\]
    by the ideal generated by the elements
    \begin{align*}
        &x_2^8+\zeta_1^{16}x_1^8,
        &&x_3^4+\zeta_1^{16}x_2^4+\zeta_2^8x_1^4,\\
        &x_4^2+x_1^2\zeta_3^4+x_2^2\zeta_2^8+x_3^2\zeta_1^{16},
        &&x_1^7\ov_3,\\
        &(x_2^7+x_1^6x_4+x_1^4x_2x_3^2+x_1^2x_2^4x_3)\ov_3,
        &&\rho x_1^6\ov_3,\\
        &\rho^3 x_1^4\ov_3,
&&(x_1^5x_3^2+x_1x_2^6)\ov_3,\\
&(\rho x_1^4x_3^2+\rho x_2^6)\ov_3,
&&(x_1^4x_3^2x_4+x_2^6x_4+x_2^4x_3^3+x_1^4x_2^2x_3\zeta_1^{16}+x_1^6x_2\zeta_2^8)\ov_3,\\
&\rho^7\ov_3,
&&\rho^3x_2^4\ov_3,\\
&(\rho^5x_3^2+\rho x_1^2x_2^4)\ov_3,
&&(x_1^3x_2^4+\rho^2x_2^5+\rho^4x_1x_3^2+\rho^6x_4)\ov_3,\\
&(\rho x_2^4x_3^2+\rho x_1^4x_2^2\zeta_1^{16}+\rho^5x_1^2\zeta_2^8)\ov_3.
\end{align*}
\begin{align*}
&(x_1x_2^4x_3^2+x_1^5x_2^2\zeta_1^{16}+\rho^2x_2^4x_4+\rho^2x_1^4x_3\zeta_1^{16}+\rho^4x_1^3\zeta_2^8+\rho^6x_2\zeta_2^8)\ov_3\\
&(x_1^2x_2^4x_4+x_2^5x_3^2+x_1^4x_2^3\zeta_1^{16}+\rho^4x_3^3x_4+\rho^4x_2^2x_3\zeta_1^{16}+\rho^4x_1^2x_2\zeta_2^8+\rho^6x_1\zeta_3^4)\ov_3\\
&(x_2^4x_3^2x_4+x_1^2x_2^5\zeta_2^8+x_1^4x_2^2x_4\zeta_1^{16}+x_1^4x_3^3\zeta_1^{16}+x_2^6x_3\zeta_1^{16}+\rho^2x_1x_2^4\zeta_3^4+\rho^4x_1^2x_4\zeta_2^8+\rho^4x_2x_3^2\zeta_2^8)\ov_3
    \end{align*}
\end{proposition}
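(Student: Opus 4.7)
The plan is to follow the same strategy as the preceding computations of the $E_3$ and $E_7$ pages of the $\rho$-Bockstein spectral sequence. The three steps are: (i) compute $\mathrm{ann}_{E_7}(\rho^7 \overline{v}_3)$, which determines $\ker(\delta_7)$; (ii) identify $\mathrm{im}(\delta_7)$ together with the new $\overline{v}_3$-relations it forces on a presentation in terms of the surviving generators; (iii) verify injectivity of the proposed presentation via the extension trick used before.

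For (i), set
\[
R := \frac{\F_2[\zeta_1^8,\zeta_2^8,\zeta_3^4,\zeta_4^2,\ldots][\rho,x_1,x_2,x_3,x_4,\overline{v}_3]}{\bigl(x_2^4+\zeta_1^8 x_1^4,\ x_3^4+\zeta_1^{16}x_2^4+\zeta_2^8 x_1^4,\ x_4^2+x_1^2\zeta_3^4+x_2^2\zeta_2^8+x_3^2\zeta_1^{16}\bigr)}
\]
which surjects onto $E_7$ by imposing the seven $\overline{v}_3$-relations. Writing $R = T[\rho,\overline{v}_3]$ with $T$ the underlying $x_1$-torsion-free polynomial quotient, each of the seven $\overline{v}_3$-relations becomes explicitly divisible by $x_1^4$ after rewriting with $x_2^4 = \zeta_1^8 x_1^4$ (for example, $x_2^6 = \zeta_1^8 x_1^4 x_2^2 \in (x_1^4)$). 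Hence if $f\rho^7\overline{v}_3 = 0$ in $E_7$ then $f\rho^7 \in (x_1^4) \subset R$, and $x_1$-torsion-freeness of $T$ together with regularity of $\rho$ forces $f \in (x_1^4)$. Thus $\mathrm{ann}_{E_7}(\rho^7\overline{v}_3) = (x_1^4)$; the only candidate Massey product $\langle x_1^4,\rho^7,\overline{v}_3\rangle$ is represented by $x_1^4\zeta_1^8 = x_2^4$, which already lies in the subalgebra generated by $x_1,x_2,x_3,x_4$, so no new Massey generators appear.

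For (ii), since $\zeta_1^8$ is the unique generator of $E_7$ not surviving to $E_8$, the image $\mathrm{im}(\delta_7)$ is the principal ideal $(\rho^7\overline{v}_3) \subset \ker(\delta_7)$, contributing the relation $\rho^7\overline{v}_3 = 0$. The remaining seven new $\overline{v}_3$-relations arise because each $E_7$ $\overline{v}_3$-relation, once multiplied by $\zeta_1^8$ and rewritten using $\zeta_1^8 x_1^4 = x_2^4$ together with the identity $\zeta_1^8 x_2^2 = x_3^2 + \zeta_2^4 x_1^2$ (obtained in $E_7$ as the unique char-$2$ square root of the $E_7$-relation $x_3^4 + \zeta_1^{16}x_2^4 + \zeta_2^8 x_1^4 = 0$), yields a nontrivial identity in the $E_8$-generators that is not a consequence of the three non-$\overline{v}_3$ relations on those generators alone. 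Enumerating the seven $E_7$ $\overline{v}_3$-relations and performing these reductions produces precisely the eight listed polynomials. For (iii), if $S$ denotes the polynomial quotient in the statement, the surjection $S \twoheadrightarrow E_8$ is injective by the extension
\[
S \hookrightarrow S[y]/\bigl(y^2+\zeta_1^{16},\ x_1^4 y + x_2^4\bigr),
\]
whose target is identified with $E_7$ via $y = \zeta_1^8$. Since the remaining algebra generators are permanent cycles by Theorem \ref{thmn=3SS}, the $\rho$-Bockstein spectral sequence collapses at $E_8$.

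The principal obstacle is the bookkeeping in step (ii): each of the seven reductions is a multi-step polynomial manipulation in $E_7$, using repeatedly the non-$\overline{v}_3$ relations together with the square-root identity $\zeta_1^8 x_2^2 = x_3^2 + \zeta_2^4 x_1^2$, and matching the resulting expressions with the eight listed in the statement must be carried out by hand. The argument otherwise parallels the preceding cases essentially step-for-step.
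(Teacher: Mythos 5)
Your proposal follows the paper's strategy essentially step for step: establish $\mathrm{ann}_{E_7}(\rho^7\ov_3)=(x_1^4)$ by lifting to the ring $R=T[\rho,\ov_3]$ with $T$ $x_1$-torsion-free and checking the seven $\ov_3$-relations in $I$ lie in $(x_1^4)$; observe that $\zeta_1^8 x_1^4=x_2^4$ so no new Massey-product generators arise; conclude surjectivity of the presentation map; and verify injectivity via the extension $S\hookrightarrow S[y]/(y^2+\zeta_1^{16},x_1^4y+x_2^4)$ with $y=\zeta_1^8$. This matches the paper's proof, which proves the annihilator claim in precisely this way and then invokes ``arguing as before.''

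Two small imprecisions in your write-up are worth flagging. First, in the annihilator argument you wrote $f\rho^7\in(x_1^4)$; the correct conclusion from $f\rho^7\ov_3\in I\subset(x_1^4)$ is $f\rho^7\ov_3\in(x_1^4)$, and regularity of $\rho,\ov_3$ modulo $(x_1^4)$ then gives $f\in(x_1^4)$. Second, the ``square-root identity'' $\zeta_1^8x_2^2=x_3^2+\zeta_2^4x_1^2$ that you propose for the step-(ii) rewriting involves $\zeta_2^4$, which is \emph{not} a ring element of $E_7$ (only $\zeta_2^8$ survives $\delta_3$); the identity only makes sense inside $\ker(\delta_3)$ as a rewriting of the element $\zeta_2^4x_1^2$ (which is a $\delta_3$-cycle), so it cannot be used directly to reduce $\zeta_1^8\cdot(\text{$E_7$ relations})$ to polynomials in the $E_8$-generators. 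That said, the paper itself does not derive the eighteen listed relations --- it asserts the presentation after the annihilator computation --- so this imprecision sits in a step the paper leaves implicit rather than in the core argument.
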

\begin{proof}
    We claim that $\mathrm{ann}_{E_7}(\rho^7\ov_3)=(x_1^4)$. To see this, let
\[R=\frac{\F_2[\zeta_1^8,\zeta_2^8,\zeta_3^4,\zeta_4^2,\ldots][\rho,x_1,x_2,x_3,x_4,\ov_3]}{(x_2^4+\zeta_1^8x_1^4,x_3^4+\zeta_1^{16}x_2^4+\zeta_2^8x_1^4,x_4^2+x_1^2\zeta_3^4+x_2^2\zeta_2^8+x_3^2\zeta_1^{16})}\]
There is a surjection $\pi:R\to R/I\cong E_7$ where $I$ is the ideal generated by the elements 
\begin{align*}
        &x_1^7\ov_3,
     &&(x_2^7+x_1^6x_4+x_1^4x_2x_3^2+x_1^2x_2^4x_3)\ov_3,\\
        &\rho x_1^6\ov_3,
        &&\rho^3 x_1^4\ov_3,\\
&(x_1^5x_3^2+x_1x_2^6)\ov_3,
&&(\rho x_1^4x_3^2+\rho x_2^6)\ov_3,\\
&(x_1^4x_3^2x_4+x_2^6x_4+x_2^4x_3^3+x_1^4x_2^2x_3\zeta_1^{16}+x_1^6x_2\zeta_2^8)\ov_3.
    \end{align*}
    So if $\pi(r)\cdot \rho^7\ov_3=0\in E_7$, then $r\rho^7\ov_3\in I$. It is not hard to check that $I\subset (x_1^4)$, so since $R$ is of the form $T[\rho,\ov_3]$ for $T$ an $x_1$-torsion free ring we have $r\in (x_1^4)$. Arguing as before, we arrive at the claimed presentation.
\end{proof}

The remaining algebra generators are all permanent cycles, and so the $\rho$-Bockstein spectral sequence collapses on $E_8$. This verifies the discussion preceding Proposition \ref{propn=3list}, so that $H_*(E_{15}/(\ov_1,\ov_2))$ has the claimed description as an $\mathcal{A}_*$-comodule algebra. In weight zero, we have
\[H_*(E_{15}/(\ov_1,\ov_2))\cong \mathcal{A}_*\square_{\mathcal{A}(3)_*}M_3\]
and machine computation gives the following basis of the $\mathcal{A}(3)_*$-comodule algebra $M_3$, which has dimension 165.
{
\allowdisplaybreaks
\begin{align*}
1&\in H_0
&\rho^6x_1 \ov_3 &\in H_8
&\rho^5x_1^2 \ov_3 &\in H_9\\ 
\rho^6x_2 \ov_3 &\in H_{10}
&\rho^4x_1^3 \ov_3 &\in H_{10}
&\rho^5x_1x_2 \ov_3 &\in H_{11}\\
\rho^4x_1^2x_2 \ov_3 &\in H_{12}
&\rho^2x_1^5 \ov_3 &\in H_{12}
&\rho^5x_2^2 \ov_3 &\in H_{13}\\
\rho^3x_1^3x_2 \ov_3 &\in H_{13} 
&\rho^6x_3 \ov_3 &\in H_{14}
&\rho^4x_1x_2^2 \ov_3 &\in H_{14}\\
\rho^2x_1^4x_2 \ov_3 &\in H_{14}
&\rho^5x_1x_3 \ov_3 &\in H_{15}
&\rho^3x_1^2x_2^2 \ov_3 &\in H_{15}\\
\rho x_1^5x_2 \ov_3 &\in H_{15}
&\rho^4x_1^2x_3 \ov_3 &\in H_{16}
&\rho^4x_2^3 \ov_3 &\in H_{16}\\
\rho^2x_1^3x_2^2 \ov_3 &\in H_{16}
&x_1^6x_2 \ov_3 &\in H_{16}
&\rho^5x_2x_3 \ov_3 &\in H_{17}\\
\rho^3x_1^3x_3 \ov_3 &\in H_{17}
&\rho^3x_1x_2^3 \ov_3 &\in H_{17}
&\rho x_1^4x_2^2 \ov_3 &\in H_{17}\\ 
\rho^4x_1x_2x_3 \ov_3 &\in H_{18}
&\rho^2x_1^4x_3 \ov_3 &\in H_{18}
&\rho^2x_1^2x_2^3 \ov_3 &\in H_{18}\\
x_1^5x_2^2 \ov_3 &\in H_{18}
&\rho^3x_1^2x_2x_3 \ov_3 &\in H_{19}
&\rho x_1^5x_3 \ov_3 &\in H_{19}\\
\rho x_1^3x_2^3 \ov_3 &\in H_{19}
&\rho^4x_2^2x_3 \ov_3 &\in H_{20}
&\rho^2x_1^3x_2x_3 \ov_3 &\in H_{20}\\
\rho^2x_1x_2^4 \ov_3 &\in H_{20}
&x_1^6x_3 \ov_3 &\in H_{20}
&x_1^4x_2^3 \ov_3 &\in H_{20}\\
\rho^5x_3^2 \ov_3 &\in H_{21}
&\rho^3x_1x_2^2x_3 \ov_3 &\in H_{21}
&\rho x_1^4x_2x_3 \ov_3 &\in H_{21}\\
\rho^6x_4 \ov_3 &\in H_{22}
&\rho^4x_1x_3^2 \ov_3 &\in H_{22}
&\rho^2x_1^2x_2^2x_3 \ov_3 &\in H_{22}\\
\rho^2x_2^5 \ov_3 &\in H_{22}
&x_1^5x_2x_3 \ov_3 &\in H_{22}
&\rho^5x_1x_4 \ov_3 &\in H_{23}\\
\rho^3x_1^2x_3^2 \ov_3 &\in H_{23}
&\rho^3x_2^3x_3 \ov_3 &\in H_{23}
&\rho x_1^3x_2^2x_3 \ov_3 &\in H_{23}\\
\rho x_1x_2^5 \ov_3 &\in H_{23}
&\rho^4x_1^2x_4 \ov_3 &\in H_{24}
&\rho^4x_2x_3^2 \ov_3 &\in H_{24}\\
\rho^2x_1^3x_3^2 \ov_3 &\in H_{24}
&\rho^2x_1x_2^3x_3 \ov_3 &\in H_{24}
&x_1^4x_2^2x_3 \ov_3 &\in H_{24}\\
x_1^2x_2^5 \ov_3 &\in H_{24}
&\rho^5x_2x_4 \ov_3 &\in H_{25}
&\rho^3x_1^3x_4 \ov_3 &\in H_{25}\\
\rho^3x_1x_2x_3^2 \ov_3 &\in H_{25}
&\rho x_1^4x_3^2 \ov_3 &\in H_{25}
&\rho x_1^2x_2^3x_3 \ov_3 &\in H_{25}\\
\rho^4x_1x_2x_4 \ov_3 &\in H_{26}
&\rho^2x_1^4x_4 \ov_3 &\in H_{26}
&\rho^2x_1^2x_2x_3^2 \ov_3 &\in H_{26}\\
\rho^2x_2^4x_3 \ov_3 &\in H_{26}
&x_1^5x_3^2 \ov_3 &\in H_{26}
&x_1^3x_2^3x_3 \ov_3 &\in H_{26}\\ 
\rho^3x_1^2x_2x_4 \ov_3 &\in H_{27}
&\rho^3x_2^2x_3^2 \ov_3 &\in H_{27}
&\rho x_1^5x_4 \ov_3 &\in H_{27}\\
\rho x_1^3x_2x_3^2 \ov_3 &\in H_{27}
&\rho x_1x_2^4x_3 \ov_3 &\in H_{27}
&\rho^4x_3^3 \ov_3 &\in H_{28}\\
\rho^2x_1^3x_2x_4 \ov_3 &\in H_{28}
&\rho^2x_1x_2^2x_3^2 \ov_3 &\in H_{28}
&x_1^6x_4 \ov_3 &\in H_{28}\\
x_1^4x_2x_3^2 \ov_3 &\in H_{28}
&x_1^2x_2^4x_3 \ov_3 &\in H_{28}
&\rho^4x_2^2x_4\overline{v}_3&\in H_{28}\\
\rho^3x_1x_2^2x_4\overline{v}_3&\in H_{29}
&\rho^5x_3x_4 \ov_3 &\in H_{29}
&\rho^3x_1x_3^3 \ov_3 &\in H_{29}\\
\rho x_1^4x_2x_4 \ov_3 &\in H_{29}
&\rho x_1^2x_2^2x_3^2 \ov_3 &\in H_{29}
&\rho x_2^5x_3 \ov_3 &\in H_{29}\\
\rho^2x_1^2x_2^2x_4\overline{v}_3&\in H_{30}
&\rho^4x_1x_3x_4 \ov_3 &\in H_{30}
&\rho^2x_1^2x_3^3 \ov_3 &\in H_{30}\\
\rho^2x_2^3x_3^2 \ov_3 &\in H_{30}
&x_1^5x_2x_4 \ov_3 &\in H_{30}
&x_1^3x_2^2x_3^2 \ov_3 &\in H_{30}\\
x_1x_2^5x_3 \ov_3 &\in H_{30}
&\rho^3x_1^2x_3x_4 \ov_3 &\in H_{31}
&\rho^3x_2x_3^3 \ov_3 &\in H_{31}\\
\rho x_1^3x_2^2x_4 \ov_3 &\in H_{31}
&\rho x_1^3x_3^3 \ov_3 &\in H_{31}
&\rho x_1x_2^3x_3^2 \ov_3 &\in H_{31}\\
\rho^3x_2^3x_4\overline{v}_3&\in H_{31}
&\rho^4x_2x_3x_4 \ov_3 &\in H_{32}
&\rho^2x_1^3x_3x_4 \ov_3 &\in H_{32}\\
\rho^2x_1x_2^3x_4 \ov_3 &\in H_{32}
&\rho^2x_1x_2x_3^3 \ov_3 &\in H_{32}
&x_1^4x_2^2x_4 \ov_3 &\in H_{32}\\
x_1^4x_3^3 \ov_3 &\in H_{32}
&x_1^2x_2^3x_3^2 \ov_3 &\in H_{32}
&x_2^6x_3 \ov_3 &\in H_{32}\\
\rho^3x_1x_2x_3x_4 \ov_3 &\in H_{33}
&\rho x_1^4x_3x_4 \ov_3 &\in H_{33}
&\rho x_1^2x_2^3x_4 \ov_3 &\in H_{33}\\
\rho x_1^2x_2x_3^3 \ov_3 &\in H_{33}
&\rho^2x_1^2x_2x_3x_4 \ov_3 &\in H_{34}
&\rho^2x_2^2x_3^3 \ov_3 &\in H_{34}\\
x_1^5x_3x_4 \ov_3 &\in H_{34}
&x_1^3x_2^3x_4 \ov_3 &\in H_{34}
&x_1^3x_2x_3^3 \ov_3 &\in H_{34}\\
\rho^2x_2^4x_4\overline{v}_3&\in H_{34}
&\rho x_1^3x_2x_3x_4 \ov_3 &\in H_{35}
&\rho x_1x_2^4x_4 \ov_3 &\in H_{35}\\
\rho x_1x_2^2x_3^3 \ov_3 &\in H_{35}
&\rho^3x_2^2x_3x_4\overline{v}_3&\in H_{35}
&\rho^4x_3^2x_4 \ov_3 &\in H_{36}\\
x_1^4x_2x_3x_4 \ov_3 &\in H_{36}
&x_1^2x_2^4x_4 \ov_3 &\in H_{36}
&x_1^2x_2^2x_3^3 \ov_3 &\in H_{36}\\
\rho^2x_1x_2^2x_3x_4\overline{v}_3&\in H_{36}
&\rho^3x_1x_3^2x_4 \ov_3 &\in H_{37}
&\rho x_1^2x_2^2x_3x_4 \ov_3 &\in H_{37}\\
\rho x_2^5x_4 \ov_3 &\in H_{37}
&\rho x_2^3x_3^3 \ov_3 &\in H_{37}
&\rho^2x_1^2x_3^2x_4 \ov_3 &\in H_{38}\\
x_1^3x_2^2x_3x_4 \ov_3 &\in H_{38}
&x_1x_2^5x_4 \ov_3 &\in H_{38}
&x_1x_2^3x_3^3 \ov_3 &\in H_{38}\\
\rho^2x_2^3x_3x_4\overline{v}_3&\in H_{38}
&\rho^3x_2x_3^2x_4 \ov_3 &\in H_{39}
&\rho x_1^3x_3^2x_4 \ov_3 &\in H_{39}\\
\rho x_1x_2^3x_3x_4 \ov_3 &\in H_{39}
&\rho^2x_1x_2x_3^2x_4\ov_3&\in H_{40}
&x_1^4x_3^2x_4\ov_3&\in H_{40}\\
x_1^2x_2^3x_3x_4\ov_3&\in H_{40}
&x_2^6x_4\ov_3&\in H_{40}
&\rho^6x_1^3x_2^3x_3^2\ov_3^2&\in H_{40}\\
\rho x_1^2x_2x_3^2x_4\ov_3&\in H_{41}
&\rho x_2^4x_3x_4\ov_3&\in H_{41}
&x_1^3x_2x_3^2x_4\ov_3&\in H_{42}\\
x_1x_2^4x_3x_4\ov_3&\in H_{42}
&\rho^2x_2^2x_3^2x_4\overline{v}_3&\in H_{42}
&\rho^3x_3^3x_4\ov_3&\in H_{43}\\
\rho x_1x_2^2x_3^2x_4\ov_3&\in H_{43}
&\rho^2x_1x_3^3x_4\ov_3&\in H_{44}
&x_1^2x_2^2x_3^2x_4\ov_3&\in H_{44}\\
x_2^5x_3x_4\ov_3&\in H_{44}
&\rho^6x_1^3x_2^2x_3^3\ov_3^2&\in H_{44}
&\rho x_1^2x_3^3x_4\ov_3&\in H_{45}\\
\rho x_2^3x_3^2x_4\ov_3&\in H_{45}
&\rho^2x_2x_3^3x_4\ov_3&\in H_{46}
&x_1^3x_3^3x_4\ov_3&\in H_{46}\\
x_1x_2^3x_3^2x_4\ov_3&\in H_{46}
&\rho^6x_1^2x_2^3x_3^3\ov_3^2&\in H_{46}
&\rho x_1x_2x_3^3x_4\ov_3&\in H_{47}\\
\rho^5x_1^3x_2^3x_3^3\ov_3^2&\in H_{47}
&x_1^2x_2x_3^3x_4\ov_3&\in H_{48}
&\rho^6x_1^3x_2^3x_3x_4\overline{v}_3^2&\in H_{48}\\
\rho x_2^2x_3^3x_4\ov_3&\in H_{49}
&x_1x_2^2x_3^3x_4\ov_3&\in H_{50}
&x_2^3x_3^3x_4\ov_3&\in H_{52}
\end{align*}
}
Note that multiplication in $M_3$ is not square zero, as there are five generators divisible by $\ov_3^2$. For example, there is the nontrivial product $H_8\otimes H_{32}\to H_{40}$
\[\rho^6x_1\ov_3\cdot x_1^2x_2^3x_3^2\ov_3=\rho^6x_1^3x_2^3x_3^2\ov_3^2\]
However, all products in $M_3$ are determined by the relations in the discussion preceding Proposition \ref{propn=3list}. The coactions follow from Corollary \ref{corcoactionx_m} and the fact that $\rho$ and $\ov_3$ are primitive.

For the following, we refer to Figure \ref{einftybp3}. Here we use rectangles in a given bidegree if it has rank $\ge 3$.

\begin{theorem}\label{einftym=3}
    The HSSS for $BPGL\langle 3\rangle$ collapses on $E_{16}$, and there is an isomorphism of $\mathcal{A}_*$-comodule algebras (in weight zero)

\[E_\infty\cong(\mathcal{A}_*\square_{\mathcal{A}(3)_*}M_3)\oplus(\mathcal{A}_*\square_{\mathcal{A}(2)_*}\overline{M}_{2})\]
where the latter is a square zero extension with $\overline{M}_{3}\cdot\overline{M}_{2}=0$, where $\overline{M}_{i}$ denotes the augmentation ideal in $M_i$.
\end{theorem}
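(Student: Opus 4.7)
The plan is to assemble the result from the inputs already in place: the $d_{15}$-differentials of Theorem \ref{thmn=3SS}, the $\rho$-Bockstein computation of $H_\ast(E_{15}/(\ov_1,\ov_2),d_{15})$ in weight zero carried out above, and Theorem \ref{thmt03} giving the $m=2$ $E_\infty$-page. The key structural remark is that in weight zero on $E_{15}$, the tower
\[E_{15}\to E_{15}/(\ov_1,\ov_2^2)\to E_{15}/(\ov_1,\ov_2)\]
has first arrow an isomorphism, since $\ov_1$- and $\ov_2^2$-multiples in weight zero vanish after the earlier $d_3$'s and $d_7$'s. It therefore suffices to compute $H_\ast(E_{15}/(\ov_1,\ov_2^2),d_{15})$ in weight zero.

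Since every $d_{15}$-target in Theorem \ref{thmn=3SS} is $\ov_2$-free and $d_{15}(\ov_2)=0$, the differential $d_{15}$ strictly preserves the $\ov_2$-adic filtration on $E_{15}/(\ov_1,\ov_2^2)$. One therefore obtains a splitting of complexes and a corresponding decomposition
\[E_{16}\big|_{\mathrm{wt}\,0}\cong H_\ast\bigl(E_{15}/(\ov_1,\ov_2),d_{15}\bigr)\big|_{\mathrm{wt}\,0}\oplus\ov_2\cdot H_\ast\bigl(E_{15}/(\ov_1,\ov_2),d_{15}\bigr)\big|_{\mathrm{wt}\,-3}.\]
The $\rho$-Bockstein calculation identifies the first summand in weight zero with $\mathcal{A}_*\square_{\mathcal{A}(3)_*}M_3$.

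To identify the second summand with $\mathcal{A}_*\square_{\mathcal{A}(2)_*}\overline{M}_2$, I would use the quotient map $BPGL\langle 3\rangle\to BPGL\langle 2\rangle$, which at the $E_{15}$-level sends $\ov_3\mapsto 0$ and thereby induces a ring map $E_{16}(BPGL\langle 3\rangle)\to E_\infty(BPGL\langle 2\rangle)=H_\ast \mathrm{tmf}_0(3)$. Every $\overline{M}_2$-class of Theorem \ref{thmt03} lies in the permanent-cycle subalgebra of Theorem \ref{thmn=3SS} and so tautologically lifts to $E_{16}(BPGL\langle 3\rangle)$, so this map surjects onto $\mathcal{A}_*\square_{\mathcal{A}(2)_*}\overline{M}_2\subset H_\ast\mathrm{tmf}_0(3)$. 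Injectivity on the $\ov_2$-divisible piece reduces, by $\ov_2$-linearity of $d_{15}$, to showing that the $\ov_3$-divisible part of $H_\ast(E_{15}/(\ov_1,\ov_2),d_{15})$ in weight $-3$ is trivial, which one reads off from the explicit generators produced by the $\rho$-Bockstein.

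Finally, collapse on $E_{16}$ is formal once the page is identified: every algebra generator of the computed $E_{16}$ is a permanent cycle. The relevant powers of the $\zeta_i$ lie in the image of the edge homomorphism of Theorem \ref{edgethm}, while $\rho$, the $x_n$, and the $\ov_i$ are permanent cycles by Corollary \ref{corx_mpermanent} and the general formalism of the HSSS, so the Leibniz rule rules out differentials $d_r$ for $r\ge 16$. The extension $\overline{M}_3\cdot\overline{M}_2=0$ is immediate from the splitting above: such a product lies in the $\ov_2\ov_3$-divisible part of $E_{16}$ in weight zero, but $\mathcal{A}_*\square_{\mathcal{A}(3)_*}M_3$ is $\ov_2$-free and $\mathcal{A}_*\square_{\mathcal{A}(2)_*}\overline{M}_2$ is $\ov_3$-free. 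The main obstacle will be the injectivity step in the $\ov_2$-identification, which requires honest bookkeeping against the $\rho$-Bockstein output in weight $-3$ to rule out stray $\ov_2\ov_3$-divisible survivors.
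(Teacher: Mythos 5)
Your decomposition of $E_{16}$ into the $\ov_2$-free and $\ov_2$-divisible pieces is essentially the paper's $\ov_2$-Bockstein argument, repackaged as a split complex, and the identification of the first summand via the $\rho$-Bockstein matches the paper. Your identification of the second summand via the quotient $BPGL\langle 3\rangle\to BPGL\langle 2\rangle$ is a route the paper does not take; it is plausible but as written is circular in the surjectivity step (``tautologically lifts to $E_{16}$'' does not by itself establish that the lift is nonzero on $E_{16}$), and the injectivity reduction to an $\ov_3$-divisibility statement in weight $-3$ needs to be argued more carefully against boundaries in $E_{15}(m=3)$, not just against the $\ov_3$-ideal.

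The serious gap is the collapse argument. You assert that ``every algebra generator of the computed $E_{16}$ is a permanent cycle'' and invoke the edge homomorphism plus the Leibniz rule. This is false: the summand $\mathcal{A}_*\square_{\mathcal{A}(2)_*}\overline{M}_2$ involves the classes $\zeta_1^8,\zeta_2^4,\zeta_3^2,\zeta_4$ (as exterior multipliers on the $\ov_2$-divisible classes), and by Theorem \ref{differentialsthm} each of these supports a nonzero $d_{15}$; none of them lies in the image of the edge homomorphism, which for $m=3$ is $\F_2[\zeta_1^{16},\zeta_2^8,\zeta_3^4,\zeta_4^2,\zeta_5,\dots]$. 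The products $\ov_2\cdot\zeta_j^{2^{4-j}}\cdot(\text{generator of }\overline{M}_2)$ are $d_{15}$-cycles only because the $d_{15}$-targets vanish after multiplying by $\ov_2$, not because the $\zeta$'s are themselves cycles, so Leibniz alone rules out nothing for $r\ge 16$. The paper's actual collapse proof isolates exactly this issue: it reduces (via the $\mathcal{A}_*$-comodule structure) to showing the single class $\ov_2\zeta_1^8\zeta_2^4\zeta_3^2\zeta_4$ is a permanent cycle, then observes by a degree argument that the only possible differential is a $d_{23}$ landing in the $\ov_3^2$-divisible part, and finally checks by machine computation of that part of $E_{16}$ that the target group is zero. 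Your proposal contains neither the degree reduction nor the check on the $\ov_3^2$-part, so the collapse step is unproved.
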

\begin{proof}
    We run the $\ov_2$-Bockstein spectral sequence to calculate $E_{16}$, using that we have an isomorphism in weight zero
\[E_{16}\cong H_*(E_{15}/(\ov_1,\ov_2^2))\]
The description of the $d_{15}$'s implies that, on $E_{15}$
\[\mathrm{image}(d_{15})\cap(\ov_2)=0\]
and so this spectral sequence collapses. All of the  relations in $H_*(E_{15}/(\ov_2))$ hold in $E_{16}$ again by the differentials given in  Theorem \ref{thmn=3SS}, and the claimed description follows for $E_{16}$.

It remains to show that the HSSS collapses on $E_{16}$, and the only classes on $E_{16}$ that are not in the subalgebra of permanent cycles described in Theorem \ref{thmn=3SS} lie in the subspace
\[V\cdot E(\zeta_1^8,\zeta_2^4,\zeta_3^2,\zeta_4)\]
where $V$ is the subspace of $E_{16}$ generated by classes divisible by $\ov_2$, which lie on the line of red dots (the line $y=6-x$) in Figure \ref{einftybp3}. It suffices therefore to show that the subspace
\[
\ov_2\cdot E(\zeta_1^8,\zeta_2^4,\zeta_3^2,\zeta_4)
\]
of $E_2$ consists of permanent cycles. Since this is a spectral sequence of $\mathcal{A}_*$-comodules, the coproduct formulas imply that it suffices to show the class
\[\ov_2\zeta_1^8\zeta_2^4\zeta_3^2\zeta_4\]
is a permanent cycle. It follows for degree reasons that the only possible differential on this class is a $d_{23}$ with target a sum of monomials of the form
\[p(\zeta_i)\rho^a x_1^b\ov_3^2\]
written as an element of $E_2$. It is straightforward to compute the $\ov_3^2$ part of $E_{16}$ in this weight following closely the computation above, from which a machine computation shows there is no class in the required degree on $E_{16}$.
\end{proof}

\begin{remark}
Modulo comodule algebra extension problems, this describes
\[H_*\Gamma(BPGL\langle 3\rangle)\]
Unlike the $m=2$ computation, there is room here for nontrivial comodule algebra extensions. We can resolve these case by case using ad hoc restriction and transfer arguments, but we do not include this, as the size of $M_3$ makes this prohibitively lengthy.

For the same reasons, we do not include a schematic for the comodule $M_3$, although all of its structure may be determined from the coactions on the $x_m$'s described in Corollary \ref{corcoactionx_m}.
\end{remark}

\begin{sseqdata}[ name = Einftybp3, Adams grading, classes = {fill, show name=below},
grid = go, xrange ={0}{50},yrange={-50}{6},xscale=0.2,yscale=0.2,x tick step =2, y tick step =2, font = \tiny ]
\class[name = 1](0,0)
\class[name=\zeta_1^{16}](16,-16)
\class[name=\zeta_2^8](24,-24)
\class[name=\zeta_3^4](28,-28)
\class[name=\zeta_4^2](30,-30)
\class[name=\zeta_5](31,-31)
\class(32,-32)
\class(40,-40)
\class(44,-44)
\class(46,-46)
\class(47,-47)
\class(48,-48)
\class(48,-48)
\class(52,-52)
\class[red!88!black](4,2)
\class[red!88!black](6,0)
\class[red!88!black](7,-1)
\class[red!88!black](8,-2)
\class[red!88!black](10,-4)
\class[red!88!black](11,-5)
\class[red!88!black](12,-6)
\class[red!88!black](12,-6)
\class[red!88!black](13,-7)
\class[red!88!black](14,-8)
\class[red!88!black](14,-8)
\class[red!88!black](15,-9)
\class[red!88!black](16,-10)
\class[red!88!black](16,-10)
\class[red!88!black,rectangle](18,-12)
\DoUntilOutOfBounds{
\class[red!88!black,rectangle](\lastx+1,\lasty-1)
}
\class[blue!85!black](8,6)
\class[blue!85!black](9,5)
\class[blue!85!black](10,4)
\class[blue!85!black](10,4)
\class[blue!85!black](11,3)
\class[blue!85!black](12,2)
\class[blue!85!black](12,2)
\class[blue!85!black](13,1)
\class[blue!85!black](13,1)
\class[blue!85!black,rectangle](14,0)
\DoUntilOutOfBounds{
\class[blue!85!black,rectangle](\lastx+1,\lasty-1)
}
\class[blue!85!black](40,-12)
\class[blue!85!black](44,-16)
\class[blue!85!black](46,-18)
\class[blue!85!black](47,-19)
\class[blue!85!black](48,-20)
\class(48,48)
\class(60,-60)
\structline[green!50!black](0,0)(48,48)
\structline[green!50!black](0,0)(60,-60)
\end{sseqdata}
\begin{figure}
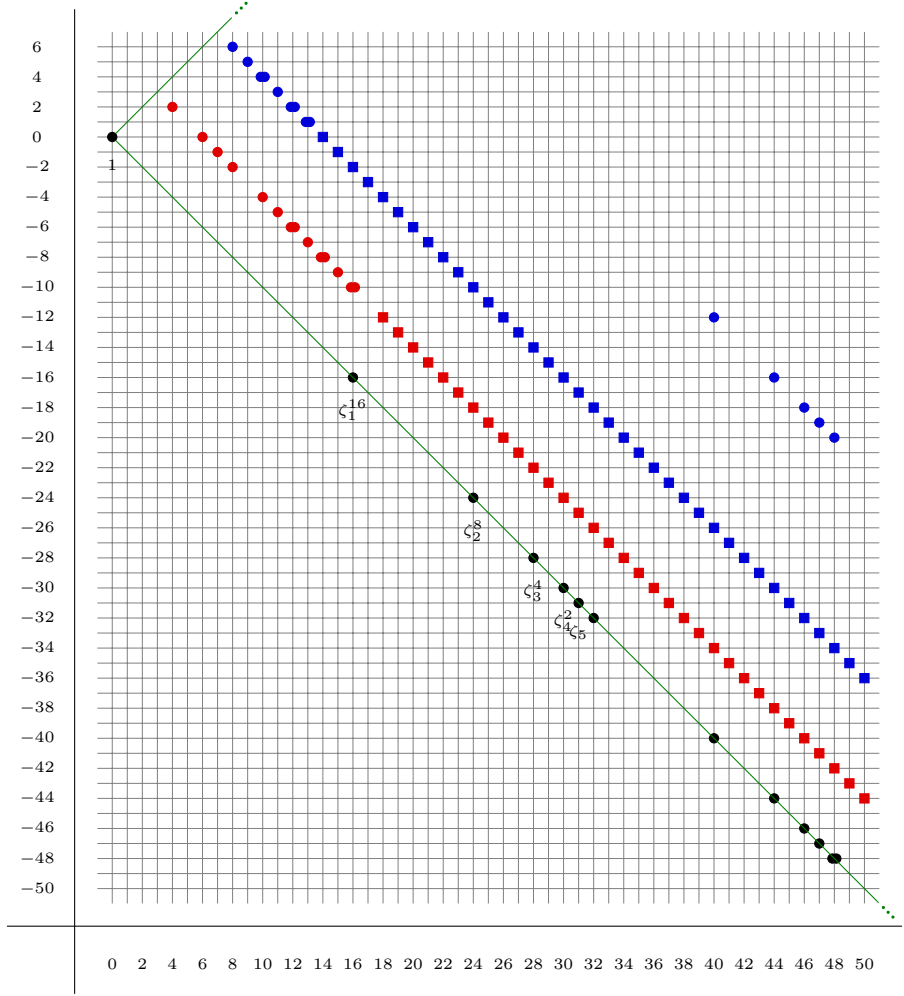

\centering
\printpage[name = Einftybp3]
\caption{The $E_{\infty}$ page of the HSSS for $BPGL\langle 3\rangle$.}
\label{einftybp3}
\end{figure}

\section{The HSSS in equivariant homotopy}\label{secequivariant}
In the equivariant stable homotopy category $\Sp^G$ for a finite group $G$, Hill--Hopkins--Ravenel \cite{HHR} constructed a filtration analogous to Voevodsky's slice filtration. Later, Ullman \cite{ullman} constructed a filtration with better multiplicative properties called the \textit{regular} slice filtration. To fix notions, we work with Ullman's filtration, though for the spectra we consider, the filtrations coincide.

Most of our results on the arithmetic square and the HSSS for $BPGL\langle m\rangle$ apply without change in the equivariant setting for $\BPRn$ by applying Betti realization and using Corollary \ref{cor:betaE}. As in Section \ref{sec2}, we can smash the slice tower of $E$ with any $G$-spectrum $K$, obtaining a spectral sequence. The proofs in Section \ref{sec2} go through in this setting essentially verbatim, so we omit details. Whereas the slice spectral sequence gives a very powerful tool to compute the homotopy groups of the fixed points $E^G$ of a $G$-spectrum, our spectral sequence computes the \textit{homology} of $E^G$.



Computations in $\R$-motivic homotopy and $C_2$-equivariant homotopy often differ due to the presence of the negative cone in the equivariant homology of a point. We fully compute the $RO(C_2)$-graded HSSS for $k_\R$ and we find an exotic differential (Corollary \ref{corexoticdiff}) of the form
\[d_5(2u_{2\sigma}^{-1}\cdot \zeta_1^2\zeta_2)=\ov_1^2\]
This differential, in particular, originates in the negative cone and lands in the positive cone. We deduce this differential from the following fact.

\begin{proposition}\label{v_nnilpotent}
The map
\[\pi_\star \BPR\to\pi_\star(i_*H\mathbb F_2\otimes \BPR)\]
sends $\overline{v}_{m}^{2^m}\mapsto0$.
\end{proposition}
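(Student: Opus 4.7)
The strategy is to combine the HSSS differential of Theorem \ref{differentialsthm} with a characteristic-two Leibniz argument in the equivariant HSSS for \(BP_\R\), and then use the arithmetic square to cancel the resulting \(a_\sigma\)-factor and isolate \(\overline{v}_m^{2^m}\) itself.

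First, by Theorem \ref{differentialsthm} transferred along \(\beta\) via Corollary \ref{cor:betaE}, the equivariant HSSS for \(BP_\R\) carries the differential
\[
d_{2^{m+1}-1}(\zeta_1^{2^m}) = \overline{v}_m\, a_\sigma^{2^m-1}.
\]
Setting \(y=\zeta_1^{2^m}\) and \(x=\overline{v}_m a_\sigma^{2^m-1}\), we have \(dy=x\) and \(dx=d^2y=0\). Applying the Leibniz rule in characteristic two gives
\[
d(x^{n-1}y)=d(x^{n-1})\cdot y+x^{n-1}\cdot dy=x^{n-1}\cdot x=x^{n}
\]
for every \(n\geq 1\), since \(x^{n-1}\) is a cycle. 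In particular, taking \(n=2^m\), the class
\[
x^{2^m}=\overline{v}_m^{2^m}\, a_\sigma^{2^m(2^m-1)}
\]
is a boundary on \(E_{2^{m+1}-1}\), vanishes on \(E_\infty\), and hence, by the vanishing-line bounds analogous to Proposition \ref{propvanishinglines}, it maps to zero in \(\pi_\star(i_*H\F_2\otimes BP_\R)\).

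The second step is to deduce from \(\overline{v}_m^{2^m}a_\sigma^{2^m(2^m-1)}=0\) the vanishing of \(\overline{v}_m^{2^m}\) itself, which I would accomplish via the equivariant analogue of the arithmetic square (Proposition \ref{proparithmeticsquare}). On the \(a_\sigma\)-inverted (geometric fixed-point) piece, the equivariant form of Lemma \ref{lemmarholocalbpgl} identifies \(BP_\R[a_\sigma^{-1}]\) with an \(H\F_2\)-algebra on which \(\overline{v}_m\) (and hence \(\overline{v}_m^{2^m}\)) vanishes for \(m\geq 1\) by connectivity. On the \(a_\sigma\)-completed piece, the relevant Borel-type homology is \(a_\sigma\)-torsion free in the \(RO(C_2)\)-bidegree of \(\overline{v}_m^{2^m}\), so the vanishing of \(\overline{v}_m^{2^m}a_\sigma^{2^m(2^m-1)}\) forces \(\overline{v}_m^{2^m}=0\) here as well. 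The pullback square then yields the result.

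The main obstacle will be the \(a_\sigma\)-injectivity statement on the \(a_\sigma\)-completed piece: making it precise amounts to a careful computation in the Borel-homology analogue of the arithmetic square of Section \ref{subsecTate}, which should follow from the explicit description of the \(\tau\)-local, \(\rho\)-complete part established there together with the fact that the underlying Hurewicz image of \(\overline{v}_m^{2^m}\) sits in a specific, \(a_\sigma\)-torsion-free summand. A cleaner alternative, should this approach prove cumbersome, is to construct the exotic cycle in the negative cone directly (as suggested by the \(m=1\) example \(d_5(2u_{2\sigma}^{-1}\zeta_1^2\zeta_2)=\overline{v}_1^2\)) whose \(d_{2^{m+1}-1}\)-image is \(\overline{v}_m^{2^m}\); this bypasses the upgrade step but requires a separate analysis of how the \(u_{2\sigma}^{-k}\)-classes interact with the slice differentials.
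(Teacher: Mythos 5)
Your first step is correct (and gives slightly more than the paper needs): the differential $d_{2^{m+1}-1}(\zeta_1^{2^m}) = a_\sigma^{2^m-1}\overline{v}_m$, together with the Leibniz rule and the weight-zero vanishing line of Proposition \ref{propvanishinglines}, does yield $a_\sigma^{n(2^m-1)}\overline{v}_m^n = 0$ in $\pi_\star(i_*H\F_2\otimes\BPR)$ for all $n$. The paper only needs the base case $n=1$.

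The second step has a genuine gap. In a pullback square, showing that a class maps to zero in both corners does not show that the class is zero: by the Mayer--Vietoris sequence such a class merely lies in the image of the boundary map $\partial$ from the Tate corner, over which your argument gives no control. In fact the paper makes exactly this point for $BPGL$: Proposition \ref{propbndmap}(3) shows that, in weight zero, \emph{every} positive-degree element of $H_*\Gamma(BPGL)$ dies under both comparison maps, so ``vanishes in both corners'' carries essentially no information here. (Your $a_\sigma$-torsion-freeness claim for the Borel corner is correct, but it only implies $\overline{v}_m^{2^m}$ maps to zero there; it does not kill $\overline{v}_m^{2^m}$ globally.) The paper instead closes the gap with a transfer/Frobenius induction that is special to genuine equivariant homotopy: if $a_\sigma^{2^m-i}\overline{v}_m^i=0$, then $a_\sigma^{2^m-i-1}\overline{v}_m^i$ is $a_\sigma$-torsion, hence in the image of the transfer; since the underlying restriction of $\overline{v}_m$ is $v_m=0\in H_*(BP;\F_2)$, the Frobenius relation gives $a_\sigma^{2^m-(i+1)}\overline{v}_m^{i+1}=\mathrm{tr}\bigl(x\cdot\mathrm{res}(\overline{v}_m)\bigr)=0$. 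Each step trades one power of $a_\sigma$ for one of $\overline{v}_m$ and terminates at $\overline{v}_m^{2^m}=0$. This equivariant input is the ingredient your pullback argument is missing.
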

\begin{proof}
By Theorem \ref{differentialsthm}, we have a differential
\[
d_{2^{m+1}-1}(\zeta_1^{2^m})=a_\sigma^{2^m-1}\overline{v}_{m}
\]
on $E_{2^{m+1}-1}(\BPR;i_*H\mathbb F_2)$. There is nothing in higher filtration in this stem, so we have the relation 
\[
a_\sigma^{2^m-1}\overline{v}_{m}=0\in \pi_\star(i_*H\mathbb F_2\otimes \BPR)
\]
By induction, we may assume that $a_\sigma^{2^m-i}\overline{v}_{m}^i=0$ for $1\le i<2^m$, which implies that the class $a_\sigma^{2^m-(i+1)}\overline{v}_{m}^i$ is in the image of the transfer. Since the restriction of $\overline{v}_{m}$ is zero, the Frobenius relation implies that $a_\sigma^{2^m-(i+1)}\overline{v}_{m}^{i+1}=0$.
\end{proof}

\begin{remark}
Note that the map
\[\pi_\star\BPR\to\pi_\star(H\underline{\F_2}\otimes\BPR)\]
sends $\overline{v}_{m}\mapsto 0$, since the latter is a constant Mackey functor in degrees $*\rho$, and $v_m=0$ in $H_*(BP;\F_2)$. However, it is not hard to see from the HSSS that 
\[\overline{v}_{m}\neq0\in \pi_\star(i_*H\mathbb F_2\otimes \BPR)\]
It seems likely that $2^m$ is the minimal nilpotence degree for all $m$.
\end{remark}



The computation of the $E_2$-page of the homological slice {\sseq} in the equivariant setting follows in the same way, where now we include the negative cone. We recall Stong's computation of $\pi_\star H\mZ$.

\begin{proposition}\label{propstong}
    The $RO(C_2)$-graded homotopy $\pi_\star H\mZ$ is the square zero extension of the ring $\Z[a_\sigma,u_{2\sigma}]/(2a_\sigma)$ by the module
    \[NC:=\Z\{e_{2n\sigma}|n>0\}\oplus\F_2\{e_{(2n+1)\sigma}/a_\sigma^j|n>0,j\ge0\}\]
    where $e_{k\sigma}=\mathrm{tr}(u_\sigma^{-k})$. The module structure on $NC$ is determined by the relations 
    \[a_\sigma e_{k\sigma}=u_{2\sigma}e_{2\sigma}=u_{2\sigma}e_{3\sigma}=0\]
    and
    \[u_{2\sigma}e_{k\sigma}=e_{(k-2)\sigma}\]
    for $k\ge 4$.
    \end{proposition}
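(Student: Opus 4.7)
The plan is to derive $\pi_\star H\mZ$ from the isotropy separation square obtained by smashing $H\mZ$ with the cofiber sequence $E{C_2}_+\to S^0\to \tilde{E}C_2$. Applying $\pi_\star$ produces a long exact sequence in which the Borel term contributes the ``positive cone'' and the geometric fixed point term contributes the $a_\sigma$-inverted (or ``negative cone'') part, with the connecting map and Frobenius reciprocity pinning down all multiplicative relations.

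First I would identify the positive cone. The orientation classes $a_\sigma\in\pi_{-\sigma}S^0$ and $u_{2\sigma}\in\pi_{2-2\sigma}H\mZ$ yield a ring map
\[\Z[a_\sigma,u_{2\sigma}]/(2a_\sigma)\to\pi_\star H\mZ,\]
with $2a_\sigma=\tr(\res(a_\sigma))=0$ by Frobenius reciprocity. That this map is injective and exhausts the positive cone region is the classical Bredon cohomology calculation $H^\star_{C_2}(\mathrm{pt};\mZ)=\Z[a_\sigma,u_{2\sigma}]/(2a_\sigma)$, obtainable from the cellular filtration of $E{C_2}_+$ or equivalently by identifying $\pi_\star(H\mZ\wedge E{C_2}_+)$ with the $a_\sigma$-completion of this polynomial ring.

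Next I would produce the negative cone classes $e_{k\sigma}$ either as transfers $\tr(u_\sigma^{-k})$ or equivalently as lifts of the mod $2$ generators living in $\pi_\star(\tilde{E}C_2\wedge H\mZ)\cong\F_2[a_\sigma^\pm,u_{2\sigma}^\pm]$ under the connecting map of the isotropy separation sequence. In even twists $k=2n>0$ the lift is integral and $a_\sigma$-annihilated, giving $\Z\{e_{2n\sigma}\}$; in odd twists $k=2n+1>0$ the lift is only $\F_2$-valued and remains $a_\sigma$-infinitely divisible, producing the $\F_2\{e_{(2n+1)\sigma}/a_\sigma^j\}$ summands. The relation $a_\sigma e_{k\sigma}=0$ is then automatic from Frobenius: $\tr(\res(a_\sigma)u_\sigma^{-k})=0$ since $\res(a_\sigma)=0$.

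Finally, the multiplication $u_{2\sigma}\cdot e_{k\sigma}=\tr(u_\sigma^{-(k-2)})$ follows from Frobenius, and equals $e_{(k-2)\sigma}$ for $k\ge 4$. The main obstacle is bookkeeping at $k=2,3$: the naive products $u_{2\sigma}e_{2\sigma}=\tr(1)=2$ and $u_{2\sigma}e_{3\sigma}=\tr(u_\sigma^{-1})$ genuinely land in the positive cone subring $\Z[a_\sigma,u_{2\sigma}]/(2a_\sigma)$ rather than in $NC$, so in the square-zero-extension presentation they are recorded as zero in $NC$. Carefully separating what truly vanishes in $\pi_\star H\mZ$ from what is merely absorbed into the positive cone is where most care is needed, and once this is done the presentation reduces to the form stated.
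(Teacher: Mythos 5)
The paper does not prove this statement — it cites Stong's computation without an argument — so there is no proof of record to compare against; your isotropy-separation sketch is the standard route, and the ingredients you list (Borel part for the positive cone, transfers plus the localization sequence for the negative cone, Frobenius reciprocity for the relations) are the right ones.

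Two precision points are worth flagging, even though neither derails the plan. First, $\pi_\star^{C_2}(\tilde{E}C_2\wedge H\mZ)\cong\F_2[u_{2\sigma}][a_\sigma^{\pm}]$, not $\F_2[u_{2\sigma}^{\pm},a_\sigma^{\pm}]$: the geometric fixed points $\Phi^{C_2}H\mZ$ are the connective ring $H\F_2[y_2]$ with $|y_2|=2$, whereas the two-sided-periodic ring you wrote is the Tate construction $H\mZ^{tC_2}$; the distinction matters if you enumerate negative-cone lifts from this group. Second, the edge cases $k=2$ and $k=3$ are not of the same kind. Frobenius gives $u_{2\sigma}e_{2\sigma}=\tr(1)=2$, which is an honest nonzero element of the positive cone — so $\pi_\star H\mZ$ is not literally a square-zero extension, and the stated relation $u_{2\sigma}e_{2\sigma}=0$ should be read as a definition of the abstract $R$-module $NC$ rather than as a relation in $\pi_\star H\mZ$. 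By contrast $u_{2\sigma}e_{3\sigma}=\tr(u_\sigma^{-1})$ lies in degree $-1+\sigma$, where $\pi_\star H\mZ$ vanishes outright, so that product is genuinely zero and is not ``absorbed'' into the positive cone. Your sketch lumps the two cases together; separating them is exactly the care you correctly identified as necessary.
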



\begin{proposition}
There is an isomorphism of $RO(C_2)$-graded rings
\[
\pi_\star(i_*H\mathbb F_2\otimes H\mZ)\cong (\mathcal A_*\square_{\mathcal A(0)_*}\F_2)[a_\sigma,x_1]\oplus NC\]
where the latter ring is a square zero extension of the ring $(\mathcal A_*\square_{\mathcal A(0)_*}\F_2)[a_\sigma,x_1]$ by the module
\[NC:=(\mathcal A_*\square_{\mathcal A(0)_*}\F_2)\{e_{(2i+1)\sigma}/a_\sigma^j,x_1\cdot e_{(2i+1)\sigma}/a_\sigma^j\st i\ge1,j\ge0\}\]
defined by the relation
\[x_1\cdot (x_1\cdot e_{(2i+1)\sigma}/a_\sigma^j)=\zeta_1^2\cdot e_{(2i+1)\sigma}/a_\sigma^{j-2}+e_{(2i-1)\sigma}/a_\sigma^j\]
and the usual relations in $\pi_\star H\mZ$.
\end{proposition}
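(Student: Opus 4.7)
The proposition assembles the ring structure from three inputs: the motivic calculation of $\pi_{*,*}^{\R}(i_*H\F_2\otimes M\Z)$ from Corollary \ref{corE_2description}, Stong's description of $\pi_\star H\mZ$ in Proposition \ref{propstong}, and the $H\mZ$-module structure on $i_*H\F_2\otimes H\mZ$. My approach is to produce the two summands separately, check the stated relations, and then verify completeness bidegree by bidegree.

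For the polynomial summand $(\mathcal{A}_*\square_{\mathcal{A}(0)_*}\F_2)[a_\sigma,x_1]$, I would apply Betti realization to the motivic ring map of Corollary \ref{corE_2description}. The classes $\zeta_i$, $\rho\leftrightarrow a_\sigma$, and $x_1$ all Betti-realize, and the coaction $\psi(x_1)=1\otimes x_1+\xi_1\otimes a_\sigma$ persists. Corollary \ref{corhomologygammaE} applied to $M\Z$ (whose only nonzero slice is itself) implies this ring map is an isomorphism onto the $a_\sigma$-torsion-free part of $\pi_\star(i_*H\F_2\otimes H\mZ)$ in the appropriate weight range; equivalently, after inverting $a_\sigma$ the $NC$ classes disappear and the polynomial ring recovers $\pi_\star(i_*H\F_2\otimes H\mZ)[a_\sigma^{-1}]$ by an argument parallel to Lemma \ref{lemmarholocalbpgl}.

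For the $NC$ summand, I would exploit the fact that $\pi_\star(i_*H\F_2\otimes H\mZ)$ is a module over $\pi_\star H\mZ$. The Stong generators $e_{(2i+1)\sigma}/a_\sigma^j$ map into $\pi_\star(i_*H\F_2\otimes H\mZ)$, and their $x_1$-multiples provide the second family of module generators. The crucial $x_1^2$ relation is the equivariant form of Corollary \ref{corx_1rel}: the motivic identity $x_1^2=\zeta_1^2\rho^2+\tau^2$ realizes to $x_1^2=\zeta_1^2 a_\sigma^2+u_{2\sigma}$ (identifying the Betti realization of $\tau^2$ with the appropriate multiplicative class). Multiplying by $e_{(2i+1)\sigma}/a_\sigma^j$ and applying Stong's module relations $a_\sigma\cdot e_{k\sigma}=0$ and $u_{2\sigma}\cdot e_{k\sigma}=e_{(k-2)\sigma}$ for $k\geq 4$ produces precisely the formula in the statement. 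The square-zero structure of the extension follows because the $NC$ classes are transfers from the underlying spectrum, and by Frobenius reciprocity $\mathrm{tr}(x)\cdot\mathrm{tr}(y)=\mathrm{tr}(x\cdot\mathrm{res}\,\mathrm{tr}(y))=2\mathrm{tr}(xy)=0$ modulo $2$.

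The principal obstacle will be verifying that the constructed map from the split extension to $\pi_\star(i_*H\F_2\otimes H\mZ)$ is an isomorphism in every bidegree, without leaking classes or missing relations. I would argue this by splitting along the $a_\sigma$-torsion decomposition: the $a_\sigma$-inverted theory is controlled by the motivic calculation and gives the polynomial ring, while the $a_\sigma$-torsion portion is governed by the isotropy separation sequence for $H\mZ$ smashed with $i_*H\F_2$, producing exactly the list of $NC$ generators with the stated free module structure over $\mathcal{A}_*\square_{\mathcal{A}(0)_*}\F_2$. A dimension count in each bidegree using Corollary \ref{corhomologygammaE} and the arithmetic square of Proposition \ref{proparithmeticsquare} for $\beta(M\Z)$ then closes the argument.
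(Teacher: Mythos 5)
Your approach is a genuinely different route from the paper's, but two steps do not hold up as written. The paper proves the additive statement in one stroke: it takes the equivariant analogue of Proposition~\ref{propE_2splitting} — that $\pi_\star(i_*H\F_2\otimes H\mZ)$ is a direct sum of copies of $\mathcal A_*\square_{\mathcal A(0)_*}\F_2$ over the $\Z$-summands of Stong's $\pi_\star H\mZ$ and copies of $\mathcal A_*$ over the $\F_2$-summands — and then re-indexes via $\mathcal A_*\cong(\mathcal A_*\square_{\mathcal A(0)_*}\F_2)\oplus \zeta_1(\mathcal A_*\square_{\mathcal A(0)_*}\F_2)$ and the identification $e_{2n\sigma}=x_1 e_{(2n+1)\sigma}$ to convert this sum into the stated free $\mathcal A_*\square_{\mathcal A(0)_*}\F_2$-module on $e_{(2i+1)\sigma}/a_\sigma^j$ and $x_1 e_{(2i+1)\sigma}/a_\sigma^j$. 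You instead split by $a_\sigma$-torsion and handle the two halves with different tools. The positive-cone argument via Betti realization of Corollary~\ref{corE_2description} is sound, since $(\mathcal A_*\square_{\mathcal A(0)_*}\F_2)[a_\sigma,x_1]$ lives in weights $\leq 0$ where Corollary~\ref{corhomologygammaE} applies, and your derivation of the module relation from Corollary~\ref{corx_1rel} and Stong's $u_{2\sigma}$-multiplication matches the paper's reasoning.

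The two gaps: first, your plan to nail down the $NC$ module ``by a dimension count using Corollary~\ref{corhomologygammaE}'' cannot work as stated, because that corollary only controls weights $<2$ while every $NC$ generator $e_{(2i+1)\sigma}/a_\sigma^j$ sits in $\sigma$-degree $2i+j+1\geq 3$. You would need instead the genuine Tate pullback for $H\mZ$ (or, equivalently, the equivariant version of Proposition~\ref{propE_2splitting} that the paper uses implicitly), and you do not carry this out. Second, the Frobenius-reciprocity argument for square-zeroness does not literally apply: the classes $e_{(2i+1)\sigma}/a_\sigma^j$ for $j>0$ are not transfers in $\pi_\star H\mZ$ (their restriction sits in $\pi^e_j$, which vanishes there), so you cannot write their products as $\mathrm{tr}(x\cdot\mathrm{res}\,\mathrm{tr}(y))$. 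The paper's route is simpler and airtight: $NC$ is a square-zero ideal in $\pi_\star H\mZ$ by Stong, and the ring map $\pi_\star H\mZ\to\pi_\star(i_*H\F_2\otimes H\mZ)$ together with $x_1^2=\zeta_1^2 a_\sigma^2+u_{2\sigma}$ propagates this to the extended module generated by $x_1$-multiples.
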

\begin{proof}
This follows from Proposition \ref{propstong} and the isomorphisms

\aln{
\mathcal A_*\{e_{(2i+1)\sigma}/a_\sigma^j\}&\cong (\mathcal A_*\square_{\mathcal A(0)_*}\F_2)\{e_{(2i+1)\sigma}/a_\sigma^j\}\oplus \zeta_1\cdot(\mathcal A_*\square_{\mathcal A(0)_*}\F_2)\{e_{(2i+1)\sigma}/a_\sigma^j\}\\
&\cong(\mathcal A_*\square_{\mathcal A(0)_*}\F_2)\{e_{(2i+1)\sigma}/a_\sigma^j\}\oplus x_1\cdot(\mathcal A_*\square_{\mathcal A(0)_*}\F_2)\{e_{(2i+1)\sigma}/a_\sigma^{j+1}\}
}
and
\[(\mathcal A_*\square_{\mathcal A(0)_*}\F_2)\{e_{2i\sigma}\}\cong (\mathcal A_*\square_{\mathcal A(0)_*}\F_2)\{x_1\cdot e_{(2i+1)\sigma}\}\]
For the latter, we are using that
\[x_1\cdot e_{(2i+1)\sigma}=e_{2i\sigma}\]
as follows from the Frobenius relation and that $x_1$ restricts to $u_\sigma$. The relations (and the fact that $NC$ is square zero) then follow from the fact that
\[\pi_\star H\mZ\to \pi_\star(i_*H\mathbb F_2\otimes H\mZ)\]
is a ring map, along with Corollary \ref{corx_1rel}.
\end{proof}

\begin{corollary}
We have an isomorphism
\[E_2^{*,\star}(\BPRn;i_*H\mathbb F_2)\cong \bigg((\mathcal A_*\square_{\mathcal A(0)_*}\F_2)[a_\sigma,x_1]\oplus NC\bigg)[\overline{v}_{1},\ldots,\overline{v}_{m}]\]
An element $p\in(\mathcal A_*\square_{\mathcal A(0)_*}\F_2)_i$ has bidegree $(i,-i)$, and we have the following bidegrees
\aln{
|\overline{v}_{i}|&=((2^i-1)\rho,0)\\
|a_\sigma|&=(-\sigma,1)\\
|x_1|&=(1-\sigma,0)\\
|e_{(2i+1)\sigma}/a_\sigma^j|&=(-(2i+1)+(2i+j+1)\sigma,-j)
}
\end{corollary}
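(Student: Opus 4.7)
The plan is to proceed exactly as in the motivic case (Section \ref{subsecE_2}), reducing the computation to the $H\underline{\Z}$-case via the slice associated graded. First, recall that the regular slice tower of $\BPRn$ was computed by Hill--Hopkins--Ravenel and Ullman: the odd slices vanish, and the associated graded is given by
\[
\bigoplus_{t\ge 0}P^{2t}_{2t}\BPRn \;\simeq\; H\underline{\Z}\otimes\F_2[\overline{v}_1,\ldots,\overline{v}_m],
\]
where $\overline{v}_i$ sits in equivariant degree $(2^i-1)\rho$ and in slice filtration $2(2^i-1)$. Smashing this with the flat $H\underline{\Z}$-module $i_*H\F_2\otimes H\underline{\Z}$ (whose $RO(C_2)$-graded homotopy is the ring computed in the preceding proposition) identifies $E_2$ as the tensor product
\[
E_2^{*,\star}(\BPRn;i_*H\F_2)\;\cong\;\pi_\star(i_*H\F_2\otimes H\underline{\Z})\otimes_{\pi_\star H\underline{\Z}}\pi_\star(H\underline{\Z}[\overline{v}_1,\ldots,\overline{v}_m]).
\]
Since the second factor is a polynomial ring in the $\overline{v}_i$ over $\pi_\star H\underline{\Z}$, this base change is exactly polynomial extension in the $\overline{v}_i$'s, yielding the claimed formula.

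Next, to verify the bidegrees I would observe that the HSSS is indexed by $(\star,s)$ where $\star\in RO(C_2)$ is the equivariant degree and $s$ is the slice filtration. The classes from the $\mathcal{A}_*\square_{\mathcal{A}(0)_*}\F_2$ factor arise on the $0$-slice $P^0\BPRn\simeq H\underline{\Z}$ and thus sit in filtration $0$ with equivariant degree equal to the topological degree, giving bidegree $(i,-i)$ (with our Adams-style convention $(t-s,s)$). The class $a_\sigma\in\pi_{-\sigma}H\underline{\Z}$ is detected on slice filtration $1$, $x_1$ (which restricts to $u_\sigma$ and witnesses the relation $x_1^2=\zeta_1^2a_\sigma^2+u_{2\sigma}$) lies in degree $1-\sigma$ on the $0$-slice, and the polynomial generators $\overline{v}_i$ live on the $(2^i-1)\rho$-slice at filtration $0$ by construction. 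The negative cone classes $e_{(2i+1)\sigma}/a_\sigma^j$ inherit their bidegrees directly from $\pi_\star H\underline{\Z}$ together with the fact that $e_{k\sigma}=\mathrm{tr}(u_\sigma^{-k})$ is detected in filtration $0$ while division by $a_\sigma^j$ increases filtration by $-j$.

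The only subtlety, as in the motivic setting, is that $\zeta_1$ itself does not exist in the $H\underline{\Z}$-homology of a point, so the class $x_1=[a_\sigma\zeta_1]$ must be introduced as an indecomposable; the computation in Section \ref{subsecE_2} together with the identification $\pi_\star(i_*H\F_2\otimes H\underline{\Z})$ in the preceding proposition handles this in the positive cone, while the identifications
\[
\mathcal{A}_*\{e_{(2i+1)\sigma}/a_\sigma^j\}\;\cong\;(\mathcal{A}_*\square_{\mathcal{A}(0)_*}\F_2)\{e_{(2i+1)\sigma}/a_\sigma^j,\,x_1\cdot e_{(2i+1)\sigma}/a_\sigma^{j+1}\}
\]
handle the negative cone. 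The main obstacle is thus not any deep computation but rather bookkeeping: one has to ensure the square zero relation on $NC$ together with the relation $x_1\cdot(x_1\cdot e_{(2i+1)\sigma}/a_\sigma^j)=\zeta_1^2 e_{(2i+1)\sigma}/a_\sigma^{j-2}+e_{(2i-1)\sigma}/a_\sigma^j$ are compatible with multiplication by the $\overline{v}_i$, which is automatic since $\overline{v}_i$ has trivial restriction and $NC$ consists of transferred (hence $\overline{v}_i$-torsion-free polynomial) classes.
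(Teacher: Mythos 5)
Your argument is correct and tracks the paper's (implicit) proof of this corollary exactly: the slice associated graded of $\BPRn$ is a wedge of suspensions of $H\underline{\Z}$ indexed by monomials in the $\overline{v}_i$'s, so smashing with $i_*H\F_2$ and taking $\pi_\star$ simply polynomially extends the preceding proposition's computation of $\pi_\star(i_*H\F_2\otimes H\underline{\Z})$.

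A few cosmetic slips worth flagging, none of which affect the conclusion. First, the slice associated graded you cite should be $H\underline{\Z}[\overline{v}_1,\ldots,\overline{v}_m]$ (a wedge of $\Sigma^{|\overline{v}^I|}H\underline{\Z}$ over monomials), not $H\underline{\Z}\otimes\F_2[\overline{v}_1,\ldots,\overline{v}_m]$: HHR and Ullman work integrally, and the mod-$2$ coefficients only appear after smashing with $i_*H\F_2$. Second, $a_\sigma$ is \emph{not} in slice filtration $1$; it is a class on the $0$-slice whose \emph{Adams} filtration $s=t-\lVert t-s\rVert=0-(-1)=1$ gives the second coordinate in the bidegree. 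You correctly distinguish slice filtration from Adams filtration for the other generators, so this reads as a typo rather than a misunderstanding. Finally, the closing appeal to Frobenius reciprocity to show that $NC$ is compatible with $\overline{v}_i$-multiplication is unnecessary (and would actually suggest $\overline{v}_i\cdot NC=0$, which is true in $\pi_\star\BPR$ but is not the $E_2$-statement): on $E_2$ the $\overline{v}_i$ are formal polynomial generators coming from the slice grading, so the $\F_2[\overline{v}_1,\ldots,\overline{v}_m]$-module structure is free by construction and no Mackey-theoretic argument is needed.
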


\begin{proposition}\label{prop:ROpermcycles}
In the spectral sequence
\[E_r^{*,\star}(k_{\mathbb R};i_*H\mathbb F_2)\]
the subalgebra
\[\bigg((\mathcal A_*\square_{\mathcal A(1)_*}\F_2)[a_\sigma,x_1]\oplus NC\bigg)[\overline{v}_{1},\ldots,\overline{v}_{m}]\]
of the $E_2$-page consists of permanent cycles.
\end{proposition}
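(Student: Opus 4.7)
The plan is to verify that each algebra generator of the subalgebra is a permanent cycle and then invoke the Leibniz rule. The classes $a_\sigma$, $x_1$, and $\ov_1,\ldots,\ov_m$ are permanent cycles: the classes $a_\sigma\in\pi_{-\sigma}S^0$ and $\ov_i\in\pi_{(2^i-1)\rho}\BPRn$ lift to equivariant homotopy, and $x_1$ is the Betti realization of its motivic namesake, a permanent cycle by Corollary \ref{corx_mpermanent}; since motivic and equivariant HSSS are compatible under Betti realization (the equivariant analog of Corollary \ref{cor:betaE}), the equivariant $x_1$ is also permanent. For the subalgebra $\mathcal A_*\square_{\mathcal A(1)_*}\F_2$, we use that $\Gamma(\BPRn[1])\simeq k_\R^{C_2}\simeq\mathrm{ko}$ by Proposition \ref{propfixedpointsglobalsec} and Corollary \ref{cor:betaE}, so that $H_*\mathrm{ko}\cong\mathcal A_*\square_{\mathcal A(1)_*}\F_2$ arises as the image of the equivariant edge homomorphism; an equivariant analog of Theorem \ref{edgethm} then shows these classes are permanent.

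For the negative cone generators $e_{(2i+1)\sigma}/a_\sigma^j$, the plan is to rule out nonzero $d_r$ by tridegree constraints. Such a class sits in slice $0$, so any nonzero $d_r$ must land in an $\ov_1^k$-slice with $k=(r-1)/2$ and $r$ odd, with target of the form $\ov_1^k z$ for some $z\in\pi_{V_z}(i_*H\mathbb F_2\otimes H\mZ)$ of slice zero, $V_z=-(2i+2+k)+(2i+j+1-k)\sigma$ and filtration $r-j$. Using the explicit description of $\pi_\star(i_*H\mathbb F_2\otimes H\mZ)$ from the preceding proposition---combined with the $a_\sigma$-torsion of $NC$ and the fact that positive cone classes are supported in non-negative integer degree---one rules out the candidate targets. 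The classes $x_1\cdot e_{(2i+1)\sigma}/a_\sigma^j$ are then permanent by the Leibniz rule since $x_1$ is. Finally, the Leibniz rule for the multiplicative spectral sequence ensures that any product of permanent cycles is permanent, so the entire subalgebra consists of permanent cycles.

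The main obstacle is the degree-counting step. In the principal case $r=2j+4i+3$ (where $V_z$ becomes a purely negative integer), vanishing of both cones in negative integer degree is immediate. For other $r$, however, candidate targets---such as $\ov_1\cdot e_{(2i+3)\sigma}$ as a potential $d_3$-target of $e_{(2i+1)\sigma}/a_\sigma^3$---are a priori nonzero on $E_2$; ruling these out requires naturality along the slice projection $k_\R\to H\mZ$ together with the $a_\sigma$-module structure on the target, forcing the differential to vanish.
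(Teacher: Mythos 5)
Your treatment of the positive-cone generators ($a_\sigma$, $x_1$, the $\ov_i$, and $\mathcal A_*\square_{\mathcal A(1)_*}\F_2$ via the edge homomorphism and Betti realization compatibility) is essentially the paper's argument. The divergence, and the gap, is in the treatment of the negative-cone module $NC$.

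Your degree-counting strategy does not close, and you acknowledge as much: classes like $\ov_1\cdot e_{(2i+3)\sigma}$ sit in exactly the right tridegree to be $d_3(e_{(2i+1)\sigma}/a_\sigma^3)$. Your proposed fix -- naturality along the slice projection $k_\R\to H\mZ$ together with the $a_\sigma$-module structure -- does not suffice. The slice projection gives a map of spectral sequences \emph{toward} the HSSS of $H\mZ$, which is concentrated in slice zero and therefore carries no differentials; naturality in that direction cannot prevent a differential from occurring in the source. The $a_\sigma$-module constraint is also too weak: both sides of the candidate $d_3$ are $a_\sigma$-torsion (indeed $a_\sigma\cdot\ov_1\cdot e_{(2i+3)\sigma}=0$ already on $E_2$ since $a_\sigma e_{(2i+3)\sigma}=0$), so Leibniz with $a_\sigma$ yields $0=0$ and gives no contradiction. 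You have not ruled out this differential.

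The paper's argument goes in the opposite direction, using naturality along the unit $S^0\to i_*H\F_2$, which produces a map from the ordinary $RO(C_2)$-graded slice spectral sequence of $\BPR$ (or $\BPRn[1]$) to the HSSS. The negative-cone classes $e_{k\sigma}$, $e_{(2n+1)\sigma}/a_\sigma^j$ in $\pi_\star H\mZ$ are permanent cycles in the ordinary slice spectral sequence of $\BPR$ (they lift to $\pi_\star\BPR$; this is standard from the $C_2$-slice computation of Hill--Hopkins--Ravenel and Hu--Kriz). Pushing forward along the unit, their images in the HSSS are therefore permanent cycles, with no degree-counting needed. To repair your argument, replace the degree-counting for $NC$ with this naturality observation; the rest of your outline then goes through.
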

\begin{proof}
This follows from Corollary \ref{corx_mpermanent}, Theorem \ref{edgethm}, and the fact that the negative cone in $\pi_\star H\mZ$ consists of permanent cycles in the slice spectral sequence for $\BPR$.
\end{proof}

By Theorem \ref{differentialsthm}, the first differentials we see are
\aln{
d_3(\zeta_1^2)&=a_\sigma\overline{v}_{1}\\
d_3(\zeta_2)&=x_1\overline{v}_{1}
}
and $d_3$ vanishes on the rest of the algebra generators by Proposition \ref{prop:ROpermcycles}. This gives the following. 
\begin{proposition}
$E_4^{*,\star}(k_{\mathbb R};i_*H\mathbb F_2)$ is the subalgebra of
\[\frac{\bigg((\mathcal A_*\square_{\mathcal A(0)_*}\F_2)[a_\sigma,x_1]\oplus NC\bigg)[\overline{v}_{1}]}{(a_\sigma\overline{v}_{1},x_1\overline{v}_{1})}\]
generated by 
\[\frac{\bigg((\mathcal A_*\square_{\mathcal A(1)_*}\F_2)[a_\sigma,x_1]\oplus NC\bigg)[\overline{v}_{1}]}{(a_\sigma\overline{v}_{1},x_1\overline{v}_{1})}\]
\aln{
x_2&:=x_1\zeta_1^2+a_\sigma\zeta_2\in\langle a_\sigma,\overline{v}_{1},x_1\rangle\\
y_n&:=e_{n\sigma}\zeta_1^2=\mathrm{tr}(u_\sigma^{-n}\zeta_1^2)\\
z_m&:=x_1\cdot\frac{e_{-3\sigma}}{a_\sigma^m}\zeta_2\\
w&:=e_{2\sigma}\zeta_1^2\zeta_2=\mathrm{tr}(u_\sigma^{-2}\zeta_1^2\zeta_2)
}
for $n\ge 2$ and $m\ge0$
\end{proposition}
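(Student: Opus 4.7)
Since the odd slices of $k_\R$ vanish, there are no $d_2$ differentials, so $E_3 = E_2$. By Theorem \ref{differentialsthm}, $d_3$ is determined on algebra generators by $d_3(\zeta_1^2) = a_\sigma \ov_1$ and $d_3(\zeta_2) = x_1 \ov_1$, while all other generators---namely $a_\sigma$, $x_1$, $\ov_1$, the polynomial generators of $\mathcal{A}_*\square_{\mathcal{A}(1)_*}\F_2$, and the negative-cone generators $e_{(2n+1)\sigma}/a_\sigma^j$ and $x_1\, e_{(2n+1)\sigma}/a_\sigma^j$---are $d_3$-cycles by Proposition \ref{prop:ROpermcycles}. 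The plan is to apply the Leibniz rule to compute the kernel and image of $d_3$ on the entire $E_3$-page.

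I would first verify that the claimed classes are $d_3$-cycles by direct computation. For $x_2 = x_1\zeta_1^2 + a_\sigma\zeta_2$, we have $d_3(x_2) = x_1 a_\sigma \ov_1 + a_\sigma x_1 \ov_1 = 0$, exhibiting $x_2$ as the Massey product $\langle a_\sigma, \ov_1, x_1\rangle$. For $y_n = e_{n\sigma}\zeta_1^2$ with $n$ odd, $d_3(y_n) = e_{n\sigma} a_\sigma \ov_1 = 0$ by the relation $a_\sigma e_{n\sigma} = 0$ of Proposition \ref{propstong}; for $n = 2i$ even, the identification $e_{2i\sigma} = x_1 e_{(2i+1)\sigma}$ shows that $y_{2i} = x_1 y_{2i+1}$ is a cycle too. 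The cycles $z_m$ and $w$ are handled similarly, using the module structure of $NC$ and the fact that the Leibniz-rule outputs land in subspaces annihilated by $a_\sigma$.

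To show these generate, I decompose $E_3$ as a module over the permanent-cycle subring
\[R := \big((\mathcal{A}_*\square_{\mathcal{A}(1)_*}\F_2)[a_\sigma, x_1] \oplus NC\big)[\ov_1].\]
In the positive-cone part, $E_3$ is free over $R$ on the basis $\{1, \zeta_1^2, \zeta_2, \zeta_1^2\zeta_2\}$, and the computation $d_3(\zeta_1^2\zeta_2) = \ov_1 x_2 = \ov_1 x_1 \cdot \zeta_1^2 + \ov_1 a_\sigma \cdot \zeta_2$ together with the fact that $\ov_1$ and the pair $(a_\sigma, x_1)$ form regular sequences on the relevant coefficient ring shows that the $d_3$-cycles are generated by $1$ and $x_2$, while the image is the ideal $(a_\sigma\ov_1, x_1\ov_1)$. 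For the negative-cone part, $d_3$ acts by multiplication by $a_\sigma\ov_1$ on $\zeta_1^2$-multiples and by $x_1\ov_1$ on $\zeta_2$-multiples of negative-cone classes, and the vanishing relations $a_\sigma e_{(2n+1)\sigma} = 0$ together with the $x_1$-structure on $NC$ cut out precisely the Massey-type cycles $y_n$, $z_m$, $w$.

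The main obstacle will be the bookkeeping in the negative cone, in particular the interaction between the quadratic relation
\[x_1\cdot(x_1\, e_{(2n+1)\sigma}/a_\sigma^j) = \zeta_1^2\, e_{(2n+1)\sigma}/a_\sigma^{j-2} + e_{(2n-1)\sigma}/a_\sigma^j\]
and the $d_3$-action on $\zeta_1^2, \zeta_2$-multiples of negative-cone classes, as well as the boundary cases where the indexing range of $NC$ forces degeneration. Once this is organised---best done by filtering by the $j$-index in $a_\sigma^{-j}$---matching the list of generators $x_2, y_n, z_m, w$ is a direct verification.
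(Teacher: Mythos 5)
Your positive-cone analysis is sound and essentially mirrors the paper's (very brief) justification: the differentials $d_3(\zeta_1^2)=a_\sigma\ov_1$, $d_3(\zeta_2)=x_1\ov_1$, $d_3(\zeta_1^2\zeta_2)=\ov_1 x_2$, the freeness of $\mathcal A_*\square_{\mathcal A(0)_*}\F_2$ over $\mathcal A_*\square_{\mathcal A(1)_*}\F_2$ on $\{1,\zeta_1^2,\zeta_2,\zeta_1^2\zeta_2\}$, and the regular-sequence argument identifying $1$ and $x_2=\langle a_\sigma,\ov_1,x_1\rangle$ as the new cycle generators in that part. The $y_n$ argument (via $a_\sigma e_{k\sigma}=0$) is also correct.

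Where the argument breaks down is in the sentence that $z_m$ and $w$ are ``handled similarly, using\ldots the fact that the Leibniz-rule outputs land in subspaces annihilated by $a_\sigma$.'' That is not what happens. The Leibniz output for $z_m=(x_1 e_{3\sigma}/a_\sigma^m)\zeta_2$ is
\[
d_3(z_m)=\ov_1\, x_1\cdot\big(x_1\cdot e_{3\sigma}/a_\sigma^m\big),
\]
which involves $x_1^2$ on the negative cone, not $a_\sigma$; evaluating it requires the quadratic relation from Corollary \ref{corx_1rel} (equivariantly, $x_1^2=\zeta_1^2 a_\sigma^2+u_{2\sigma}$) together with the $u_{2\sigma}$-action from Proposition \ref{propstong}. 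For $w=e_{2\sigma}\zeta_1^2\zeta_2$ this works: $a_\sigma e_{2\sigma}=0$ kills one term and $x_1 e_{2\sigma}=x_1^2 e_{3\sigma}=\zeta_1^2 a_\sigma^2 e_{3\sigma}+u_{2\sigma}e_{3\sigma}=0$ kills the other. But for $z_m$ the same relation gives $x_1^2(e_{3\sigma}/a_\sigma^m)=\zeta_1^2 e_{3\sigma}/a_\sigma^{m-2}$, which is zero for $m\le 1$ but \emph{nonzero} for $m\ge 2$. So ``annihilated by $a_\sigma$'' is the wrong mechanism, and the $z_m$ case for $m\ge 2$ is genuinely delicate: one must address why $\ov_1\zeta_1^2 e_{3\sigma}/a_\sigma^{m-2}$ does not obstruct (and this is exactly the kind of negative-cone subtlety that Section~6 warns about). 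The ``generation'' direction in the negative cone is likewise only gestured at; you flag the $j$-filtration yourself as the needed organizing device, but it is precisely there that the $x_1^2$-relation couples the $\zeta_1^2$-multiples to shifts in $j$, so that step cannot be left implicit.
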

The subalgebra
\[\frac{\bigg((\mathcal A_*\square_{\mathcal A(0)_*}\F_2)[a_\sigma,x_1]\oplus NC\bigg)[\overline{v}_{1}]}{(a_\sigma\overline{v}_{1},x_1\overline{v}_{1})}\]
consists of permanent cycles, $x_2$ is a permanent cycle by Corollary \ref{corx_mpermanent}, the $y_n$'s are permanent cycles since they are transfers of permanent cycles, and we show that the $z_m$'s are permanent cycles for degree reasons.

\begin{proposition}\label{prop:z_mperm}
The classes $z_m$ are permanent cycles for all $m\ge0$.
\end{proposition}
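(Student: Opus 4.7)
The plan is to show $z_m$ supports no differential $d_r$ for $r\ge 4$ via a bidegree enumeration argument. First, I will compute the tridegree of $z_m$ from those of its factors $x_1$ (tridegree $(1-\sigma,0)$), the negative-cone class $e_{-3\sigma}/a_\sigma^m$ (tridegree read off from Stong's description of $\pi_\star H\underline{\Z}$), and $\zeta_2$ (tridegree $(3,-3)$). This gives $|z_m| = (1+(m+2)\sigma,\,-m-3)$, so a hypothetical nonzero $d_r(z_m)$ with $r\ge 4$ would land in bidegree $((m+2)\sigma,\,r-m-3)$. The crucial feature is that this target stem has zero integer component.

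On the positive-cone summand of $E_4$ in this bidegree, every monomial has integer stem equal to a non-negative sum of contributions from $\zeta$-factors, $x_1$-factors, and $\overline{v}_1$-factors. Forcing this sum to vanish eliminates all three kinds of factors, leaving only a pure power of $a_\sigma$, whose $\sigma$-stem is non-positive. This contradicts the target $\sigma$-stem $(m+2)\sigma$ for $m\ge 0$, and so no positive-cone class can serve as a target.

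On the negative-cone summand, every monomial contains either a factor $e_{(2n+1)\sigma}/a_\sigma^j$ or its $x_1$-twisted companion, contributing negative integer stem $-(2n+1)$ to be cancelled against positive-cone factors. The relations $a_\sigma\overline{v}_1 = x_1\overline{v}_1 = 0$ in $E_4$ force any monomial containing $\overline{v}_1$ to have no $a_\sigma$ or $x_1$ factor, and writing out the three bidegree equations (integer stem, $\sigma$-stem, and filtration) yields a rigid linear system in the remaining exponents. This pins down the potential candidates to a finite family indexed by $(n,j,r)$, which I will rule out using the divisibility and parity constraints imposed by the surviving $E_4$-generators $\zeta_1^4,\zeta_2^2,\zeta_3,\ldots$ (in particular, the smallest permissible $\zeta$-sum is $4$, which excludes many values outright). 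The main obstacle is the negative-cone bookkeeping: the analysis is elementary, but verifying that no $(n,j,r)$-tuple simultaneously satisfies all integrality and positivity constraints requires handling a small number of cases carefully.
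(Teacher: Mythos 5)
Your setup is correct and matches the paper's: you correctly compute $|z_m| = (1+(m+2)\sigma, -m-3)$, identify that a hypothetical $d_r(z_m)$ must land in bidegree $((m+2)\sigma, r-m-3)$ with zero integer stem, and correctly rule out the positive-cone summand (a positive-cone monomial with zero integer stem is a pure power of $a_\sigma$, whose $\sigma$-stem is nonpositive). The gap is in how you handle the negative cone.

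You propose that the three bidegree equations "pin down the potential candidates to a finite family indexed by $(n,j,r)$" which you then plan to exclude via a divisibility/parity analysis. But in fact the system has \emph{no solutions at all} once $r\ge 4$ --- there is no finite family to enumerate. This is the key observation the paper uses and that your proposal misses. Writing a candidate monomial as $m(\zeta_i)\,x_1^\epsilon\,e_{(2n+1)\sigma}/a_\sigma^j$ with $m(\zeta_i)\in(\mathcal{A}_*\square_{\mathcal{A}(0)_*}\F_2)_k$, the zero-integer-stem equation forces $k=2n+1-\epsilon$ and the $\sigma$-stem equation forces $2n+1-\epsilon+j=m+2$; together these force the filtration to be $-k-j=-(m+2)$, \emph{independent of} $n$, $j$, $\epsilon$. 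But the filtration of a $d_r$-target is $r-m-3\ge 1-m$ for $r\ge 4$, which is strictly larger than $-(m+2)$. The contradiction is immediate; no case analysis is needed.

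Your proposed fallback argument is also on shaky ground. The claim that "the smallest permissible $\zeta$-sum is $4$" is drawn from the generators $\zeta_1^4,\zeta_2^2,\zeta_3,\dots$ of $\mathcal{A}_*\square_{\mathcal{A}(1)_*}\F_2$, but in $E_4$ the negative-cone classes are multiplied by $\zeta_1^2$, $\zeta_2$, and $\zeta_1^2\zeta_2$ through the extra generators $y_n$, $z_m$, $w$, so the coefficient of a negative-cone factor can be an arbitrary monomial in $\mathcal{A}_*\square_{\mathcal{A}(0)_*}\F_2$. The parity constraint you invoke therefore does not hold, and the intended case analysis would not close. Since the proof as written stops at a plan that both rests on an incorrect structural claim and overlooks the decisive filtration contradiction, it is incomplete.
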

\begin{proof}
For this, note that
\[|z_m|=(1+(m+2)\sigma,-3-m)\]
So that
\[|d_r(z_m)|=((m+2)\sigma,-3-m+r)\]
Using our description of $E_4$, and the relations
\[\overline{v}_{1}x_2=\overline{v}_{1}y_n=\overline{v}_{1}z_m=\overline{v}_{1}w=0\]
 the target $d_r(z_m)$ can be written as a sum of monomials of the form
\[m(\zeta_i)\cdot x_1^\epsilon\cdot \frac{e_{(2i+1)\sigma}}{a_\sigma^j}\]
written as elements in a subquotient of $E_2$, for $\epsilon=0,1$ and $m(\zeta_i)$ a monomial in $\mathcal A_*\square_{\mathcal A(0)_*}\F_2$. If $m\in (\mathcal A_*\square_{\mathcal A(0)_*}\F_2)_k$, this monomial has bidegree
\[(k,-k)+(-(2i+1-\epsilon)+(2i+1-\epsilon+j)\sigma,-j)\]
It follows that $k=2i+1-\epsilon$ so that the filtration is $-(2i+1-\epsilon+j)$. On the other hand, looking at the $\sigma$ degree, we have
\[2i+1-\epsilon+j=m+2\]
This is a contradiction, as $r\ge 4$ tells us the filtration must be at least $1-m$.
\end{proof}

\begin{corollary}\label{corexoticdiff}
The differential $d_5$ is determined by
\[d_5(w)=\overline{v}_{1}^2\]
\end{corollary}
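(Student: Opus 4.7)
The plan is to combine the nilpotence statement of Proposition \ref{v_nnilpotent} at $m=1$ with a careful bidegree analysis on $E_4$ to force the claimed differential. By Proposition \ref{v_nnilpotent}, the class $\overline{v}_1^2$ vanishes in $\pi_{2\rho}(i_*H\F_2\otimes k_\R)$; on the other hand, $\overline{v}_1$ is a permanent cycle by Proposition \ref{prop:ROpermcycles}, and $\overline{v}_1^2$ is nonzero on $E_4$. Therefore some differential $d_r$ for $r\ge 4$ must hit $\overline{v}_1^2$.

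The bidegree of $\overline{v}_1^2$ is $(2\rho,0)=(2+2\sigma,0)$ with filtration $0$, so the source of a $d_r$ hitting $\overline{v}_1^2$ lies in bidegree $(3+2\sigma,-r)$. Computing,
\[
|w| = |e_{2\sigma}|+|\zeta_1^2|+|\zeta_2|= (-2+2\sigma,0)+(2,-2)+(3,-3)= (3+2\sigma,-5),
\]
so $w$ is a candidate source for a $d_5$. The next step is to rule out every other possibility. I would enumerate the monomials in the quotient ring
\[
\frac{\bigl((\cA_*\square_{\cA(0)_*}\F_2)[a_\sigma,x_1]\oplus NC\bigr)[\overline{v}_1]}{(a_\sigma\overline{v}_1,x_1\overline{v}_1)}[x_2,y_n,z_m,w]
\]
in all bidegrees $(3+2\sigma,-r)$ with $r\ge 4$. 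Using $a_\sigma\overline{v}_1=x_1\overline{v}_1=0$, no monomial divisible by $\overline{v}_1$ maps to $\overline{v}_1^2$; and by Proposition \ref{prop:ROpermcycles} and Proposition \ref{prop:z_mperm}, every other generator in these bidegrees -- namely polynomial combinations of $a_\sigma$, $x_1$, $x_2$, the $y_n$, the $z_m$, and elements of $\cA_*\square_{\cA(1)_*}\F_2$ -- is a permanent cycle. The only class remaining is $w$, which is a transfer of an element involving $\zeta_1^2\zeta_2$, and in particular is not in the subalgebra of known permanent cycles since $\zeta_1^2,\zeta_2\notin\cA_*\square_{\cA(1)_*}\F_2$.

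Since $\overline{v}_1^2$ must be killed and $w$ is the unique candidate source, we conclude $d_5(w)=\overline{v}_1^2$. The only nontrivial step is the bidegree enumeration in paragraph two: the negative cone contains many classes of the form $(\text{element of }\cA_*\square_{\cA(0)_*}\F_2)\cdot e_{(2i+1)\sigma}/a_\sigma^j$ (or their $x_1$-multiples), and one must verify carefully that these either lie in the wrong bidegree or, by Proposition \ref{prop:z_mperm}-style arguments, are permanent cycles incapable of carrying a differential to a positive-cone class. Once this enumeration is complete, the differential is forced and requires no further computation.
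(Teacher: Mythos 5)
Your argument is correct and follows the paper's proof essentially verbatim: use Proposition \ref{v_nnilpotent} to force a differential killing $\ov_1^2$, note that by Propositions \ref{prop:ROpermcycles} and \ref{prop:z_mperm} (together with the transfer argument for the $y_n$ and Corollary \ref{corx_mpermanent} for $x_2$) the class $w$ is the only algebra generator of $E_4$ that is not a permanent cycle, and conclude by a bidegree count. Your explicit computation $|w|=(3+2\sigma,-5)$ is the content the paper compresses into ``for degree reasons,'' so you have merely unpacked the same argument rather than taken a different route.
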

\begin{proof}
By Proposition \ref{prop:z_mperm}, the only algebra generator of $E_4$ that is not a permanent cycle is $w$. By Proposition \ref{v_nnilpotent}, there must be a differential killing $\overline{v}_{1}^2$, and for degree reasons, the stated differential is the only possibility.
\end{proof}
 
\begin{corollary}
The spectral sequence $E_r^{*,\star}(k_{\mathbb R};i_*H\mathbb F_2)$ collapses on $E_6$, which is the subalgebra of
\[\frac{\bigg((\mathcal A_*\square_{\mathcal A(0)_*}\F_2)[a_\sigma,x_1]\oplus NC\bigg)[\overline{v}_{1}]}{(a_\sigma\overline{v}_{1},x_1\overline{v}_{1},\overline{v}_{1}^2)}\]
generated by 
\[\frac{\bigg((\mathcal A_*\square_{\mathcal A(1)_*}\F_2)[a_\sigma,x_1]\oplus NC\bigg)[\overline{v}_{1}]}{(a_\sigma\overline{v}_{1},x_1\overline{v}_{1},\overline{v}_{1}^2)}\]
and the classes $x_2,y_n,z_m$ for $n\ge 2$ and $m\ge0$.
\end{corollary}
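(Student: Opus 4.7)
The plan is to compute $E_6$ from $E_5 = E_4$ via $d_5$ and then establish collapse by verifying that every algebra generator of $E_6$ is a permanent cycle. Because the odd slices of $k_\R$ vanish, all even-indexed differentials are zero, so $E_5 = E_4$ and only $d_5, d_7, d_9, \ldots$ remain as candidates for nontrivial differentials. By Proposition \ref{prop:ROpermcycles}, Corollary \ref{corx_mpermanent}, Proposition \ref{prop:z_mperm}, and the fact that each $y_n$ is a transfer of a permanent cycle, the only algebra generator of $E_4$ supporting a nontrivial $d_5$ is $w$, with $d_5(w) = \overline{v}_1^2$ by Corollary \ref{corexoticdiff}. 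To carry out the computation of $E_6$, I will use that $w \cdot a_\sigma = w \cdot x_1 = 0$ already on $E_2$: the first because $a_\sigma$ annihilates $NC$, and the second because $x_1 e_{2\sigma} = x_1^2 e_{3\sigma}$, which vanishes by the equivariant analog of Corollary \ref{corx_1rel} (namely $x_1^2 = \zeta_1^2 a_\sigma^2 + u_{2\sigma}$) combined with $u_{2\sigma} e_{3\sigma} = 0$ from Proposition \ref{propstong}. Combined with the square-zero structure of $NC$, this forces $w$ to appear linearly in monomials with annihilator ideal containing $(a_\sigma, x_1)$, so by the Leibniz rule the image of $d_5$ is the principal ideal generated by $\overline{v}_1^2$ inside the $w$-free subalgebra of $E_4$. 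Quotienting yields the claimed presentation of $E_6$.

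For the collapse statement, since even differentials continue to vanish, only $d_7, d_9, \ldots$ need to be checked, and by the Leibniz rule it suffices to show each algebra generator of $E_6$ is a permanent cycle. The subalgebra $((\mathcal{A}_*\square_{\mathcal{A}(1)_*}\F_2)[a_\sigma, x_1] \oplus NC)[\overline{v}_1]$ consists of permanent cycles by Proposition \ref{prop:ROpermcycles}. The Massey products $x_2$, $y_n$, and $z_m$ are covered, respectively, by Corollary \ref{corx_mpermanent}, by naturality of the transfer from the underlying HSSS, and by Proposition \ref{prop:z_mperm}. Together with the $w$-free monomials in the other generators, these exhaust $E_6$, so all higher differentials vanish.

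The main obstacle I anticipate is rigorously verifying the $E_6$ presentation and ruling out hidden additional relations. One must carefully track the interactions among $d_5(w) = \overline{v}_1^2$, the existing relations $a_\sigma \overline{v}_1 = x_1 \overline{v}_1 = 0$, and the nontrivial structure of Massey products such as $x_2 = x_1 \zeta_1^2 + a_\sigma \zeta_2$ that become genuine elements of $E_4$. The square-zero structure of $NC$ together with Proposition \ref{propstong} should keep the bookkeeping tractable, but confirming that no spurious extra quotienting or hidden comodule algebra extensions occur requires a careful case analysis of the annihilator of $\overline{v}_1^2$ and of the multiplication by $w$ on $E_4$.
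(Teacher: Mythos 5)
Your argument is correct and follows essentially the same route as the paper: reduce to $d_5(w)=\overline{v}_1^2$ from Corollary \ref{corexoticdiff}, use the vanishing of the products $w\cdot a_\sigma = w\cdot x_1 = w^2 = w y_n = w z_m = w x_2 = 0$ (which you derive from $a_\sigma\cdot NC = 0$, the square-zero structure of $NC$, and the equivariant version of Corollary \ref{corx_1rel}) to conclude that $d_5$ kills exactly the ideal $(\overline{v}_1^2)$ in the $w$-free subalgebra with no new Massey-product cycles, and then observe that every remaining algebra generator is a permanent cycle by Proposition \ref{prop:ROpermcycles}, Corollary \ref{corx_mpermanent}, Proposition \ref{prop:z_mperm}, and the transfer argument for $y_n$. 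This matches the paper's proof, which records the same list of vanishing products to rule out new cycles and then invokes permanence of the generators.
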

\begin{proof}
There are no new cycles because 
\[w^2=a_\sigma w=u_\sigma w=wz_m=wy_n=wx_2=0\in E_6\]
since these relations hold on $E_2$. The generators are all permanent cycles, so $E_6=E_\infty$.
\end{proof}
\begin{remark}
In this section, we relied heavily on ad hoc degree arguments to control the algebra generators of the $RO(C_2)$-graded HSSS for $\BPRn[1]$. For larger values of $m$, it becomes harder to determine when classes coming from the negative cone in the HSSS for $\BPRn$ are permanent cycles, so it seems that one needs additional input in these cases. However, by Proposition \ref{v_nnilpotent} there must be some pattern of differentials leaving the negative cone that kill powers of $\ov_m$.
\end{remark}

\printbibliography

\end{document}